\documentclass{amsart}

\usepackage{graphicx} % Required for inserting images
\usepackage{amsmath,amsfonts,euscript,amscd,amsthm,amssymb,upref,graphics,color,verbatim, float, placeins,mathrsfs, mathtools,comment,tikz, multirow, makecell, tikz-cd}

\newcommand{\Ascr}{\mathscr{A}}

\newcommand{\Dscr}{\mathscr{D}}

\newcommand{\Lscr}{\mathscr{L}}
\newcommand{\Mscr}{\mathscr{M}}
\newcommand{\Nscr}{\mathscr{N}}

\newcommand{\Pscr}{\mathscr{P}}

\newcommand{\Tscr}{\mathscr{T}}

\newcommand{\Xscr}{\mathscr{X}}

\newcommand{\bbG}{\mathbb{G}}
\newcommand{\bbL}{\mathbb{L}}
\newcommand{\bbT}{\mathbb{T}}

\usepackage{ulem}  %  (for \xout)

\usepackage[margin=3cm]{geometry}
\usepackage[all]{xy}
\usepackage{hyperref}
\usepackage[shortlabels]{enumitem}

\newcommand{\red}{\color{red}}

\newcommand{\lnd}{\operatorname{{\rm LND}}}

\newcommand{\Der}{\operatorname{{\rm Der}}}
\newcommand{\hlnd}{\operatorname{{\rm HLND}}}

\newcommand{\klnd}{\operatorname{{\rm KLND}}}
\newcommand{\Integ}{\ensuremath{\mathbb{Z}}}
\newcommand{\Gscr}{\mathscr{G}}
\newcommand{\Nat}{\ensuremath{\mathbb{N}}}
\newcommand{\Rat}{\ensuremath{\mathbb{Q}}}
\newcommand{\Comp}{\ensuremath{\mathbb{C}}}

\newcommand{\aff}{\ensuremath{\mathbb{A}}}
\newcommand{\bk}{{\ensuremath{\rm \bf k}}}

\newcommand{\dom}{		\operatorname{\rm dom}}

\newcommand{\G}{\ensuremath{\mathbb{G}}}

\newcommand{\trdeg}{	\operatorname{{\rm trdeg}}}
\newcommand{\Frac}{		\operatorname{{\rm Frac}}}

\newcommand{\Lie}{		\operatorname{{\rm Lie}}}

\newcommand{\setspec}[2]{\big\{\,#1\, \mid \,#2\, \big\}}
\renewcommand{\emptyset}{\varnothing}
\newcommand{\isom}{\cong}

\newcommand{\Aut}{		\operatorname{{\rm Aut}}}
\newcommand{\rank}{		\operatorname{\rm rank}}

\newcommand{\Spec}{		\operatorname{{\rm Spec}}}

\newcommand{\Supp}{		\operatorname{{\rm Supp}}}

\newcommand{\LieLN}{\operatorname{{\rm LieLN}}}
\newcommand{\DJL}{\operatorname{{\rm DJL}}}
\newcommand{\KS}{\operatorname{{\rm KS}}}
\newcommand{\kerseq}{\operatorname{{\rm kerseq}}}

\swapnumbers
\newtheorem*{theoremA}{Theorem A}
\newtheorem*{theoremB}{Theorem B}
\newtheorem*{corollaryC}{Corollary C}
\newtheorem*{theoremD}{Theorem D}
\newtheorem{theorem}[subsection]{Theorem}

\newtheorem*{theorem*}{Theorem}
\newtheorem{proposition}[subsection]{Proposition}
\newtheorem*{proposition*}{Proposition}
\newtheorem{lemma}[subsection]{Lemma}
\newtheorem{lemmadef}[subsection]{Lemma and definition}
\newtheorem*{lemma*}{Lemma}
\newtheorem{corollary}[subsection]{Corollary}
\newtheorem*{corollary*}{Corollary}

\theoremstyle{definition}

\newtheorem{remark}[subsection]{Remark}
\newtheorem{definition}[subsection]{Definition}

\newtheorem{noname}[subsection]{}
\newtheorem*{noname*}{}

\newtheorem*{nothing*}{}

\newtheorem{example}[subsection]{Example}

\newtheorem{notation}[subsection]{Notation}

\newcommand{\Id}{		\operatorname{{\rm Id}}}
\newcommand{\Grad}{\rm{Grad}}

\newcommand{\Agoth}{\mathfrak{A}}

\newcommand{\Lgoth}{\mathfrak{L}}

\newcommand{\ggoth}{\mathfrak{g}}
\newcommand{\hgoth}{\mathfrak{h}}

\newcommand{\Aeul}{\EuScript{A}}
\newcommand{\Beul}{\EuScript{B}}

\newcommand{\Deul}{\EuScript{D}}

\newcommand{\Leul}{\EuScript{L}}

\newcommand{\dl}{\operatorname{{\rm dl}}}

\newcommand{\bt}{\mathbf{t}}
\newcommand{\bi}{\mathbf{i}}
\newcommand{\NU}{\operatorname{{\rm NU}}}
\newcommand{\NC}{\operatorname{{\rm NC}}}
\newcommand{\EXP}{\text{\sc exp}}
\newcommand{\LOG}{\text{\sc log}}
\newcommand{\Borel}{\operatorname{{\rm Borel}}}
\newcommand{\nes}[1]{{#1}^{(\text{\rm nes)}}}
\newcommand{\clo}[1]{{#1}^{(\text{\rm cl)}}}
\newcommand{\urad}[1]{{#1}_{\text{\rm u}}}
\newcommand{\bbM}{\mathbb{M}}

\newcommand{\Aff}{\mathbb{A}}

\title[Borel subgroups of the automorphism group of an affine variety]{Borel subgroups of the automorphism group\\ of an affine variety}
\author{Michael Chitayat, Daniel Daigle and Andriy Regeta}

\address{Department of Mathematics and Statistics, University of Ottawa, Ottawa, ON, Canada, K1N 6N5.}
\email{ddaigle@uottawa.ca}

\address{Dipartimento di Matematica ``Tullio Levi-Civita'', 
Universit\`a di Padova, Via Trieste 63, I-35121 Padova}
\email{michael.chitayat@unipd.it, mikechitayat@gmail.com; andriyregeta@gmail.com}

\subjclass[2020]{Primary 14J50, 13N15; Secondary 14R10, 14R20, 17B66, 22E65}

\keywords{Affine variety, automorphism group, ind-group, Borel subgroup, unipotent subgroup, nested subgroup, Lie algebra of derivations, locally nilpotent derivation, locally finite derivation}

\date{\today}

\begin{document}

\begin{abstract}
    We study Borel subgroups of the automorphism group of an affine variety $X$ and their associated Lie algebras. We prove a bijective correspondence between the set of Borel subgroups of $\Aut(X)$ and the set of maximal nested unipotent subgroups of $\Aut(X)$. 
    We show that if $X$ is factorial of dimension at most 3, then the Lie algebra of every maximal nested unipotent subgroup has the special property of being what we call \textit{integral}; we also show that this fails for $X = \Aff^n$ when $n \geq 4$.
    As an application of our results, we show that Nagata's automorphism is contained in a unique Borel subgroup. 
\end{abstract}

\maketitle

\newcommand{\I}{\text{\rm I}}
\newcommand{\II}{\text{\rm II}}
\newcommand{\plinth}{\operatorname{{\rm pl}}}

\newcommand{\bbD}{\mathbb{D}}
\newcommand{\dgoth}{\mathfrak{d}}
\newcommand{\Hom}{\operatorname{{\rm Hom}}}
\newcommand{\Span}{\operatorname{{\rm Span}}}
\newcommand{\kk}[1]{\bk^{[{#1}]}}
\newcommand{\Norm}{\operatorname{{N}}}
\newcommand{\ad}{	\operatorname{\text{\rm ad}}}

\newcommand{\UnitalAlg}[1]{\langle #1 \rangle_{\text{\rm u-alg}}}
\newcommand{\AssAlg}[1]{\langle #1 \rangle_{\text{\rm alg}}}
\newcommand{\LieAlg}[1]{\langle #1 \rangle_{\text{\rm Lie}}}

\newcommand{\lfd}{\operatorname{{\rm LFD}}}
\newcommand{\LieWLF}
{\operatorname{{\rm LieWLF}}}
\newcommand{\PGL}{\operatorname{{\rm PGL}}}
\newcommand{\SolLieWLF}{\operatorname{{\rm SolLieWLF}}}

\newcommand{\rtrdeg}{	\operatorname{{\rm rtrdeg}}}
\newcommand{\grank}{	\operatorname{\rm grank}}

\newcommand{\rien}[1]{}

\newcommand{\ba}{{\bf a}}
\newcommand{\bb}{{\bf b}}
\newcommand{\bx}{{\bf x}}

\setlength{\marginparwidth}{2.5cm}  % or smaller/larger as needed
\newcommand{\andriy}[1]{{\color{blue}*}\marginpar{\tiny \color{blue} AR: #1}}
\newcommand{\daniel}[1]{{\color{cyan}*}\marginpar{\tiny \color{cyan} DD: #1}}
\newcommand{\michael}[1]{{\color{red}*}\marginpar{\tiny \color{red} MC: #1}}

\section{Introduction}

Let $X$ be an irreducible affine algebraic variety over an algebraically closed field $\bk$ of characteristic zero.
This article studies the automorphism group $\Aut(X)$ of $X$, with particular attention to two types of subgroups:
Borel subgroups and maximal nested unipotent subgroups.

Recall that $\Aut(X)$ has the structure of an affine ind-group \cite[Theorem 5.1.1]{FurterKraft}.
A {\it Borel subgroup\/} of $\Aut(X)$ is a maximal element
of the set of connected solvable subgroups of $\Aut(X)$, where connectedness refers to the topology of $\Aut(X)$ as an ind-group.
A subgroup of $\Aut(X)$ is {\it nested\/} if it is the union of an increasing sequence
$G_0 \subseteq G_1 \subseteq G_2 \subseteq \cdots$ of algebraic subgroups of $\Aut(X)$.
An element $g$ of $\Aut(X)$ is {\it unipotent\/} if
$g = \Id_X$ or $g$ belongs to some algebraic subgroup of $\Aut(X)$ isomorphic to $\G_a$.
A subgroup of $\Aut(X)$ is {\it unipotent\/} if all its elements are unipotent.
(Refer to Section \ref{sectionUniqueBorel} for details).

Consider the important special case where $X = \Aff^n$.
It is well known that $\Aut(\Aff^1) \isom \G_m \ltimes \G_a$ is a solvable linear algebraic group.
Although $\Aut(\Aff^2)$ is infinite-dimensional and hence not an algebraic group,
it is still relatively well understood, thanks to \cite{Jung} and \cite{vdKulk}.
By contrast, $\Aut(\Aff^n)$ remains poorly understood for $n \ge 3$.
It is well known that all Borel subgroups of a linear algebraic group are conjugate.
The same holds for $\Aut(\Aff^1)$ and $\Aut(\Aff^2)$: the group $\Aut(\Aff^1)$ is its own (unique) Borel subgroup,
and all Borel subgroups of $\Aut(\Aff^2)$ are conjugate by \cite{BEE16} (case $\bk = \Comp$)
and \cite[Proposition 6.24]{solvableAutGroups} (general case).
On the other hand,  $\Aut(\Aff_\Comp^n)$ contains non-conjugate Borel subgroups for every $n \ge 3$ \cite[Corollary 1.5]{regeta2025group}. Let us now describe our results.

\bigskip

\noindent{\bf\boldmath Unipotent subgroups of $\Aut(X)$.}
In \cite{regeta2025maximal}, the authors describe maximal commutative unipotent subgroups of $\Aut(X)$
as a first step towards a systematic study of maximal unipotent and maximal solvable subgroups of $\Aut(X)$.
Subsequently, Perepechko showed in \cite{perepechko2025}
that maximal nested unipotent subgroups of $\Aut(X)$ correspond bijectively to the maximal elements of a certain class of Lie algebras of locally nilpotent derivations,
which he called {\it de Jonqui\`eres-like algebras.}\footnote{The term ``de Jonqui\`eres-like algebra" was used in an earlier version of \cite{perepechko2025} but was
not retained in the current version.}
This motivates the study of these Lie algebras, whose definition we now recall.

Let $B = \bk[X]$ be the coordinate algebra of $X$ and let $\Der_\bk(B)$ denote the Lie algebra of all $\bk$-derivations of $B$.
By an {\it admissible sequence}, we mean a finite sequence $(D_1,\dots, D_m)$ of locally nilpotent derivations of $B$ such that
$D_1, \dots,D_m$ pairwise commute and are  linearly independent over $B$.
Given an admissible sequence $(D_1,\dots, D_m)$, we define
\begin{itemize}

\item $A_i = \cap_{j = i}^m \ker D_j$ for each $i \in \{1,\dots,m\}$,
and we say that $(A_1, \dots, A_m)$ is the \textit{kernel sequence\/} of $(D_1, \dots, D_m)$;

\item $L(D_1, \dots, D_m) = \sum_{i = 1}^m A_i D_i$\,;

\item $\Lscr(D_1, \dots, D_m) = \setspec{\delta \in \Der_\bk(B)}{s \delta \in L(D_1, \dots, D_m) \text{ for some $s \in A_1 \setminus \{0\}$}}$.

\end{itemize}
One can see that $L(D_1, \dots, D_m) \subseteq \Lscr(D_1, \dots, D_m)$ are Lie-subalgebras of $\Der_\bk(B)$
and that all elements of $\Lscr(D_1, \dots, D_m)$ are locally nilpotent.
A Lie subalgebra $\Lscr$ of $\Der_\bk(B)$ is called \textit{de Jonqui\`eres-like} if $\Lscr = \Lscr(D_1, \dots, D_m)$ for some admissible sequence $(D_1, \dots, D_m)$.
We set: 
\begin{align*}
    \DJL(B) &= \text{ the set of de Jonqui\`eres-like Lie subalgebras of $\Der_\bk(B)$} \\
&= \setspec{ \Lscr(D_1, \dots, D_m) }{ \text{$(D_1, \dots, D_m)$ is an admissible sequence} } .
\end{align*}  

Implicitly contained in \cite{perepechko2025} is the fact that
\begin{equation} \label {7631DD5C4D884DC7} 
\xymatrix@1@C=16mm{
\fbox{\footnotesize maximal nested unipotent subgroups of $\Aut(X)$} \, 
\, \ar @<-.4ex>[r]_-{ \Lie } &  \ar @<-.4ex>[l]_-{ \EXP }
\, \fbox{\footnotesize maximal elements of $\DJL(B)$} }
\end{equation}
are mutually inverse bijections,
where $\EXP$ is essentially the exponentiation of locally nilpotent derivations and $\Lie$ is the operation
of forming the Lie algebra of a nested unipotent subgroup of $\Aut(X)$.
In view of these bijections, it is natural to ask whether one can describe the structure of the poset $( \DJL(B),\subseteq )$.
Section \ref{sectionDJL} is devoted to that question.
Corollary \ref{ujd3876g20oigjh2vy68} states that,
given admissible sequences $(D_1,\dots,D_m)$ and $(E_1,\dots,E_n)$,
$$
\Lscr(D_1, \dots,D_m) = \Lscr(E_1, \dots, E_n) \ \iff \ \begin{minipage}[t]{.5\textwidth}
$(D_1, \dots, D_m)$ and $(E_1, \dots, E_n)$ have the same\\ kernel sequence.
\end{minipage}
$$
This allows us to replace the study of the algebras $\Lscr \in \DJL(B)$ by that of their kernel sequences,
which turns out to be a fruitful strategy: several of the significant results of this paper are proved by applying that philosophy.
In particular, we obtain a practical criterion for deciding whether a given element $\Lscr \in \DJL(B)$ is maximal,
and a method for finding all immediate successors of $\Lscr$ if it is not maximal (Proposition \ref{E024C289-4F11-48E4-831C-C94E59C34ACE}).

Section \ref{SecintegralBorels} continues the study of $\DJL(B)$, but from a different angle. 
An element $\Lscr$ of $\DJL(B)$ is said to be {\it integral\/} if there exists an admissible sequence $(D_1,\dots, D_m)$ such that $\Lscr = L(D_1, \dots, D_m)$.
To see the group-theoretic significance of this notion, consider a maximal nested unipotent subgroup $U$ of $\Aut(X)$.
By \eqref{7631DD5C4D884DC7}, $\Lie(U)$ is a maximal element of $\DJL(B)$;
if $\Lie(U)$ is integral then $U$ admits a decomposition\footnote{Equality \eqref{0F7089CC7FF14DF9} follows from the fact that 
if $L = L_1 \ltimes L_2$ are Lie algebras consisting of locally nilpotent derivations then $\EXP(L) = \EXP(L_1) \ltimes \EXP(L_2)$.
The proof uses the Baker-Campbell-Hausdorff Theorem and related results.}
\begin{equation} \label {0F7089CC7FF14DF9} 
U = U_1 \ltimes ( U_2 \ltimes ( \cdots \ltimes (U_{m-1} \ltimes U_m) \cdots )) , \quad \text{with $U_i = \EXP(A_i D_i)$ for each $i=1,\dots,m$.} 
\end{equation}
Here,
$(D_1,\dots,D_m)$ is any admissible sequence satisfying $\Lie(U) = L(D_1,\dots,D_m)$, and $(A_1,\dots,A_m)$ is its kernel sequence.
Also, one can see that $m$ is the derived length of the solvable group $U$.
In particular, we note:
\begin{quote}
{\it If all maximal elements of $\DJL(B)$ are integral, then all maximal nested unipotent subgroups of $\Aut(X)$ can
be explicitly described as in \eqref{0F7089CC7FF14DF9}.}
\end{quote}
It is therefore natural to ask which affine $\bk$-domains $B$ have the property that all maximal elements of $\DJL(B)$ are integral.
Combining Propositions \ref{dim3Straight} and \ref{6BC46AAB-2493-4034-AE60-7FCC9264ECB8} with Example \ref{nonIntegralExample} gives:
\begin{theoremA}
\mbox{\ }
\begin{enumerate}[\rm(a)]

\item If $B$ is a factorial affine $\bk$-domain with $\dim B \le 3$ then all elements of $\DJL(B)$ are integral.

\item For each $n\ge4$, there exist non-integral maximal elements of $\DJL( \bk^{[n]})$.

\item There exist $3$-dimensional smooth affine varieties $X$ such that  $\DJL( \bk[X] )$ contains non-integral maximal elements.

\end{enumerate}
\end{theoremA}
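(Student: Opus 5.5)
Part (a) is the substantive claim; (b) and (c) are by explicit examples. For (a), I will work with kernel sequences, using Corollary \ref{ujd3876g20oigjh2vy68}. Let $\Lscr = \Lscr(D_1,\dots,D_m)$ with kernel sequence $(A_1,\dots,A_m)$. Because the $D_i$ are linearly independent over $B$ and commute, we have $\trdeg A_i = \dim B - (m-i+1)$. Hence $m \le \dim B \le 3$. The goal is to replace each $D_i$ by a derivation $E_i$ with the same kernel sequence for which $L(E_1,\dots,E_m)=\Lscr(E_1,\dots,E_m)$. The natural choice is to make each $E_i$ \emph{irreducible}, meaning $E_i(B)$ is contained in no proper principal ideal of $B$. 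Factoriality of $B$ is what makes such a choice available: one divides $D_i$ by the gcd of its image, and this gcd lies in the relevant kernel.

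First I check which $E_i$ remain admissible. Dividing $D_m$ by $c\in A_m=\ker D_m$ keeps it locally nilpotent. Dividing $D_i$ by an element of $A_i$ preserves commutation with $D_j$ for $j>i$, since $A_i\subseteq\ker D_j$. However, it may break commutation with $D_j$ for $j<i$, because $D_j(c)$ need not vanish. So I will normalize inductively from $i=m$ down to $1$. At each step I use the freedom $D_i \mapsto a D_i + \sum_{j>i} a_j D_j$ with $a, a_j \in A_i$, which preserves the kernel sequence.

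Next, given $\delta$ with $s\delta = \sum a_i E_i$ and $s\in A_1$, I must show $s$ divides each $a_i$ and that $a_i/s\in A_i$. The plan is to apply $\delta$ successively to elements of the kernels. Since $\trdeg A_{i+1}-\trdeg A_i = 1$, I will choose a local slice $r_i\in A_{i+1}$ of $E_i$, so that $E_i r_i\in A_i$ and $E_j r_i=0$ for $j>i$. This gives a triangular system, and unique factorization in $B$ should force the divisibility. The factorially closed property of $A_i$ in $B$ then yields $a_i/s\in A_i$.

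The main obstacle is the bottom cases in dimension $3$: $m=2$ or $m=3$ with $\trdeg A_1\ge 1$. The kernels are then UFDs of transcendence degree at most $2$, and by the theorems of Miyanishi and Zariski they are polynomial rings or one-dimensional factorial domains. I expect to need this structure to choose the slices and the gcd normalizations compatibly; I will try a case split on $m$. If direct divisibility fails, I will instead show that $\Lscr$ is a free $A_1$-module generated by irreducible derivations, using that $A_i$ is a polynomial ring over $A_{i-1}$ up to localization.

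For (b), the construction for $\bk^{[4]}=\bk[x,y,z,w]$ is a kernel sequence whose top derivation must be a Nagata-type irreducible but non-triangular derivation over a non-polynomial kernel, such as the Roberts or Freudenburg examples. Any generator of the sequence then requires a denominator $s\in A_1$. I will verify maximality with the criterion of Proposition \ref{E024C289-4F11-48E4-831C-C94E59C34ACE}. Adding polynomial variables extends the example to every $n\ge 4$. For (c), I will use a smooth non-factorial threefold, for example a Danielson-type variety $xy=z^2-1$ times a line. There the non-principal divisors break the gcd step, and the same criterion certifies that the resulting element of $\DJL$ is maximal.
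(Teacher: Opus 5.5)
Your proposal has genuine gaps in all three parts, and the key step of (a) is false. Making the $D_i$ irreducible is harmless: by Lemma~\ref{F45F968B-ECF9-45AC-9626-A51DA798D02B}, any $c$ with $D_i=c\delta_i$ already lies in $A_1$, so no inductive care is needed. But irreducibility plus unique factorization does \emph{not} force $s\mid a_i$ when $s\delta=\sum a_iE_i$.

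\textbf{Counterexample to the divisibility step.} In $B=\bk[x,y,z]$ take $E_1=\partial_y+x\partial_z$ and $E_2=\partial_y$.
Both are irreducible ($E_i(y)=1$), they commute, and $(E_1,E_2)$ is admissible with kernel sequence $\bk[x]\subset\bk[x,z]$.
Now $x\partial_z=E_1-E_2\in L(E_1,E_2)$, yet $\partial_z\notin L(E_1,E_2)$: applying $\partial_z=aE_1+bE_2$ to $z$ gives $1=ax$ with $a\in\bk[x]$.
Your slice computation cannot detect this, since at the slice $z$ one gets $s\delta(z)=a_1x$ and the factor $s=x$ is absorbed by $E_1(z)$.
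Here $\Lscr(E_1,E_2)=L(\partial_z,\partial_y)$, but reaching $\partial_z$ requires \emph{dividing} the combination $E_1-E_2$ by $x$.
Your normalization $E_i\mapsto aE_i+\sum_{j>i}a_jE_j$ followed by removing gcd's could produce this, but you give no rule for choosing the combination and no reason the process terminates. The fallback ``$\Lscr$ is a free $A_1$-module'' is not the integrality condition.

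\textbf{What the paper does instead.} The missing idea is Lemma~\ref{2345A6EB-195E-46CB-BBBF-A4CB58DA2639}, which treats the only nontrivial case, $m=2$ and $\dim B=3$. If $m=\dim B$ then $A_1$ is a field; if $m=1$ then $\lnd_{A_1}(B)$ is cyclic because $B$ is a UFD. Your case ``$m=3$ with $\trdeg A_1\ge1$'' cannot occur.
The argument runs as follows.
\begin{enumerate}
\item Fix $E$ irreducible and $\beta\in A_2\setminus A_1$. Choose $D'$ with $L(D,E)\subseteq L(D',E)$, $\Lscr(D',E)=\Lscr(D,E)$ and $D'(\beta)B$ maximal; this exists by Noetherianity.
\item Suppose $\delta\in\Lscr(D',E)\setminus L(D',E)$ and $p\delta=a_1D'+a_2E$ with $p$ prime in the PID $A_1$. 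Irreducibility of $E$, together with $\delta\notin L(D',E)$, forces $p\nmid a_1$.
\item Since $A_1/pA_1\isom\bk$ ($\bk$ algebraically closed), write $a_1=\mu+pa_1'$ with $\mu\in\bk^*$.
\item Then $D_1=\delta-a_1'D'$ satisfies $pD_1=\mu D'+a_2E$, so $D'(\beta)B\subset D_1(\beta)B$ strictly, contradicting maximality.
\end{enumerate}
No structure theorem for the kernels is used. Miyanishi's theorem concerns $\bk^{[3]}$, not an arbitrary factorial $B$.

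\textbf{Parts (b) and (c).} Here you give neither a concrete element nor an obstruction to integrality.
For (b), a non-triangular top derivation with non-polynomial kernel is neither needed nor shown to force denominators. The paper uses the linear derivation $E=u\partial_z-v\partial_y$ on $\bk[u,v,y,z]$, where $(\partial_y,E)$ has kernel sequence $\bk[u,v]\subset\bk[u,v,uy+vz]$. Both $\partial_y,\partial_z$ lie in $\Lscr(\partial_y,E)$. If $\Lscr(\partial_y,E)=L(D',E')$, then $E'=hE$; writing $\partial_y=aD'+bE'$ and $\partial_z=cD'+dE'$ and taking determinants gives $1=(ad-bc)(uD'(y)+vD'(z))h\in(u,v)$, a contradiction.
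For (c), ``non-principal divisors break the gcd step'' only says that your method fails, not that some element is non-integral. The paper takes $X=C\times\Aff^2$ with $C$ a non-rational smooth affine curve, so $R=\bk[C]$ is not a UFD. A relation $pa=qb$ yields irreducible derivations $a\partial_x+q\partial_y$ and $b\partial_x+p\partial_y$ of $R[x,y]$ with the same kernel $A$ that are not unit multiples of each other, so $\lnd_A(B)$ is not cyclic. Your $(xy=z^2-1)\times\Aff^1$ has only one free variable, so this construction does not apply to it as is.

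\textbf{Passing to maximal elements.} You are also missing the mechanism that produces \emph{maximal} non-integral elements. An element of $\DJL(B)$ contained in an integral one is integral (Lemma~\ref{33206224-E440-46C6-BE36-BAB82E58772E}), so every maximal element above a non-integral one is non-integral. Checking maximality of an unspecified element via Proposition~\ref{E024C289-4F11-48E4-831C-C94E59C34ACE} does not replace this. In particular, in your step ``add polynomial variables'', the lift of an element of $\DJL(\bk^{[4]})$ to $\bk^{[4]}[x_5,\dots,x_n]$ is never maximal: one can put $\partial_{x_5}$ in front of its admissible sequence. Moreover, non-integrality of the lift must itself be proved; the paper does this via the retraction $x_i\mapsto0$ in Lemma~\ref{A7B19E49-A7C3-46A5-B235-23A05DBE5BC7}.
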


\bigskip

\noindent{\bf\boldmath Borel subgroups of $\Aut(X)$.}
Our results on Borel subgroups rely on the following special case of \cite[Corollary B and Theorem D]{solvableAutGroups},
where $\Aut_{\text{\rm u}}(X)$ denotes the set of unipotent automorphisms of $X$:

\begin{theorem*}
Let $\Beul$ be a Borel subgroup of $\Aut(X)$ and let $\urad{\Beul} = \Beul \cap \Aut_{\text{\rm u}}(X)$.
Then $\urad{\Beul}$ is a nested unipotent subgroup of $\Aut(X)$ and there exists an algebraic torus $T$ in $\Aut(X)$ such that $\Beul = T \ltimes \urad{\Beul}$. 
\end{theorem*}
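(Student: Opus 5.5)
The plan is to deduce the statement from the general structure theory of connected solvable subgroups of an ind-group, as developed in \cite{solvableAutGroups}, specialized to $\Aut(X)$. I will use four inputs. First, a Borel subgroup $\Beul$ is closed, because the closure of a connected solvable subgroup is again connected and solvable, and maximality then forces equality. Second, in an affine ind-group a closed connected solvable subgroup is nested; this is the key structural fact. Once it is available, the rest is essentially the Lie–Kolchin/Jordan decomposition theory of solvable linear algebraic groups carried along an increasing chain. Third, $\Aut(X)$ acts on $\bk[X]$ by locally finite automorphisms on each algebraic subgroup, so Jordan decomposition is available elementwise. Fourth, I will use the standard facts about solvable connected algebraic groups $G$, namely $G = T_G \ltimes G_u$ with $T_G$ a maximal torus and $G_u$ the unipotent radical, which is the set of unipotent elements.

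\emph{Step 1 (nestedness).} I first show that $\Beul$ is nested, $\Beul = \bigcup_k G_k$ with each $G_k$ a connected solvable algebraic subgroup. I would try the route of \cite{solvableAutGroups}. The Lie algebra $\Lie(\Beul)$ is a solvable Lie algebra of locally finite derivations. One then shows that the subgroups generated by finitely many connected algebraic subgroups of $\Beul$ are again algebraic, using solvability to bound dimensions and the fact that the derived series terminates. Then $\Beul$, being connected, is generated by the algebraic subgroups it contains and is exhausted by an increasing chain. This is the main obstacle. Connectedness in the ind-topology does not a priori imply generation by algebraic subgroups, and I would cite \cite[Corollary B]{solvableAutGroups} here rather than reprove it.

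\emph{Step 2 (unipotent part).} Each $G_k$ decomposes as $G_k = T_k \ltimes (G_k)_u$, and $(G_k)_u = G_k \cap \Aut_{\text{\rm u}}(X)$. This equality holds because an element of $G_k$ is unipotent in the sense of the paper exactly when it is unipotent in the algebraic group $G_k$: a unipotent element of a connected algebraic group lies in a $\G_a$, namely $\exp$ of its logarithm. Hence $\urad{\Beul} = \bigcup_k (G_k)_u$ is an increasing union of unipotent algebraic groups. It is therefore a nested unipotent subgroup, and it is normal in $\Beul$.

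\emph{Step 3 (torus).} Next I need compatible maximal tori. Maximal tori of $G_{k+1}$ are conjugate under $(G_{k+1})_u$, and the image of a maximal torus of $G_k$ in $G_{k+1}/(G_{k+1})_u$ is a torus. Moreover, the dimensions $\dim T_k$ are bounded by $\dim X$, since a faithful torus action on an irreducible variety $X$ has dimension at most $\dim X$. So these dimensions stabilize for $k \ge k_0$. After conjugating inside each $G_{k+1}$, I can choose $T_{k_0} = T_{k_0+1} = \cdots =: T$. For $k\ge k_0$ this gives $G_k = T\ltimes (G_k)_u$. Taking the union yields $\Beul = T \ltimes \urad{\Beul}$, where $T \cap \urad{\Beul} = 1$ holds because a torus contains no nontrivial unipotent elements. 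The one delicate point is the stabilization: I must check that a maximal torus of $G_k$ is maximal in $G_{k+1}$ once the dimensions agree. This holds because a torus of $G_{k+1}$ containing $T_k$ with the same dimension equals $T_k$.
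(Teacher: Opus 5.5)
\textbf{There is a genuine gap, and it is in Step 1, which carries the whole content of the statement.} Your closure argument is fine: it is exactly the paper's observation in \ref{C14524DE-554D-4B0D-A527-D763AFE4A977}. Steps 2--3 are also fine, provided you already have $\Beul=\bigcup_k G_k$ with connected algebraic $G_k$. But Step 1 is not established, and the sketch you give would not work as written:
\begin{itemize}
\item Solvability bounds derived length, not dimensions. The triangular subgroup of $\Aut(\Aff^2)$ is solvable yet contains unipotent algebraic subgroups of every dimension.
\item Connectedness in the ind-topology does not by itself produce algebraic subgroups of $\Beul$, let alone an exhausting chain of them.
\item The claim that $\Lie(\Beul)$ consists of locally finite derivations already presupposes essentially what you are trying to prove.
\end{itemize}

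The citation does not fill the hole. As the paper describes \cite{solvableAutGroups}, Corollary B supplies the decomposition $H=T\ltimes\urad H$ itself, and Theorem D is needed \emph{separately} to get that $\urad H$ is nested. That extra step would be pointless if Corollary B made $H$ nested, since your own Step 2 shows that a nested $H$ has nested $\urad H$. Moreover, in this paper the nestedness of a Borel subgroup ($\Beul\in\NC(X)$ in \eqref{614A02E0-C9C8-49D5-9285-451382A1986A}) is deduced \emph{from} $\Beul=T\ltimes\urad{\Beul}$ via Lemma \ref{29B1EE01-E26F-43CA-ADB2-5A0E15668FCE}(c). So Step 1, as you justify it, sits downstream of the statement being proved. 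Steps 2--3 then only re-extract what the cited result already gives.

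There is a second missing point even if you grant nestedness. Steps 2--3 need the $G_k$ to be connected, since a disconnected solvable algebraic group need not be a torus times its unipotent radical. For a nested subgroup, connectedness in $\Aut(X)$ does not give this: see \ref{7BACADD4-796D-4491-8D0A-93D9F0132FFC}(c) and the roots-of-unity example after it. For the closed group $\Beul$, you would additionally need the nested ind-structure to agree with the induced one, $\nes{\Beul}=\clo{\Beul}$, which is a further nontrivial input.

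The direct route is the paper's: $\Beul$ is closed by maximality, Corollary B then provides an algebraic torus $T$ with $\Beul=T\ltimes\urad{\Beul}$, and Theorem D gives that $\urad{\Beul}$ is nested. No chain-of-tori argument is needed.
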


Given a Borel subgroup $\Beul$ of $\Aut(X)$, it is not a priori clear whether $\urad{\Beul}$ is maximal among nested unipotent subgroups.
Conversely, while every maximal nested unipotent subgroup of $\Aut(X)$ is connected and solvable,
and hence is contained in a Borel subgroup \cite[Proposition 6.24]{solvableAutGroups},
it is not clear whether such a Borel subgroup is unique.
The following simplified version of Theorem \ref{0CA62AB7-E680-4831-972F-E730B069503F} addresses these two issues explicitly.

\begin{theoremB}\label{uniquenessBorel}
The map $\Beul \mapsto \urad{\Beul}$ defines a bijection between the Borel subgroups of $\Aut(X)$ and the maximal nested unipotent subgroups of $\Aut(X)$.
\end{theoremB}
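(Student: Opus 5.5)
We need to show three things: for every Borel subgroup $\Beul$, the group $\urad{\Beul}$ is a maximal nested unipotent subgroup; every maximal nested unipotent subgroup $U$ arises this way; and the Borel subgroup $\Beul$ with $\urad{\Beul}=U$ is unique. The argument is built on two facts. By the Theorem quoted above, $\Beul = T\ltimes\urad{\Beul}$ with $T$ an algebraic torus. By \cite[Proposition 6.24]{solvableAutGroups}, every maximal nested unipotent subgroup, being connected and solvable, lies in some Borel subgroup.

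\emph{Surjectivity and uniqueness.} Let $U$ be a maximal nested unipotent subgroup and let $\Beul\supseteq U$ be a Borel subgroup. Then $U\subseteq \Beul\cap\Aut_{\rm u}(X)=\urad{\Beul}$, and $\urad{\Beul}$ is nested unipotent, so maximality gives $U=\urad{\Beul}$. For uniqueness, suppose $\Beul$ and $\Beul'$ are Borel subgroups with $\urad{\Beul}=\urad{\Beul'}=U$. Both lie in the normalizer $N(U)$ of $U$ in $\Aut(X)$. The plan is to show that $N(U)$ is itself connected and solvable; then $\Beul=N(U)=\Beul'$ by maximality. To prove this, pass to Lie algebras via the bijection \eqref{7631DD5C4D884DC7}: $\Lie(U)=\Lscr(D_1,\dots,D_m)$ is a maximal element of $\DJL(B)$, and $N(U)$ normalizes $\Lie(U)$. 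It therefore stabilizes the kernel sequence $(A_1,\dots,A_m)$ of $\Lie(U)$, which is intrinsic to $\Lscr$ by Corollary \ref{ujd3876g20oigjh2vy68}. I would then show that the semisimple part of $N(U)$ is a torus. The key point is that any algebraic subgroup of $N(U)$ acts on the flag of $\G_a$-actions and, by maximality of $\Lscr$, cannot contain unipotent elements outside $U$. Hence $N(U)/U$ contains no unipotent elements, and I would argue that it is a commutative group of semisimple elements, that is, a diagonalizable group. A connectedness argument, replacing $N(U)$ by $N(U)^\circ$, which already contains $\Beul$ and $\Beul'$, then makes $N(U)^\circ=T\ltimes U$ connected and solvable.

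\emph{Injectivity and maximality of $\urad{\Beul}$.} Injectivity follows from the uniqueness above once we know that $\urad{\Beul}$ is maximal. So let $\Beul=T\ltimes U$ with $U=\urad{\Beul}$, and suppose $U\subseteq U'$ with $U'$ a maximal nested unipotent subgroup. By surjectivity, $U'=\urad{\Beul'}$ for some Borel subgroup $\Beul'$. I want to show $T$ normalizes $U'$; then $T\ltimes U'$ is connected and solvable and contains $\Beul$, so maximality of $\Beul$ forces $U'=U$. On the Lie algebra side, $\Lie(U)\subseteq\Lie(U')$ in $\DJL(B)$, and $T$ acts semisimply on $\Lie(U)$. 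Using the structure of successors in $\DJL(B)$ (Proposition \ref{E024C289-4F11-48E4-831C-C94E59C34ACE}) and the kernel-sequence description, I would build a chain of $T$-stable de Jonqui\`eres-like algebras from $\Lie(U)$ up to a maximal one. The step is to choose the new locally nilpotent derivations at each stage as $T$-homogeneous (semi-invariant) elements: average over the torus, or take a homogeneous component with respect to the $T$-grading of $B$. This produces a maximal $T$-stable $\Lscr'\supseteq\Lie(U)$, and $T\ltimes\EXP(\Lscr')$ is connected and solvable.

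I expect this last step to be the main obstacle: showing that a non-maximal $T$-stable element of $\DJL(B)$ has a strictly larger $T$-stable successor. Proposition \ref{E024C289-4F11-48E4-831C-C94E59C34ACE} describes the immediate successors through kernel data. The first thing I would try is to show that the set of such successors is $T$-invariant. I would then pick a homogeneous generator, using the fact that the $T$-grading of $B$ restricts to the rings $A_i$ and that homogeneous components of locally nilpotent derivations commuting with the $D_i$ are again locally nilpotent.
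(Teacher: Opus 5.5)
Your surjectivity argument is correct, but the uniqueness step has a genuine gap. You want $N(U)^\circ$ to be connected and solvable because ``$N(U)/U$ has no unipotent elements, hence is diagonalizable''. Nothing of the sort is available here. $N(U)$ is merely a closed subgroup of the ind-group $\Aut(X)$, $N(U)/U$ carries no algebraic-group structure, and elements of $N(U)$ need not lie in any algebraic subgroup.

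Take $X=\bbG_{\text{\rm m}}^{2}$. Then $\lnd(B)=\{0\}$, so $U=\{\Id_X\}$ is the maximal nested unipotent subgroup. Meanwhile $N(U)=\Aut(X)\cong\bbG_{\text{\rm m}}^{2}\rtimes\mathrm{GL}_2(\Integ)$ has no nontrivial unipotent elements, yet it is neither commutative nor made of semisimple elements (consider $(x,y)\mapsto(xy,y)$).

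Passing to $N(U)^\circ$ does not fix this. Proving $N(U)^\circ$ solvable is at least as strong as the uniqueness you want. Moreover, nothing tells you a priori that $N(U)^\circ$ is a union of algebraic subgroups (connected ind-groups such as $\Aut(\Aff^2)$ are not), so there is no reduction to algebraic group theory. That unipotent elements of $N(U)$ lie in $U$ is essentially Lemma \ref{CAA13785-2B53-4A53-80E3-3857241586CB}(b). But it says nothing about how two tori $T,T'$ normalizing $U$ interact, and that is the real issue: you need $\langle T,T',U\rangle$ to be solvable, with no common algebraic group containing $T$ and $T'$.

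The paper resolves this on the Lie algebra side. For $\Lscr$ maximal in $\LieLN_\bk(B)$, Proposition \ref{8E965CF0-961C-4AC9-859F-398B4CEA6E5E} gives $[L(\ggoth),L(\hgoth)]\subseteq\Lscr$ for all $\ggoth,\hgoth\in\Grad(\Lscr)$. The key input is Lemma \ref{C6AEC471-BCDE-4A94-B59D-72C4AAF45868}: the top component of a locally finite derivation of nonzero degree is locally nilpotent, hence lies in $\Norm(\Lscr)\cap\lnd(B)=\Lscr$. So $\Lscr+\sum_{\ggoth}L(\ggoth)$ is solvable and weakly locally finite, and only one maximal element of $\LieWLF_\bk(B)_{\text{\rm sol}}$ lies over $\Lscr$ (Proposition \ref{CD178309-4EC6-4386-9C1E-34F625BEBC29}). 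The Arzhantsev--Zaidenberg bijection $\Beul\mapsto\Lie(\Beul)$ (Theorem \ref{29BFE460-6348-4F3D-9CB7-B7FA0F1316A4}) then transfers this to Borel subgroups. Your argument needs some substitute for this integration step.

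Your strategy for the maximality of $\urad{\Beul}$ is sound. It is a group-level version of Lemma \ref{36F36D71-D42B-4E92-8612-63FD8842DBA9}(c), and it even avoids Arzhantsev--Zaidenberg. Write $T=T_\ggoth$, so that $\Lie(U)$ is $\ggoth$-homogeneous. It suffices to produce, whenever $\Lie(U)$ is not maximal in $\LieLN_\bk(B)$, a strictly larger $\ggoth$-homogeneous $\Lscr'\in\LieLN_\bk(B)$, since then $T\ltimes\EXP(\Lscr')$ is connected, solvable and strictly contains $\Beul$.

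The mechanism you propose for this key step is false. Homogeneous components of a locally nilpotent derivation need not be locally nilpotent, already when $\Lscr=\{0\}$ (where the commutation condition is vacuous), and averaging over $T$ only produces such components. On $\bk[x,y]$ with the standard grading, $(2xy-2y^3)\partial_x+(x-y^2)\partial_y$ is locally nilpotent: it is the conjugate of $x\partial_y$ by $x\mapsto x+y^2$. But its degree-one component $2xy\partial_x-y^2\partial_y$ sends $y^k$ to $-ky^{k+1}$, so it is not. Likewise, knowing that the set of immediate successors is $T$-invariant does not give you a $T$-stable one.

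The working device is Proposition \ref{C70CF166-08AB-4FA3-971E-432644916C40}(a). Order $\Integ^d$ and use only highest components: by Lemma \ref{E7261F32-B2ED-4D50-8F3E-E694DB49CF21}, the highest components of a locally nilpotent set form a locally nilpotent set. Choose $D\in\Lscr'\setminus\Lscr$ with the fewest homogeneous components. Then $\hat D$ and the homogeneous elements of $\Lscr$ generate a $\ggoth$-homogeneous element of $\LieLN_\bk(B)$ containing $\Lscr$. Either it is strictly larger than $\Lscr$, or $\hat D\in\Lscr$, in which case $D-\hat D$ contradicts minimality. With that repair the maximality half goes through, but the uniqueness half remains unproved.
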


As a charming application of Theorem B and of results from Section \ref{sectionDJL} on the structure of \mbox{$(\DJL(B),\subseteq)$},
we mention the following fact, obtained in paragraph \ref{6A24CF4A9C6743B6}:

\begin{corollaryC}\label{IntroNagata}
    There exists a unique Borel subgroup (resp.\ a unique maximal nested unipotent subgroup) of $\Aut(\Aff^3)$ that contains Nagata's automorphism. 
\end{corollaryC}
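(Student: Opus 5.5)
We would prove Corollary C by combining Theorem B with the bijection \eqref{7631DD5C4D884DC7}. This reduces the statement to a question about the poset $(\DJL(B),\subseteq)$ for $B = \bk[x,y,z] = \kk{3}$. Recall that Nagata's automorphism is $N = \exp(\Delta)$, where $\Delta = rD$, $D = 2y\,\partial_x + z\,\partial_y$ (up to the standard normalization) is triangular, $r = xz + y^2 \in \ker D$, and $\ker D = \bk[z,r]$.

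\emph{Reduction.} A maximal nested unipotent subgroup $U$ contains $N$ if and only if $\Delta \in \Lie(U)$. One direction holds because $N$ lies in the $\G_a$-subgroup $\exp(\bk\Delta)$ and the nested structure gives $\Lie$ of the $\G_a$ inside $\Lie(U)$. For the converse, $\EXP(\Lie(U)) = U$. So it suffices to show that there is a unique maximal element of $\DJL(B)$ containing $\Delta$. By Theorem B, the Borel subgroups containing $N$ then correspond bijectively to these: if $N \in \Beul$ then $N$ is unipotent, so $N \in \urad{\Beul}$, and conversely. This gives both statements of the corollary.

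\emph{Existence.} $\bk\Delta \subseteq \Lscr(\Delta)$, and $(\Delta)$ is admissible. Every element of $\DJL(B)$ is contained in a maximal one, since chains are controlled by kernel sequences and dimension bounds the length (we use that earlier result of Section \ref{sectionDJL}).

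\emph{Uniqueness.} This is the main step. Let $\Lscr = \Lscr(D_1,\dots,D_m)$ be maximal and contain $\Delta$. By Theorem A(a), since $\kk{3}$ is factorial of dimension $3$, we may assume $\Lscr = L(D_1,\dots,D_m)$ with kernel sequence $(A_1,\dots,A_m)$. A maximal element should have $m=3$, with $A_3 = \ker D_3$ of transcendence degree $2$, $A_2$ of transcendence degree $1$, and $A_1 = \bk$. Here we would use the maximality criterion of Proposition \ref{E024C289-4F11-48E4-831C-C94E59C34ACE}.

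Write $\Delta = \sum a_i D_i$ with $a_i \in A_i$. Then $a_1 D_1 + a_2 D_2$ has a kernel containing $A_3$, so on $A_3$ the derivation $\Delta$ acts through a derivation with kernel $\supseteq A_2$. The key claim is that $\ker D_3 = \ker\Delta = \bk[z,r]$. We would argue as follows: $\Delta$ is irreducible-ish and has rank $3$ (Nagata is not tame/stably nontrivial), so $\Delta$ cannot have a nonzero $D_1$- or $D_2$-component without forcing $\Delta$ to have rank $\le 2$. More concretely, if $a_1 \ne 0$ or $a_2 \ne 0$, then $\Delta$ restricted to $A_3$ is a nonzero LND of a two-dimensional factorial ring $A_3 \cong \kk{2}$. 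In that case $\Delta$ would kill no variable of $A_3$... We expect this to give a contradiction with the known fact that $\ker\Delta$ contains no coordinate of $\kk3$ except $z$-type elements, via the plinth/rank-$3$ property of Nagata. Hence $\Delta \in A_3D_3$ and $\ker D_3 = \ker\Delta = \bk[z,r]$.

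Next, $A_2$ must be a one-dimensional subring of $\bk[z,r]$ that is the kernel of an LND of $\bk[z,r]$ extending to $B$ commuting appropriately. Since $z$ divides $\Delta(x)$-type plinth data, we would show that $A_2 = \bk[z]$ is forced, for instance because $D_2$ restricted to $\bk[z,r]$ must have kernel containing the plinth ideal generator $z$. By Corollary \ref{ujd3876g20oigjh2vy68}, the kernel sequence $(\bk,\bk[z],\bk[z,r])$ then determines $\Lscr$ uniquely.

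We expect this uniqueness analysis to be the main obstacle, namely ruling out $a_1,a_2\neq 0$ and pinning down $A_2$. We would first try to do it through the plinth ideal $\plinth(\Delta)$ and the rigidity of $\ker\Delta$.
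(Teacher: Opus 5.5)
\textbf{There is a genuine gap in the uniqueness step.} The object your argument converges to does not exist.

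You expect the maximal element containing the Nagata derivation to have $m=3$ and kernel sequence $(\bk,\bk[z],\bk[z,r])$. But if $(A_1,A_2,A_3)\in\KS(B)$ with $A_1=\bk$, the localization in the definition of $\KS(B)$ is trivial. So $B=A_3^{[1]}$ is forced. For $A_3=\bk[z,r]$ this is false. Take $D=z\partial_y-2y\partial_x$ (fix the sign so that $D(r)=0$ for $r=xz+y^2$). It is irreducible and generates $\lnd_{\bk[z,r]}(B)$, and it has no slice because $D(B)\subseteq (y,z)B$. Hence $(\bk,\bk[z],\bk[z,r])\notin\KS(B)$, and by Proposition~\ref{6CFC46F0-0103-4BE1-8ACB-BDA7B53CB093} no element of $\DJL(B)$ has this kernel sequence.

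The correct answer is $\Lscr(\bk[z],\bk[z,r])$, of derived length $2$. It is already maximal, because $\Omega(\bk[z],\bk[z,r])=\emptyset$ by the same no-slice argument. So Proposition~\ref{E024C289-4F11-48E4-831C-C94E59C34ACE}(e) contradicts your $m=3$ expectation rather than supporting it.

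Your justification of $\ker D_3=\ker\Delta$ also rests on a false premise. The Nagata derivation has rank $2$, not $3$, since $z$ is a variable in its kernel. Rank $2$ is exactly the property that makes the argument work; the non-tameness of $N$ plays no role. The remaining sentences (``would kill no variable\dots'', ``plinth ideal generator $z$'') do not constitute an argument.

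\textbf{The missing idea} is that the \emph{last} derivation of an admissible sequence centralizes the whole algebra. If $\Lscr=\Lscr(D_1,\dots,D_m)$, then $D_m$ commutes with $L(D_1,\dots,D_m)$, since all coefficients lie in $A_i\subseteq\ker D_m$. By Lemma~\ref{F45F968B-ECF9-45AC-9626-A51DA798D02B}, $D_m$ then commutes with all of $\Lscr$. Applied to $rD\in\Lscr$, the same lemma gives $[D_m,D]=0$ and $r\in\ker D_m$.
\begin{itemize}
\item Suppose $\ker D_m\neq\ker D$. Then $\ker D_m\cap\ker D=\bk[v]$ for a variable $v$ of $B$ (Freudenburg's Proposition 5.40).
\item Because $\rank D=2$, any variable of $B$ lying in $\ker D$ generates $\bk[z]$ (Daigle's rigidity theorem).
\item So $r\in\bk[v]=\bk[z]$, a contradiction.
\end{itemize}
Hence every $\Lscr\in\DJL(B)$ containing $rD$ satisfies $\kerseq(\Lscr)\succeq(\bk[z,r])$, and conversely every such $\Lscr$ contains $\lnd_{\bk[z,r]}(B)\ni rD$.

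It remains to show that $(\bk[z],\bk[z,r])$ is the unique element of $\KS(B)$ strictly above $(\bk[z,r])$:
\begin{itemize}
\item Length $3$ is excluded as above.
\item For length $2$, the bottom term is $\ker E_1\cap\ker E_2=\bk[v]$ with $v$ a variable lying in $\bk[z,r]$, hence it equals $\bk[z]$.
\item Membership in $\KS(B)$ holds because $y/z$ becomes a slice of $D$ after inverting $\bk[z]\setminus\{0\}$.
\end{itemize}
This is the content of Lemma~\ref{DJLNagata1}.

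Your reduction from groups to $\DJL(B)$ via Theorem B is fine; it is cleanest via $\Lie(U)=\LOG(U)$. The appeal to Theorem A(a) and the decomposition $\Delta=\sum a_iD_i$ are unnecessary.
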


The significance of Theorem B is better understood when we view it as a corollary of the following result,
which is essentially Theorem \ref{0CA62AB7-E680-4831-972F-E730B069503F}.

\begin{theoremD}
Let $B = \bk[X]$ denote the coordinate algebra of $X$ and $\lnd(B)$ the set of locally nilpotent derivations of $B$.
Consider the following diagram of sets and maps:
\begin{equation} \label {234219016D354992} 
\hspace{-8mm}\raisebox{5mm}{\scalebox{.95}{\xymatrix@R=7mm@C=24mm{
{\fbox{\begin{minipage}{25mm} \centering\footnotesize Borel subgroups\\ of $\Aut(X)$ \end{minipage}} }
\ar[d]_-{\Lie} \ar[r]^-{ \Beul \mapsto \urad{\Beul} := \Beul \cap \Aut_{\text{\rm u}}(X) }  \ar[r]_-{ \rho_X } 
&  {\fbox{\begin{minipage}{36mm} \centering\footnotesize maximal nested unipotent\\ subgroups of $\Aut(X)$ \end{minipage}} }  \ar @<-.4ex> [d]_-{\Lie}   \\
{\fbox{\begin{minipage}{43mm} \centering\footnotesize maximal solvable Lie algebras\\ of locally finite derivations of $B$ \end{minipage}} }
\ar[r]^-{\Lscr \mapsto \Lscr \cap \lnd(B)} \ar[r]_-{\rho_B}
 &  {\fbox{\begin{minipage}{47mm} \centering\footnotesize maximal elements of $\DJL(B)$ \end{minipage}} }
\ar @<-.4ex>[u]_-{ \EXP }
}}}
\end{equation}
\begin{enumerate}[\rm(a)]

\item If $f$ denotes any one of the five arrows in diagram \eqref{234219016D354992} then
\begin{enumerate}[\rm(i)]
\item $f$ is a well-defined bijection;
\item two elements of $\dom(f)$ are conjugate if and only if their images by $f$ are conjugate.
\end{enumerate}

\item Diagram \eqref{234219016D354992} is commutative.

\end{enumerate}
\end{theoremD}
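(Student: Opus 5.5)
The plan is to organize the proof around the right vertical arrow, which is already a bijection by Perepechko's correspondence \eqref{7631DD5C4D884DC7}. I would first prove that the left vertical arrow is a bijection. Then I would show that $\rho_B$ is a bijection, and finally obtain $\rho_X$ from commutativity. The known structural input on the group side is the quoted special case of \cite[Corollary B and Theorem D]{solvableAutGroups}: every Borel subgroup has the form $\Beul = T\ltimes\urad{\Beul}$ with $T$ a torus and $\urad{\Beul}$ nested unipotent. On the Lie side I would use the analogous decomposition. A maximal solvable Lie algebra $\Lscr$ of locally finite derivations is the semidirect sum of an abelian algebra $\mathfrak t$ of semisimple derivations (the Lie algebra of a torus) and the ideal $\Lscr\cap\lnd(B)$.

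For the left arrow, $\Beul$ is nested, so $\Lie(\Beul)=\bigcup_i\Lie(G_i)$ consists of locally finite derivations and is solvable. Conversely, a solvable Lie algebra of locally finite derivations integrates to a connected nested solvable subgroup, generated by the torus $\exp\mathfrak t$ and $\EXP(\Lscr\cap\lnd(B))$. Maximality should transfer in both directions, because $\Lie$ and this integration are inclusion-preserving and mutually inverse on connected nested subgroups. Equivariance under conjugation by $\Aut(X)$ is formal, so (ii) holds for this arrow.

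For $\rho_B$, I would first check that $\Lscr\cap\lnd(B)$ is a Lie algebra of locally nilpotent derivations which is normalized by $\mathfrak t$. It then lies in some element of $\DJL(B)$, since by Perepechko's results every such algebra is contained in a maximal DJL algebra $\Lscr'$. The key claim is that $\Lscr\cap\lnd(B)$ is itself maximal in $\DJL(B)$. Suppose $\Lscr\cap\lnd(B)\subsetneq\Lscr'$. I would find a torus $\mathfrak t'$ normalizing $\Lscr'$ such that $\mathfrak t'\ltimes\Lscr'$ is solvable and contains $\Lscr$, contradicting the maximality of $\Lscr$.

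I expect this to be the main obstacle, and the tool I would try first is the kernel-sequence description of Corollary \ref{ujd3876g20oigjh2vy68}. Since $\mathfrak t$ normalizes $\Lscr\cap\lnd(B)$, it preserves its kernel sequence. The step I need is that $\mathfrak t$ then preserves the kernel sequence of the canonical maximal extension $\Lscr'$, for instance by the maximality criterion of Proposition \ref{E024C289-4F11-48E4-831C-C94E59C34ACE}. If uniqueness of the extension fails, I would instead average over the torus: the set of maximal DJL extensions containing the given one is acted on by $T$, and a torus acting on such a set should have a fixed point. Once $\mathfrak t$ preserves the kernel sequence, it normalizes $\Lscr'$.

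For surjectivity of $\rho_B$, take a maximal $\Lscr'\in\DJL(B)$ and let $\Lscr$ be a maximal solvable algebra of locally finite derivations containing it. Then $\Lscr\cap\lnd(B)$ is DJL and contains $\Lscr'$, so it equals $\Lscr'$ by maximality. For injectivity, suppose $\Lscr_1$ and $\Lscr_2$ have the same image $N$. Both normalize $N$, and the normalizer of $N$ among locally finite derivations is solvable, being a torus part plus $N$. Hence $\Lscr_1+\Lscr_2$ is solvable, and maximality forces $\Lscr_1=\Lscr_2$.

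Commutativity of the square follows from $\Lie(\Beul\cap\Aut_{\text{\rm u}}(X))=\Lie(\Beul)\cap\lnd(B)$, which is immediate from $\Beul=T\ltimes\urad{\Beul}$. With commutativity, $\rho_X$ is the composite $\EXP\circ\rho_B\circ\Lie$ of bijections, so it is a bijection and well defined. Statement (ii) holds for every arrow because each map is $\Aut(X)$-equivariant and bijective.
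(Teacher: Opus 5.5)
\paragraph{The main gap: maximality of $N$.}
Your overall architecture agrees with the paper's, and several pieces are fine: your Zorn-type surjectivity argument for $\rho_B$, commutativity, and (ii) via equivariance plus bijectivity. The genuine gap is the step you flag yourself: showing that $N=\Lscr\cap\lnd(B)$ is maximal in $\LieLN_\bk(B)$ (equivalently in $\DJL(B)$) whenever $\Lscr$ is maximal in $\LieWLF_\bk(B)_{\text{\rm sol}}$. Neither of your mechanisms works.
\begin{enumerate}
\item There is no canonical maximal extension. In Example \ref{BF979604-4D5B-4A13-A6AA-E9EDC00540C7}, $\Lscr(D,E)$ has infinitely many immediate successors, all of them maximal.
\item There is no general principle giving a torus-fixed point on the set of maximal extensions of $N$; that set carries no structure supporting a fixed-point theorem. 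A $T$-stable strict extension does exist, but that existence is exactly what has to be proved.
\end{enumerate}
The missing idea is Proposition \ref{C70CF166-08AB-4FA3-971E-432644916C40}(a): a maximal element of $\LieLN_\bk(B,\ggoth)$ is maximal in $\LieLN_\bk(B)$. Its proof is a highest-homogeneous-component argument. Totally order $\Integ^d$ and pick $D$ in the larger algebra, not in $N$, with the fewest homogeneous components. By Lemma \ref{E7261F32-B2ED-4D50-8F3E-E694DB49CF21}, $\hat D$ together with the homogeneous elements of $N$ is a locally nilpotent set. It therefore generates a homogeneous element of $\LieLN_\bk(B)$, which forces $\hat D\in N$. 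Then $D-\hat D$ contradicts the minimality of $D$. You also need that, for a $\ggoth$-homogeneous $\Lscr''\supseteq N$ in $\LieLN_\bk(B)$, the sum $L(\ggoth)+\Lscr''$ is weakly locally finite and not just solvable (Lemma \ref{CA556DEE-FE7D-4364-A217-A983EBE59700}), and meets $\lnd(B)$ exactly in $\Lscr''$. Together these give Lemma \ref{36F36D71-D42B-4E92-8612-63FD8842DBA9}(c), which is what you need.

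\paragraph{Two steps asserted rather than proved.}
First, the decomposition $\Lscr=\mathfrak t\ltimes(\Lscr\cap\lnd(B))$, with $\mathfrak t$ the Lie algebra of an algebraic torus, is condition $(*)$ of Proposition \ref{CD178309-4EC6-4386-9C1E-34F625BEBC29}. It is not a formal Lie-side analogue of the group decomposition. The paper obtains it (Corollary \ref{CC2270E1-029B-4F26-89F8-81B03963C6CD}) only from the group side, via Theorems \ref{29BFE460-6348-4F3D-9CB7-B7FA0F1316A4} and \ref{A38C833F-DB8D-45CD-9664-1D5AE5BE00CF}. Your argument for the left arrow presupposes $(*)$. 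As stated, it also relies on a false integration claim. Take $\delta=x\partial_x+\lambda y\partial_y$ on $\bk[x,y]$ with $\lambda\notin\Rat$. Then $\bk\delta$ is a solvable Lie algebra of locally finite derivations, but it is not $\Lie(G)$ for any $G\in\NC(X)$. So ``maximality transfers in both directions'' is real work. You should cite Theorem \ref{29BFE460-6348-4F3D-9CB7-B7FA0F1316A4} for the left arrow, as the paper does, and derive $(*)$ from it.

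Second, in your injectivity argument the set of locally finite derivations normalizing $N$ is not a priori a Lie algebra. Moreover, ``torus part plus $N$, hence solvable'' hides the key fact that $[L(\ggoth),L(\hgoth)]\subseteq N$ for all $\ggoth,\hgoth\in{\Grad}(N)$ (Proposition \ref{8E965CF0-961C-4AC9-859F-398B4CEA6E5E}). The paper proves this from two facts:
\begin{enumerate}
\item the top component of a locally finite derivation of nonzero degree is locally nilpotent (Lemma \ref{C6AEC471-BCDE-4A94-B59D-72C4AAF45868});
\item $\Norm(N)\cap\lnd(B)=N$.
\end{enumerate}
Weak local finiteness of the resulting sum again requires Lemma \ref{CA556DEE-FE7D-4364-A217-A983EBE59700}.
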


Let us briefly indicate how $\rho_X$ is shown to be bijective in Theorem D.
By \eqref{7631DD5C4D884DC7}, we already know that the two vertical arrows on the right side of the diagram are mutually inverse bijections.
By Lemma \ref{21AFB0A0-0700-4E7F-8AE2-D3557F9656CC} and paragraph \ref{4B7B6F7E-43D3-4719-91CA-E6301252556A},
the set in the lower-left corner of the diagram equals the set of ``locally integrable Borel subalgebras of $\Lie(\Aut X)$'' in the terminology of \cite{Arz_Zaid_2026};
so Theorem 3.4 of that paper implies that the vertical arrow ``$\Lie$'' on the left side of the diagram is well-defined and bijective.
If we now take for granted the (non-trivial) facts that $\rho_X$ and $\rho_B$ are well-defined and that the diagram commutes,
we see that {\it $\rho_X$ is bijective if and only if $\rho_B$ is bijective.}
The main part of the proof then consists of showing that $\rho_B$ is bijective.
This bijectivity is asserted in Proposition \ref{CD178309-4EC6-4386-9C1E-34F625BEBC29},
whose proof relies on results of Sections \ref{sectionLocallyNilpotentandLocallyFinite} and \ref{Liealggradsec}.
We note that the results of \cite{solvableAutGroups} are used twice in the proof of Theorem D: they are used in the proof of \cite[Theorem 3.4]{Arz_Zaid_2026},
and the above-mentioned special case of \cite[Corollary B and Theorem D]{solvableAutGroups} is needed to verify the hypothesis of Proposition \ref{CD178309-4EC6-4386-9C1E-34F625BEBC29}.

Theorem D shows that classifying the Borel subgroups of $\Aut(X)$ up to conjugacy is equivalent to classifying
the maximal elements of $\DJL(B)$ up to conjugacy. In a sequel to this paper, we use this equivalence together with the theory of locally nilpotent derivations to obtain explicit results about Borel subgroups of $\Aut(\Aff^3)$ and their conjugacy classes.

\section{Preliminaries}\label{SectionPreliminaries}

\begin{noname}   \label {9823hf276v38ghfvcd9387tg}
We use  ``$\subset$'' for strict inclusion and  ``$\subseteq$'' for general inclusion.
We follow the convention that $0 \in \Nat$.
All rings are assumed to be associative, commutative and unital.
If $A$ is a ring then $A^*$ denotes the set of units of $A$.
If $B$ is an algebra over a ring $A$, the notation $B = A^{[n]}$ (where $n \in \Nat$) means that $B$ is isomorphic as an $A$-algebra to a polynomial
ring in $n$ variables over $A$. If $L/K$ is a field extension then $L = K^{(n)}$ means that $L$ is a purely transcendental extension of $K$
of transcendence degree $n$.

The word ``domain'' means ``integral domain''.
We write $\Frac A$ for the field of fractions of a domain $A$.
If $\bk$ is a field then a {\it $\bk$-domain\/} is a $\bk$-algebra that is also a domain,
and an {\it affine $\bk$-domain\/} is a finitely generated $\bk$-algebra that is also a domain.
If $A \subseteq B$ are domains then the transcendence degree of $B$ over $A$ is denoted $\trdeg_A(B)$ or $\trdeg(B:A)$.
If $\bk$ is a field and $B$ is a $\bk$-domain then the {\it rational transcendence degree of $B$ over $\bk$}, denoted $\rtrdeg_\bk(B)$,
is the supremum of the set of $n \in \Nat$ for which there exists a field $K$ satisfying
$\bk \subseteq K \subseteq \Frac(B)$ and $\Frac(B) = K^{(n)}$.  Clearly, $0 \le \rtrdeg_\bk(B) \le \trdeg_\bk(B)$.
\end{noname}

\begin{noname} \label {04E41BA5-443D-48FF-9B44-9279A9BECCA5}
Let $A \subseteq B$ be domains. We say that $A$ is {\it factorially closed in $B$} if
the conditions $x,y \in B \setminus \{0\}$ and $xy \in A$ imply that $x,y \in A$.
Note that if $A$ is factorially closed in $B$ then $B^*=A^*$, $B \cap \Frac(A) = A$, every field included in $B$ is included in $A$,
and if $B$ is a UFD then so is $A$.
An arbitrary intersection of factorially closed subrings of $B$ is factorially closed in $B$.
\end{noname}

\begin{noname} \label {7E1F8E6C-B326-4B86-A442-CA26364C2C1C}
Let $A$ be a subring of a ring $B$. We write $\Der(B)$ for the set of all derivations $D : B \to B$,
and $\Der_A(B)$  for the set of $A$-derivations $D : B \to B$.
Note that $\Der_A(B)$ is a $B$-module, and also a Lie algebra over the ring $A$, with Lie bracket $[D,E] = DE-ED$ for $D,E \in \Der_A(B)$.

If $\Delta \subseteq \Der(B)$ then we define $\ker(\Delta) = \bigcap_{D \in \Delta} \ker(D)$; if $\Delta=\emptyset$ then $\ker(\Delta) = B$.

A derivation $D : B \to B$ is {\it irreducible\/} if for every choice of $b \in B$ and $\delta \in \Der(B)$ satisfying $D = b\delta$, we have $b \in B^*$. A derivation $D : B \to B$ is {\it locally nilpotent} if, for each $b \in B$, there exists $n>0$ such that $D^n(b)=0$.
We write $\lnd(B)$ for the set of locally nilpotent derivations $D : B \to B$, and $\klnd(B) = \setspec{ \ker(D) }{ D \in \lnd(B) \text{ and } D \neq 0 }$.
If $A \subseteq B$, we define $\lnd_A(B) = \lnd(B) \cap \Der_A(B)$. 
It is well known that if $B$ is a domain of characteristic zero then the following hold. 
\begin{enumerate}[\rm(i)]

\item If $\Delta \subseteq \lnd(B)$, then $\ker \Delta$ is factorially closed in $B$;
consequently, every field contained in $B$ is also contained in $\ker \Delta$. 

\item If $D : B \to B$ is a derivation and $S$ is a multiplicative set of $B$, then the derivation
$S^{-1}D : S^{-1}B \to S^{-1}B$ is defined by the rule $(S^{-1}D)(b/s) = ( D(b)s - b D(s) )/ s^2$, where $b \in B$ and $s \in S$.
If $D$ is locally nilpotent and $S \subseteq \ker D$ then $S^{-1}D$ is locally nilpotent and $\ker(S^{-1}D) = S^{-1}(\ker D)$.

\item If $D \in \Der(B) \setminus \{0\}$ and $b \in B \setminus \{0\}$, then
    \begin{equation*}
    \text{$bD$ is locally nilpotent $\iff$ $D$ is locally nilpotent and $b \in \ker D$}.
    \end{equation*}

\end{enumerate}
\end{noname}

\begin{lemma} \label{F45F968B-ECF9-45AC-9626-A51DA798D02B}
Let $B$ be a domain of characteristic zero and $E \in \lnd(B)$.
For each $D \in \Der(B) \setminus \{0\}$ and $b \in B \setminus \{0\}$ we have
$$
[bD,E]=0 \ \iff \ b \in \ker(E) \text{ and } [D,E]=0 .
$$
\end{lemma}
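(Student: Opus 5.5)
Our starting point is the commutator identity $[bD,E] = b[D,E] - E(b)D$, which holds for any $b \in B$ and $D,E \in \Der(B)$. To check it, apply both sides to $x \in B$: $bD(E(x)) - E(bD(x)) = b\,DE(x) - E(b)D(x) - b\,ED(x)$. The implication ``$\Leftarrow$'' is then immediate: if $E(b)=0$ and $[D,E]=0$, both terms vanish.

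For ``$\Rightarrow$'', assume $[bD,E]=0$, so that $b[D,E] = E(b)D$. The plan is to show $E(b)=0$. Once that is known, $b[D,E]=0$, and since $B$ is a domain and $b\neq 0$, we get $[D,E](x)=0$ for every $x$, i.e.\ $[D,E]=0$.

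To show $E(b)=0$, we use a degree argument with respect to the locally nilpotent derivation $E$. Let $\deg_E$ be the usual degree function: $\deg_E(x) = \max\{n : E^n(x)\neq 0\}$ for $x\neq0$, and $\deg_E(0)=-\infty$. In characteristic zero on a domain this function is additive, $\deg_E(xy)=\deg_E(x)+\deg_E(y)$. Suppose, for contradiction, that $E(b)\neq 0$; then $\deg_E(E(b)) = \deg_E(b)-1$. Since $D\neq 0$, we may choose $x\in B$ with $D(x)\neq 0$. Evaluating $b[D,E](x)=E(b)D(x)$ at such $x$ gives $[D,E](x)\neq 0$. Comparing degrees then yields
\[
\deg_E([D,E](x)) = \deg_E(D(x)) - 1 \quad\text{for every } x \text{ with } D(x)\neq 0 .
\]

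The main step is to contradict this identity by choosing $x$ well. Take $x$ with $D(x)\neq0$ and write $m=\deg_E(x)$. Since $[D,E]=DE-ED$, we have $ED(x) = DE(x) - [D,E](x)$. An alternative is to argue directly with $F = E(b)/b$: in $\Frac(B)$ we have $[D,E]=FD$, and iterating gives $[D,E^n] = \bigl(\sum_i \ldots\bigr)D$-type expressions. I would first try the cleaner route below.

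Set $E' = E(b)/b$ and work over $K=\Frac(B)$, where $[E,D] = -E'D$. Choose $x\in\ker E$ with $D(x)\neq 0$; such an $x$ exists when $\ker E$ generates $K$ up to one variable, because $D$ cannot vanish on both $\ker E$ and a local slice $s$ (if $D(s)=0$ and $D(\ker E)=0$ then $D=0$ on $\Frac(B)$, as $B\subseteq (\ker E)_c[s]$ for $c=E(s)$). For $x\in\ker E$ we get $E(D(x)) = ED(x) - DE(x) = [E,D](x) = -E'D(x)$. Then $D(x)\neq0$ is a nonzero element $y$ with $bE(y) = -E(b)\,y$, i.e.\ $E(by)=0$. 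So $by\in\ker E$, and since $\ker E$ is factorially closed (\ref{7E1F8E6C-B326-4B86-A442-CA26364C2C1C}(i)), $b\in\ker E$, contradicting $E(b)\neq0$. If $D(\ker E)=0$, use instead a local slice $s$: a similar computation gives $E(bD(s)) = bD(E(s))$, with $E(s)\in\ker E$ so that $D(E(s))=0$. Hence again $bD(s)\in\ker E$, which forces $b\in\ker E$. This completes the proof. Note the case split on whether $D(\ker E)=0$.
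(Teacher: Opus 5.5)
Your final argument is correct and is essentially the paper's proof. You split according to whether $D$ kills $\ker E$, and in each case find an element on which $D$ is nonzero and whose $E$-image is killed by $D$: an $x\in\ker E$, or a local slice $s$. You then deduce $bD(x)\in\ker E$, hence $b\in\ker E$ by factorial closedness, and finally get $[D,E]=0$ from $[bD,E]=b[D,E]-E(b)D$. For $D(s)\neq 0$ the paper argues via $\ker D=\ker E$, whereas you use $B\subseteq(\ker E)_c[s]$; both work.

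The unfinished $\deg_E$ digression and the detour through $E(b)/b$ are unnecessary and should be deleted. Evaluating $[bD,E]=0$ at $x\in\ker E$ directly gives $E(bD(x))=0$.
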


\begin{proof}
There exists $x \in B$ such that
\begin{equation} \label {991262CB-2E3C-4CD0-A389-40F21868AB73}
D(x) \neq 0 \quad \text{and} \quad E(x) \in \ker(D) .
\end{equation}
Indeed, if  $\ker(E) \nsubseteq \ker(D)$ then any $x \in \ker(E) \setminus \ker(D)$ satisfies \eqref{991262CB-2E3C-4CD0-A389-40F21868AB73},
and if  $\ker(E) \subseteq \ker(D)$ then the fact that $D \neq 0$ implies that $\ker(D) = \ker(E)$,
so any local slice\footnote{A {\it local slice\/} of $E$ is an $x \in B$ such that $E(x)\neq0$ and $E^2(x)=0$. Since $E$ is locally nilpotent and nonzero, it has a local slice.}
$x$ of $E$ satisfies \eqref{991262CB-2E3C-4CD0-A389-40F21868AB73} (because $x \notin \ker(E) = \ker(D)$ and $E(x) \in \ker(E) = \ker(D)$).

Suppose that $[bD,E]=0$ and choose any $x$ satisfying \eqref{991262CB-2E3C-4CD0-A389-40F21868AB73}. Then
$E( b D(x) ) = (E \circ bD)(x) = (bD \circ E)(x) = bD( E(x) )=0$,
so $0 \neq b D(x) \in \ker(E)$. As $\ker(E)$ is factorially closed in $B$ by \ref{7E1F8E6C-B326-4B86-A442-CA26364C2C1C}(i), we have $b \in \ker(E)$.
It then follows that for each $y \in B$ we have 
$b E(D(y)) = E( b D(y) ) = (E \circ bD)(y) = (bD \circ E)(y) = bD(E(y))$,
so $E(D(y)) = D(E(y))$, showing that $[D,E]=0$.
This proves $(\Rightarrow)$, and the converse is easily verified.
\end{proof}

\begin{corollary} \label {FE9079CB-02CC-4FD7-8149-9E402ABC4851}
Let $B$ be a domain of characteristic zero.
If $D,E \in \Der(B) \setminus \{0\}$ and $a,b \in B \setminus \{0\}$ then the following are equivalent:
\begin{enumerate}[\rm(i)]

\item $aD, bE \in \lnd(B)$ and $(aD) \circ (bE) = (bE) \circ (aD)$

\item $D,E \in \lnd(B)$, $D\circ E = E \circ D$ and $a,b \in \ker(D) \cap \ker(E)$.

\end{enumerate}

\end{corollary}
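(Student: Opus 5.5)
The plan is to combine fact \ref{7E1F8E6C-B326-4B86-A442-CA26364C2C1C}(iii) with Lemma \ref{F45F968B-ECF9-45AC-9626-A51DA798D02B}, applying the lemma twice, once in each variable.

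For (ii)$\Rightarrow$(i), assume $D,E\in\lnd(B)$ commute and $a,b\in\ker D\cap\ker E$.
\begin{itemize}
\item Since $a\in\ker D$, fact (iii) gives $aD\in\lnd(B)$. Likewise $b \in \ker E$ gives $bE \in \lnd(B)$.
\item For commutation, apply the lemma with the locally nilpotent derivation $E$. Since $b\in\ker E$ and $[D,E]=0$, we get $[bE,D]=0$.
\item Now $bE\in\lnd(B)$, so apply the lemma again with locally nilpotent derivation $bE$ and multiplier $a$. Since $a\in\ker(bE)=\ker E$ and $[D,bE]=0$, we get $[aD,bE]=0$.
\end{itemize}
Here $\ker(bE)=\ker E$ holds because $B$ is a domain and $b\ne0$.

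For (i)$\Rightarrow$(ii), assume $aD,bE\in\lnd(B)$ commute.
\begin{itemize}
\item By fact (iii), $D\in\lnd(B)$ with $a\in\ker D$, and $E\in\lnd(B)$ with $b\in\ker E$.
\item Apply the lemma with the locally nilpotent derivation $bE$, the derivation $D\neq0$ and the multiplier $a$. From $[aD,bE]=0$ it yields $a\in\ker(bE)=\ker E$ and $[D,bE]=0$.
\item Rewrite this as $[bE,D]=0$ and apply the lemma with the locally nilpotent derivation $D$, the derivation $E\neq0$ and the multiplier $b$. It yields $b\in\ker D$ and $[E,D]=0$.
\end{itemize}
Collecting these gives $D\circ E=E\circ D$ and $a,b\in\ker D\cap\ker E$, which is (ii).

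No real obstacle is expected. The only points needing care are the hypotheses of the lemma: the derivation on which it acts must be nonzero, and the one appearing in the role of $E$ must be locally nilpotent. Both are ensured by $D,E\neq0$, $a,b\neq0$, and fact (iii).
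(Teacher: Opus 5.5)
Your overall route is the intended one: the paper gives no separate proof and treats the corollary as an immediate consequence of Lemma~\ref{F45F968B-ECF9-45AC-9626-A51DA798D02B} and \ref{7E1F8E6C-B326-4B86-A442-CA26364C2C1C}(iii). Your direction (i)$\Rightarrow$(ii) is correct as written.

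The second bullet of (ii)$\Rightarrow$(i), however, mixes up roles. Applying the lemma with the locally nilpotent derivation $E$ and multiplier $b$ gives a statement about $[bD,E]$, not $[bE,D]$. In fact $[bE,D]=b[E,D]-D(b)E$, so the hypothesis you need is $b\in\ker D$, not $b\in\ker E$. To fix it, apply the lemma with $D$ as the locally nilpotent derivation and $E$ as the nonzero derivation, using $b\in\ker D$ and $[E,D]=0$; both are available in (ii). Your third bullet is then correct as stated.

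Alternatively, you can bypass the lemma for this direction by expanding $[aD,bE]=ab[D,E]+aD(b)E-bE(a)D$. Each term vanishes under (ii), and this identity needs no local nilpotence.
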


\begin{noname}
If $B$ is an algebra over a field $\bk$,
a $\bk$-derivation $D : B \to B$ is {\it locally finite\/} if, for each $b \in B$,
there exists a finite dimensional $\bk$-vector subspace $U$ of $B$ such that $b \in U$ and $D(U) \subseteq U$.
We write $\lfd_\bk(B)$ for the set of locally finite $\bk$-derivations $D : B \to B$.
\end{noname}

The sets $\lnd(B)$ and $\lfd_\bk(B)$ are in general neither closed under addition nor under the Lie bracket.

\section{Locally nilpotent sets and (weakly) locally finite sets}\label{sectionLocallyNilpotentandLocallyFinite}

\newcommand{\id}{\text{\rm id}}

If $V$ is a vector space over a field $\bk$,
we write $\Leul(V)$ for the associative $\bk$-algebra whose elements are the linear maps $F : V \to V$ and whose multiplication is the composition of maps.
Note that $\Leul(V)$ is also a Lie algebra over $\bk$, with $[F,G] = F \circ G - G \circ F$ for all $F,G \in \Leul(V)$.

\begin{definition} \label {582852DA-9D57-47AC-B351-A3A2CC446141}
Let $V$ be a vector space over a field $\bk$ and let $\Delta$ be a subset of $\Leul(V)$.
\begin{enumerate}[\rm(a)]

\item The set $\Delta$ is {\it locally nilpotent\/} if for each $x \in V$ and every infinite sequence $(F_1,F_2,\dots)$ of elements of $\Delta$,
there exists $n>0$ such that $(F_n F_{n-1} \cdots F_1)(x)=0$.

\item The set $\Delta$ is {\it locally finite\/} if for each $x \in V$ there exists a finite dimensional subspace $U$ of $V$
such that $x \in U$ and $F(U) \subseteq U$ for all $F \in \Delta$.

\item The set $\Delta$ is {\it weakly locally finite\/} if every finite subset of $\Delta$ is locally finite.

\end{enumerate}
\end{definition}

It is clear that if $\Delta$ is locally nilpotent (resp.\ locally finite, weakly locally finite) then so is every subset of $\Delta$.

\begin{remark} \label {03442952-E213-4085-B07A-FABC7408E85A}
Let $V$ be a vector space over a field $\bk$ and $\Delta$ a subset of $\Leul(V)$.
We write $\AssAlg{\Delta}$ for the subalgebra of $\Leul(V)$ generated by $\Delta$, i.e.,
the intersection of all $\bk$-subspaces $A$ of $\Leul(V)$ that are closed under composition of maps and satisfy $\Delta \subseteq A$.
We write $\LieAlg{\Delta}$ for the Lie subalgebra of $\Leul(V)$ generated by $\Delta$.
It is easy to see that if $\Delta$ is locally nilpotent (resp.\ locally finite, weakly locally finite)
then so is each one of $\Span_\bk(\Delta) \subseteq \LieAlg{\Delta} \subseteq \AssAlg{\Delta}$.
\end{remark}

If $B$ is an algebra over a field $\bk$ and $\Delta$ is a subset of $\Der_\bk(B)$,
we say that $\Delta$ is a {\it locally nilpotent\/} (resp.\ {\it locally finite, weakly locally finite\/}) subset of $\Der_\bk(B)$ if it is so
when viewed as a set of $\bk$-linear endomorphisms of $B$.
Clearly, if $\Delta$ is a locally nilpotent subset of $\Der_\bk(B)$ then $\Delta \subseteq \lnd(B)$,
and if $\Delta$ is a weakly locally finite subset of $\Der_\bk(B)$ then $\Delta \subseteq \lfd_\bk(B)$.
For background on locally nilpotent sets of derivations, see \cite{Skutin_LND_2021} and \cite{Daigle_LNSets_2020};
for (weakly) locally finite sets of derivations, see \cite{ChDaRe2026}.

\begin{lemma}\cite[Lemma 2]{Skutin_LND_2021}\label {SkutinLemma2}
Let $V$ be a vector space over a field $\bk$ and $\Delta$ a locally nilpotent subset of $\Leul(V)$.
If $F \in \Leul(V)$ is locally nilpotent and $[F,\Delta] \subseteq \Delta$ then $\{F\} \cup \Delta$ is locally nilpotent.
\end{lemma}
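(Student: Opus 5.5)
The plan is to attach to every vector an ordinal-valued ``height'' with respect to $\Delta$. Each element of $\Delta$ strictly lowers this height, and I will show that $F$ never raises it. Local nilpotency of $\{F\}\cup\Delta$ then follows from well-foundedness of the ordinals together with local nilpotency of $F$.

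\emph{Construction of the height.} Define subspaces $V_\alpha$ of $V$ by transfinite recursion:
$$V_0=\{0\}, \qquad V_\alpha=\setspec{v\in V}{D(v)\in \textstyle\bigcup_{\beta<\alpha}V_\beta \text{ for all } D\in\Delta} \quad (\alpha>0).$$
By induction these form an increasing chain of subspaces. Since $V$ is a set, the chain stabilizes at some ordinal $\lambda$; put $V_\infty=V_\lambda=\bigcup_\alpha V_\alpha$.

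First I show $V_\infty=V$, using that $\Delta$ is locally nilpotent.
\begin{itemize}
\item Suppose $v\notin V_\infty$. Then some $D\in\Delta$ satisfies $D(v)\notin V_\infty$; otherwise $v\in V_{\lambda+1}=V_\infty$.
\item Iterating, we obtain $D_1,D_2,\dots\in\Delta$ with $D_n\cdots D_1(v)\notin V_\infty$ for all $n$.
\item In particular $D_n\cdots D_1(v)\neq0$ for all $n$, contradicting local nilpotency of $\Delta$. (This uses dependent choice, which is harmless.)
\end{itemize}

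For $v\in V$ let $\rho(v)$ be the least $\alpha$ with $v\in V_\alpha$. This height has three properties.
\begin{itemize}
\item $\rho(v)=0$ if and only if $v=0$.
\item $\rho(D v)<\rho(v)$ for every $D\in\Delta$ and every $v\neq0$, by definition of $V_\alpha$.
\item $\rho(a+b)\le\max(\rho(a),\rho(b))$ and $\rho(cv)\le\rho(v)$ for scalars $c$, because the $V_\alpha$ are increasing subspaces.
\end{itemize}

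\emph{Key step: $\rho(F v)\le\rho(v)$ for all $v$.} I prove this by transfinite induction on $\rho(v)$. The case $v=0$ is trivial. For $v\neq0$ and $D\in\Delta$, write
$$D F v = F(Dv) - [F,D]\,v.$$
\begin{itemize}
\item Since $\rho(Dv)<\rho(v)$, the induction hypothesis gives $\rho(F(Dv))\le\rho(Dv)<\rho(v)$.
\item Since $[F,D]\in\Delta$ by hypothesis, $\rho([F,D]v)<\rho(v)$.
\end{itemize}
Hence $\rho(DFv)<\rho(v)$ for every $D\in\Delta$, so $Dv'\in\bigcup_{\beta<\rho(v)}V_\beta$ with $v'=Fv$. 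This means $Fv\in V_{\rho(v)}$, as claimed. This is the step where the hypothesis $[F,\Delta]\subseteq\Delta$ enters, and the main subtlety is setting up the height so that it is subadditive; defining it through subspaces handles this.

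\emph{Conclusion.} Let $x\in V$ and let $(G_1,G_2,\dots)$ be a sequence in $\{F\}\cup\Delta$. Put $x_n=G_n\cdots G_1(x)$ and suppose, for contradiction, that $x_n\neq0$ for all $n$. By the two properties above, $\rho(x_n)$ is non-increasing in $n$, and it drops strictly whenever $G_n\in\Delta$.
\begin{itemize}
\item If infinitely many $G_n$ lie in $\Delta$, we get an infinite strictly decreasing sequence of ordinals, which is impossible.
\item Otherwise $G_n=F$ for all $n\ge N$, so $x_n=F^{\,n-N}(x_N)$. This vanishes for large $n$ because $F$ is locally nilpotent, again a contradiction.
\end{itemize}
Hence some $x_n=0$, and $\{F\}\cup\Delta$ is locally nilpotent.
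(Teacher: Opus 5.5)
Your argument is correct and complete. The paper gives no proof of this lemma: it is quoted from Skutin and used as a black box (in Lemma~\ref{CAA13785-2B53-4A53-80E3-3857241586CB}(a)), so there is no in-paper argument to compare yours with.

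The delicate points all check out:
\begin{itemize}
\item The $V_\alpha$ are subspaces forming an increasing chain.
\item The chain exhausts $V$ because $\Delta$ is locally nilpotent. This is where the sequence form of the definition is used, and you correctly avoid any uniform bound on word lengths, which could fail since $\Delta$ may be infinite.
\item Defining the rank through subspaces makes subadditivity automatic.
\item Your key step is exactly the statement $F(V_\alpha)\subseteq V_\alpha$, obtained from $DF=FD-[F,D]$ with $[F,D]\in\Delta$.
\end{itemize}
That key step uses only $[F,\Delta]\subseteq\Delta$, not the local nilpotency of $F$. The latter enters only to handle sequences that are eventually constant equal to $F$.

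For comparison, a route that avoids ordinals is to normal-order the words.
\begin{itemize}
\item From $DF^{b}=\sum_{i=0}^{b}\binom{b}{i}F^{b-i}(-\operatorname{ad}_F)^{i}(D)$ and $(\operatorname{ad}_F)^{i}(D)\in\Delta$, the product $G_n\cdots G_1$ is a finite sum of terms $cF^{b}W$ with $W$ a word in $\Delta$.
\item Passing to $G_{n+1}G_n\cdots G_1$ either raises $b$ or extends $W$ on the left by one letter of $\Delta$.
\item These terms form a finitely branching tree in which the condition $W(x)=0$ is inherited by descendants.
\item So if infinitely many $G_n$ lie in $\Delta$ and $G_n\cdots G_1(x)\neq0$ for all $n$, K\"onig's lemma produces an infinite branch. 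Along it, the words give a sequence in $\Delta$ whose partial products never kill $x$, contradicting local nilpotency of $\Delta$.
\end{itemize}
Your rank function replaces the expansion formula and K\"onig's lemma by well-foundedness of the ordinals. It also yields a stronger structural fact: $\{F\}\cup\Delta$ preserves a single filtration on which every element of $\Delta$ acts strictly decreasingly. The tree argument, in exchange, avoids transfinite constructions at the cost of the binomial bookkeeping.
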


\begin{noname}  \label {ACF4309F-5E77-4FB8-BDC4-45B3485FFDB6}
Given a Lie algebra $\Lscr$, let $\Lscr'$ temporarily denote the Lie subalgebra of $\Lscr$ generated by the set $\setspec{ [x,y] }{ x,y \in \Lscr }$,
and define $\Lscr^{(0)} = \Lscr$ and $\Lscr^{(n+1)} = \big( \Lscr^{(n)} \big)'$ for $n\ge0$.
(The prime notation $\Lscr'$ will not be used after this point.)
Recall that if there exists $d \in \Nat$ such that $\Lscr^{(d)} = 0$ then $\Lscr$ is said to be {\it solvable,}
and the smallest $d \in \Nat$ satisfying $\Lscr^{(d)} = 0$ is called the {\it derived length\/} of $\Lscr$, denoted $\dl(\Lscr)$.
\end{noname}

\begin{lemma}  \label {A4919C28-E3E4-4D7F-AE02-8F709B2AF0B7}
Let $B$ be a domain of finite transcendence degree over a field $\bk$ of characteristic zero.
\begin{enumerate}[\rm(a)]

\item \label {SkutinProp}
If $\Lscr$ is a Lie subalgebra of $\Der_\bk(B)$ and $\Lscr \subseteq \lnd(B)$ then $\Lscr$ is a locally nilpotent subset of $\Der_\bk(B)$.

\item \label {SkutinCor1} If $\Delta$ is a locally nilpotent subset of $\Der_\bk(B)$ then
the Lie subalgebra of $\Der_\bk(B)$ generated by $(\ker\Delta)\Delta$ is solvable and is a locally nilpotent subset of $\Der_\bk(B)$.

\item \label {SkutinThm4} If $\Delta \subseteq \Der_\bk(B)$ and every finite subset of $\Delta$ is locally nilpotent, then $\Delta$ is locally nilpotent.

\item \label {SkutinThm6} If $\Ascr$ is a Lie subalgebra of $\Der_\bk(B)$ such that $\Ascr \subseteq \lnd(B)$ and $\ker\Ascr = \bk$,
there exist $x_1,\dots,x_m \in B$ algebraically independent over $\bk$
such that $B = \bk[x_1,\dots,x_m]$ and $\Ascr \subseteq \sum_{i=1}^m \bk[x_1,\dots,x_{i-1}] \partial_{x_i}$.

\end{enumerate}
\end{lemma}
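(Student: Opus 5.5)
All four parts are results of Skutin \cite{Skutin_LND_2021}, stated for domains of finite transcendence degree over a field of characteristic zero. My plan is mainly to check that his hypotheses match ours and to indicate the structure of his arguments. The intended correspondence is: (a) is Skutin's Proposition, (b) his Corollary 1, (c) his Theorem 4 and (d) his Theorem 6. Since $B$ is a domain of finite transcendence degree over $\bk$ with $\operatorname{char}\bk=0$, the hypotheses agree verbatim. The only translation needed is that his ``locally nilpotent set'' is exactly Definition \ref{582852DA-9D57-47AC-B351-A3A2CC446141}(a), applied to $V=B$.

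If I were to reprove the results rather than cite them, I would start with (c). I would argue by induction on $\trdeg_\bk(B)$, proving a Lie-theoretic form of the statement. Given a finite locally nilpotent set $\Delta'\subseteq\Delta$, let $A=\ker\Delta'$. This ring is factorially closed by \ref{7E1F8E6C-B326-4B86-A442-CA26364C2C1C}(i), and it is strictly bigger than $\bk$ unless $\Delta'=\{0\}$. I would then localize at $S=A\setminus\{0\}$ using \ref{7E1F8E6C-B326-4B86-A442-CA26364C2C1C}(ii). This reduces the transcendence degree over the new base field $\Frac A$, and induction then controls all finite subsets simultaneously.

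From (c), part (a) follows by reducing to finite subsets of the Lie algebra $\Lscr$. The key input is that a Lie algebra consisting of locally nilpotent derivations admits a common nonzero kernel element outside $\bk$ whenever $\trdeg_\bk B>0$, an Engel-type statement. One then runs the same localization induction, using Lemma \ref{SkutinLemma2} to adjoin one derivation at a time to a set already known to be locally nilpotent and normalized by it.

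For (b), I would first observe that elements $aD$ with $a\in\ker\Delta$ and $D\in\Delta$ are locally nilpotent, by \ref{7E1F8E6C-B326-4B86-A442-CA26364C2C1C}(iii). I would then show that the Lie algebra they generate consists of LNDs, and is hence locally nilpotent by (a). Solvability would come from a filtration: after localizing at $\ker\Delta$, there is a flag of kernels whose length is bounded by $\trdeg$. For (d), $\ker\Ascr=\bk$ together with (a) gives a locally nilpotent Lie algebra. Picking a local slice of a suitable element and using the slice theorem inductively yields triangular coordinates $x_1,\dots,x_m$.

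The main obstacle is the Engel-type step in (a)/(c): producing a nonconstant common kernel element for an infinite-dimensional Lie algebra of LNDs. That is exactly where Skutin's work lies, so in the paper I would simply invoke \cite{Skutin_LND_2021} for all four parts.
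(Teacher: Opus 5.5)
Your proposal matches the paper: the paper gives no argument of its own and simply identifies (a), (b), (c), (d) with Skutin's Proposition, Corollary 1, Theorem 4 and Theorem 6. That is exactly the correspondence you propose.

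If you keep the optional reproof sketch, do not present it as a proof, because two of its claims are false. You claim that $\ker\Delta'$ strictly contains $\bk$ for nonzero finite $\Delta'$, and that a Lie algebra of locally nilpotent derivations has a nonconstant common kernel element whenever $\trdeg_\bk B>0$. Both fail for $B=\bk[x]$ with $\bk\,\partial_x$, which is precisely the situation that part (d) addresses.
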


Parts (a), (b), (c) and (d) of Lemma \ref{A4919C28-E3E4-4D7F-AE02-8F709B2AF0B7} are, respectively,
the Proposition, Corollary 1, Theorem 4 and Theorem 6 of \cite{Skutin_LND_2021}.
The following is analogous to part (a) of Lemma \ref{A4919C28-E3E4-4D7F-AE02-8F709B2AF0B7}:

\begin{lemma}\cite[Corollary 1]{ChDaRe2026} \label {21AFB0A0-0700-4E7F-8AE2-D3557F9656CC}
Let $B$ be a domain of finite transcendence degree over a field $\bk$ of characteristic zero
and let $\Lscr$ be a Lie subalgebra of $\Der_\bk(B)$ such that $\Lscr \subseteq \lfd_\bk(B)$.
\begin{enumerate}[\rm(a)]

\item If there exists $n \in \Nat$ such that $\Lscr^{(n)}$ is weakly locally finite, then $\Lscr$ is weakly locally finite.

\item If $\Lscr$ is solvable then it is weakly locally finite.

\end{enumerate}
\end{lemma}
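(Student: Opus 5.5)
Part (b) follows from part (a): if $\Lscr$ is solvable, then $\Lscr^{(d)}=0$ for some $d\in\Nat$, and the zero algebra is trivially weakly locally finite. So the work is in proving (a), which I would do by induction on $n$. The statement for general $n$ reduces to the case $n=1$: if $\Lscr^{(n)}=(\Lscr^{(n-1)})^{(1)}$ is weakly locally finite, the case $n=1$ applied to $\Lscr^{(n-1)}$ (still a Lie algebra of locally finite derivations) makes $\Lscr^{(n-1)}$ weakly locally finite, and we descend. So I would assume the derived algebra $\Lscr^{(1)}$ is weakly locally finite and show that an arbitrary finite subset $\{D_1,\dots,D_r\}\subseteq\Lscr$ is locally finite.

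\textbf{Step 1: localize to a finite-dimensional setting.} Fix $b\in B$. Each $D_i$ is locally finite, so $b$ lies in a finite-dimensional $D_i$-stable subspace $U_i$. Let $\mathfrak g=\LieAlg{D_1,\dots,D_r}$. Its derived part is spanned by iterated brackets, which lie in $\Lscr^{(1)}$. The natural plan is to use Lie-theoretic structure: $\mathfrak g=\Span(D_1,\dots,D_r)+[\mathfrak g,\mathfrak g]$, with $[\mathfrak g,\mathfrak g]\subseteq\Lscr^{(1)}$. The difficulty is that $[\mathfrak g,\mathfrak g]$ may be infinite-dimensional, and weak local finiteness only controls finite subsets of it.

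\textbf{Step 2: use finite transcendence degree.} This is where the hypothesis on $\trdeg_\bk B$ must enter, as in Skutin's results (Lemma~\ref{A4919C28-E3E4-4D7F-AE02-8F709B2AF0B7}). I would consider the $\Frac(B)$-span $K\mathfrak g$ inside $\Der_\bk(\Frac B)$, which has dimension at most $\trdeg_\bk B$. Choose finitely many elements $E_1,\dots,E_s$ of $[\mathfrak g,\mathfrak g]$ forming a $\Frac(B)$-basis of $K[\mathfrak g,\mathfrak g]$. By weak local finiteness, $\{D_1,\dots,D_r,E_1,\dots,E_s\}\cap\Lscr^{(1)}$, and in particular $\{E_1,\dots,E_s\}$, is locally finite. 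I would then take the subalgebra $R\subseteq B$ generated by a finite-dimensional subspace containing $b$ that is stable under the $E_j$ and under each $D_i$ separately. The goal is to show that $\mathfrak g$ acts locally finitely on $R$ by exhibiting, for each element, an ascending chain of finite-dimensional $\mathfrak g$-stable subspaces that stabilizes.

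\textbf{Main obstacle.} The hard step is passing from stability under finitely many $\Frac(B)$-generators to stability under all of $\mathfrak g$, because the coefficients expressing other brackets in terms of the $E_j$ are rational functions. The approach I would try first is to work with the filtration by degree with respect to a finite generating subspace $W\ni b$. Since each $D_i$ and each $E_j$ maps $W$ into a finite-dimensional space, I would show that the degree growth of iterated brackets applied to $W$ is bounded. This would rely on the fact that every element of $\mathfrak g$ is a locally finite derivation, so its Jordan decomposition, with semisimple part together with the locally nilpotent part, preserves degree filtrations. Combining this with Lie's theorem applied to the solvable finite-dimensional image of $\mathfrak g$ acting on the quotients then gives a common finite-dimensional invariant subspace containing $b$. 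If this direct approach fails, my fallback is the structural argument of \cite{ChDaRe2026}: treat $\mathfrak g$ as acting on $\Frac B$, and use Lemma~\ref{SkutinLemma2}-type closure, with $[D_i,\Lscr^{(1)}]\subseteq\Lscr^{(1)}$, to adjoin the $D_i$ one at a time to a locally finite set.
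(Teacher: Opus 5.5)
Your outer reductions are sound. Part (b) follows from (a) because $\Lscr^{(d)}=0$ is trivially weakly locally finite. Part (a) reduces, by applying the case $n=1$ successively to $\Lscr^{(n-1)},\Lscr^{(n-2)},\dots$, to the following claim: if $\Lscr^{(1)}$ is weakly locally finite, then every finite subset $\{D_1,\dots,D_r\}$ of $\Lscr$ is locally finite. The paper does not prove the lemma itself; it quotes it from \cite{ChDaRe2026}.

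The gap is that this case $n=1$, which is the whole content of the lemma, is never proved. Each mechanism you propose for it either fails or presupposes the conclusion.
\begin{itemize}
\item In Step~2, a subspace stable under a $\Frac(B)$-basis $E_1,\dots,E_s$ of $\Frac(B)[\mathfrak g,\mathfrak g]$ (with $\mathfrak g=\LieAlg{D_1,\dots,D_r}$) says nothing about the other elements of $[\mathfrak g,\mathfrak g]$. Those are $\Frac(B)$-linear, not $\bk$-linear, combinations of the $E_j$. For example, in $B=\bk[x,y]$ the abelian algebra $\bk[x]\partial_y$ is weakly locally finite, and $\partial_y$ is a $\Frac(B)$-basis of its span that stabilizes $\bk+\bk y$. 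Yet no finite-dimensional subspace containing $y$ is stable under all of $\bk[x]\partial_y$.
\item A finite-dimensional subspace ``stable under each $D_i$'' is, for $r\ge2$, exactly what you are trying to produce. Taking a $D_i$-stable subspace for each $i$ and summing them, or taking the subalgebra they generate, gives nothing stable under all the $D_i$.
\item The filtration idea is circular in the same way. A derivation preserves the filtration $\bk+W+W^2+\cdots$ only if it maps $W$ into $\bk+W$; for instance $x^2\partial_y$ does not preserve the one attached to $W=\bk x+\bk y$. A common such $W$ for $D_1,\dots,D_r$ is essentially the conclusion. Note also that $B$ is not assumed finitely generated.
\item Lie's theorem needs a finite-dimensional $\mathfrak g$-module, which you do not yet have. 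It also needs solvability, which is not a hypothesis of (a): take $\Lscr=\mathfrak{sl}_2$ acting by linear vector fields on $\bk[x,y]$.
\end{itemize}

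The fallback does not close the gap either. The locally finite analogue of Lemma~\ref{SkutinLemma2} does hold: if $\Delta$ is a \emph{locally finite} set and $F$ is locally finite with $[F,\Delta]\subseteq\Delta$, then for a finite-dimensional $F$-stable $U\ni b$ the space $U+\AssAlg{\Delta}U$ is finite-dimensional and stable under $F$ and $\Delta$. So adjoining $D_1,\dots,D_r$ one at a time to $\Delta=[\mathfrak g,\mathfrak g]$ would finish the proof \emph{provided} $[\mathfrak g,\mathfrak g]$ is locally finite. But you only know it is weakly locally finite, and weak local finiteness does not imply local finiteness, as $\bk[x]\partial_y$ again shows.

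Moreover, the statement ``$[\mathfrak g,\mathfrak g]$ is locally finite for every finitely generated Lie subalgebra $\mathfrak g\subseteq\Lscr$'' is equivalent to what you want to prove. Local finiteness of $\{D_1,\dots,D_r\}$ implies that of $\AssAlg{D_1,\dots,D_r}$, which contains $\mathfrak g$. So the fallback only restates the problem.

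What is missing is an actual argument, using $\trdeg_\bk B<\infty$ and the hypothesis $\Lscr\subseteq\lfd_\bk(B)$ in an essential way, that prevents $[\mathfrak g,\mathfrak g]$ from behaving like $\bk[x]\partial_y$. The proposal correctly identifies this obstacle but does not overcome it.
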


\begin{notation} \label {67BC6576-67FE-41A8-975F-597EE29DC120}
Let $B$ be a domain of finite transcendence degree over a field $\bk$ of characteristic zero,
and view $\Der_\bk(B)$ as a Lie algebra over $\bk$.
We define:
\begin{align*}
\LieLN_\bk(B)  &= \ \begin{minipage}[t]{.7\textwidth}
set of Lie subalgebras $\Lscr$ of $\Der_\bk(B)$ such that $\Lscr$ is a locally nilpotent subset of $\Der_\bk(B)$,
\end{minipage} \\
\LieWLF_\bk(B)  &= \ \begin{minipage}[t]{.75\textwidth}
 set of Lie subalgebras $\Lscr$ of $\Der_\bk(B)$ such that $\Lscr$ is a weakly locally finite subset of $\Der_\bk(B)$,
\end{minipage} \\
\LieWLF_\bk(B)_{\text{\rm sol}} &= \setspec{ \Lscr \in \LieWLF_\bk(B) }{ \text{$\Lscr$ is solvable} } . 
\end{align*}
Observe that if $\Lscr \in \LieLN_\bk(B)$ then $\Lscr$ is solvable by Lemma \ref{A4919C28-E3E4-4D7F-AE02-8F709B2AF0B7}\ref{SkutinCor1},
and weakly locally finite by \cite[Remark 4.2]{ChDaRe2026}.
This gives the first inclusion in:
\begin{equation}  \label {1354F7B8-F676-4D8A-9F67-8316E680B009}
\LieLN_\bk(B)  \subseteq \LieWLF_\bk(B)_{\text{\rm sol}} \subseteq \LieWLF_\bk(B) .
\end{equation}
Also, Lemmas \ref{A4919C28-E3E4-4D7F-AE02-8F709B2AF0B7}\ref{SkutinProp} and \ref{21AFB0A0-0700-4E7F-8AE2-D3557F9656CC} give the following equalities:
\begin{align}
\label {D6C764BE-C72A-4279-B9F3-CFD5F5E639ED}
\LieLN_\bk(B) &= \text{ set of Lie subalgebras $\Lscr$ of $\Der_\bk(B)$ such that $\Lscr \subseteq \lnd(B)$,} \\
\label {A949AD42-B8A3-4566-B093-7183EBF9B9B5}
\LieWLF_\bk(B)_{\text{\rm sol}} &= \text{ set of solvable Lie subalgebras $\Lscr$ of $\Der_\bk(B)$ such that $\Lscr \subseteq \lfd_\bk(B)$.}
\end{align}
\end{notation}

The following is a direct consequence of  \cite[Corollary 3]{ChDaRe2026}:

\begin{lemma} \label {CA556DEE-FE7D-4364-A217-A983EBE59700}
Let $B$ be a domain of finite transcendence degree over a field $\bk$ of characteristic zero
and let  $\Lscr \in \LieWLF_\bk(B)$.
If $T$ is a subset of $\lfd_\bk(B)$ such that $[T,T] \subseteq \Lscr$ and $[T,\Lscr] \subseteq \Lscr$,
then  $\Mscr := \LieAlg{T \cup \Lscr}$ belongs to $\LieWLF_\bk(B)$ and $[\Mscr,\Mscr] \subseteq \Lscr$. 
\end{lemma}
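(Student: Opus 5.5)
The plan is to deduce the statement directly from \cite[Corollary 3]{ChDaRe2026}, which I take to say the following. If $\Lscr$ is a weakly locally finite Lie algebra of derivations of $B$, and $T \subseteq \lfd_\bk(B)$ normalizes $\Lscr$ with brackets landing in $\Lscr$, then adjoining $T$ preserves weak local finiteness. So the work is bookkeeping: identify $\Mscr$ explicitly, and check both the bracket condition and weak local finiteness of $\Mscr$.

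\textbf{Step 1: describe $\Mscr$.} Let $\Mscr_0 = \Span_\bk(T) + \Lscr$. The hypotheses $[T,T]\subseteq\Lscr$ and $[T,\Lscr]\subseteq\Lscr$, together with $[\Lscr,\Lscr]\subseteq\Lscr$, show that $\Mscr_0$ is closed under the bracket. Hence $\Mscr = \Mscr_0$. The same computation gives $[\Mscr,\Mscr]\subseteq \Lscr$ immediately. In particular $\Lscr$ is an ideal of $\Mscr$, and $\Mscr/\Lscr$ is abelian.

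\textbf{Step 2: weak local finiteness.} Take a finite subset $F \subseteq \Mscr$. Each element of $F$ is a finite combination of elements of $T$ and one element of $\Lscr$. By Remark \ref{03442952-E213-4085-B07A-FABC7408E85A} it therefore suffices to show that $\{t_1,\dots,t_r\} \cup \{\ell_1,\dots,\ell_s\}$ is locally finite, where $t_i\in T$ and $\ell_j \in \Lscr$.

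This is the main point, and it is exactly what \cite[Corollary 3]{ChDaRe2026} supplies. The intended mechanism is as follows. Fix $x\in B$. Let $W$ be the finite-dimensional span of the iterates of $x$ under the $t_i$ (each $t_i$ is locally finite, and they commute modulo $\Lscr$). The $\ell_j$ together with the finitely many brackets $[t_i,t_k]$ and the iterated brackets $\ad(t_i)^a(\ell_j)$ lie in $\Lscr$. Finite dimensionality of $\ad(t_i)$-orbits is needed here, and the cited corollary guarantees it. Weak local finiteness of $\Lscr$ then gives a finite-dimensional space containing $W$ stable under that finite subset of $\Lscr$. Closing up alternately under the $t_i$ and these elements terminates, thanks to the finite transcendence degree argument in \cite{ChDaRe2026}.

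In writing the proof I would simply invoke \cite[Corollary 3]{ChDaRe2026} for Step 2, after verifying its hypotheses via Step 1. The only real obstacle is matching its exact formulation, for instance whether it asks for $T$ to be a single derivation or a set. If it is stated for a single element, I would induct on the number $r$ of elements of $T$ involved in $F$. At each stage I would replace $\Lscr$ by $\Lscr_k=\LieAlg{\{t_1,\dots,t_k\}\cup\Lscr}$, which by Step 1 equals $\Span(t_1,\dots,t_k)+\Lscr$. The element $t_{k+1}$ still normalizes $\Lscr_k$, with $[t_{k+1},\Lscr_k]\subseteq\Lscr\subseteq\Lscr_k$.
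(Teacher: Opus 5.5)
Your proposal is correct and follows the paper, which gives no argument beyond calling the lemma a direct consequence of Corollary~3 of \cite{ChDaRe2026}; your Step~1 (showing $\Span_\bk(T)+\Lscr$ is closed under the bracket, so that $\Mscr=\Span_\bk(T)+\Lscr$ and $[\Mscr,\Mscr]\subseteq\Lscr$) is exactly the routine verification left implicit there. Do not present the ``intended mechanism'' paragraph as part of the proof: finite-dimensionality of the span of all words in the $t_i$ applied to $x$ does not follow from each $t_i$ being locally finite and the $t_i$ commuting modulo $\Lscr$---that is essentially what the cited corollary has to establish.
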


\begin{notation}  \label {EDCCFF3C-F3DF-4BCD-82AC-363D73AFAA4B}
If $\Lscr$ is a Lie subalgebra of $\Der_\bk(B)$,
let $\Norm(\Lscr) = \setspec{D \in \Der_\bk(B)}{[D, \Lscr] \subseteq \Lscr }$ denote the normalizer of $\Lscr$ in $\Der_\bk(B)$.
Recall that $\Lscr \subseteq \Norm(\Lscr) \subseteq \Der_\bk(B)$,
$\Norm(\Lscr)$ is a Lie subalgebra of $\Der_\bk(B)$ and $\Lscr$ is an ideal of $\Norm(\Lscr)$.
\end{notation}

\begin{lemma} \label {CAA13785-2B53-4A53-80E3-3857241586CB}
Let $B$ be a domain of finite transcendence degree over a field $\bk$ of characteristic zero.
\begin{enumerate}[\rm(a)]

\item If $\Lscr \in \LieLN_\bk(B)$ and $D \in \Norm(\Lscr) \cap \lnd(B)$ then $\bk D + \Lscr \in \LieLN_\bk(B)$.

\item If $\Lscr$ is a maximal element of $\LieLN_\bk(B)$ then $\Norm(\Lscr) \cap \lnd(B) = \Lscr$.

\end{enumerate}
\end{lemma}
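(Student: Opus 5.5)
For part (a), I will show that $\Mscr := \bk D + \Lscr$ is a Lie subalgebra of $\Der_\bk(B)$ that is a locally nilpotent subset of $\Der_\bk(B)$. First, $\Mscr$ is a Lie subalgebra. Indeed, it is a $\bk$-subspace. For $\lambda,\mu \in \bk$ and $L_1,L_2 \in \Lscr$ we have
$$[\lambda D + L_1, \mu D + L_2] = \lambda[D,L_2] - \mu[D,L_1] + [L_1,L_2] \in \Lscr,$$
because $[D,D]=0$, $D \in \Norm(\Lscr)$, and $\Lscr$ is closed under the bracket.

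Next comes local nilpotency. Since $\Lscr \in \LieLN_\bk(B)$, it is a locally nilpotent subset of $\Leul(B)$. The derivation $D$ is locally nilpotent and satisfies $[D,\Lscr] \subseteq \Lscr$, so Lemma \ref{SkutinLemma2} shows that $\{D\} \cup \Lscr$ is locally nilpotent. By Remark \ref{03442952-E213-4085-B07A-FABC7408E85A}, its span $\bk D + \Lscr$ is then also locally nilpotent. Hence $\bk D + \Lscr \in \LieLN_\bk(B)$. Alternatively, one could note that $\bk D+\Lscr \subseteq \lnd(B)$ and invoke \eqref{D6C764BE-C72A-4279-B9F3-CFD5F5E639ED}, but checking that each element $\lambda D + L$ is locally nilpotent already needs the Skutin lemma. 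So the route above is the direct one.

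For part (b), the inclusion $\Lscr \subseteq \Norm(\Lscr) \cap \lnd(B)$ is clear, since $\Lscr \subseteq \Norm(\Lscr)$ and every element of a locally nilpotent set lies in $\lnd(B)$. Conversely, let $D \in \Norm(\Lscr) \cap \lnd(B)$. By (a), $\bk D + \Lscr \in \LieLN_\bk(B)$, and it contains $\Lscr$. Maximality then forces $\bk D + \Lscr = \Lscr$, so $D \in \Lscr$.

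No step here is a real obstacle. The only substantive input is Lemma \ref{SkutinLemma2}, together with the observation that passing to the span preserves local nilpotency.
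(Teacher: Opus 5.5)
Your proposal is correct and follows essentially the same route as the paper: apply Lemma~\ref{SkutinLemma2} to get that $\{D\} \cup \Lscr$ is locally nilpotent, then use Remark~\ref{03442952-E213-4085-B07A-FABC7408E85A} to pass to $\bk D + \Lscr$, and deduce (b) from maximality. You also check explicitly that $\bk D + \Lscr$ is closed under the bracket, which the paper uses only implicitly when it writes $\LieAlg{ \{D\} \cup \Lscr } = \bk D + \Lscr$.
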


\begin{proof}
(a) By Lemma \ref{SkutinLemma2}, $\{D\} \cup \Lscr$ is a locally nilpotent subset of $\Der_\bk(B)$.
So Remark \ref{03442952-E213-4085-B07A-FABC7408E85A} implies that $\LieAlg{ \{D\} \cup \Lscr } = \bk D + \Lscr$ is a locally nilpotent subset of $\Der_\bk(B)$.
This proves (a), and (b) follows.
\end{proof}

The following observation is useful in conjunction with Zorn's Lemma:

\begin{lemma}  \label {DEA84E5E-79BC-4D4B-BF2D-C5735262EA6D}
Let $B$ be a domain of finite transcendence degree over a field $\bk$ of characteristic zero
and let $\Xscr$  be a nonempty collection of subsets of $\Der_\bk(B)$ that is totally ordered by inclusion.
\begin{enumerate}[\rm(a)]

\item If each element of $\Xscr$ is a locally nilpotent subset of $\Der_\bk(B)$ then so is $\bigcup_{X \in \Xscr} X$.

\item If each element of $\Xscr$ is a weakly locally finite subset of $\Der_\bk(B)$ then so is $\bigcup_{X \in \Xscr} X$.

\item If each element of $\Xscr$ is a solvable Lie subalgebra of $\Der_\bk(B)$ then so is $\bigcup_{X \in \Xscr} X$.

\end{enumerate}
\end{lemma}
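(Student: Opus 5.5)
Let $\Xscr$ be a chain and $Y = \bigcup_{X \in \Xscr} X$. The argument in each part rests on one observation: since $\Xscr$ is totally ordered by inclusion, every finite subset of $Y$ lies in a single member of $\Xscr$. For $F_1,\dots,F_k \in Y$ choose $X_i \in \Xscr$ with $F_i \in X_i$; the largest of $X_1,\dots,X_k$ contains all the $F_i$. Each part then reduces to a property that can be checked on finite subsets.

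For (a), I would use Lemma \ref{A4919C28-E3E4-4D7F-AE02-8F709B2AF0B7}\ref{SkutinThm4}. By hypothesis $B$ has finite transcendence degree over $\bk$ and characteristic zero, so it suffices to show that every finite subset of $Y$ is locally nilpotent. A finite subset lies in some $X \in \Xscr$, which is locally nilpotent, and subsets of locally nilpotent sets are locally nilpotent. Hence $Y$ is locally nilpotent. Part (b) is even more direct: weak local finiteness is defined by a condition on finite subsets, and each finite subset of $Y$ is a finite subset of some $X \in \Xscr$, hence locally finite. No earlier lemma is needed here.

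For (c), I would first check that $Y$ is a Lie subalgebra. Any two elements of $Y$ lie in a common $X$, so their sums, scalar multiples and brackets lie in $X \subseteq Y$. The substantive point is solvability. A union of solvable Lie algebras of unbounded derived length need not be solvable, so I need a uniform bound on $\dl(X)$ for $X \in \Xscr$. This is the main obstacle, and I would handle it by bounding the derived length of any solvable Lie subalgebra of $\Der_\bk(B)$ in terms of $n = \trdeg_\bk(B)$.

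My approach would be to pass to $K = \Frac(B)$. Then $\Der_\bk(B)$ embeds in $\Der_\bk(K)$, and $K\otimes_\bk \Lscr$ acts on $K$ inside $\Der_\bk(K)$, which is a $K$-vector space of dimension $n$. I would then invoke a known result, presumably cited from the literature (for instance \cite{ChDaRe2026} or \cite{solvableAutGroups}, or a Martelo–Ribón-type theorem), that solvable Lie algebras of vector fields (derivations) on a field of transcendence degree $n$ have derived length bounded by a function $c(n)$, e.g.\ $2n$. With such a bound $c$, every $X \in \Xscr$ satisfies $X^{(c)} = 0$. Since derived series commute with directed unions, $Y^{(c)} = \bigcup_{X \in \Xscr} X^{(c)} = 0$: each element of $Y^{(c)}$ is obtained from finitely many elements of $Y$, all of which lie in a single $X$. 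Therefore $Y$ is solvable.
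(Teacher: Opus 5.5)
Your proposal is correct and follows the paper's proof essentially step for step. Part (a) goes through Lemma \ref{A4919C28-E3E4-4D7F-AE02-8F709B2AF0B7}\ref{SkutinThm4}, part (b) comes straight from the definition, and part (c) combines a uniform bound on derived length with the fact that finitely many elements of the union lie in a single member of the chain. The bound you were unsure how to source is exactly what the paper cites: Theorem 2 of Makedonskyi--Petravchuk (2014), which gives $\dl \le 2\trdeg_\bk(B)$ for every solvable Lie subalgebra of $\Der_\bk(B)$, so your detour through $\Frac(B)$ is not needed.
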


\begin{proof}
Part (a) follows from Lemma \ref{A4919C28-E3E4-4D7F-AE02-8F709B2AF0B7}\ref{SkutinThm4} and part (b) is obvious.
For part (c), let $d = \trdeg_\bk(B)$ and recall that each solvable Lie subalgebra of $\Der_\bk(B)$ has derived length $\le 2d$
by \cite[Theorem 2]{MakedonskyiPetravchuk2014};
since each finitely generated subalgebra of $\Lscr = \bigcup_{X \in \Xscr} X$ is a subalgebra of some member of $\Xscr$
and hence has derived length $\le 2d$, we must have $\dl(\Lscr) \le 2d$.
\end{proof}

\section{De Jonqui\`eres-like algebras}\label{sectionDJL}
 
Given a domain $B$ of characteristic zero, we study the set $\DJL(B)$ of ``de Jonqui\`eres-like'' subalgebras of $\Der(B)$
(Definition \ref{E1B94B5F-4823-4C89-95DD-4299351970B9}).
These algebras were first defined by Perepechko in \cite{perepechko2025},
where he described their connection with the nested unipotent subgroups of the automorphism group of an affine variety.

\begin{definition}
If $(D_1,\dots,D_m)$ (where $m\ge0$) is an arbitrary sequence of derivations of a ring $B$ then the {\it kernel sequence\/} of 
$(D_1,\dots,D_m)$ is the sequence of rings $(A_1,\dots,A_m)$
defined by $A_i = \bigcap_{j = i}^m \ker(D_j)$ for each $i \in \{1,\dots,m\}$.
We will often say that the 
kernel sequence of $(D_1,\dots,D_m)$ is $A_1 \subseteq A_2 \subseteq \cdots \subseteq A_m$, instead of $(A_1,\dots,A_m)$.
The two notations are interchangeable.
\end{definition}

\begin{lemma}  \label {C0519934-54B1-477F-8677-BF8168C22DE9}
Let $B$ be a domain of characteristic zero and let $(D_1,\dots,D_m) \in \lnd(B)^m$ be such that $m\ge0$
and $D_iD_j = D_jD_i$ for all $i,j \in \{1,\dots,m\}$.
Let $A_1 \subseteq A_2 \subseteq \cdots \subseteq A_m$ be the kernel sequence of $(D_1,\dots,D_m)$ and also define $A_{m+1} = B$.
The following are equivalent:
\begin{enumerate}[\rm(i)]

\item $D_1, \dots, D_m$ are linearly independent over $B$;

\item $A_i \neq A_{i+1}$ for all $i \in \{1,\dots,m\}$;

\item there exist $s_1,\dots,s_m \in B$ such that for all $i,j \in \{1,\dots,m\}$ we have
$$
\textstyle
\text{$D_i(s_j) \in A_1 \setminus \{0\}$ if $i=j$
\quad and \quad $D_i(s_j)=0$ if $i \neq j$.}
$$

\end{enumerate}
\end{lemma}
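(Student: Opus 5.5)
The plan is to prove (iii)$\Rightarrow$(i)$\Rightarrow$(ii)$\Rightarrow$(iii). Throughout I use one observation: because the $D_j$ pairwise commute, each $D_j$ maps $\ker D_k$ into $\ker D_k$, so every $D_j$ maps every $A_i$ into itself. Also $A_i = A_{i+1}\cap\ker D_i$. Hence the restriction $D_i|_{A_{i+1}}$ is a locally nilpotent derivation of $A_{i+1}$ with kernel $A_i$, and (ii) says exactly that each restriction $D_i|_{A_{i+1}}$ is nonzero.

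\emph{(iii)$\Rightarrow$(i).} Suppose $\sum_j b_jD_j=0$. Applying this to $s_i$ gives $b_iD_i(s_i)=0$. Since $D_i(s_i)\neq0$, we get $b_i=0$.

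\emph{(i)$\Rightarrow$(ii).} I argue by contraposition, assuming some $A_i=A_{i+1}$ and taking $i$ largest with this property.
\begin{itemize}
\item Then $(D_{i+1},\dots,D_m)$ is a commuting family of locally nilpotent derivations with kernel sequence $A_{i+1}\subseteq\cdots\subseteq A_m$ satisfying (ii).
\item By the implication (ii)$\Rightarrow$(iii) proved below, applied to this shorter family (so I prove that implication first, or run an induction on $m$), there are $s_j$ ($j>i$) with $D_k(s_j)=\delta_{kj}c_j$ and $c_j\in A_{i+1}\setminus\{0\}$.
\item Put $c=\prod_j c_j$ and localize at $c$. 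Iterating the slice theorem, first for $D_m$ with slice $s_m$, then for $D_{m-1}$ on $(\ker D_m)_c$, and so on, gives $B_c=(A_{i+1})_c[s_{i+1},\dots,s_m]$.
\item Consider $E=cD_i-\sum_{j>i}(c/c_j)D_i(s_j)D_j$. It kills $A_{i+1}$, because all of $D_i,D_{i+1},\dots,D_m$ do (using $A_i=A_{i+1}$). It also kills every $s_j$, by a direct computation.
\item Hence $E=0$ on $B_c$, and therefore on $B$. This is a nontrivial $B$-linear dependence, since $c\neq0$.
\end{itemize}

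\emph{(ii)$\Rightarrow$(iii).} I use induction on $m$, the case $m=0$ being empty.
\begin{itemize}
\item Apply the induction hypothesis to the restrictions of $D_1,\dots,D_{m-1}$ to $A_m$. These commute, are locally nilpotent, and have kernel sequence $A_1\subseteq\cdots\subseteq A_{m-1}$ with last ring $A_m$. This gives $s_1,\dots,s_{m-1}\in A_m$ with $D_k(s_j)=\delta_{kj}c_j$, $c_j\in A_1\setminus\{0\}$. Automatically $D_m(s_j)=0$.
\item It remains to find $s_m$ with $D_m(s_m)\in A_1\setminus\{0\}$ and $D_j(s_m)=0$ for $j<m$. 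Let $c=\prod c_j\in A_1$ and set $\partial_j=c_j^{-1}D_j$ on $B_c$ for $j<m$. These are commuting locally nilpotent derivations with $\partial_k(s_j)=\delta_{kj}$, and all of them commute with $D_m$.
\item Take a local slice $t$ of $D_m$ (possible since $A_m\neq B$), so $a=D_m(t)\in A_m\setminus\{0\}$. By local nilpotence, choose a multi-index $\beta$ such that $a':=\partial^\beta a\neq0$ while $\partial_ja'=0$ for all $j<m$; then $a'\in(A_1)_c$. Set $t'=\partial^\beta t$, so $D_m(t')=a'$.
\item Apply the Dixmier-type map $\Psi(b)=\sum_\alpha\frac{(-s)^\alpha}{\alpha!}\partial^\alpha b$. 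It lands in $\bigcap_{j<m}\ker\partial_j$, commutes with $D_m$ (which kills the $s_j$ and the $c_j$), and fixes $a'$. Hence $s'=\Psi(t')$ satisfies $D_j(s')=0$ for $j<m$ and $D_m(s')=a'$.
\item Finally, multiply by a power $c^N$, which lies in $A_1$ and is killed by every $D_k$, to clear denominators. Taking $s_m=c^Ns'$ works.
\end{itemize}

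I expect this last step to be the main obstacle. One must carefully justify the iterated slice description $B_c=(\cdots)[s_1,\dots]$, and check that $\Psi$ is well defined (the sum is finite by local nilpotence) and has the stated kernel and commutation properties. The trick of using $\partial^\beta$ to force $D_m(s_m)$ into $A_1$ rather than merely into $A_m$ is also essential.
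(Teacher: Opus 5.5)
Your proof is correct. The derivation $E$ in (i)$\Rightarrow$(ii) does vanish on the generators $(A_{i+1})_c, s_{i+1},\dots,s_m$ of $B_c$. The sum defining $\Psi$ is finite and is killed by every $\partial_k$: the two sums in $\partial_k\Psi(b)$ cancel after the shift $\alpha=\gamma+e_k$. Once $c^Ns'\in B$, the element $D_m(c^Ns')=c^Na'$ lies in $B$ and in every $\ker D_j$, hence in $A_1\setminus\{0\}$.

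The paper does not prove this lemma at all; it declares it well known and refers to Maubach's paper. Your argument is therefore an independent, self-contained route. Its merit is that it uses only tools the paper already has at hand. The first is the slice theorem \ref{SliceThm} after localizing at an element of the common kernel, which is the same mechanism the paper later uses in Lemma \ref{BAD0996F-5C99-407B-B702-0A97D9510736}(a) to get $\Beul=\Aeul_1[s_1,\dots,s_m]$. The second is the several-variable Dixmier map.

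In a final write-up, state two things explicitly. The induction in (ii)$\Rightarrow$(iii) must quantify over all domains of characteristic zero, since the hypothesis is applied to $A_m$ rather than to $B$. Also, (ii)$\Rightarrow$(iii) has to be proved first, because (i)$\Rightarrow$(ii) invokes it for the tail $(D_{i+1},\dots,D_m)$; there is no circularity, since (ii)$\Rightarrow$(iii) never uses (i).

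Your passage to the leading term $a'=\partial^\beta a$ is indeed the crux. Applying $\Psi$ to $t$ directly would give $D_m(\Psi(t))=\Psi(a)$. That is $a$ evaluated at $s_1=\dots=s_{m-1}=0$ in $(A_m)_c=(A_1)_c[s_1,\dots,s_{m-1}]$, and it can vanish.
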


\begin{proof}
This is well known. See for instance \cite{Maubach_03}.
\end{proof}

\begin{definition}
Let $B$ be a domain of characteristic zero.
We say that $(D_1,\dots,D_m)$ is an {\it admissible sequence of derivations of $B$\/} if
$m\ge0$, $D_1,\dots,D_m \in \lnd(B)$, $D_iD_j = D_jD_i$ for all $i,j \in \{1,\dots,m\}$,
and the equivalent conditions (i--iii) of Lemma \ref{C0519934-54B1-477F-8677-BF8168C22DE9} are satisfied.
We will sometimes abbreviate ``is an admissible sequence of derivations of $B$'' to ``is admissible''.

If $(D_1,\dots,D_m)$ is an admissible sequence of derivations of $B$ then
a {\it system of local slices\/} of $(D_1,\dots,D_m)$ is a tuple $(s_1,\dots,s_m) \in B^m$ that satisfies
condition (iii) of Lemma \ref{C0519934-54B1-477F-8677-BF8168C22DE9}.
Clearly, each admissible sequence of derivations has at least one system of local slices.

Note that the empty sequence ``$(\,)$'' of derivations is admissible and that its kernel sequence is the empty sequence ``$(\,)$'' of rings.
\end{definition}

\begin{definition}  \label {new-900E2A3C-9A1F-44A9-AB77-981525DA7901}
Let $B$ be a domain of characteristic zero.
If $(D_1,\dots,D_m)$ is an admissible sequence of derivations of $B$ and 
$A_1 \subset \cdots \subset A_m$ is its kernel sequence
then we define $L( D_1,\dots,D_m ) = \sum_{i=1}^m A_i D_i$.
Given a  multiplicatively closed subset $S$ of $A_1 \setminus \{0\}$, we define
\begin{equation*}
\Lscr_S( D_1,\dots,D_m ) = \setspec{ \delta \in \Der(B) }{ s \delta \in L(D_1,\dots,D_m) \text{ for some $s \in S$} }.
\end{equation*}
In the special case where $S = A_1 \setminus \{0\}$, we omit the ``$S$'' from the notation:
\begin{equation*}
\Lscr( D_1,\dots,D_m ) = \setspec{ \delta \in \Der(B) }{ s \delta \in L(D_1,\dots,D_m) \text{ for some $s \in A_1 \setminus\{0\}$} }.
\end{equation*}
Observe that $L(D_1,\dots,D_m) \subseteq \Lscr_S(D_1,\dots,D_m) \subseteq \Lscr(D_1,\dots,D_m)$
are Lie subalgebras of $\Der_{A_1}(B)$, where we view $\Der_{A_1}(B)$ as a Lie algebra over the ring $A_1$.
We will see in Lemma \ref{BAD0996F-5C99-407B-B702-0A97D9510736} that $\Lscr(D_1,\dots,D_m) \subseteq \lnd(B)$.
\end{definition}

\begin{example} \label {B6D5B613-5754-4ECF-8B69-9CFB86A83A7D}
Let $B$ be a domain of characteristic zero. If $D \in \lnd(B) \setminus \{0\}$ then the sequence $(D)$ is admissible and $\Lscr(D) = \lnd_A(B)$, where $A = \ker(D)$.
This describes the $m=1$ case of $\Lscr(D_1,\dots,D_m)$.
The $m=0$ case is described by $\Lscr(\,) = \{0\}$ where $0$ is the zero derivation of $B$.
\end{example}

Before the next Lemma, we need to recall some facts and some terminology.
Part (a) of the following fact appeared in \cite[Section 1]{Nouaze-Gabriel_1967} and \cite[Proposition~2.1]{Wright:JacConj},
and (b) follows from (a).

\begin{noname} \label {SliceThm}
Let $B$ be a $\Rat$-algebra, $D \in \lnd(B)$ and $A = \ker(D)$.
\begin{enumerate}[\rm(a)]

\item If $s\in B$ satisfies $D(s) \in A^*$ then $B = A[s] = A^{[1]}$.

\item Let $t \in B$ be such that $D(t) \neq 0$ and $D^2(t)=0$, and let $a = D(t) \in A\setminus\{0\}$.
Then $B_a = A_a[t] = (A_a)^{[1]}$.

\end{enumerate}
\end{noname}

\begin{noname} \label {CA51E646-7FBD-40C8-9978-60D269C64CB6}
Let $A$ be a domain of characteristic zero, $n\ge1$ and $B = A^{[n]}$.
A {\it coordinate system\/} of $B$ over $A$ is a tuple $(x_1,\dots,x_n) \in B^n$ satisfying $B = A[x_1,\dots,x_n]$.
If $(x_1,\dots,x_n)$ is a coordinate system of $B$ over $A$,
an $A$-derivation $D \in \Der_A(B)$ is {\it triangular with respect to $(x_1,\dots,x_n)$} if $D(x_i) \in A[x_1,\dots,x_{i-1}]$ for all $i=1,\dots,n$
(in particular $D(x_1) \in A$).
An $A$-derivation $D \in \Der_A(B)$ is {\it triangular\/} if it is triangular with respect to some coordinate system of $B$ over $A$.
It is well known that if an $A$-derivation of $B$ is triangular then it is locally nilpotent.
\end{noname}

\begin{noname} \label {CDEAE042-8838-49BE-A23B-4213BCCCD10C}
Let $A \subseteq B$ be domains of characteristic zero, $S = A \setminus \{0\}$, $K = S^{-1}A$ and $\Beul = S^{-1}B$.
Note that $\Der_A(B)$ (resp.\ $\Der_K(\Beul)$) is a Lie algebra over $A$ (resp.\ $K$).

For each $D \in \Der_A(B)$, $S^{-1}D : \Beul \to \Beul$ belongs to $\Der_K(\Beul)$ and satisfies $\ker(S^{-1}D) = S^{-1}\ker(D)$.

If $L$ is a Lie subalgebra of $\Der_A(B)$ then $L^{(S)} := \setspec{ S^{-1}D }{ D \in L }$ is a Lie algebra over $A$ and a Lie subalgebra of $\Der_K(\Beul)$ over $A$
(where we view $\Der_K(\Beul)$ as a Lie algebra over $A$),
and the map $D \mapsto S^{-1}D$ is an isomorphism $L \to L^{(S)}$ of Lie algebras over $A$.
Moreover, $\ker( L^{(S)} ) = S^{-1} \ker(L)$ and the equality $(L^{(i)})^{(S)} = (L^{(S)})^{(i)}$ is valid for all $i \in \Nat$.
Clearly, $L^{(S)} \subseteq \lnd(\Beul)$ if and only if $L \subseteq \lnd(B)$.
\end{noname}

\begin{lemma}  \label{BAD0996F-5C99-407B-B702-0A97D9510736}
Let $B$ be a domain of characteristic zero,
let $(D_1,\dots,D_m)$ be an admissible sequence of derivations of $B$,
let $A_1 \subset \cdots \subset A_m$ be its kernel sequence and let $A_{m+1} = B$. 
Let $S = A_1 \setminus \{0\}$, $\Beul = S^{-1}B$, $\Aeul_i = S^{-1}A_i$ $(1 \le i \le m+1)$ and $\Deul_i = S^{-1}D_i \in \lnd(\Beul)$  $(1 \le i \le m)$.
Then $(\Deul_1,\dots,\Deul_m)$ is an admissible sequence of derivations of $\Beul$
and $\Aeul_1 \subset \cdots \subset \Aeul_{m}$ is its kernel sequence.
Moreover, if $(s_1,\dots,s_m) \in B^m$ is a system of local slices of $(D_1,\dots,D_m)$ then:
\begin{enumerate}[\rm(a)]

\item For each $i \in \{1,\dots,m \}$, $\Aeul_{i+1} = \Aeul_1[s_1,\dots,s_{i}] = \Aeul_i[s_i] = \Aeul_i^{[1]}$ and $B \cap \Aeul_i = A_i$.
In particular, $\Beul = \Aeul_1[s_1,\dots,s_m] = \Aeul_1^{[m]}$.

\item For each $i \in \{1,\dots,m\}$, if we define $u_i = D_i(s_i) \in S$ then $\Deul_i = u_i \frac{\partial}{\partial s_i}$. Moreover,
$$
\textstyle  \Lscr(D_1,\dots,D_m)^{(S)} \subseteq L(\Deul_1,\dots,\Deul_m) = L( \frac{\partial}{\partial s_1} , \dots , \frac{\partial}{\partial s_m} )
= \Lscr( \frac{\partial}{\partial s_1} , \dots , \frac{\partial}{\partial s_m} ) 
$$
where ``\,${}^{(S)}$'' is defined in \ref{CDEAE042-8838-49BE-A23B-4213BCCCD10C} and 
$\Lscr(D_1,\dots,D_m)^{(S)}$ is a Lie subalgebra of $L(\Deul_1,\dots,\Deul_m)$ over $A_1$.

\item $\trdeg_{A_i}(A_{i+1}) = 1$ for every $i = 1, \dots, m$.

\item $\Lscr(D_1,\dots,D_m) \subseteq \lnd(B)$

\end{enumerate}
\end{lemma}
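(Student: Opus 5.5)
Everything is done after localizing at $S=A_1\setminus\{0\}$. First, by \ref{7E1F8E6C-B326-4B86-A442-CA26364C2C1C}(ii), each $\Deul_i$ is locally nilpotent with $\ker\Deul_i=S^{-1}\ker D_i$. Localization commutes with the finite intersections defining $A_i$ (an element $b/s$ lies in $S^{-1}\ker D_j$ iff $b\in\ker D_j$, since $s\in\ker D_j$). So the kernel sequence of $(\Deul_1,\dots,\Deul_m)$ is $(\Aeul_1,\dots,\Aeul_m)$. Commutativity is preserved under localization. The images $S^{-1}s_j$ form a system of local slices, because $\Deul_i(s_j)=D_i(s_j)$ and $A_1\subseteq\Aeul_1$. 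Hence $(\Deul_1,\dots,\Deul_m)$ is admissible by Lemma \ref{C0519934-54B1-477F-8677-BF8168C22DE9}(iii).

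For (a), set $u_i=D_i(s_i)\in S$, a unit of $\Aeul_1$. Fix $i$ and restrict $\Deul_i$ to $\Aeul_{i+1}$. Since $\Deul_j(s_i)=0$ for $j>i$, we have $s_i\in\Aeul_{i+1}$, and $\Deul_i$ maps $\Aeul_{i+1}$ into itself because the $D$'s commute. Its kernel there is $\Aeul_{i+1}\cap\ker\Deul_i=\Aeul_i$. As $\Deul_i(s_i)=u_i\in\Aeul_i^*$, the slice theorem \ref{SliceThm}(a) gives $\Aeul_{i+1}=\Aeul_i[s_i]=\Aeul_i^{[1]}$. Induction on $i$ yields $\Aeul_{i+1}=\Aeul_1[s_1,\dots,s_i]$, and $i=m$ gives $\Beul=\Aeul_1^{[m]}$. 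For $B\cap\Aeul_i=A_i$: if $b=a/s$ with $a\in A_i$ and $s\in S\subseteq A_i$, then $sb\in A_i$, and factorial closedness of $A_i$ (\ref{7E1F8E6C-B326-4B86-A442-CA26364C2C1C}(i)) gives $b\in A_i$. Part (c) follows at once, since transcendence degree is unchanged by localization and $\Aeul_{i+1}=\Aeul_i^{[1]}$.

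For (b), we have $\Deul_i(s_j)=\delta_{ij}u_i$ and $\Deul_i$ kills $\Aeul_1$, so $\Deul_i=u_i\,\partial/\partial s_i$ in the coordinate system $(s_1,\dots,s_m)$ over $\Aeul_1$. The kernel sequence of $(\partial_{s_1},\dots,\partial_{s_m})$ is again $\Aeul_i=\Aeul_1[s_1,\dots,s_{i-1}]$. Since the $u_i$ are units of $\Aeul_i$, we get $\Aeul_i\Deul_i=\Aeul_i\partial_{s_i}$, so the two $L$'s coincide. Next, $\Lscr(\partial_{s_1},\dots,\partial_{s_m})=L(\partial_{s_1},\dots,\partial_{s_m})$, because the relevant multipliers $s$ lie in $\Aeul_1\setminus\{0\}$ and are therefore units, and $L$ is an $\Aeul_1$-module. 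Finally, let $\delta\in\Lscr(D_1,\dots,D_m)$ with $s\delta\in L(D_1,\dots,D_m)$. Then $S^{-1}(s\delta)\in L(\Deul_1,\dots,\Deul_m)$, and dividing by the unit $s$ gives $S^{-1}\delta\in L(\Deul_1,\dots,\Deul_m)$. By \ref{CDEAE042-8838-49BE-A23B-4213BCCCD10C}, the image $\Lscr^{(S)}$ is a Lie algebra over $A_1$. This uses that $\Lscr(D_1,\dots,D_m)$ is closed under brackets, which I will verify from the containment $\Lscr^{(S)}\subseteq L(\Deul_\bullet)$ and the fact that $L(\Deul_\bullet)$ is a Lie algebra (the description in \ref{CDEAE042-8838-49BE-A23B-4213BCCCD10C} makes $D\mapsto S^{-1}D$ injective).

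For (d), every element of $L(\partial_{s_1},\dots,\partial_{s_m})=\sum\Aeul_1[s_1,\dots,s_{i-1}]\partial_{s_i}$ is triangular over $\Aeul_1$, hence locally nilpotent by \ref{CA51E646-7FBD-40C8-9978-60D269C64CB6}. So for $\delta\in\Lscr(D_1,\dots,D_m)$ the derivation $S^{-1}\delta$ is locally nilpotent, and therefore $\delta$ is locally nilpotent on $B\subseteq\Beul$. I expect the main obstacle to be bookkeeping rather than any deep step. The points needing care are checking that $\Deul_i$ preserves $\Aeul_{i+1}$, so that the slice theorem applies at each stage, and that the system of local slices remains valid over $\Aeul_1$ with $u_i$ invertible.
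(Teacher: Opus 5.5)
Your proposal is correct and follows the paper's proof. Both arguments localize at $S$, apply the slice theorem to $\Deul_i|_{\Aeul_{i+1}}$ with slice $s_i$, obtain $B\cap\Aeul_i=A_i$ from factorial closedness, and get (d) from the triangularity of the elements of $L(\partial_{s_1},\dots,\partial_{s_m})$. The one point you leave for later, that $\Lscr(D_1,\dots,D_m)$ is closed under brackets, is already taken as an observation in Definition~\ref{new-900E2A3C-9A1F-44A9-AB77-981525DA7901}. It also follows from your setup, because clearing denominators shows that $\Lscr(D_1,\dots,D_m)$ is exactly the preimage of $L(\Deul_1,\dots,\Deul_m)$ under the bracket-preserving map $D\mapsto S^{-1}D$.
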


\begin{proof}
The claim that $(\Deul_1,\dots,\Deul_m)$ is admissible and $\Aeul_1 \subset \cdots \subset \Aeul_{m}$ is its kernel sequence
easily follows from \ref{7E1F8E6C-B326-4B86-A442-CA26364C2C1C}(ii).  
The derivation $\Deul_i|_{\Aeul_{i+1}} : \Aeul_{i+1} \to \Aeul_{i+1}$ is locally nilpotent and has kernel $\Aeul_i$;
since $s_i \in \Aeul_{i+1}$ and $\Deul_i|_{\Aeul_{i+1}}(s_i) \in S \subseteq \Aeul_1^*$,
\ref{SliceThm} implies that $\Aeul_{i+1} = \Aeul_i[s_i] = \Aeul_i^{[1]}$.
Since $A_i$ is factorially closed in $B$, $B \cap \Aeul_i = A_i$ (see \ref{04E41BA5-443D-48FF-9B44-9279A9BECCA5}, \ref{7E1F8E6C-B326-4B86-A442-CA26364C2C1C}).
This proves (a), and assertions (b) and (c) easily follow.
To prove (d), note that $L( \frac{\partial}{\partial s_1} , \dots , \frac{\partial}{\partial s_m} )$ is the set of $\Aeul_1$-derivations of $\Aeul_1[s_1,\dots,s_m] = \Aeul_1^{[m]}$
that are triangular with respect to $(s_1,\dots,s_n)$ (see \ref{CA51E646-7FBD-40C8-9978-60D269C64CB6});
in particular, $L( \frac{\partial}{\partial s_1} , \dots , \frac{\partial}{\partial s_m} ) \subseteq \lnd(\Beul)$ and hence $\Lscr(D_1,\dots,D_m)^{(S)} \subseteq \lnd(\Beul)$ by (b).
 As noted in \ref{CDEAE042-8838-49BE-A23B-4213BCCCD10C}, it follows that $\Lscr(D_1,\dots,D_m) \subseteq \lnd(B)$.
\end{proof}

\begin{lemma} \label {E881691D-8557-4672-B859-92E3341ADF4B}
Let $B$ be a domain of characteristic zero, $(D_1,\dots,D_m)$ an admissible sequence of derivations of $B$, $L = L(D_1,\dots,D_m)$ and $\Lscr = \Lscr(D_1,\dots,D_m)$.
Let $(A_1,\dots,A_m)$ be the kernel sequence of $(D_1,\dots,D_m)$.
\begin{enumerate}[\rm(a)]

\item For each $i \in \{1,\dots,m\}$, there exists $(a_i,\dots,a_m) \in A_i \times \cdots \times A_m$ such that
\begin{itemize}

\item $a_i D_i, \, a_{i+1}D_{i+1}, \, \dots, \, a_m D_m \in L^{(i-1)} \setminus \{0\}$;

\item for each $j$ such that $i < j \le m$, $D_i D_{i+1} \cdots D_{j-1} a_j \neq 0$.

\end{itemize}

\item $\dl(L) = m = \dl(\Lscr)$

\item $A_i = \ker( L^{(i-1)} ) = \ker( \Lscr^{(i-1)} )$ for all $i = 1,\dots,m$.

\end{enumerate}
\end{lemma}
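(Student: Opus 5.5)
We prove (a) by descending induction on $i$, then read off (b) and (c). Throughout, fix a system of local slices $(s_1,\dots,s_m)$ and write $u_j = D_j(s_j) \in A_1\setminus\{0\}$.

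For $i=1$, take $a_j = 1$ for all $j$. Then $a_jD_j = D_j \in L = L^{(0)}$, and $D_1\cdots D_{j-1}(1)$ is not what we need. So instead we choose $a_j$ with $D_1\cdots D_{j-1}a_j \neq 0$ directly, namely $a_j = s_1 s_2\cdots s_{j-1}$. This lies in $A_j$ because $s_k\in A_{k+1}\subseteq A_j$ for $k<j$. Since the $D$'s commute and $D_k(s_l)=0$ for $k\ne l$, we get $D_1\cdots D_{j-1}(s_1\cdots s_{j-1}) = u_1\cdots u_{j-1}\neq 0$.

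The general step rests on the bracket identity: for $a\in A_k$, $b\in A_j$ with $k<j$,
$$[aD_k, bD_j] = aD_k(b)D_j,$$
because $D_j(a)=0$, as $a\in A_k\subseteq \ker D_j$. Also $aD_k(b)\in A_j$. Given $a_i,\dots,a_m$ for level $i$ with $a_jD_j\in L^{(i-1)}$, set $a'_j = a_i D_i(a_j)$ for $j>i$. Then $a'_jD_j = [a_iD_i, a_jD_j]\in L^{(i)}$. The nonvanishing condition, however, requires a clever choice of $a_i$.

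The cleanest route is to carry the explicit choice $a^{(i)}_j = c_i\, s_i s_{i+1}\cdots s_{j-1}$ at level $i$, where $c_i\in A_1\setminus\{0\}$ is a product of $u$'s.
\begin{itemize}
\item At level $1$ we have $a^{(1)}_j = s_1\cdots s_{j-1}$.
\item Bracketing $a^{(i)}_iD_i = c_iD_i$ with $a^{(i)}_jD_j$ gives $c_i D_i(c_i s_i\cdots s_{j-1}) D_j = c_i^2 u_i\, s_{i+1}\cdots s_{j-1} D_j$, which has the required form with $c_{i+1}=c_i^2u_i$.
\item For $j=i+1$ this gives $c_{i+1}D_{i+1}$.
\end{itemize}
Then $D_i\cdots D_{j-1}(c_is_i\cdots s_{j-1}) = c_iu_i\cdots u_{j-1}\ne 0$, and everything is nonzero since $B$ is a domain. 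This proves (a).

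For (b), part (a) gives $L^{(m-1)}\ni c_mD_m\neq 0$, so $\dl(L)\ge m$. By Lemma~\ref{BAD0996F-5C99-407B-B702-0A97D9510736}(b), $\Lscr$ embeds in $L(\partial_{s_1},\dots,\partial_{s_m})$ over $\Beul=\Aeul_1[s_1,\dots,s_m]$. That triangular algebra has derived length $\le m$: one shows by induction that its $k$-th derived algebra lies in $\sum_{j>k}\Aeul_1[s_1,\dots,s_{j-1}]\partial_{s_j}$, using that brackets of triangular fields only produce fields in higher indices. The isomorphism $D\mapsto S^{-1}D$ respects derived series (\ref{CDEAE042-8838-49BE-A23B-4213BCCCD10C}), so we obtain $m\le\dl(L)\le\dl(\Lscr)\le m$.

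For (c), the same filtration argument shows $\Lscr^{(i-1)}\subseteq$ (the preimage of) $\sum_{j\ge i}\Beul\,\partial_{s_j}$, all of which kill $\Aeul_i\cap B=A_i$. Hence $A_i\subseteq\ker\Lscr^{(i-1)}\subseteq\ker L^{(i-1)}$. Conversely, $L^{(i-1)}$ contains nonzero $bD_j$ for all $j\ge i$ by (a), so $\ker L^{(i-1)}\subseteq\bigcap_{j\ge i}\ker D_j=A_i$ by factorial closedness.

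The main obstacle is checking the derived-series filtration on the triangular algebra carefully, in particular that the $k$-th derived algebra kills $s_1,\dots,s_k$.
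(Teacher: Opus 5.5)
Your proof is correct and follows essentially the paper's route. In (a) you use the same slices $a_j = s_1\cdots s_{j-1}$ and the same recursion $a'_j = a_iD_i(a_j)$, which you simply solve in closed form as $c_i s_i\cdots s_{j-1}$. For (b) and (c) you use the same localization to the triangular algebra $L(\partial_{s_1},\dots,\partial_{s_m})$ to get $\dl(\Lscr)\le m$ and $A_i\subseteq\ker\Lscr^{(i-1)}$.

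There are two cosmetic slips. The induction in (a) is ascending, not descending. In (c), $\ker(bD_j)=\ker(D_j)$ for $b\neq 0$ follows just from $B$ being a domain, not from factorial closedness.
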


\begin{proof}
We define $S = A_1 \setminus \{0\}$ and $\Deul_1, \dots, \Deul_m, \Aeul_1, \dots, \Aeul_m$ as in Lemma \ref{BAD0996F-5C99-407B-B702-0A97D9510736}.
If $1 \le i \le j \le m$ then $[A_i D_i, A_j D_j] \subseteq D_i(A_j)D_j$; this implies that $L^{(1)} \subseteq L(D_2,\dots,D_m)$,
and it follows by induction that  
\begin{equation} \label {704C628C-2F09-4EFE-8EC1-102471D43361}
L^{(i-1)} \subseteq L(D_i,\dots,D_m) \quad \text{for all $i=1,\dots,m$.}
\end{equation}
In particular, $L^{(m-1)} \subseteq L(D_m)$ and hence $L^{(m)} \subseteq [L(D_m), L(D_m)] = \{0\}$. Thus,
\begin{equation}  \label {9698D969-97D0-4F85-B2E6-D40107A4E87A}
\dl( L ) \le m .
\end{equation}
Refer to \ref{CDEAE042-8838-49BE-A23B-4213BCCCD10C} for the notation ``${}^{(S)}$'' and its properties.
Let $i \in \{1,\dots,m\}$; 
we have  $\Lscr^{(S)} \subseteq L(\Deul_1,\dots,\Deul_m)$ by Lemma \ref{BAD0996F-5C99-407B-B702-0A97D9510736}(b),
so  $( \Lscr^{(i-1)} )^{(S)} = ( \Lscr^{(S)} )^{(i-1)} \subseteq L(\Deul_1,\dots,\Deul_m)^{(i-1)} \subseteq L(\Deul_i,\dots,\Deul_m)$
where the last inclusion follows by applying \eqref{704C628C-2F09-4EFE-8EC1-102471D43361} to $(\Deul_1,\dots,\Deul_m)$.  Hence,
$S^{-1}\ker ( \Lscr^{(i-1)} ) = \ker \big( ( \Lscr^{(i-1)} )^{(S)} \big)  \supseteq \ker L(\Deul_i,\dots,\Deul_m) = \Aeul_i = S^{-1}A_i$.
Since $\ker ( \Lscr^{(i-1)} )$ and $A_i$ are factorially closed in $B$, we have 
$B \cap S^{-1}\ker ( \Lscr^{(i-1)} ) = \ker ( \Lscr^{(i-1)} )$ and  $B \cap S^{-1}A_i = A_i$, so
\begin{equation} \label {EFA8928A-7E76-4924-B6DA-5C05CCAB5D36}
A_i \subseteq \ker( \Lscr^{(i-1)} ) \quad \text{for all $i=1,\dots,m$.}
\end{equation}
Applying \eqref{9698D969-97D0-4F85-B2E6-D40107A4E87A} to $(\Deul_1,\dots,\Deul_m)$ gives $\dl( L(\Deul_1,\dots,\Deul_m) ) \le m$,
and since $\Lscr^{(S)} \subseteq L(\Deul_1,\dots,\Deul_m)$ it follows that $\dl(\Lscr^{(S)}) \le m$.
We have $\dl(\Lscr) = \dl(\Lscr^{(S)})$ because $\Lscr$ and $\Lscr^{(S)}$ are isomorphic Lie algebras by \ref{CDEAE042-8838-49BE-A23B-4213BCCCD10C}, so
\begin{equation} \label {68AAEB0E-5174-4CBB-A95E-BAB0CA1159BE}
\dl(L) \le \dl(\Lscr) \le m .
\end{equation}
For each $i \in \{1,\dots,m\}$, let $\Pscr(i)$ be the assertion that 
there exists $(a_i,\dots,a_m) \in A_i \times \cdots \times A_m$ such that
\begin{enumerate}

\item[\rm (1$_i$)] $a_i D_i, \, a_{i+1}D_{i+1}, \, \dots, \, a_m D_m \in L^{(i-1)} \setminus \{0\}$;

\item[\rm (2$_i$)] for each $j$ such that $i < j \le m$, $D_i D_{i+1} \cdots D_{j-1} a_j \neq 0$.

\end{enumerate}
To prove that $\Pscr(1)$ is true, consider a system of local slices $(s_1, \dots, s_m) \in B^m$ of $(D_1,\dots,D_m)$.
We have $D_i(s_i) \in A_1 \setminus \{0\}$ for all $i=1,\dots,m$ and $D_i(s_j) = 0$ whenever $i \neq j$.
Define $a_1=1 \in A_1 \setminus \{0\}$ and $a_j = s_1 \cdots s_{j-1} \in A_j \setminus \{0\}$ for each $j \in \{2, \dots, m\}$. 
Then $(a_1,\dots,a_m) \in A_1 \times \cdots \times A_m$ and  $a_1 D_1, \, \dots, \, a_m D_m \in L^{(0)} \setminus \{0\}$, so $(1_1)$ is true.
For each $j \in \{2, \dots, m\}$,
$D_1 \cdots D_{j-1} a_j = D_1 \cdots D_{j-1}(s_1 \cdots s_{j-1}) = D_1(s_1) \cdots D_{j-1}(s_{j-1}) \in A_1 \setminus \{0\}$,
so $(2_1)$ is true. This shows that $\Pscr(1)$ is true.

Suppose that $i$ is such that $1 \le i < m$ and $\Pscr(i)$ is true.
Let $(a_i,\dots,a_m) \in A_i \times \cdots \times A_m$ be such that (1$_i$) and (2$_i$) are satisfied.
For each $j \in \{i+1, \dots, m\}$, define $a_j' = a_i D_i(a_j)$.
It is clear that $(a_{i+1}', \dots, a_m') \in A_{i+1} \times \cdots \times A_m$.
Let us check that $(a_{i+1}', \dots, a_m')$ satisfies (1$_{i+1}$) and (2$_{i+1}$).
Conditions $(1_{i})$ and $(2_{i})$ imply that $a_i \neq 0$ and $D_{i}(a_{i+1}) \neq 0$, so $a_{i+1}' = a_i D_i(a_{i+1}) \neq 0$ and hence $a_{i+1}'D_{i+1} \neq 0$.
If $j$ is such that $i+1 < j \le m$ then we have $a_i \neq 0$ by $(1_i)$ and $D_i D_{i+1} \cdots D_{j-1} a_j \neq 0$ by $(2_i)$, so
$$
D_{i+1} D_{i+2} \cdots D_{j-1} a_j'
= D_{i+1} D_{i+2} \cdots D_{j-1}(a_i D_i(a_j)) 
= a_i D_i D_{i+1} D_{i+2} \cdots D_{j-1}( a_j) \neq 0 ,
$$
showing that $(2_{i+1})$ holds. Since $(2_{i+1})$ implies that $a_{i+2}', \dots, a_m' \neq 0$, we have $a_{i+1}'D_{i+1}, \dots, a_{m}'D_{m} \neq 0$.
If $j \in \{i+1, \dots, m\}$ then $a_i D_i, a_j D_j \in L^{(i-1)}$, so $L^{(i)} \ni [a_i D_i, a_j D_j] = a_i D_i(a_j) D_j = a_j' D_j \neq 0$,
showing that (1$_{i+1}$) holds.  Hence, $\Pscr(i+1)$ is true.
By induction, $\Pscr(1), \dots, \Pscr(m)$ are true; this proves (a).

By $\Pscr(m)$, there exists $\alpha_m \in A_m$ such that $\alpha_m D_m \in L^{(m-1)} \setminus \{0\}$;
so $L^{(m-1)} \neq 0$.
Together with \eqref{68AAEB0E-5174-4CBB-A95E-BAB0CA1159BE}, this implies that (b) is true.
For each $i \in \{1,\dots,m\}$, we have $a_i D_i, \, a_{i+1}D_{i+1}, \, \dots, \, a_m D_m \in L^{(i-1)} \setminus \{0\}$ by (a),
so $A_i = \ker\{a_iD_i, a_{i+1}D_{i+1}, \dots, a_mD_m\} \supseteq \ker L^{(i-1)} \supseteq \ker \Lscr^{(i-1)} \supseteq A_i$,
where the last inclusion is \eqref{EFA8928A-7E76-4924-B6DA-5C05CCAB5D36}.
This proves (c), and completes the proof of the Lemma.
\end{proof}

\begin{notation}  \label {900E2A3C-9A1F-44A9-AB77-981525DA7901}
Let $B$ be a domain of characteristic zero.
Given an arbitrary increasing sequence $A_1 \subseteq \cdots \subseteq A_m$ of subrings of $B$,
we define $\Lscr(A_1,\dots,A_m)$ to be the set of all $D \in \lnd_{A_1}(B)$
satisfying $D(A_{i}) \subseteq A_i$ for all $i=1,\dots,m$. Note that, in this generality, $\Lscr(A_1,\dots,A_m)$ is not necessarily closed under addition.
% \footnote{For instance, let $B = \Comp^{[2]}$, $A_1=\Comp$ and $A_2=B$; then $\Lscr(A_1,A_2) = \lnd(B)$ is not closed under addition.}
({\it Remark:} If $m=0$ then $\Lscr(A_1,\dots,A_m) = \Lscr( \, ) = \lnd_{A_1}(B) = \lnd_B(B) = \{0\}$ because $A_1= A_{m+1}=B$.)
\end{notation}

\begin{proposition} \label {0615A9A0-CFB7-445C-B34E-44BA30B13CFA}
Let $B$ be a domain of characteristic zero, $(D_1,\dots,D_m)$ an admissible sequence of derivations of $B$,
and $A_1 \subset \cdots \subset A_m$ the kernel sequence of $(D_1,\dots,D_m)$.
Then $\Lscr(D_1,\dots,D_m) = \Lscr(A_1,\dots,A_m)$.
\end{proposition}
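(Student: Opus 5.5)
We prove the two inclusions separately, working throughout in the localization $\Beul = S^{-1}B$ with $S = A_1\setminus\{0\}$ as in Lemma \ref{BAD0996F-5C99-407B-B702-0A97D9510736}. Fix a system of local slices $(s_1,\dots,s_m)$. By part (a) of that lemma, $\Beul = \Aeul_1[s_1,\dots,s_m]$ with $\Aeul_{i+1}=\Aeul_1[s_1,\dots,s_i]$, and $\Deul_i = u_i\,\partial/\partial s_i$.

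\emph{Inclusion $\Lscr(D_1,\dots,D_m)\subseteq \Lscr(A_1,\dots,A_m)$.} Let $\delta\in\Lscr(D_1,\dots,D_m)$. By Lemma \ref{BAD0996F-5C99-407B-B702-0A97D9510736}(d), $\delta\in\lnd(B)$. Choose $s\in S$ with $s\delta=\sum_j a_jD_j$, where $a_j\in A_j$. For $x\in A_i$ we have $D_j(x)=0$ for $j\ge i$. Hence $s\delta(x)=\sum_{j<i}a_jD_j(x)$. Each term lies in $A_i$, since $a_j\in A_j\subseteq A_i$ and $D_j(A_i)\subseteq A_i$ (the $D$'s commute, so $D_j$ preserves $\ker D_k$). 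Thus $\delta(x)\in B\cap S^{-1}A_i=A_i$, using Lemma \ref{BAD0996F-5C99-407B-B702-0A97D9510736}(a). Taking $i=1$ gives $\delta(A_1)=0$, so $\delta$ is an $A_1$-derivation. Hence $\delta\in\Lscr(A_1,\dots,A_m)$.

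\emph{Inclusion $\Lscr(A_1,\dots,A_m)\subseteq\Lscr(D_1,\dots,D_m)$.} Let $D\in\lnd_{A_1}(B)$ with $D(A_i)\subseteq A_i$ for all $i$. Then $S^{-1}D$ is an $\Aeul_1$-derivation of $\Aeul_1^{[m]}$ with $S^{-1}D(\Aeul_i)\subseteq\Aeul_i$. In particular $S^{-1}D(s_i)\in\Aeul_{i+1}=\Aeul_1[s_1,\dots,s_i]$, and also $A_{m+1}=B$ is trivially preserved. Write
\[
S^{-1}D=\sum_i S^{-1}D(s_i)\,\partial/\partial s_i=\sum_i \bigl(S^{-1}D(s_i)/u_i\bigr)\Deul_i .
\]
The key issue is the index shift: the coefficient of $\partial/\partial s_i$ lies in $\Aeul_{i+1}$, but membership in $L(\Deul_1,\dots,\Deul_m)$ requires it to lie in $\Aeul_i$. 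So we must show $S^{-1}D(s_i)\in\Aeul_i$. This is where local nilpotency enters, and it is the main obstacle.

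Restrict $S^{-1}D$ to $\Aeul_{i+1}=\Aeul_i[s_i]$. It is locally nilpotent and maps $\Aeul_i$ into itself, and $S^{-1}D(s_i)=f(s_i)$ with $f\in\Aeul_i[T]$. We claim $\deg_T f\le 0$. Argue by degree in $s_i$: if $\deg f=d\ge1$, compute the $s_i$-leading behaviour of the iterates $D^n(s_i)$. If $d\ge2$, the degrees grow like $d^n$. If $d=1$, write $f=a s_i+b$. Induct on $i$ from below: $S^{-1}D$ restricted to $\Aeul_i$ is locally nilpotent, and this forces $a\neq0$ to make $D^n(s_i)$ have leading coefficient involving $a^n$. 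To make this precise, use the standard fact that a locally nilpotent derivation $\delta$ of $R[t]$ with $\delta(R)\subseteq R$ satisfies $\delta(t)\in R$. Since $R$ is a domain of characteristic zero, this can be proved by passing to $\Frac(R)$ (extend $\delta$ to $K[t]$; an LND of $K[t]$ preserving $K$ must have $\deg_t\delta(t)\le0$, by the degree-function argument $\deg(\delta x)\le\deg x-1$ for LNDs).

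Then $S^{-1}D=\sum_i c_i\Deul_i$ with $c_i\in\Aeul_i$. Clearing a common denominator $s\in S$ gives $s c_i\in B\cap \Aeul_i=A_i$, so $sD\in L(D_1,\dots,D_m)$. Hence $D\in\Lscr(D_1,\dots,D_m)$.
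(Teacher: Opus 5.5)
Your proof is correct and follows the paper's argument essentially step for step. The first inclusion goes through $D_j(A_i)\subseteq A_i$ and $B\cap S^{-1}A_i=A_i$. The second localizes at $S=A_1\setminus\{0\}$, shows $S^{-1}D(s_i)\in\Aeul_i$ from the fact that a locally nilpotent derivation of $R[t]$ stabilizing $R$ sends $t$ into $R$ (the paper simply cites this as Principle 6 of Freudenburg's book), and then clears denominators with $u_1\cdots u_m$.

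One caveat concerns your parenthetical proof of that fact, which does not work. The extension of $\delta$ to $\Frac(R)[t]$ is not locally nilpotent unless $\delta(R)=0$ (e.g.\ $\partial_x$ on $\bk[x][t]$), and here $S^{-1}D$ need not vanish on $\Aeul_i$. Either just cite the fact, or argue directly:
\begin{itemize}
\item if $\deg_t\delta(t)\ge 2$, the $t$-degree of $\delta^n(t)$ strictly increases;
\item if $\delta(t)=at+b$ with $a\neq0$, then $\delta^n(t)=c_nt+e_n$ with $c_0=1$ and $c_{n+1}=\delta(c_n)+ac_n$. Having $c_n\neq0=c_{n+1}$ is impossible, because $c_n\mid\delta(c_n)$ forces $\delta(c_n)=0$ for a locally nilpotent $\delta$, which would give $ac_n=0$.
\end{itemize}
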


\begin{proof}
We claim that if $i,j \in \{1,\dots,m\}$ and $a_i \in A_i$ then $a_i D_i(A_j) \subseteq A_j$.
Indeed, if $j \le i$ then $A_j \subseteq \ker(D_i)$, so $a_iD_i(A_j) = \{0\} \subseteq A_j$;
and if $j > i$ then $a_i \in A_i \subseteq A_j$ and $D_i(A_j) \subseteq A_j$, so $a_iD_i(A_j) \subseteq A_j$.
This proves the above claim, and it follows that each element of $L(D_1,\dots,D_m)$ maps each $A_j$ into itself;
since $L(D_1,\dots,D_m) \subseteq \lnd_{A_1}(B)$, we obtain that $L(D_1,\dots,D_m) \subseteq \Lscr(A_1,\dots,A_m)$.
Using that each $A_j$ is factorially closed in $B$, we deduce that if $\delta \in \Der(B)$ and $s \in A_1 \setminus \{0\}$
are such that $s \delta \in\Lscr(A_1,\dots,A_m)$, then $\delta$ maps each $A_j$ into itself;
since $s\delta \in \lnd_{A_1}(B)$ by definition of $\Lscr(A_1,\dots,A_m)$,
we have $\delta \in \lnd_{A_1}(B)$ by \ref{7E1F8E6C-B326-4B86-A442-CA26364C2C1C}(iii), so $\delta \in\Lscr(A_1,\dots,A_m)$.
Consequently, $\Lscr(D_1,\dots,D_m) \subseteq \Lscr(A_1,\dots,A_m)$.

To prove the reverse inclusion, we use the notation of Lemma \ref{BAD0996F-5C99-407B-B702-0A97D9510736} (including $A_{m+1}=B$).
Consider $D \in \Lscr(A_1,\dots,A_m)$ and its localization $\Deul = S^{-1}D : \Beul \to \Beul$;
clearly, $\Deul \in \Lscr( \Aeul_1, \dots, \Aeul_m )$.

If $i \in \{ 1,\dots,m \}$ then $\Aeul_{i+1} = \Aeul_i[s_i] = \Aeul_i^{[1]}$;
since $\Deul$ is locally nilpotent and satisfies $\Deul(\Aeul_i) \subseteq \Aeul_i$ and $\Deul(\Aeul_i[s_i]) \subseteq \Aeul_i[s_i]$,
we have $\Deul(s_i) \in \Aeul_i$ (see for instance Principle 6 on page 23 of \cite{Freud:Book-new}).
Consequently, $D(s_i) \in \Aeul_i \cap B = A_i$ where the last equality follows from the fact that $A_i$ is factorially closed in $B$.
Recall from Lemma \ref{BAD0996F-5C99-407B-B702-0A97D9510736} that (for each $i=1,\dots,m$)
$\Deul_i = u_i \frac{\partial}{\partial s_i}$, where $u_i = D_i(s_i) \in S$.
Since $\Deul$ is an $\Aeul_1$-derivation of $\Beul = \Aeul_1[s_1,\dots,s_m] = \Aeul_1^{[m]}$,
we have $\Deul = \sum_{i=1}^m \Deul(s_i) \frac{\partial}{\partial s_i}
= \sum_{i=1}^m \Deul(s_i) \frac1{u_i} \Deul_i$.
Let $u = u_1 \cdots u_m \in S$ and (for each $i=1,\dots,m$) $\hat u_i = u/u_i \in S$.
Then $u \Deul = \sum_{i=1}^m \Deul(s_i) \hat u_i \Deul_i$
and consequently $u D = \sum_{i=1}^m D(s_i) \hat u_i D_i$.
As $D(s_i) \hat u_i \in A_i$ for all $i$, we have $u D \in L(D_1,\dots,D_m)$ and hence $D \in \Lscr(D_1,\dots,D_m)$.
This shows that $\Lscr(A_1,\dots,A_m) \subseteq \Lscr(D_1,\dots,D_m)$, which completes the proof of the Lemma.
\end{proof}

\begin{corollary} \label {ujd3876g20oigjh2vy68}
Let $B$ be a domain of characteristic zero and let $(D_1, \dots, D_m)$ and $(E_1, \dots, E_n)$ be admissible sequences of derivations of $B$.
$$
\Lscr(D_1, \dots,D_m) = \Lscr(E_1, \dots, E_n) \ \iff \ \begin{minipage}[t]{.5\textwidth}
$(D_1, \dots, D_m)$ and $(E_1, \dots, E_n)$ have the same\\ kernel sequence.
\end{minipage}
$$
Moreover, if these conditions hold then $m=n$.
\end{corollary}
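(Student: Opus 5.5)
The reverse implication ($\Leftarrow$) follows from Proposition \ref{0615A9A0-CFB7-445C-B34E-44BA30B13CFA}, which expresses $\Lscr(D_1,\dots,D_m)$ purely in terms of the kernel sequence. If $(D_1,\dots,D_m)$ and $(E_1,\dots,E_n)$ have the same kernel sequence $(A_1,\dots,A_m)$, then necessarily $m=n$, since the two sequences of rings are literally equal. The Proposition then gives
$$\Lscr(D_1,\dots,D_m)=\Lscr(A_1,\dots,A_m)=\Lscr(E_1,\dots,E_n).$$

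For the forward implication ($\Rightarrow$), the plan is to recover both $m$ and the kernel sequence intrinsically from the Lie algebra $\Lscr=\Lscr(D_1,\dots,D_m)$, using Lemma \ref{E881691D-8557-4672-B859-92E3341ADF4B}. Suppose $\Lscr(D_1,\dots,D_m)=\Lscr(E_1,\dots,E_n)=:\Lscr$.

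First, part (b) of that lemma gives $\dl(\Lscr)=m$ when applied to the $D$'s and $\dl(\Lscr)=n$ when applied to the $E$'s. Since the derived length is an invariant of the Lie algebra, $m=n$. This also proves the final assertion of the statement.

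Next, let $(A_1,\dots,A_m)$ and $(A'_1,\dots,A'_m)$ be the kernel sequences of the $D$'s and the $E$'s. Part (c) of the same lemma gives, for each $i\in\{1,\dots,m\}$,
$$A_i=\ker(\Lscr^{(i-1)})=A'_i.$$
The right-hand side depends only on $\Lscr$, so the two kernel sequences coincide.

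Since the earlier lemma and proposition carry all the weight, there is no real obstacle. The only point needing care is the degenerate case $m=0$ or $n=0$. There $\Lscr=\{0\}$ and the derived length is $0$, so both sequences are empty, and the argument goes through.
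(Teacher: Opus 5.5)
Your proof is correct and follows the paper's route: the paper derives this corollary directly from Lemma~\ref{E881691D-8557-4672-B859-92E3341ADF4B}(c) and Proposition~\ref{0615A9A0-CFB7-445C-B34E-44BA30B13CFA}, exactly as you do. Your use of part~(b) of that lemma to get $m=n$ before comparing the kernel sequences term by term just makes explicit a step the paper leaves to the reader.
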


\begin{proof}
This follows from Lemma \ref{E881691D-8557-4672-B859-92E3341ADF4B}(c) and
Proposition \ref{0615A9A0-CFB7-445C-B34E-44BA30B13CFA}.
\end{proof}

\begin{corollary} \label {58D561A8-FF48-417A-9D8F-635BA02F689F}
Let $B$ be a domain of characteristic zero.
If $(D_1,\dots,D_m)$ is admissible
and, for each $i=1,\dots,m$, $D_i = a_i \delta_i$ for some $a_i \in B$ and $\delta_i \in \Der(B)$,
then $(\delta_1,\dots,\delta_m)$ is admissible and $\Lscr(D_1,\dots,D_m) = \Lscr(\delta_1,\dots,\delta_m)$.
\end{corollary}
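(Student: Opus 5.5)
The plan is to check the hypotheses of admissibility for $(\delta_1,\dots,\delta_m)$ one at a time, using Corollary~\ref{FE9079CB-02CC-4FD7-8149-9E402ABC4851} and the fact that each $D_i \neq 0$. Then I will compare the kernel sequences of the two tuples and conclude with Corollary~\ref{ujd3876g20oigjh2vy68}.

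First, no $D_i$ is zero: if some $D_i=0$, the $D_j$ would be linearly dependent over $B$. Hence $a_i \neq 0$ and $\delta_i \neq 0$ for all $i$.

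Next, by \ref{7E1F8E6C-B326-4B86-A442-CA26364C2C1C}(iii), $D_i = a_i\delta_i \in \lnd(B)$ implies that $\delta_i \in \lnd(B)$ and $a_i \in \ker\delta_i$. Since $B$ is a domain and $a_i \neq 0$, we also get $\ker D_i = \ker \delta_i$.

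For commutativity, fix $i \neq j$. The derivations $a_i\delta_i$ and $a_j\delta_j$ are locally nilpotent and commute, so Corollary~\ref{FE9079CB-02CC-4FD7-8149-9E402ABC4851} (direction (i)$\Rightarrow$(ii)) gives $\delta_i\delta_j = \delta_j\delta_i$. For $i=j$ there is nothing to check.

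For linear independence, suppose $\sum b_i\delta_i = 0$ with $b_i \in B$. Multiplying by $a = a_1\cdots a_m \neq 0$ gives
$$
\textstyle \sum_i \bigl(b_i \prod_{k\neq i} a_k\bigr) D_i = 0 .
$$
The independence of the $D_i$ then forces $b_i \prod_{k\neq i}a_k = 0$, hence $b_i = 0$ since $B$ is a domain. (Alternatively, one can use condition (ii) of Lemma~\ref{C0519934-54B1-477F-8677-BF8168C22DE9}, since the kernel sequences coincide.) So $(\delta_1,\dots,\delta_m)$ is admissible.

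Finally, $\ker\delta_j = \ker D_j$ for every $j$, so the two sequences have the same kernel sequence. Corollary~\ref{ujd3876g20oigjh2vy68} then gives $\Lscr(D_1,\dots,D_m) = \Lscr(\delta_1,\dots,\delta_m)$.

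None of these steps is a real obstacle. The only care needed is to rule out $a_i = 0$ (by nonvanishing of $D_i$) before applying \ref{7E1F8E6C-B326-4B86-A442-CA26364C2C1C}(iii) and Corollary~\ref{FE9079CB-02CC-4FD7-8149-9E402ABC4851}, which require nonzero factors.
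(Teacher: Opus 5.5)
Your proof is correct and follows essentially the same route as the paper. Corollary~\ref{FE9079CB-02CC-4FD7-8149-9E402ABC4851} (together with \ref{7E1F8E6C-B326-4B86-A442-CA26364C2C1C}(iii)) gives local nilpotence and commutativity, $\ker(a_i\delta_i)=\ker\delta_i$ gives the same kernel sequence, and Corollary~\ref{ujd3876g20oigjh2vy68} finishes. Your explicit exclusion of $a_i=0$ and your direct check of $B$-linear independence simply spell out what the paper leaves as ``clearly''.
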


\begin{proof}
Corollary \ref{FE9079CB-02CC-4FD7-8149-9E402ABC4851} implies that $\delta_1,\dots,\delta_m$ belong to $\lnd(B)$ and pairwise commute,
and clearly $(\delta_1,\dots,\delta_m)$ has the same kernel sequence as $(D_1,\dots,D_m)$.
So $(\delta_1,\dots,\delta_m)$ is admissible
and Corollary \ref{ujd3876g20oigjh2vy68} gives $\Lscr(D_1,\dots,D_m) = \Lscr(\delta_1,\dots,\delta_m)$.
\end{proof}

\begin{definition}   \label {E1B94B5F-4823-4C89-95DD-4299351970B9}
Given a domain $B$ of characteristic zero, we define the set
$$
\DJL(B) = \setspec{ \Lscr(D_1,\dots,D_m) }{ \text{$(D_1,\dots,D_m)$ is an admissible sequence of derivations of $B$} } .
$$
If $\Lscr \in \DJL(B)$ and $A = \ker(\Lscr)$ then $\Lscr$ is a Lie subalgebra of $\Der_{A}(B)$
and we have $\Lscr \subseteq \lnd_{A}(B)$ by Lemma \ref{BAD0996F-5C99-407B-B702-0A97D9510736}(d). The elements of $\DJL(B)$ are called ``de Jonqui\`eres-like subalgebras of $\Der(B)$'',
but instead of using that name we systematically write ``$\Lscr \in \DJL(B)$''.
\end{definition}

We are interested in the poset $(\DJL(B),\subseteq)$, and in particular in its maximal elements.

\begin{remark} \label {D9D95302-2C52-4CBD-A475-2C8367B13B8F}
Let $B$ be a domain of characteristic zero and $\Lscr \in \DJL(B)$.
Then $\Lscr = \Lscr(D_1,\dots,D_m)$ for some admissible sequence $(D_1,\dots,D_m)$ of derivations of $B$.
For any such $(D_1,\dots,D_m)$, we have
$$
m = \trdeg(B : \ker(\Lscr) )  = \dl(\Lscr) .
$$
Indeed, let $A_1 \subset \cdots \subset A_m$ be the kernel sequence of $(D_1,\dots,D_m)$ and let $A_{m+1} = B$.
We have $A_1 = \ker(\Lscr)$ and Lemma \ref{BAD0996F-5C99-407B-B702-0A97D9510736} shows that $\trdeg(A_{i+1} : A_i) = 1$ for all $i=1,\dots,m$,
so $m = \trdeg(B : \ker(\Lscr) )$. The fact that $m = \dl(\Lscr)$ is Lemma \ref{E881691D-8557-4672-B859-92E3341ADF4B}(b).
By part (c) of the same Lemma,
all admissible sequences $(D_1,\dots,D_m)$ satisfying $\Lscr = \Lscr(D_1,\dots,D_m)$ have the same kernel sequence.
So the following definition is logically sound.
\end{remark}

\begin{definition}  \label {2B03F8AD-D2DB-4E88-9526-69E7D6B6E44E}
Let $B$ be a domain of characteristic zero and let $\Lscr \in \DJL(B)$.
We define the {\it kernel sequence of $\Lscr$}, denoted $\kerseq(\Lscr)$,
to be the kernel sequence of any admissible sequence $(D_1,\dots,D_m)$ satisfying $\Lscr = \Lscr(D_1,\dots,D_m)$.
\end{definition}

\begin{notation}
Given a domain $B$ of characteristic zero,
let $\KS(B)$ be the set of tuples $(A_1,\dots,A_m)$ such that $m\ge0$,
$A_1 \subseteq \cdots \subseteq A_m$ are subrings of $B$,
and if we define $A_{m+1}=B$ and $S = A_1 \setminus \{0\}$ then  
$$
\text{$B \cap S^{-1}A_i = A_i$ 
and $S^{-1}A_{i+1} = (S^{-1}A_i)^{[1]}$\ \ for all $i = 1,\dots, m$.}
$$
We define a partial order $\preceq$ on $\KS(B)$ by declaring that,
given $(A_1,\dots,A_m), (B_1,\dots,B_n) \in \KS(B)$,
$$
(A_1,\dots,A_m) \preceq (B_1,\dots,B_n) \iff \ m \le n \text{ and } (A_1, \dots, A_{m}) = (B_{n-m+1}, \dots, B_{n}) .
$$
\end{notation}

We will see in Proposition \ref{6CFC46F0-0103-4BE1-8ACB-BDA7B53CB093}
that the map $\Lscr \mapsto \kerseq(\Lscr)$ is an injective homomorphism of posets from $(\DJL(B), \subseteq)$ to $(\KS(B),\preceq)$,
and that it is an isomorphism of posets when $B$ is finitely generated as an algebra over a field.

\begin{remark} \label {B32B4DC5-A0C7-4017-8426-C822141AF8D4}
Let $B$ be a domain of characteristic zero.
\begin{enumerate}[\rm(a)]

\item If $(A_1,\dots,A_m) \in \KS(B)$ then $A_1,\dots,A_m$ are factorially closed subrings of $B$.
This follows from $A_i = B \cap S^{-1}A_i$ together with the fact that $S^{-1}A_i$ is factorially closed in $S^{-1}B = (S^{-1}A_i)^{[m+1-i]}$.
Moreover, $\trdeg_{A_i}(A_{i+1}) = 1$ for all $i = 1, \dots, m$, where we define $A_{m+1}=B$.
Also note that if $\bk$ is a field and $\bk \subseteq B$ then $\bk \subseteq A_1$, because $A_1$ is factorially closed in $B$.

\item If $(A_1,\dots,A_m), (B_1,\dots,B_n) \in \KS(B)$ and $(A_1,\dots,A_m) \prec (B_1,\dots,B_n)$ then
$m<n$, $B_1 \subset A_1$, and $\trdeg_{B_1}(A_1) = n-m > 0$.

\end{enumerate}
\end{remark}

\begin{lemma}  \label {860EF548-B386-4610-B6AC-3864D47DDD15}
Let $B$ be a domain of characteristic zero.
\begin{enumerate}[\rm(a)]

\item If $\Lscr \in \DJL(B)$ then $\kerseq(\Lscr) \in \KS(B)$.

\end{enumerate}
For parts {\rm(b)} and {\rm(c)}, assume that $B$ is a finitely generated algebra over a field.
\begin{enumerate}[\rm(a)]
\addtocounter{enumi}{1}

\item Each element of $\KS(B)$ is the kernel sequence of some element of $\DJL(B)$.

\item If $(A_1,\dots,A_m) \in \KS(B)$ and $A_{m+1} :=B$ then $A_{i} \in \klnd(A_{i+1})$ for all $i = 1,\dots,m$.

\end{enumerate}
\end{lemma}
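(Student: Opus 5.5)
For part (a), I would simply unwind the definitions. Write $\Lscr = \Lscr(D_1,\dots,D_m)$ with $(D_1,\dots,D_m)$ admissible, so that $\kerseq(\Lscr) = (A_1,\dots,A_m)$ is the kernel sequence of $(D_1,\dots,D_m)$. Lemma \ref{BAD0996F-5C99-407B-B702-0A97D9510736}(a) gives, with $S = A_1\setminus\{0\}$, both $B\cap S^{-1}A_i = A_i$ and $S^{-1}A_{i+1} = (S^{-1}A_i)[s_i] = (S^{-1}A_i)^{[1]}$ for all $i$, where $A_{m+1}=B$. These are exactly the defining conditions of $\KS(B)$.

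For part (b), let $(A_1,\dots,A_m)\in\KS(B)$, $S = A_1\setminus\{0\}$, $K = S^{-1}A_1$ and $\Beul = S^{-1}B$.
\begin{enumerate}[(1)]
\item By definition of $\KS(B)$ I can choose $t_1,\dots,t_m$ with $S^{-1}A_{i+1} = (S^{-1}A_i)[t_i]$, each $t_i$ transcendental over $S^{-1}A_i$. Inductively, $S^{-1}A_{i} = K[t_1,\dots,t_{i-1}]$ and $\Beul = K[t_1,\dots,t_m] = K^{[m]}$.
\item The partial derivatives $\partial_i = \partial/\partial t_i$ are pairwise commuting locally nilpotent $K$-derivations of $\Beul$. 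Moreover $\bigcap_{j\ge i}\ker\partial_j = K[t_1,\dots,t_{i-1}] = S^{-1}A_i$.
\item Since $B$ is finitely generated over a field, write $B = \bk[b_1,\dots,b_r]$. Then there is a single $s\in S$ such that $s\,\partial_i(b_k)\in B$ for all $i,k$. Hence each $D_i := s\partial_i$ maps $B$ into $B$.
\item Restricted to $B$, the $D_i$ are locally nilpotent, because they are restrictions of $s\partial_i\in\lnd(\Beul)$, which holds as $s\in K\subseteq\ker\partial_i$. They pairwise commute, again because $s$ lies in every $\ker\partial_j$.
\item The kernel sequence is the right one: $\bigcap_{j\ge i}\ker(D_j) = B\cap S^{-1}A_i = A_i$, using the $\KS$ condition.
\item The sequence is admissible. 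Lemma \ref{C0519934-54B1-477F-8677-BF8168C22DE9}(ii) holds since $A_i\ne A_{i+1}$, because $\trdeg_{A_i}A_{i+1}=1$ by Remark \ref{B32B4DC5-A0C7-4017-8426-C822141AF8D4}(a).
\end{enumerate}
Therefore $\Lscr(D_1,\dots,D_m)\in\DJL(B)$ has kernel sequence $(A_1,\dots,A_m)$.

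For part (c), take the admissible $(D_1,\dots,D_m)$ produced in (b). Then $A_{i+1} = \bigcap_{j>i}\ker D_j$ is stable under $D_i$, since $D_i$ commutes with each $D_j$. The restriction $D_i|_{A_{i+1}}$ is locally nilpotent with kernel $A_{i+1}\cap\ker D_i = A_i$. It is nonzero because $A_i\neq A_{i+1}$. Hence $A_i\in\klnd(A_{i+1})$. For $i=m$ this reads $A_m = \ker D_m\in\klnd(B)$.

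The only step needing care is the denominator-clearing in (b), step (3). This is precisely where finite generation of $B$ is used: one must produce a single $s\in S$ that works for all generators and all $i$. One must also check that $s\in\ker\partial_j$ for every $j$, so that local nilpotence and commutativity survive multiplication by $s$. Everything else is routine bookkeeping with factorial closedness and the identity $B\cap S^{-1}A_i=A_i$.
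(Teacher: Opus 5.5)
Your proof is correct and follows the paper's argument essentially step for step: localize at $S=A_1\setminus\{0\}$, choose coordinates $t_i$, take the partial derivatives $\partial/\partial t_i$, clear denominators using finite generation of $B$, and restrict $D_i$ to $A_{i+1}$ for (c). Using one common denominator $s$ instead of the paper's separate $\sigma_i$ is harmless, and the only detail you leave implicit is in step (3): you need $\bk\subseteq A_1$ (Remark \ref{B32B4DC5-A0C7-4017-8426-C822141AF8D4}(a)), so that $s\partial_i$ is a $\bk$-derivation and checking it on the generators $b_k$ suffices.
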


\begin{proof}
Part (a) follows from Lemma \ref{BAD0996F-5C99-407B-B702-0A97D9510736}.

(b) Let $(A_1,\dots,A_m) \in \KS(B)$ and define $A_{m+1}=B$.
Let $\bk \subseteq B$ be a field over which $B$ is a finitely generated algebra, and note that $\bk \subseteq A_1$ by Remark  \ref{B32B4DC5-A0C7-4017-8426-C822141AF8D4}.
Let $S = A_1 \setminus \{0\}$, $\Beul = S^{-1}B$, and $\Aeul_i = S^{-1}A_i$ $(1 \le i \le m+1)$.
For each $i=1,\dots,m$, we have $\Aeul_{i+1} = \Aeul_i^{[1]}$, so we can choose $x_i \in \Aeul_{i+1}$ such that 
$\Aeul_{i+1} = \Aeul_i[x_i] = \Aeul_i^{[1]}$.
So $\Beul = \Aeul_1[x_1,\dots,x_m] = \Aeul_1^{[m]}$, and we may consider the partial derivatives $\partial_{x_1}, \dots, \partial_{x_m} : \Beul \to \Beul$.
For each $i = 1,\dots,m$, the fact that $\bk \subseteq \ker( \partial_{x_i} )$ and that $B$ is finitely generated as a $\bk$-algebra
implies that there exists $\sigma_i \in S$ such that $\sigma_i \partial_{x_i} : \Beul \to \Beul$ maps $B$ into itself;
so we may consider the restriction $D_i : B \to B$ of $\sigma_i \partial_{x_i}$.
The derivations $D_1,\dots,D_m \in \lnd(B)$ pairwise commute,
and since $\ker(D_j) = B \cap \ker(\partial_{x_j})$ for each $j = 1,\dots,m$, we have
$\bigcap_{j\ge i} \ker(D_j) = B \cap \bigcap_{j\ge i} \ker(\partial_{x_j}) = B \cap \Aeul_1[x_1,\dots,x_{i-1}] = B \cap \Aeul_i = A_i$ for each $i=1,\dots,m$.
That is, the kernel sequence of $(D_1,\dots,D_m)$ is $(A_1,\dots,A_m)$.
As $A_i \neq A_{i+1}$ for all $i=1,\dots,m$, $(D_1,\dots,D_m)$ is admissible.
So $\Lscr(D_1,\dots,D_m)$ is an element of $\DJL(B)$ whose kernel sequence is $(A_1,\dots,A_m)$. This proves (b).

(c) Part (b) implies that there exists an admissible sequence $(D_1,\dots,D_m)$ of derivations of $B$ whose kernel sequence is $(A_1,\dots,A_m)$.
If $i \in \{1,\dots,m\}$ then $D_i|_{A_{i+1}} : A_{i+1} \to A_{i+1}$ is a nonzero locally nilpotent derivation of $A_{i+1}$ whose kernel is $A_i$;
so $A_i \in \klnd(A_{i+1})$, which proves (c).
\end{proof}

\begin{lemma}  \label {BC319DE3-1827-4876-96D9-C0E8DBBECECB}
Let $B$ be a domain of characteristic zero and $(D_1,\dots,D_m)$ an admissible sequence of derivations of $B$
with kernel sequence $A_1 \subset \cdots \subset A_{m}$. 
For each $j \in \{1,\dots,m\}$,
$$
\setspec{ D \in \Lscr(D_1,\dots,D_m) }{ A_j \subseteq \ker(D) } = \Lscr_{A_1 \setminus \{0\}}(D_j, D_{j+1}, \dots, D_m) .
$$
\end{lemma}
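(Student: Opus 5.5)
The plan is to prove the two inclusions directly from the definitions, using a system of local slices to handle the harder one. Throughout, fix $j$ and set $S = A_1 \setminus \{0\}$.

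First, check that the right-hand side makes sense. The subsequence $(D_j,\dots,D_m)$ consists of pairwise commuting locally nilpotent derivations that are linearly independent over $B$, so it is admissible. Its kernel sequence is $A_j \subset \cdots \subset A_m$, since $\bigcap_{k=i}^m \ker(D_k) = A_i$ for $i \ge j$. Since $A_1 \subseteq A_j$, the set $S$ is a multiplicatively closed subset of $A_j \setminus \{0\}$, so $\Lscr_S(D_j,\dots,D_m)$ is defined as in Definition \ref{new-900E2A3C-9A1F-44A9-AB77-981525DA7901}.

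For the inclusion $\supseteq$, let $\delta \in \Der(B)$ and $s \in S$ with $s\delta \in L(D_j,\dots,D_m) = \sum_{i \ge j} A_i D_i$. This sum is contained in $L(D_1,\dots,D_m)$, so $\delta \in \Lscr(D_1,\dots,D_m)$. Moreover, each $A_iD_i$ with $i \ge j$ annihilates $A_j$, because $A_j \subseteq A_i \subseteq \ker D_i$. Hence $s\delta(A_j) = 0$, and since $B$ is a domain and $s \neq 0$, we get $\delta(A_j) = 0$.

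For the inclusion $\subseteq$, let $D \in \Lscr(D_1,\dots,D_m)$ with $A_j \subseteq \ker D$. Choose $s \in S$ and $a_i \in A_i$ with $sD = \sum_{i=1}^m a_i D_i$. It suffices to show that $a_i = 0$ for every $i < j$, since then $sD \in L(D_j,\dots,D_m)$ and so $D \in \Lscr_S(D_j,\dots,D_m)$. To do this, take a system of local slices $(s_1,\dots,s_m)$ of $(D_1,\dots,D_m)$, which exists by Lemma \ref{C0519934-54B1-477F-8677-BF8168C22DE9}(iii). For $i < j$ we have $D_k(s_i) = 0$ for all $k \ge i+1$, so $s_i \in A_{i+1} \subseteq A_j \subseteq \ker D$. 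Evaluating the expression for $sD$ at $s_i$ gives
$$0 = sD(s_i) = \sum_k a_k D_k(s_i) = a_i D_i(s_i).$$
Since $D_i(s_i) \in A_1 \setminus \{0\}$ and $B$ is a domain, this forces $a_i = 0$.

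The only point requiring care is showing that $s_i$ lies in $A_j$ when $i < j$; this follows directly from condition (iii) of Lemma \ref{C0519934-54B1-477F-8677-BF8168C22DE9}. The rest is routine bookkeeping.
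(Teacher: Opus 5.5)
Your proof is correct and follows the same route as the paper: the inclusion $\supseteq$ is immediate, and for $\subseteq$ you write $sD=\sum_i a_iD_i$ and kill each coefficient $a_i$ with $i<j$ by evaluating $sD$ on suitable elements of $A_j$. The only difference is the choice of test elements. The paper takes the smallest $r$ with $a_r\neq 0$ and an arbitrary $x\in A_{r+1}\setminus A_r$. Your local slices $s_i$ are annihilated by every $D_k$ with $k\neq i$, so they isolate each $a_i$ directly and no minimality argument is needed.
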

\begin{proof}
Let $j \in \{1,\dots,m\}$.
The inclusion ``$\supseteq$'' is clear.
To prove ``$\subseteq$'', consider $D \in \Lscr(D_1,\dots,D_m)$ such that $A_j \subseteq \ker(D)$.
There exists $\alpha \in A_1 \setminus \{0\}$ such that $\alpha D \in L(D_1,\dots,D_m)$.
So we have $\alpha D = \sum_{i=1}^m a_i D_i$ for some $a_1,\dots,a_m$ such that $a_i \in A_i$ for all $i$.
We claim that 
\begin{equation} \label {10E2C932-E592-4025-A889-416F26AC4B0A}
a_1 = \cdots = a_{j-1} = 0 .
\end{equation}
This is obvious if $D=0$, so let us assume that $D \neq 0$.
Then there exists $r \in \{1,\dots,m\}$ such that $\alpha D = \sum_{i=r}^m a_i D_i$ and $a_r \neq 0$.
We can choose $x \in A_{r+1} \setminus A_r$ (as usual, define $A_{m+1}=B$);
then $\alpha D(x) = a_r D_r(x) \neq 0$, so $A_{r+1} \nsubseteq \ker(D)$ and consequently $j \le r$,
from which \eqref{10E2C932-E592-4025-A889-416F26AC4B0A} follows.
Now \eqref{10E2C932-E592-4025-A889-416F26AC4B0A} implies that $\alpha D \in L(D_j, \dots, D_m)$,
and since $\alpha \in A_1 \setminus \{0\}$ we obtain $D \in \Lscr_{A_1 \setminus \{0\}}(D_j, D_{j+1}, \dots, D_m)$.
\end{proof}

\begin{lemma} \label {00A23F46-9C1D-449A-BE41-68ACD0236AC5}
Let $B$ be a domain of characteristic zero and let
$(D_1,\dots,D_m)$ and $(E_1,\dots,E_n)$ be admissible sequences of derivations of $B$,
with kernel sequences $A_1 \subset \cdots \subset A_{m}$ and $B_1 \subset \cdots \subset B_{n}$ respectively.
\begin{enumerate}[\rm(a)]

\item $(A_1,\dots,A_m), (B_1,\dots,B_n) \in \KS(B)$

\item If $L(D_1, \dots, D_m) \subseteq \Lscr(E_1,\dots,E_n)$ then $(A_1,\dots,A_m) \preceq (B_1,\dots,B_n)$ in $\KS(B)$.

\item If $\Lscr(D_1, \dots, D_m) \subseteq \Lscr(E_1,\dots,E_n)$ then  $(A_1,\dots,A_m) \preceq (B_1,\dots,B_n)$ in $\KS(B)$ and
\begin{align*}
\Lscr(D_1,\dots,D_m) &= \Lscr_{B_1 \setminus \{0\}}( E_{n-m+1}, \dots, E_n ) = \Lscr( E_{n-m+1}, \dots, E_n ) \\
&= \Lscr(A_1, \dots, A_{m}) = \Lscr(B_{n-m+1}, \dots, B_{n}) .
\end{align*}

\end{enumerate}
\end{lemma}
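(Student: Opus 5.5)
Part (a) is Lemma \ref{860EF548-B386-4610-B6AC-3864D47DDD15}(a), so the work is in (b) and (c). For (b), assume $L(D_1,\dots,D_m) \subseteq \Lscr(E_1,\dots,E_n)$ and write $\Lscr_E = \Lscr(E_1,\dots,E_n)$, $L_D = L(D_1,\dots,D_m)$. By Lemma \ref{E881691D-8557-4672-B859-92E3341ADF4B}(c) applied to both sequences, $A_i = \ker(L_D^{(i-1)})$ and $B_k = \ker(\Lscr_E^{(k-1)})$. Since $L_D \subseteq \Lscr_E$ we have $L_D^{(i-1)} \subseteq \Lscr_E^{(i-1)}$, so $B_i \subseteq A_i$ for $i \le \min(m,n)$. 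Comparing derived lengths with Lemma \ref{E881691D-8557-4672-B859-92E3341ADF4B}(b) gives $m \le n$. I do not expect these inclusions to match indices correctly, though: the claimed relation is $A_i = B_{n-m+i}$, which is aligned at the top. I will therefore work from the top and use transcendence degree to force equality.

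First I show $B_n \subseteq A_m$. Every element of $\Lscr_E$ is an $A_1$-derivation? No: it is a $B_1$-derivation. The useful direction is that $A_m = \ker(D_m)$ and $a_mD_m \in L_D \subseteq \Lscr_E$ for the nonzero $a_m$ of Lemma \ref{E881691D-8557-4672-B859-92E3341ADF4B}(a). Every element of $\Lscr_E$ preserves each $B_k$ by Proposition \ref{0615A9A0-CFB7-445C-B34E-44BA30B13CFA}.

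A cleaner route is to pass to $S=B_1\setminus\{0\}$. There $\Lscr_E^{(S)}$ consists of triangular derivations in the coordinates $t_1,\dots,t_n$ over $\Beul_1 = S^{-1}B_1$ (Lemma \ref{BAD0996F-5C99-407B-B702-0A97D9510736}). Each $D_i$, up to a scalar from $A_i$, lies in $\Lscr_E$. Let $r$ be the largest index $k$ such that some component of $D_i$ along $\partial_{t_k}$ is nonzero. The key claim is that, because the $D_j$ commute and are $B$-independent, the sequence $(D_1,\dots,D_m)$ ``uses'' exactly the top coordinates. Concretely, I would prove by descending induction on $i$ that $\ker(D_i,\dots,D_m)=A_i$ equals $B_{n-m+i}$.

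For the base case, $A_m=\ker D_m \supseteq \ker\Lscr_E^{(n-1)}\cdots$. Here the derived-series comparison is the key tool. Since $L_D^{(m-1)} \subseteq \Lscr_E^{(m-1)} \subseteq S^{-1}$-triangular fields in $t_m,\dots,t_n$, we get $A_m = \ker L_D^{(m-1)} \supseteq \ker \Lscr_E^{(m-1)}=B_m$. Since $\trdeg(B:A_m)=1$, this is only a lower bound.

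The main obstacle is upgrading these containments to equalities $A_i = B_{n-m+i}$. I would try to show $B_{n-m+i} \subseteq A_i$ and then use factorial closedness with equal transcendence degree, since both have $\trdeg$ over $B$ equal to $m-i+1$. For an algebraic extension of factorially closed subrings (kernels), one ring is algebraically closed in $B$, which yields equality. To get $B_{n-m+i}\subseteq A_i$, I would take $b\in B_{n-m+i}$ and show $D_j(b)=0$ for $j\ge i$. Because $a_jD_j\in \Lscr_E$, the element $s\,a_jD_j$ lies in $L(E)$ and so preserves each $B_k$. On the $\Beul_1$-polynomial ring, $D_j$ restricted to $\Beul_{k+1}=\Beul_k[t_k]$ is triangular. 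Commutation with $D_m,\dots$ together with a degree argument in $t$ should force the $D_j$ for $j\ge i$ to vanish on $B_{n-m+i}$. If this fails, the fallback is an induction on $m$ using Lemma \ref{BC319DE3-1827-4876-96D9-C0E8DBBECECB}: restrict to $\ker$ of the top pieces.

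For (c), apply (b), which is legitimate since $L\subseteq\Lscr$, and obtain $A_i=B_{n-m+i}$. The equalities then follow. First, $\Lscr(D)=\Lscr(A_1,\dots,A_m)$ by Proposition \ref{0615A9A0-CFB7-445C-B34E-44BA30B13CFA}, and this equals $\Lscr(B_{n-m+1},\dots,B_n)$. Next, $(E_{n-m+1},\dots,E_n)$ is admissible with kernel sequence $(B_{n-m+1},\dots,B_n)$, which gives $\Lscr(E_{n-m+1},\dots,E_n)$. Finally, the $\Lscr_{B_1\setminus\{0\}}$ version follows from Lemma \ref{BC319DE3-1827-4876-96D9-C0E8DBBECECB} with $j=n-m+1$. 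That lemma gives the set of $D\in\Lscr_E$ killing $B_{n-m+1}=A_1$, which contains $\Lscr(D)$. Conversely, $\Lscr_{B_1\setminus\{0\}}(\cdots)\subseteq\Lscr(E_{n-m+1},\dots)=\Lscr(D)$.
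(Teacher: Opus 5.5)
Your reduction of (c) to (b) is fine and matches the paper's (apply (b) via $L\subseteq\Lscr$, then Proposition~\ref{0615A9A0-CFB7-445C-B34E-44BA30B13CFA} and Lemma~\ref{BC319DE3-1827-4876-96D9-C0E8DBBECECB}). The gap is in (b). What you actually establish there is $m\le n$ and the inclusions $B_i\subseteq A_i$, obtained from $L(D_1,\dots,D_m)^{(i-1)}\subseteq\Lscr(E_1,\dots,E_n)^{(i-1)}$. These inclusions are aligned at the bottom, so together with transcendence degree they settle only the case $m=n$. For $m<n$, the entire content of (b) is that $D_i$ annihilates $B_{n-m+i}$. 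For this you offer only that commutation ``together with a degree argument in $t$ should force'' it, plus an unspecified fallback. No argument is given, and your base case yields only $B_m\subseteq A_m$, not $A_m=B_n$.

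The ingredients you name cannot suffice. Let $B=\bk[x,y]$ and $(E_1,E_2)=(\partial_x,\partial_y)$, so $B_1=\bk\subset B_2=\bk[x]$ and $\Lscr(E_1,E_2)=\bk\partial_x+\bk[x]\partial_y$. Let $(D_1)=(\partial_x)$. Then $D_1\in\Lscr(E_1,E_2)$ (it is the element $a_1D_1$, $a_1=1$, of Lemma~\ref{E881691D-8557-4672-B859-92E3341ADF4B}(a)), it is triangular in the slices $(x,y)$, and commutation is vacuous. Yet $\ker D_1=\bk[y]\neq B_2$. The only thing that fails is the hypothesis, since $y\partial_x\in L(D_1)\setminus\Lscr(E_1,E_2)$. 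So the argument must use that the \emph{whole} module $A_iD_i$ lies in $\Lscr(E_1,\dots,E_n)$; this is the missing idea.

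With it, your descending induction goes through as in the paper. Start from $A_{m+1}=B=B_{n+1}$ and assume $A_{i+1}=B_{n-m+i+1}$. Write $\alpha D_i=\sum_{k=j}^n\alpha_kE_k$ with $\alpha\in B_1\setminus\{0\}$, $\alpha_k\in B_k$ and $\alpha_j\neq0$. For any $a\in A_i$ we have $a\alpha D_i\in A_iD_i\subseteq\Lscr(E_1,\dots,E_n)$, using $B_1\subseteq A_1\subseteq A_i$. Hence $\alpha'a\alpha D_i\in L(E_1,\dots,E_n)$ for some $\alpha'\in B_1\setminus\{0\}$. Comparing $E_j$-coefficients, by $B$-linear independence of the $E_k$, gives $\alpha'a\alpha_j\in B_j$, so $a\in B_j$ by factorial closedness. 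Thus $A_i\subseteq B_j$. Since $\trdeg_{A_i}(B)=m-i+1$ and $\trdeg_{B_j}(B)=n-j+1$, this forces $j\ge n-m+i$. So $D_i$ kills $B_{n-m+i}$, whence $B_{n-m+i}\subseteq\ker D_i\cap A_{i+1}=A_i$. Your transcendence-degree and algebraic-closedness step then gives equality.
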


\begin{proof}
Part (a) is a reiteration of Lemma \ref{860EF548-B386-4610-B6AC-3864D47DDD15}(a).

(b) Suppose that $L(D_1, \dots, D_m) \subseteq \Lscr(E_1,\dots,E_n)$.
Since $A_1$ (resp.\ $B_1$) is the intersection of the kernels of all elements of 
$L(D_1, \dots, D_m)$ (resp.\ $\Lscr(E_1,\dots,E_n)$), we see that $A_1 \supseteq B_1$; consequently, $\trdeg_{A_1}(B) \le \trdeg_{B_1}(B)$.
By Lemma \ref{BAD0996F-5C99-407B-B702-0A97D9510736}, $\trdeg_{A_1}(B) = m$ and $\trdeg_{B_1}(B) = n$; so 
$$
m \le n.
$$
To prove (b), it remains to show that $(A_1, \dots, A_{m}) = (B_{n-m+1}, \dots, B_{n})$.
Define $A_{m+1} = B$ and $B_{n+1}=B$.
For each $d \in \{0, \dots, m\}$, we define $\Pscr(d)$ to be the assertion ``$A_{m+1-d} = B_{n+1-d}$\,''.
So it suffices to show that $\Pscr(0), \dots, \Pscr(m)$ are true.
Since $A_{m+1} = B = B_{n+1}$, $\Pscr(0)$ is true.
Assume that $d \in \{0, \dots, m-1\}$ is such that $\Pscr(d)$ is true, and let us prove $\Pscr(d+1)$.
So we assume that $A_{m+1-d} = B_{n+1-d}$ and we have to prove that $A_{m-d} = B_{n-d}$.

Since $D_{m-d} \in L(D_1,\dots,D_m) \subseteq \Lscr(E_1,\dots,E_n)$, there exists $\alpha \in B_1 \setminus \{0\}$
such that $\alpha D_{m-d} \in L(E_1,\dots,E_n)$. So we have
$\alpha D_{m-d} = \sum_{i=1}^n \alpha_i E_i$ for some $\alpha_1 \in B_1$, \dots, $\alpha_n \in B_n$.
Since $D_1,\dots,D_m$ are linearly independent over $B$ and $\alpha \neq 0$, we have $\alpha D_{m-d} \neq 0$.
So there exists $j \in \{1,\dots,n\}$ such that
\begin{equation} \label {835841C8-3139-49F4-AA49-1B100580CA8F}
\textstyle
\alpha D_{m-d} = \sum_{i=j}^n \alpha_i E_i \quad \text{and} \quad \alpha_j \neq 0 .
\end{equation}

Consider $a \in A_{m-d} \setminus \{0\}$.
Since $\alpha \in B_1 \setminus \{0\} \subseteq A_1 \setminus \{0\} \subseteq A_{m-d} \setminus \{0\}$, we have $a\alpha \in A_{m-d} \setminus \{0\}$,
so $a\alpha D_{m-d} \in L(D_1,\dots,D_m) \subseteq \Lscr(E_1,\dots,E_n)$ and there exists $\alpha' \in B_1 \setminus \{0\}$
such that $\alpha' a \alpha D_{m-d} \in L(E_1,\dots,E_n)$.
Consequently,
$$
\textstyle
\alpha' a \alpha D_{m-d} = \sum_{i=1}^n \hat\alpha_i E_i \quad \text{for some $\hat\alpha_1 \in B_1$, \dots, $\hat\alpha_n \in B_n$.}
$$
On the other hand, \eqref{835841C8-3139-49F4-AA49-1B100580CA8F} gives $\alpha' a \alpha D_{m-d} = \sum_{i=j}^n (\alpha' a \alpha_i) E_i$.
Since $E_1,\dots,E_n$ are linearly independent over $B$, it follows that $\alpha' a \alpha_j = \hat \alpha_j \in B_j$.
Since $\alpha_j \neq 0$, we have $\alpha' a \alpha_j \in B_j \setminus \{0\}$ and hence $a \in B_j$, because $B_j$ is factorially closed in $B$.
This being true for an arbitrary $a \in A_{m-d}$, it follows that $A_{m-d} \subseteq B_j$.
Since $\trdeg_{A_{m-d}}(B) = d+1$ and $\trdeg_{B_j}(B) = n+1-j$, we get $j \ge n-d$.
In other words, we have $\alpha_i = 0$ for all $i < n-d$, so $\alpha D_{m-d} = \sum_{i=n-d}^n \alpha_i E_i$.
This implies that $B_{n-d} = \bigcap_{i = n-d}^n \ker(E_i) \subseteq \ker(D_{m-d})$.
We also have  $B_{n-d} \subseteq B_{n+1-d} = A_{m+1-d}$ (the equality is $\Pscr(d)$), so $B_{n-d} \subseteq \ker(D_{m-d}) \cap A_{m+1-d} = A_{m-d}$.
Since $\trdeg_{B_{n-d}}(B) = \trdeg_{A_{m-d}}(B)$, $A_{m-d}$ is algebraic over $B_{n-d}$; as $B_{n-d}$ is algebraically closed in $B$,
we conclude that $A_{m-d} = B_{n-d}$, i.e., $\Pscr(d+1)$ is true. This proves (b).

(c)  Suppose that $\Lscr(D_1, \dots, D_m) \subseteq \Lscr(E_1,\dots,E_n)$.
Then the hypothesis of (b) is satisfied, so $m \le n$ and $(A_1, \dots, A_{m}) = (B_{n-m+1}, \dots, B_{n})$.
If $D \in \Lscr(D_1,\dots,D_m)$ then $D \in \Lscr(E_1,\dots,E_n)$ and $A_1 = B_{n-m+1}$ is included in $\ker(D)$,
so $D \in \Lscr_{B_1 \setminus \{0\}}( E_{n-m+1}, \dots, E_n )$ by Lemma \ref{BC319DE3-1827-4876-96D9-C0E8DBBECECB}.
This gives the first inclusion in:
\begin{multline*}
\Lscr(D_1,\dots,D_m) \subseteq \Lscr_{B_1 \setminus \{0\}}( E_{n-m+1}, \dots, E_n ) \subseteq \Lscr( E_{n-m+1}, \dots, E_n ) \\
= \Lscr(B_{n-m+1}, \dots, B_{n}) = \Lscr(A_1, \dots, A_{m}) = \Lscr(D_1,\dots,D_m) ,
\end{multline*}
where the first and last equalities follow from Proposition \ref{0615A9A0-CFB7-445C-B34E-44BA30B13CFA}.
Assertion (c) follows.
\end{proof}

\begin{remark} \label {CEC40263-FB7A-4204-8EFF-32CC255344BB}
Let $B$ be a domain of characteristic zero and let $(D_1,\dots,D_m)$ and $(E_1,\dots,E_m)$ be admissible sequences of derivations of $B$ (of the same length).
\begin{enumerate}[\rm(a)]

\item If $\Lscr(D_1,\dots,D_m) \subseteq \Lscr(E_1,\dots,E_m)$ then $\Lscr(D_1,\dots,D_m) = \Lscr(E_1,\dots,E_m)$.

\item If $\Lscr(D_1,\dots,D_m) \subseteq L(E_1,\dots,E_m)$ then $\Lscr(D_1,\dots,D_m) = L(E_1,\dots,E_m) = \Lscr(E_1,\dots,E_m)$.

\end{enumerate}
Indeed, part (a) follows from Lemma \ref{00A23F46-9C1D-449A-BE41-68ACD0236AC5}(c) and part (b) follows from part (a).
\end{remark}

\begin{proposition} \label {6CFC46F0-0103-4BE1-8ACB-BDA7B53CB093}
Let $B$ be a domain of characteristic zero. Then the map
$$
\kerseq : \DJL(B) \to \KS(B)
$$
is an injective homomorphism of posets, from $(\DJL(B), \subseteq)$ to $(\KS(B),\preceq)$.
Moreover, if $B$ is a finitely generated algebra over a field then $\kerseq$ is an isomorphism of posets
and its inverse is the map that sends $(A_1,\dots,A_m) \in \KS(B)$ to $\Lscr(A_1,\dots,A_m) \in \DJL(B)$.
\end{proposition}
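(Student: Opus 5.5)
The map $\kerseq$ is well defined by Definition \ref{2B03F8AD-D2DB-4E88-9526-69E7D6B6E44E} (via Remark \ref{D9D95302-2C52-4CBD-A475-2C8367B13B8F}), and it takes values in $\KS(B)$ by Lemma \ref{860EF548-B386-4610-B6AC-3864D47DDD15}(a). The plan is to check three things in order: that $\kerseq$ is order-preserving, that it is injective, and then, for finitely generated $B$, that it is surjective and order-reflecting.

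For monotonicity, write $\Lscr = \Lscr(D_1,\dots,D_m)$ and $\Lscr' = \Lscr(E_1,\dots,E_n)$ with admissible sequences. If $\Lscr \subseteq \Lscr'$, then Lemma \ref{00A23F46-9C1D-449A-BE41-68ACD0236AC5}(c) gives $\kerseq(\Lscr) \preceq \kerseq(\Lscr')$ directly. Injectivity is the implication ``$\Leftarrow$'' of Corollary \ref{ujd3876g20oigjh2vy68}. Together these show that $\kerseq$ is an injective homomorphism of posets.

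Now assume $B$ is finitely generated over a field. Surjectivity is Lemma \ref{860EF548-B386-4610-B6AC-3864D47DDD15}(b). Moreover, Proposition \ref{0615A9A0-CFB7-445C-B34E-44BA30B13CFA} identifies the element with kernel sequence $(A_1,\dots,A_m)$ as $\Lscr(A_1,\dots,A_m)$, so the inverse map is as claimed.

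The remaining step, which I expect to be the only substantive one, is that the inverse is also order-preserving. Suppose $(A_1,\dots,A_m) \preceq (B_1,\dots,B_n)$, so that $A_i = B_{n-m+i}$ for each $i$. I want to show $\Lscr(A_1,\dots,A_m) \subseteq \Lscr(B_1,\dots,B_n)$. By Proposition \ref{0615A9A0-CFB7-445C-B34E-44BA30B13CFA}, each side is the set of locally nilpotent derivations annihilating the first ring and stabilizing every ring in the sequence. Take $D \in \lnd_{A_1}(B)$ stabilizing $A_1,\dots,A_m$.
\begin{itemize}
\item Since $B_j \subseteq B_{n-m+1} = A_1$ for $j \le n-m+1$, we get $D(B_j) = 0 \subseteq B_j$ for those $j$.
\item The remaining $B_j$ are exactly the $A_i$, which $D$ stabilizes by assumption.
\item Finally $B_1 \subseteq A_1 \subseteq \ker D$, so $D \in \lnd_{B_1}(B)$.
\end{itemize}
Hence $D \in \Lscr(B_1,\dots,B_n)$. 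Thus the inverse is monotone, and $\kerseq$ is an isomorphism of posets.
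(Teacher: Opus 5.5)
Your proof is correct and follows essentially the paper's argument: well-definedness from Lemma~\ref{860EF548-B386-4610-B6AC-3864D47DDD15}(a), injectivity from Proposition~\ref{0615A9A0-CFB7-445C-B34E-44BA30B13CFA} (which you use via Corollary~\ref{ujd3876g20oigjh2vy68}), monotonicity from Lemma~\ref{00A23F46-9C1D-449A-BE41-68ACD0236AC5}(c), and surjectivity together with the identification of the inverse from Lemma~\ref{860EF548-B386-4610-B6AC-3864D47DDD15}(b) and Proposition~\ref{0615A9A0-CFB7-445C-B34E-44BA30B13CFA}. Your direct check that $(A_1,\dots,A_m) \mapsto \Lscr(A_1,\dots,A_m)$ is order-preserving correctly supplies the step the paper only calls ``easily seen,'' and it also covers the case $m=0$ under the convention $A_1 = B$.
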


\begin{proof}
If $\Lscr \in \DJL(B)$ then $\kerseq(\Lscr) \in \KS(B)$ by Lemma \ref{860EF548-B386-4610-B6AC-3864D47DDD15}(a);
so $\kerseq : \DJL(B) \to \KS(B)$ is a well defined set map.
If $\Lscr_1, \Lscr_2 \in \DJL(B)$ have the same kernel sequence then $\Lscr_1 = \Lscr_2$ by Proposition  \ref{0615A9A0-CFB7-445C-B34E-44BA30B13CFA},
so the map $\kerseq$ is injective.  It is a homomorphism of posets by Lemma \ref{00A23F46-9C1D-449A-BE41-68ACD0236AC5}(c).
Assume that $B$ is a finitely generated algebra over a field.
By Lemma \ref{860EF548-B386-4610-B6AC-3864D47DDD15}(b), $\kerseq : \DJL(B) \to \KS(B)$ is surjective and hence bijective. 
By Proposition \ref{0615A9A0-CFB7-445C-B34E-44BA30B13CFA},
the inverse of $\kerseq$ is the map $(A_1,\dots,A_m) \mapsto \Lscr(A_1,\dots,A_m)$, which is easily seen to be a homomorphism of posets;
so $\kerseq$ is an isomorphism of posets.
\end{proof}

\begin{remark}  \label{5A4B8846-954B-45B2-AD99-00914B6F31F0}
Let $B$ be a domain of characteristic zero and suppose that we have a strict inclusion $\Lscr_1 \subset \Lscr_2$ with $\Lscr_1, \Lscr_2 \in \DJL(B)$.
Then $\ker(\Lscr_1) \supset \ker(\Lscr_2)$ and $\trdeg( \ker(\Lscr_1) : \ker(\Lscr_2) ) > 0$. 
Indeed, let $(A_1,\dots,A_m) = \kerseq(\Lscr_1)$ and $(B_1,\dots,B_n) = \kerseq(\Lscr_2)$, and note that $\ker(\Lscr_1) = A_1$ and  $\ker(\Lscr_2) = B_1$.
Proposition \ref{6CFC46F0-0103-4BE1-8ACB-BDA7B53CB093} implies that $(A_1,\dots,A_m) \prec (B_1,\dots,B_n)$ in $\KS(B)$,
so Remark \ref{B32B4DC5-A0C7-4017-8426-C822141AF8D4} gives $\trdeg(A_1:B_1)>0$.
\end{remark}

\begin{remark}  \label {48B77BA7-E2B4-4887-BF9D-B75658D0CC7C}
If $B$ is a domain of finite transcendence degree over a field $\bk$ of characteristic zero then each
element of $\DJL(B)$ is included in a maximal element.
(This follows from Remark \ref{5A4B8846-954B-45B2-AD99-00914B6F31F0} and the fact that $\ker(\Lscr) \supseteq \bk$ for each $\Lscr \in \DJL(B)$.)
\end{remark}

%%%%%%%%%%%%%%%%%%%%%%%%%%%%%%%%%%%%%%%%%%%%%%%%%%%%%%%%%%%%%%%%%%%%%%%%%%%%%%%%%%%%%%%%%%%%%%%%%%%%%%%%%%%%%%%%%%%%%%%%%%%%%%%%%%
%%%%%%%%%%%%%%%%%%%%%%%%%%%%%%%%%%%%%%%%%%%%%%%%%%%%%%%%%%%%%%%%%%%%%%%%%%%%%%%%%%%%%%%%%%%%%%%%%%%%%%%%%%%%%%%%%%%%%%%%%%%%%%%%%%

The following result is a digression that serves to illustrate how Proposition \ref{6CFC46F0-0103-4BE1-8ACB-BDA7B53CB093} can be applied.
Lemma \ref{DJLNagata1} will be used in paragraph \ref{6A24CF4A9C6743B6} where the Nagata automorphism of $\Aff^3$
is shown to belong to a unique Borel subgroup of $\Aut(\Aff^3)$.
For the notion of {\it rank of a derivation}, see \cite[3.2.1]{Freud:Book-new}.

\begin{lemma} \label {DJLNagata1}
Let $\bk$ be a field of characteristic zero and $B = \kk3$.
Let $\Delta \in \lnd(B)$ be irreducible of rank $2$, let $z$ be a variable of $B$ that belongs to $\ker(\Delta)$, and let $f \in \ker(\Delta) \setminus  \bk[z]$.
Then $\Lscr(\bk[z],\ker(\Delta))$ is the unique maximal element of $\DJL(B)$ that contains $f\Delta$.
\end{lemma}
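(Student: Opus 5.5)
The plan is to work entirely on the side of kernel sequences, via the poset isomorphism $\kerseq:(\DJL(B),\subseteq)\to(\KS(B),\preceq)$ of Proposition \ref{6CFC46F0-0103-4BE1-8ACB-BDA7B53CB093}; this applies because $B=\kk3$ is finitely generated. Write $A=\ker(\Delta)$. By Miyanishi's theorem $A=\kk2$. Since the variable $z$ of $B$ lies in $A$, we have $A=\bk[z,g]$ for some $g$.

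\emph{Step 1: $(\bk[z],A)\in\KS(B)$ and $f\Delta\in\Lscr(\bk[z],A)$.} Let $S=\bk[z]\setminus\{0\}$ and $K=\bk(z)$.
\begin{enumerate}[\rm(i)]
\item We have $S^{-1}A=K[g]=K^{[1]}$.
\item $S^{-1}\Delta$ is a nonzero locally nilpotent derivation of $S^{-1}B=K^{[2]}$ with kernel $S^{-1}A$. Write it as $c\delta$ with $\delta$ irreducible. By Rentschler's theorem $\delta$ has a slice, so $S^{-1}B=(S^{-1}A)^{[1]}$ by \ref{SliceThm}.
\item The equalities $B\cap S^{-1}A=A$ and $B\cap S^{-1}\bk[z]=\bk[z]$ hold because both rings are factorially closed in $B$.
\end{enumerate}
Hence $\Lscr(\bk[z],A)\in\DJL(B)$. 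Next, $f\Delta$ is locally nilpotent because $f\in A$. It is a $\bk[z]$-derivation and it kills $A\supseteq\bk[z]$, so $f\Delta\in\Lscr(\bk[z],A)$ directly from Notation \ref{900E2A3C-9A1F-44A9-AB77-981525DA7901}.

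\emph{Step 2: maximality.} By Remark \ref{B32B4DC5-A0C7-4017-8426-C822141AF8D4}(b), a strict successor of $(\bk[z],A)$ in $\KS(B)$ would have the form $(B_1,\bk[z],A)$ with $\trdeg_{B_1}\bk[z]=1$. Thus $B_1$ would be algebraic over $\bk$ and factorially closed, so $B_1=\bk$.

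With $B_1=\bk$ the multiplicative set is $\bk^*$, so the $\KS$ condition reads $B=A^{[1]}$, say $B=A[s]$. Then $\Delta=\Delta(s)\,\partial_s$, and irreducibility of $\Delta$ forces $\Delta(s)\in B^*$. So $\Delta$ would have a slice and hence rank $1$, contradicting rank $2$. Therefore $\Lscr(\bk[z],A)$ is maximal.

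\emph{Step 3: uniqueness.} Let $\Mscr$ be any maximal element of $\DJL(B)$ containing $f\Delta$, and write $\kerseq(\Mscr)=(B_1,\dots,B_n)$. Then $B_1=\ker\Mscr\subseteq\ker(f\Delta)=A$, so $n=\trdeg_{B_1}B\ge1$. If $n=1$ then $B_1=A$ (algebraic closedness), and $(A)\prec(\bk[z],A)$ contradicts maximality; so $n\in\{2,3\}$. Since $\Mscr$ preserves each $B_i$, $f\Delta$ restricts to a locally nilpotent derivation of $B_n$, which is factorially closed of transcendence degree $2$.

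\emph{Case (a): $f\Delta$ kills $B_n$.} Then $B_n\subseteq A$, and both have transcendence degree $2$, so $B_n=A$.
\begin{enumerate}[\rm(i)]
\item If $n=3$, then $B_1=\bk$ and the $\KS$ condition gives $B=A^{[1]}$, which is impossible exactly as in Step 2.
\item So $n=2$. By Lemma \ref{860EF548-B386-4610-B6AC-3864D47DDD15}(c), $B_1\in\klnd(A)$, hence $B_1=\bk[h]$ for a variable $h$ of $A=\bk^{[2]}$.
\item The remaining task is to show $\bk[h]=\bk[z]$. Here one must use the $\KS$ condition $\bk(h)\otimes B=(\bk(h)\otimes A)^{[1]}$ together with the rank-$2$ hypothesis. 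The first thing to try is to show that this condition makes $h$ a variable of $B$ lying in $A$, and then to invoke the fact that for a rank-$2$ derivation, $\ker\Delta\cap\{\text{variables of }B\}$ generates a unique $\bk[z]$ (cf.\ \cite[3.2.1]{Freud:Book-new}).
\end{enumerate}

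\emph{Case (b): $f\Delta$ does not kill $B_n$.} Then $\ker(f\Delta|_{B_n})=A\cap B_n$ has transcendence degree $1$ and contains $B_{n-1}$. By algebraic closedness, $B_{n-1}=A\cap B_n$.
\begin{enumerate}[\rm(i)]
\item Write $\alpha f\Delta=\sum_i a_iE_i$ with $a_i\in B_i$, for an admissible sequence $(E_1,\dots,E_n)$ with $\Mscr=\Lscr(E_1,\dots,E_n)$. Comparing the coefficient of $E_n$ with a system of local slices, as in the proof of Lemma \ref{00A23F46-9C1D-449A-BE41-68ACD0236AC5}, should force $f$, up to factors from $B_1$, to lie in a transcendence-degree-one ring.
\item The aim is to show that this ring is $\bk[z]$, contradicting $f\notin\bk[z]$.
\end{enumerate}
This Case (b) is where the hypothesis $f\notin\bk[z]$ must be used, for instance to exclude triangular algebras of length $3$ containing $f\Delta$.

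\emph{Main obstacle.} I expect the main difficulty to be Step 3, specifically the identification $B_1=\bk[z]$ in Case (a) and the contradiction in Case (b). Both require turning the rank-$2$ hypothesis and $f\notin\bk[z]$ into statements about which subrings of $A$ can occur in a kernel sequence.
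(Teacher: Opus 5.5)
Your Steps 1 and 2 are correct; Step 2 is the paper's own argument ruling out a successor of length $3$. The gap is Step 3, which carries the real content of the lemma. There, Case (a)(iii) and Case (b) are only statements of intent, as your last paragraph concedes. Case (b) also begins with an unjustified claim: for $n=3$ the elements of $\Mscr$ are only known to kill $B_1$, so nothing gives $B_{n-1}=B_2\subseteq\ker(f\Delta)$.

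The missing idea is a commutation argument that rules out Case (b) entirely. Write $\Mscr=\Lscr(E_1,\dots,E_n)$ with $(E_1,\dots,E_n)$ admissible.
\begin{enumerate}[\rm(1)]
\item $E_n$ commutes with every element of $L(E_1,\dots,E_n)$, because $[a_iE_i,E_n]=-E_n(a_i)E_i=0$ for $a_i\in B_i\subseteq B_n=\ker E_n$.
\item By Lemma \ref{F45F968B-ECF9-45AC-9626-A51DA798D02B}, $E_n$ therefore commutes with every element of $\Mscr$, in particular with $f\Delta$.
\item Corollary \ref{FE9079CB-02CC-4FD7-8149-9E402ABC4851} then gives $[E_n,\Delta]=0$ and $f\in\ker E_n\cap A$.
\item Suppose $\ker E_n\neq A$. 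Then $\ker E_n\cap A=\bk[v]$ for some variable $v$ of $B$, by \cite[Proposition 5.40]{Freud:Book-new} (commuting locally nilpotent derivations of $\bk^{[3]}$ with distinct kernels).
\item Since $v$ and $z$ are variables of $B$ lying in the kernel of the rank-$2$ derivation $\Delta$, \cite[Theorem 2.5]{daigle1996necessary} gives $\bk[v]=\bk[z]$. Hence $f\in\bk[z]$, a contradiction.
\end{enumerate}
So $B_n=A$ for every element of $\DJL(B)$ containing $f\Delta$. This is exactly where the hypothesis $f\notin\bk[z]$ is used.

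The same two external results settle Case (a) with $n=2$, more cheaply than the route you propose. By Lemma \ref{860EF548-B386-4610-B6AC-3864D47DDD15}(b), $(B_1,A)$ is the kernel sequence of some admissible pair $(D,E)$. So $B_1=\ker D\cap\ker E=\bk[v]$ with $v$ a variable of $B$ lying in $A$, and Daigle's theorem gives $B_1=\bk[z]$.

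Your suggestion was to deduce that $h$ is a variable of $B$ from the generic-fibre condition $\bk(h)\otimes_{\bk[h]}B=\bk(h)^{[2]}$. That would need a deep Kaliman-type result (a polynomial of $\bk^{[3]}$ whose generic fibre is an affine plane is a variable), which you neither state nor prove. Also, the uniqueness fact for rank-$2$ derivations is Daigle's theorem, not \cite[3.2.1]{Freud:Book-new}, which only defines rank.

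A minor point on Step 1: the assertion $A=\bk[z,g]$ is true but needs a reference such as \cite[Theorem 2.4]{daigleFreudenburgRank2}. You do not need it, since $S^{-1}A=\ker(S^{-1}\Delta)=\bk(z)^{[1]}$ already follows from Rentschler's theorem over $\bk(z)$.
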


\begin{proof}
Let $A = \ker\Delta$ and note that the $1$-term sequence $(A)$ is an element of $\KS(B)$. We claim:
\begin{equation}  \label {2FFCB4F0-5ECF-4A7B-A19A-D4AF5CC60398}
\text{for each $\Lscr \in \DJL(B)$, \quad  $f\Delta \in \Lscr \iff \kerseq(\Lscr) \succeq (A)$ in $\KS(B)$. }
\end{equation}
To prove $(\Leftarrow)$, consider the element $\Lscr_1 = \Lscr(\Delta) \in \DJL(B)$
and note that $f\Delta \in \Lscr_1$ and $\kerseq(\Lscr_1) = (A)$.
If $\Lscr \in \DJL(B)$ is such that $\kerseq(\Lscr) \succeq (A)$, then $\Lscr \supseteq \Lscr_1 \ni f\Delta$ because $\kerseq$ is an isomorphism of posets
by Proposition \ref{6CFC46F0-0103-4BE1-8ACB-BDA7B53CB093}.
Conversely, suppose that $\Lscr \in \DJL(B)$ is such that $f\Delta \in \Lscr$.
We have $\Lscr = \Lscr(D_1, \dots, D_n)$ for some admissible sequence $(D_1,\dots,D_n)$, where $n \in \{1,2,3\}$.
Since $D_n$ commutes with every element of $\Lscr$,\footnote{If $D \in \Lscr$ then $sD \in L(D_1, \dots, D_n)$ for some $s \in \ker(\Lscr)\setminus\{0\}$, so $D_n$ commutes
with $sD$ and hence also with $D$, by Corollary \ref{FE9079CB-02CC-4FD7-8149-9E402ABC4851}.}
it commutes with $f\Delta$,
so Corollary \ref{FE9079CB-02CC-4FD7-8149-9E402ABC4851} implies that $[\Delta,D_n]=0$ and $f \in \ker(\Delta) \cap \ker(D_n)$.
If $\ker D_n \neq \ker \Delta$ then $\ker D_n \cap \ker\Delta = \bk[v]$ for some variable $v$ of $B$, by \cite[Proposition 5.40]{Freud:Book-new};
since $v$ and $z$ are variables of $B$ that belong to $\ker\Delta$, where $\rank\Delta=2$,
\cite[Theorem 2.5]{daigle1996necessary} implies that $\bk[v] = \bk[z]$, which contradicts the hypothesis that $f \notin \bk[z]$.
So we must have $\ker D_n = \ker\Delta = A$, which implies that $\kerseq(\Lscr) \succeq (A)$.
This proves \eqref{2FFCB4F0-5ECF-4A7B-A19A-D4AF5CC60398}.
Next, we show that
\begin{equation} \label {96D5A066-5AD3-4961-A64F-B526B62916DB}
\text{$(\bk[z],A)$ is the unique element $\Agoth$ of $\KS(B)$ that satisfies $\Agoth \succ (A)$.}
\end{equation}
By \cite[Theorem 2.4]{daigleFreudenburgRank2}, we have $A = \bk[z]^{[1]}$ and $S^{-1}B = (S^{-1}A)^{[1]}$ where $S = \bk[z] \setminus \{0\}$;
this implies that $(\bk[z],A) \in \KS(B)$, and $(\bk[z],A) \succ (A)$ is clear.
For uniqueness, let $\Agoth$ be an element of $\KS(B)$ such that $\Agoth \succ (A)$. Clearly, $\Agoth = (A_1, \dots, A_m)$ where $A_m = A$ and $m \in \{2,3\}$.
If $m=3$ then $A_1=\bk$ and hence $B = A^{[1]}$, which is not possible because $\rank(\Delta)\neq1$ implies $B \neq A^{[1]}$.
So $\Agoth = (A_1, A)$ for some $A_1$,
and $\Lscr(A_1, A) = \Lscr(D,E)$ for some admissible sequence $(D,E)$.
Again, \cite[Proposition 5.40]{Freud:Book-new} implies that $A_1 = \ker D \cap \ker E = \bk[v]$ for some variable $v$ of $B$
and \cite[Theorem 2.5]{daigle1996necessary} implies that $\bk[v] = \bk[z]$ since $v$ and $z$ are variables of $B$ belonging to $A$.
So $A_1 = \bk[z]$ and hence $\Agoth = (\bk[z],A)$, proving \eqref{96D5A066-5AD3-4961-A64F-B526B62916DB}.

By \eqref{96D5A066-5AD3-4961-A64F-B526B62916DB} and Proposition \ref{6CFC46F0-0103-4BE1-8ACB-BDA7B53CB093},
$\Lscr(\bk[z],A)$ is the unique $\Lscr \in \DJL(B)$ such that $\kerseq(\Lscr) \succ (A)$;
so \eqref{2FFCB4F0-5ECF-4A7B-A19A-D4AF5CC60398} implies that  $\Lscr(\bk[z],A)$ is the unique maximal element of $\DJL(B)$ that contains $f\Delta$.
\end{proof}

We now consider the problem of finding all immediate successors (\ref{ACF5CAEBFDFD404C}) of a given element of $\DJL(B)$.
More generally, our goal is to describe the posets $(\DJL(B), \subseteq)$ and $(\KS(B),\preceq)$.

%%%%%%%%%%%%%%%%%%%%%%%%%%%%%%%%%%%%%%%%%%%%%%%%%%%%%%%%%%%%%%%%%%%%%%%%%%%%%%%%%%%%%%%%%%%%%%%%%%%%%%%%%%%%%%%%%%%%%%%%
%%%%%%%%%%%%%%%%%%%%%%%%%%%%%%%%%%%%%%%%%%%%%%%%%%%%%%%%%%%%%%%%%%%%%%%%%%%%%%%%%%%%%%%%%%%%%%%%%%%%%%%%%%%%%%%%%%%%%%%%

\begin{lemmadef} \label{E18C9915-1DF7-4A48-A602-582A546C96F5}
Let $\bk$ be a field of characteristic zero, $B$ an affine $\bk$-domain and $\Agoth = (A_1,\dots,A_m) \in \KS(B)$.
For a  $\bk$-subalgebra $A_0$ of $A_1$, if we define $S = A_0 \setminus \{0\}$ and $A_{m+1}=B$ then the following conditions are equivalent:
\begin{enumerate}[\rm(i)]

\item $A_0 \in \klnd(A_1)$ and $S^{-1}A_{i+1} = (S^{-1}A_i)^{[1]}$ for all $i=1,\dots,m$;

\item $B \cap S^{-1}A_0 = A_0$ and $S^{-1}A_{i+1} = (S^{-1}A_i)^{[1]}$ for all $i=0,\dots,m$;

\item $(A_0,A_1,\dots,A_m) \in \KS(B)$.

\end{enumerate}
We define $\Omega(\Agoth)$ to be the set of $\bk$-subalgebras $A_0$ of $A_1$ that satisfy {\rm(i--iii)}.
\end{lemmadef}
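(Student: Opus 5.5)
The plan is to prove (ii)$\iff$(iii), then (iii)$\Rightarrow$(i), then (i)$\Rightarrow$(ii). Throughout, $S = A_0\setminus\{0\}$, $A_{m+1}=B$, and $T = A_1\setminus\{0\}$, so $S\subseteq T$.

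\emph{(ii)$\iff$(iii).} Condition (iii) means, by the definition of $\KS(B)$ applied to the $(m+1)$-tuple $(A_0,\dots,A_m)$ with first ring $A_0$, that
\begin{itemize}
\item $B\cap S^{-1}A_i=A_i$ for $i=0,\dots,m$, and
\item $S^{-1}A_{i+1}=(S^{-1}A_i)^{[1]}$ for $i=0,\dots,m$.
\end{itemize}
So (iii)$\Rightarrow$(ii) is immediate. For the converse we must recover $B\cap S^{-1}A_i=A_i$ for $i\ge1$. Since $\Agoth\in\KS(B)$, each $A_i$ is factorially closed in $B$ by Remark \ref{B32B4DC5-A0C7-4017-8426-C822141AF8D4}(a). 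If $b\in B$ and $sb\in A_i$ with $s\in S\subseteq A_i\setminus\{0\}$, factorial closedness gives $b\in A_i$. This argument in fact works for any multiplicative set of nonzero elements of $A_i$.

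\emph{(iii)$\Rightarrow$(i).} The second part of (i) is contained in (iii). For $A_0\in\klnd(A_1)$, apply Lemma \ref{860EF548-B386-4610-B6AC-3864D47DDD15}(c) to $(A_0,\dots,A_m)\in\KS(B)$; this is valid because $B$ is an affine $\bk$-domain. It gives $A_0\in\klnd(A_1)$ directly.

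\emph{(i)$\Rightarrow$(ii).} Kernels of locally nilpotent derivations of $A_1$ are factorially closed in $A_1$, and $A_1$ is factorially closed in $B$, so $A_0$ is factorially closed in $B$. Hence $B\cap S^{-1}A_0=A_0$. It remains to prove $S^{-1}A_1=(S^{-1}A_0)^{[1]}$; this is the main step.

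Take $D\in\lnd(A_1)$, $D\neq0$, with $\ker D=A_0$. Choose a local slice $t\in A_1$ and put $a=D(t)\in A_0\setminus\{0\}$. By \ref{SliceThm}(b), $(A_1)_a=(A_0)_a[t]$. Localizing further at $S$ (which contains $a$) gives $S^{-1}A_1=(S^{-1}A_0)[t]$. Here $t$ is transcendental over $\Frac A_0$ since $D(t)\neq0$, so $S^{-1}A_1=(S^{-1}A_0)^{[1]}$. The main obstacle I anticipated was whether the hypothesis ``$A_0\in\klnd(A_1)$'' alone suffices here, or whether one needs to pass through $\KS(B)$ and Lemma \ref{860EF548-B386-4610-B6AC-3864D47DDD15}(b). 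The slice argument above appears to settle it without that detour.

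Combining the three parts, the conditions (i), (ii), (iii) are equivalent, so $\Omega(\Agoth)$ is well defined.
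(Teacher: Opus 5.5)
Your proof is correct and takes essentially the same approach as the paper: factorial closedness of the $A_i$ in $B$ gives the conditions $B\cap S^{-1}A_i=A_i$, Lemma~\ref{860EF548-B386-4610-B6AC-3864D47DDD15}(c) gives (iii)$\Rightarrow$(i), and the fact that $A_0$ is the kernel of a locally nilpotent derivation of $A_1$ gives (i)$\Rightarrow$(ii). The only difference is that you prove the step $S^{-1}A_1=(S^{-1}A_0)^{[1]}$ explicitly, by applying \ref{SliceThm}(b) and then localizing at $S$, whereas the paper simply asserts it.
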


\begin{proof}
If $A_0$ satisfies (i)  then $A_0 \in \klnd(A_1)$, so $A_0$ is factorially closed in $A_1$;
since $A_1$ is factorially closed in $B$ by Remark \ref{B32B4DC5-A0C7-4017-8426-C822141AF8D4},
$A_0$ is factorially closed in $B$ and hence $B \cap S^{-1}A_0 = A_0$.
The fact that $A_0 \in \klnd(A_1)$ also implies that  $S^{-1}A_1 = (S^{-1}A_0)^{[1]}$, so $A_0$ satisfies (ii).

Suppose that $A_0$ satisfies (ii).
The fact that $(A_1,\dots,A_m) \in \KS(B)$ implies (by Remark \ref{B32B4DC5-A0C7-4017-8426-C822141AF8D4})
that $A_1, \dots, A_m$ are factorially closed in $B$; so $B \cap S^{-1}A_i = A_i$ for all $i=1,\dots,m$.
Since (ii) holds, we also have $B \cap S^{-1}A_0 = A_0$ and $S^{-1}A_{i+1} = (S^{-1}A_i)^{[1]}$ for all $i=0,\dots,m$.
It follows that $(A_0,A_1,,\dots,A_m) \in \KS(B)$, i.e., (iii) holds.

If (iii) holds then Lemma \ref{860EF548-B386-4610-B6AC-3864D47DDD15}(c) gives $A_0 \in \klnd(A_1)$ and the definition of $\KS(B)$
implies that $S^{-1}A_{i+1} = (S^{-1}A_i)^{[1]}$ for all $i=0,\dots,m$.
So (i) holds.
\end{proof}

\begin{definition} \label {ACF5CAEBFDFD404C}
Let $(X,\le)$ be a poset and let $x \in X$.
An {\it immediate successor\/} of $x$ is an element $y \in X$ such that $x < y$ and there is no $y' \in X$ satisfying $x < y' < y$.
An {\it immediate predecessor\/} of $x$ is an element $w \in X$ such that $w < x$ and there is no $w' \in X$ satisfying $w < w' < x$.
\end{definition}

See \ref{9823hf276v38ghfvcd9387tg} for the definition of $\rtrdeg_\bk(B)$.

\begin{proposition} \label {E024C289-4F11-48E4-831C-C94E59C34ACE}
Let $\bk$ be a field of characteristic zero and $B$ an affine $\bk$-domain.
Then the poset \mbox{$(\DJL(B),\subseteq)$} has the following properties.
\begin{enumerate}[\rm(a)]

\item If $\Lscr_0 \subset \Lscr_1 \subset \cdots \subset \Lscr_d$ is a strictly increasing sequence in $\DJL(B)$ then $d \le \rtrdeg_\bk(B)$.

\item The set of minimal elements of $\DJL(B) \setminus \{ \mathbf{0} \}$ is $\setspec{ \lnd_A(B) }{ A \in \klnd(B) }$.

\item Let $\Lscr$ be a non-minimal element of $\DJL(B)$ with kernel sequence $(A_1, \dots, A_m)$.
Then $m \ge 1$ and $\Lscr' = \Lscr(A_2, \dots, A_m)$ is the unique immediate predecessor of $\Lscr$.
Moreover, if $(D_1, \dots, D_m)$ is an admissible sequence satisfying $\Lscr = \Lscr(D_1,\dots,D_m)$
then $\Lscr' = \Lscr(D_2,\dots,D_m)$.

\item If $\Lscr \in \DJL(B)$ then the set $\setspec{ \Lscr' \in \DJL(B) }{ \Lscr' \subseteq \Lscr }$ is finite and totally ordered by inclusion.

\item Let $\Lscr \in \DJL(B)$, let $\Agoth = (A_1,\dots,A_m) \in \KS(B)$ be the kernel sequence of $\Lscr$
and let $\Omega(\Agoth)$ be the set defined in Lemma \ref{E18C9915-1DF7-4A48-A602-582A546C96F5}.
Then the set of immediate successors of $\Lscr$ in $\DJL(B)$ is 
$$
\setspec{ \Lscr(A_0,A_1,\dots,A_m) }{ A_0 \in \Omega(\Agoth) } .
$$
More precisely, $A_0 \mapsto\Lscr(A_0,A_1,\dots,A_m)$ is a bijection from $\Omega(\Agoth)$ to the set of immediate successors of $\Lscr$.
In particular, $\Lscr$ is maximal in $\DJL(B)$ if and only if $\Omega(\Agoth) = \emptyset$.

\end{enumerate}
\end{proposition}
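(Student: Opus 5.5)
The plan is to work entirely in $(\KS(B),\preceq)$ and transport the results back. This is legitimate because $\kerseq$ is a poset isomorphism $\DJL(B)\to\KS(B)$ (Proposition \ref{6CFC46F0-0103-4BE1-8ACB-BDA7B53CB093}), with inverse $(A_1,\dots,A_m)\mapsto\Lscr(A_1,\dots,A_m)$. In $\KS(B)$ the order is transparent: $(A_1,\dots,A_m)\preceq(B_1,\dots,B_n)$ means the first tuple is a terminal segment of the second. Moreover, every terminal segment $(A_j,\dots,A_m)$ of an element of $\KS(B)$ again lies in $\KS(B)$. To check this, set $T=A_j\setminus\{0\}\subseteq S^{-1}A_j$, localize the relations $S^{-1}A_{i+1}=(S^{-1}A_i)^{[1]}$ further, and use that $A_i$ is factorially closed (Remark \ref{B32B4DC5-A0C7-4017-8426-C822141AF8D4}) to get $B\cap T^{-1}A_i=A_i$. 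Alternatively, one can note directly that $(D_j,\dots,D_m)$ is admissible with kernel sequence $(A_j,\dots,A_m)$.

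For (d), the elements below $\Lscr$ correspond exactly to the terminal segments of $\kerseq(\Lscr)$. There are $m+1$ of them (including the empty one, which corresponds to $\mathbf 0$), and they form a chain; this gives finiteness and total order. Part (c) then follows. Since $\Lscr$ is non-minimal, it is not $\mathbf 0$, so $m\ge1$. In a finite chain, the unique immediate predecessor is the next segment down, $(A_2,\dots,A_m)$. The last assertion of (c) holds because $(D_2,\dots,D_m)$ is admissible with kernel sequence $(A_2,\dots,A_m)$, so Corollary \ref{ujd3876g20oigjh2vy68} applies. For (b), minimal elements of $\DJL(B)\setminus\{\mathbf 0\}$ correspond to length-one sequences $(A)\in\KS(B)$. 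By Lemma \ref{860EF548-B386-4610-B6AC-3864D47DDD15}(c) these satisfy $A\in\klnd(B)$. Conversely, any $A=\ker D$ gives $\Lscr(D)=\lnd_A(B)$ by Example \ref{B6D5B613-5754-4ECF-8B69-9CFB86A83A7D}.

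For (e), the immediate successors of $\Agoth$ in $\KS(B)$ are the elements $\Agoth'\succ\Agoth$ of length $m+1$. Indeed, any longer $\Agoth'$ has its length-$(m+1)$ terminal segment strictly between $\Agoth$ and $\Agoth'$. A length-$(m+1)$ extension $(A_0,A_1,\dots,A_m)$ lies in $\KS(B)$ exactly when $A_0\in\Omega(\Agoth)$, by Lemma and definition \ref{E18C9915-1DF7-4A48-A602-582A546C96F5}(iii); here $A_0$ is a $\bk$-subalgebra because $\bk\subseteq A_0$ by Remark \ref{B32B4DC5-A0C7-4017-8426-C822141AF8D4}. Distinct $A_0$ give distinct tuples, and these correspond to distinct elements of $\DJL(B)$, so the map is a bijection. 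Finally, $\Lscr$ is maximal iff it has no successor at all; since any strict successor dominates one of length $m+1$, this happens iff $\Omega(\Agoth)=\emptyset$.

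Part (a) is the only part needing a genuinely new argument, and I expect it to be the main obstacle. By (d), a strict chain $\Lscr_0\subset\cdots\subset\Lscr_d$ lies below $\Lscr_d$, whose kernel sequence $(A_1,\dots,A_n)$ has $n\ge d$, since the $d+1$ elements are distinct terminal segments. Set $S=A_1\setminus\{0\}$. By the definition of $\KS(B)$, $S^{-1}B=(S^{-1}A_1)^{[n]}$, so $\Frac B=(\Frac A_1)^{(n)}$ with $\bk\subseteq\Frac A_1$. Hence $n\le\rtrdeg_\bk(B)$, and therefore $d\le n\le\rtrdeg_\bk(B)$. The point to verify carefully is that $\Frac(S^{-1}A_1)=\Frac A_1$ is a field containing $\bk$ inside $\Frac B$, which is clear.
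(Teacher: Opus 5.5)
Your proof is correct and follows the paper's strategy: you transport everything to $(\KS(B),\preceq)$ via the poset isomorphism $\kerseq$ of Proposition \ref{6CFC46F0-0103-4BE1-8ACB-BDA7B53CB093}, where the order is ``being a terminal segment''. The differences are only in organization. You make explicit that terminal segments of elements of $\KS(B)$ stay in $\KS(B)$, which the paper uses tacitly. You obtain (d) directly from the chain of terminal segments, whereas the paper deduces it from (a) and (c) by an abstract poset argument. In (b) you use Lemma \ref{860EF548-B386-4610-B6AC-3864D47DDD15}(c) and Example \ref{B6D5B613-5754-4ECF-8B69-9CFB86A83A7D}, where the paper cites an external characterization of $\klnd(B)$.
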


\begin{proof}
As a preliminary remark, note that if $(A_1,\dots,A_m) \in \KS(B)$ then $\bk \subseteq A_1$ by Remark \ref{B32B4DC5-A0C7-4017-8426-C822141AF8D4}(a).
Also note that Proposition \ref{6CFC46F0-0103-4BE1-8ACB-BDA7B53CB093} implies:
\begin{equation} \label {3F50FE5B-F075-454D-94AD-0E9FB69E171B}
\begin{minipage}{.8\textwidth}
$\kerseq : (\DJL(B),\subseteq) \to (\KS(B),\preceq)$ is an isomorphism of posets
and its inverse is the map that sends $(A_1,\dots,A_m) \in \KS(B)$ to $\Lscr(A_1,\dots,A_m) \in \DJL(B)$.
\end{minipage}
\end{equation}

(a) The definitions of $\KS(B)$ and $\preceq$ immediately imply that 
if $\Agoth_0 \prec \Agoth_1 \prec \cdots \prec \Agoth_d$ is a strictly increasing sequence in $\KS(B)$ then $d \le \rtrdeg_\bk(B)$.
In view of \eqref{3F50FE5B-F075-454D-94AD-0E9FB69E171B}, (a) follows.

(b) We claim:
\begin{equation} \label {FBDDFCEE-04CE-4FB3-83F1-393B5DD53FD1}
\text{the set of minimal elements of $\KS(B) \setminus \{ (\,) \}$ is $\setspec{ (A) }{ A \in \klnd(B) }$.}
\end{equation}
By definition, the minimal elements of $\KS(B) \setminus \{ (\,) \}$
are the one-term sequences $(A)$  where $A$ is a subring of $B$ that satisfies the conditions
$B \cap S^{-1}A = A$ and $S^{-1}B = (S^{-1}A)^{[1]}$, with $S = A \setminus \{0\}$.
Since $B$ is an affine $\bk$-domain, it is well known that these rings $A$ are precisely the elements of $\klnd(B)$.
(See for instance \cite[Proposition 1.4]{Dai:smooth}.)
So \eqref{FBDDFCEE-04CE-4FB3-83F1-393B5DD53FD1} is true. 
Since $\Lscr(A) = \lnd_A(B)$ for each $A \in \klnd(B)$,
assertion (b) follows from \eqref{3F50FE5B-F075-454D-94AD-0E9FB69E171B} and \eqref{FBDDFCEE-04CE-4FB3-83F1-393B5DD53FD1}.

(c) By \eqref{3F50FE5B-F075-454D-94AD-0E9FB69E171B}, $(A_1, \dots, A_m)$ is a non-minimal element of $\KS(B)$.
Clearly, it follows that $m \ge 1$ and that $(A_2, \dots, A_m)$ is the unique immediate predecessor of $(A_1, \dots, A_m)$ in $\KS(B)$.
So, by \eqref{3F50FE5B-F075-454D-94AD-0E9FB69E171B}, $\Lscr' := \Lscr(A_2, \dots, A_m)$ is the unique immediate predecessor of $\Lscr$ in $\DJL(B)$.
If $(D_1, \dots, D_m)$ is an admissible sequence satisfying $\Lscr = \Lscr(D_1,\dots,D_m)$
then the kernel sequence of $(D_1, \dots, D_m)$ is $(A_1, \dots, A_m)$ by definition of $\kerseq(\Lscr)$ (see \ref{2B03F8AD-D2DB-4E88-9526-69E7D6B6E44E}).
Clearly, $(D_2,\dots,D_m)$ is an admissible sequence with kernel sequence $(A_2, \dots, A_m)$;
so $\Lscr'$ and $\Lscr(D_2, \dots, D_m)$ are elements of $\DJL(B)$ that have the same kernel sequence.
By injectivity of $\kerseq$, it follows that $\Lscr' = \Lscr(D_2,\dots,D_m)$.

(d) Part (c) implies that each non-minimal element of $\DJL(B)$ has a unique immediate predecessor, 
and part (a) implies that the lengths of strictly increasing sequences in $\DJL(B)$ are uniformly bounded;
any poset with these two properties has the property stated in (d).

(e) We claim that 
\begin{equation}  \label {4BDE3DF7-8D8B-4438-9BBA-4AE73FC4B76E}
\text{the set of immediate successors of $\Agoth$ in $\KS(B)$ is $\setspec{ (A_0,A_1,\dots,A_m) }{ A_0 \in \Omega(\Agoth) }$.}
\end{equation}
Indeed, suppose that $\Agoth'$ is an immediate successor of $\Agoth$.
Then $\Agoth$ is the unique immediate predecessor of $\Agoth'$,
so $\Agoth' = (A_0,A_1,,\dots,A_m)$ for some subring $A_0$ of $A_1$, where $\bk \subseteq A_0$ by Remark \ref{B32B4DC5-A0C7-4017-8426-C822141AF8D4}(a).
Since $A_0$ satisfies condition \ref{E18C9915-1DF7-4A48-A602-582A546C96F5}(iii), we have $A_0 \in \Omega(\Agoth)$.

Conversely, if $A_0 \in \Omega(\Agoth)$ then $A_0$ satisfies \ref{E18C9915-1DF7-4A48-A602-582A546C96F5}(iii), i.e., $(A_0,A_1,\dots,A_m) \in \KS(B)$.
Clearly, $\Agoth=(A_1,,\dots,A_m)$ is the unique immediate predecessor of $(A_0,A_1,,\dots,A_m)$.
Equivalently, $(A_0,A_1,,\dots,A_m)$ is an immediate successor of $\Agoth$.

This proves \eqref{4BDE3DF7-8D8B-4438-9BBA-4AE73FC4B76E}.
Assertion (e) follows from \eqref{4BDE3DF7-8D8B-4438-9BBA-4AE73FC4B76E} and \eqref{3F50FE5B-F075-454D-94AD-0E9FB69E171B}.
\end{proof}

The following example illustrates how Proposition \ref{E024C289-4F11-48E4-831C-C94E59C34ACE} can be used to find all immediate successors of a given element of $\DJL(B)$.
The Lie algebras that arise in Example \ref{BF979604-4D5B-4A13-A6AA-E9EDC00540C7} will be of particular interest
and will be further studied in Example \ref{C60B4C67-6814-44F3-8DDF-E43EEF900AFA}.

\begin{example} \label {BF979604-4D5B-4A13-A6AA-E9EDC00540C7}
Let $\bk$ be a field of characteristic zero and $B = \bk[u,v,y,z] = \bk^{[4]}$.
Consider the $\bk$-derivations $D = \partial_y$ and $E = u \partial_z - v \partial_y$ of $B$
and note that $\ker(D) = \bk[u,v,z]$ and $\ker(E) = \bk[u,v,Q]$ where $Q = uy+vz$.
It is easily checked that $(D,E)$ is admissible; thus, $\Lscr(D,E) \in \DJL(B)$.
Let us describe the immediate successors (if any) of $\Lscr(D,E)$ in $\DJL(B)$.

The kernel sequence of $\Lscr(D,E)$ is $\Agoth = (A_1,A_2)$ where  $A_1=\bk[u,v] \subset A_2=\bk[u,v,Q]$.
By condition (i) of Lemma \ref{E18C9915-1DF7-4A48-A602-582A546C96F5},
the elements of the set $\Omega(\Agoth)$ are all $R \in \klnd(A_1)$ such that
\begin{equation}  \label {F3285086-45C7-4FA1-9041-7CA1DEC37268}
\text{if we define $S = R \setminus \{0\}$, then $S^{-1}B = (S^{-1}A_2)^{[1]}$.} 
\end{equation}
(Condition (i) of Lemma \ref{E18C9915-1DF7-4A48-A602-582A546C96F5} also requires that $S^{-1}A_2 = (S^{-1}A_1)^{[1]}$, 
but this condition holds automatically because $A_2 = A_1^{[1]}$.)
For each $R \in \Omega(\Agoth)$, define $\Lscr_R = \Lscr(R,A_1,A_2)$. We claim:
\begin{enumerate}[\rm(a)]

\item $\Omega(\Agoth) = \setspec{ \bk[f] }{ \text{$f$ is a variable of $\bk[u,v]$} }$

\item  The set of immediate successors of $\Lscr(D,E)$ in $\DJL(B)$ is $\setspec{ \Lscr_R }{ R \in \Omega(\Agoth) }$.
Moreover, if $R, R' \in \Omega(\Agoth)$ are distinct then $\Lscr_{R} \neq \Lscr_{R'}$.

\item For each $R \in \Omega(\Agoth)$, $\Lscr_R$ is a maximal element of $\DJL(B)$ and has derived length $3$.

\end{enumerate}
Let us prove (a). If $R \in \Omega(\Agoth)$ then $R \in \klnd( \bk[u,v] )$, so Rentschler's Theorem \cite{rentschler1968operations}
gives $R = \bk[f]$ for some variable $f$ of $\bk[u,v]$.
Conversely,  let $R = \bk[f]$ where $f$ is a variable of $\bk[u,v]$. Clearly, $R \in \klnd(\bk[u,v])$.
Let $S = R \setminus \{0\}$; we have to prove \eqref{F3285086-45C7-4FA1-9041-7CA1DEC37268}.
If $\lambda \in \bk$ is the constant term of $f(u,v)$, then $f-\lambda = \alpha u + \beta v$ for some $\alpha, \beta \in \bk[u,v]$.
Then $E(\alpha z - \beta y) = f-\lambda \in S$,
so the locally nilpotent derivation $S^{-1}E : S^{-1}B \to S^{-1}B$ satisfies 
$(S^{-1}E)\big( \frac{\alpha z - \beta y}{f-\lambda} \big) = 1$.
By \ref{SliceThm}(a), it follows that $S^{-1}B = ( \ker( S^{-1}E ) )^{[1]} = (S^{-1}A_2)^{[1]}$, i.e., \eqref{F3285086-45C7-4FA1-9041-7CA1DEC37268} holds
and consequently $R \in \Omega(\Agoth)$.
This proves (a).

Assertion (b) follows from Proposition  \ref{E024C289-4F11-48E4-831C-C94E59C34ACE}(e).
As $\Omega(\Agoth)$ is an infinite set by (a), (b) implies that $\Lscr(D,E)$ has infinitely many immediate successors.
In particular, $\Lscr(D,E)$ is not maximal in $\DJL(B)$.

If $R \in \Omega(\Agoth)$ then $\dl( \Lscr_R ) = 3$ is clear, for instance by Lemma \ref{E881691D-8557-4672-B859-92E3341ADF4B}(b).
So, to prove (c), it suffices to show that $\Omega( \Agoth_R ) = \emptyset$ where $\Agoth_R = (R,A_1,A_2)$ is the kernel sequence of $\Lscr_R$.
Each element of $\Omega( \Agoth_R )$ is an element of $\klnd(R)$, where $R = \kk1$; so $\Omega( \Agoth_R )$ is either empty or equal to $\{\bk\}$.
If $\bk \in \Omega( \Agoth_R )$ then $(\bk,R,A_1,A_2) \in \KS(B)$ and hence $B = A_2^{[1]}$, which is false.
So $\Omega( \Agoth_R ) = \emptyset$ and consequently $\Lscr_R$ is maximal in $\DJL(B)$ by Proposition \ref{E024C289-4F11-48E4-831C-C94E59C34ACE}(e).
This proves (c).
\end{example}

Our next objective is to compare the posets $(\DJL(B),\subseteq)$ and $(\LieLN_\bk(B),\subseteq)$ 
(see Definition \ref{67BC6576-67FE-41A8-975F-597EE29DC120}).
The main result in this direction is Proposition \ref{6D6E22A4-236A-4B28-96D9-86157812D34A}.
We begin with a trivial observation:

\begin{remark} \label {D8E0AD3A-6104-479D-B7E2-D38E67645BBD}
{\it
If $B$ is a domain of finite transcendence degree over a field $\bk$ of characteristic zero then $\DJL(B) \subseteq \LieLN_\bk(B)$.}
Indeed, if $\Lscr \in \DJL(B)$ then (as noted in Definition \ref{E1B94B5F-4823-4C89-95DD-4299351970B9}) $\Lscr \subseteq \lnd(B)$,
so $\bk \subseteq \ker(\Lscr)$ by \ref{7E1F8E6C-B326-4B86-A442-CA26364C2C1C}(i);
since (as noted in Definition \ref{E1B94B5F-4823-4C89-95DD-4299351970B9}) $\Lscr$ is a Lie algebra over the ring $\ker(\Lscr)$,
it is therefore a Lie algebra over $\bk$.
So $\Lscr$ is a Lie subalgebra of $\Der_\bk(B)$ over $\bk$ and  $\Lscr \subseteq \lnd(B)$.
By statement \eqref{D6C764BE-C72A-4279-B9F3-CFD5F5E639ED} in Definition \ref{67BC6576-67FE-41A8-975F-597EE29DC120}, $\Lscr \in \LieLN_\bk(B)$.
\end{remark}

\begin{definition}
An element $\Lscr$ of $\LieLN_\bk(B)$ is {\it relatively maximal\/} if it is a maximal element of the set
$\setspec{ \Lscr' \in \LieLN_\bk(B) }{ \ker(\Lscr') = \ker(\Lscr) }$.
\end{definition}

\begin{proposition}  \label {6D6E22A4-236A-4B28-96D9-86157812D34A}
Let $\bk$ be a field of characteristic zero and $B$ an affine $\bk$-domain.
\begin{enumerate}[\rm(a)]

\item $\DJL(B) = \setspec{ \Lscr \in \LieLN_\bk(B) }{ \text{$\Lscr$ is relatively maximal} }$

\item If $\Lscr \in \LieLN_\bk(B)$ then there exists  $\Lscr' \in \DJL(B)$
such that $\Lscr \subseteq \Lscr'$ and $\ker(\Lscr) = \ker(\Lscr')$.

\item The posets $( \LieLN_\bk(B) , \subseteq )$ and $( \DJL(B) , \subseteq )$ have the same maximal elements.

\item Each element of $\DJL(B)$ (resp.\ $\LieLN_\bk(B)$) is included in a maximal element.

\end{enumerate}
\end{proposition}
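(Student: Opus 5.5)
The plan is to prove (b) first, since it is the engine, and then deduce (a), (c), (d).

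For (b), let $\Lscr \in \LieLN_\bk(B)$ and $A = \ker(\Lscr)$. If $\Lscr = 0$ take $\Lscr' = \Lscr(\,) = \{0\}$. Otherwise, localize with respect to $S = A\setminus\{0\}$ and set $K = S^{-1}A$, $\Beul = S^{-1}B$. Then $\Lscr^{(S)}$ is a Lie algebra over $A$ inside $\Der_K(\Beul)$ consisting of locally nilpotent derivations, with $\ker(\Lscr^{(S)}) = K$. I would rather work with the $K$-span $K\Lscr^{(S)}$. This span is again a Lie algebra, since $[aD,bE]=ab[D,E]$ when $a,b$ lie in the common kernel. It also consists of locally nilpotent derivations, because elements of $K\Lscr^{(S)}$ are of the form $s^{-1}\cdot(\text{element of }\Lscr^{(S)})$ after clearing denominators, using $A$-linearity of $\Lscr$ (indeed $\Lscr$ is an $A$-module? only $\bk$-linear). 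This is a point to check: if $A\Lscr \not\subseteq \Lscr$, first replace $\Lscr$ by the Lie algebra generated by $A\Lscr$. That algebra lies in $\LieLN_\bk(B)$ by Lemma \ref{A4919C28-E3E4-4D7F-AE02-8F709B2AF0B7}\ref{SkutinCor1} and has the same kernel $A$.

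Next, $\Beul$ is a $K$-domain with $\trdeg_K \Beul = n < \infty$. Since $K$ is a field and the algebra lies in $\lnd(\Beul)$ with kernel exactly $K$, Lemma \ref{A4919C28-E3E4-4D7F-AE02-8F709B2AF0B7}\ref{SkutinThm6} (applied over the field $K$) gives $\Beul = K[x_1,\dots,x_n] = K^{[n]}$ with $\Lscr^{(S)} \subseteq \sum_i K[x_1,\dots,x_{i-1}]\partial_{x_i}$. Finite transcendence degree over $K$ suffices here.

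Now I would pull back as in the proof of Lemma \ref{860EF548-B386-4610-B6AC-3864D47DDD15}(b). Since $B$ is affine over $\bk \subseteq A$, choose $\sigma_i \in S$ such that $\sigma_i\partial_{x_i}$ restricts to $D_i \in \lnd(B)$. Then $(D_1,\dots,D_n)$ is admissible; this needs reindexing so that the kernel sequence is increasing, i.e. $\ker D_j$ for $j\ge i$ meets in $B\cap K[x_1,\dots,x_{i-1}]$, which gives $A_1 = B\cap K = A$. Each $D\in\Lscr$ satisfies $S^{-1}D = \sum c_i \partial_{x_i}$ with $c_i \in K[x_1,\dots,x_{i-1}]$. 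Clearing a common denominator $s\in S$, we get $sD = \sum a_i D_i$ with $a_i \in B \cap K[x_1,\dots,x_{i-1}] = A_i$. Hence $D \in \Lscr(D_1,\dots,D_n) =: \Lscr'$, and $\ker\Lscr' = A_1 = A$. I expect this step to be the main obstacle: the bookkeeping of denominators and the identification $A_i = B\cap S^{-1}A_i$, where factorial closedness is used.

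For (a), first take $\Lscr \in \DJL(B)$. It lies in $\LieLN_\bk(B)$ by Remark \ref{D8E0AD3A-6104-479D-B7E2-D38E67645BBD}. Suppose $\Lscr \subseteq \Mscr \in \LieLN_\bk(B)$ with the same kernel. By (b), $\Mscr \subseteq \Lscr'' \in \DJL(B)$, again with the same kernel. Then Remark \ref{5A4B8846-954B-45B2-AD99-00914B6F31F0} forbids $\Lscr \subset \Lscr''$ strictly, so $\Lscr = \Mscr$. Conversely, a relatively maximal $\Lscr$ equals the $\Lscr'$ given by (b), so it lies in $\DJL(B)$.

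For (c), a maximal element of $\LieLN_\bk(B)$ is relatively maximal, so it lies in $\DJL(B)$ by (a) and is maximal there. Conversely, let $\Lscr$ be maximal in $\DJL(B)$ and suppose $\Lscr\subseteq\Mscr\in\LieLN_\bk(B)$. By (b), $\Mscr\subseteq\Lscr'\in\DJL(B)$, so maximality gives $\Lscr=\Lscr'$ and hence $\Lscr=\Mscr$.

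For (d), the $\DJL$ case is Remark \ref{48B77BA7-E2B4-4887-BF9D-B75658D0CC7C}. For $\LieLN_\bk(B)$, enlarge to $\DJL$ via (b), then to a maximal element of $\DJL(B)$, which is maximal in $\LieLN_\bk(B)$ by (c).
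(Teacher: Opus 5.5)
Your proposal is correct and follows essentially the same route as the paper: localize at $S=\ker(\Lscr)\setminus\{0\}$, apply Lemma \ref{A4919C28-E3E4-4D7F-AE02-8F709B2AF0B7}\ref{SkutinThm6} over $K=S^{-1}\ker(\Lscr)$, pull the triangular coordinates back to the kernel sequence $A_i=B\cap K[x_1,\dots,x_{i-1}]$, and then deduce (a), (c), (d) from (b) and Remark \ref{5A4B8846-954B-45B2-AD99-00914B6F31F0} as you do. The differences are only in execution. The paper proves (b) directly for any locally nilpotent subset $\Delta$ and gets the $K$-Lie algebra generated by $S^{-1}\Delta$ locally nilpotent from Remark \ref{03442952-E213-4085-B07A-FABC7408E85A}, so your detour through the $A$-span is avoidable. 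It obtains the element of $\DJL(B)$ via Proposition \ref{6CFC46F0-0103-4BE1-8ACB-BDA7B53CB093} and the description $\Lscr(A_1,\dots,A_m)$, rather than building $D_i=\sigma_i\partial_{x_i}|_B$ and clearing denominators as you do. Finally, no reindexing is needed, since the ordering from Skutin's theorem already yields an increasing kernel sequence.
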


\begin{proof}
We have $\DJL(B) \subseteq \LieLN_\bk(B)$ by Remark \ref{D8E0AD3A-6104-479D-B7E2-D38E67645BBD}.
We prove (b), (c), (d) and (a), in this order.
We first prove the following statement, which is stronger than (b):
\begin{enumerate}
\item[(b$'$)] If $\Delta$ is a locally nilpotent subset of $\Der_\bk(B)$ then there exists  $\Lscr \in \DJL(B)$
such that $\Delta \subseteq \Lscr$ and $\ker(\Delta) = \ker(\Lscr)$.
\end{enumerate}
Let $S = \ker(\Delta) \setminus \{0\}$, $K = S^{-1} \ker(\Delta)$ and $\Beul = S^{-1}B$.
For each $D \in \Delta$, we consider the derivation $S^{-1}D : \Beul \to \Beul$.
The fact that $\Delta$ is a locally nilpotent subset of $\Der_\bk(B)$ implies that
$S^{-1}\Delta = \setspec{ S^{-1}D }{ D \in \Delta }$ is a locally nilpotent subset of $\Der_K(\Beul)$.
Clearly, $\ker\big( S^{-1}\Delta \big) = K$.
Let $\Ascr$ be the Lie subalgebra of $\Der_K(\Beul)$ over $K$ generated by $S^{-1}\Delta$.
By Remark \ref{03442952-E213-4085-B07A-FABC7408E85A}, $\Ascr$ is a locally nilpotent subset of $\Der_K(\Beul)$;
since $\ker(\Ascr) = K$,
Lemma \ref{A4919C28-E3E4-4D7F-AE02-8F709B2AF0B7}\ref{SkutinThm6} implies that 
there exist $x_1,\dots,x_m \in B$ algebraically independent over $K$
such that $B = K[x_1,\dots,x_m]$ and $\Ascr \subseteq \sum_{i=1}^m K[x_1,\dots,x_{i-1}] \partial_{x_i}$.
For each $i=1,\dots,m$, define $A_i = B \cap K[x_1,\dots,x_{i-1}]$ and note that $S^{-1}A_i = K[x_1,\dots,x_{i-1}]$;
it follows that $(A_1,\dots,A_m) \in \KS(B)$,
so Proposition \ref{6CFC46F0-0103-4BE1-8ACB-BDA7B53CB093} implies that there exists $\Lscr \in \DJL(B)$ such that $\kerseq(\Lscr)=(A_1,\dots,A_m)$.
Note that $\ker(\Lscr) = A_1 = B \cap K = \ker(\Delta)$ and that (by Proposition \ref{0615A9A0-CFB7-445C-B34E-44BA30B13CFA}) $\Lscr = \Lscr(A_1,\dots,A_m)$.
So, to prove (b$'$), it suffices to check that $\Delta \subseteq \Lscr(A_1,\dots,A_m)$.
Consider $\delta \in \Delta$. Then $S^{-1}\delta \in \Ascr \subseteq \sum_{i=1}^m K[x_1,\dots,x_{i-1}] \partial_{x_i}$,
so for each $i=1,\dots,m$, $S^{-1}\delta$ maps $K[x_1,\dots,x_{i-1}]$ into itself.
It follows that for each $i=1,\dots,m$, $\delta$ maps $B \cap K[x_1,\dots,x_{i-1}] = A_i$ into itself.
As $\delta \in \lnd_{A_1}(B)$, we have $\delta \in \Lscr(A_1,\dots,A_m)$, which proves that $\Delta \subseteq \Lscr(A_1,\dots,A_m) = \Lscr$.
This proves (b$'$), and (b) follows.

Assertion (c) follows from (b) and the inclusion $\DJL(B) \subseteq \LieLN_\bk(B)$.

(d) By Remark \ref{48B77BA7-E2B4-4887-BF9D-B75658D0CC7C}, each element of $\DJL(B)$ is included in a maximal element.
This together with (b) and (c) implies that each element of $\LieLN_\bk(B)$ is included in a maximal element.

(a) Suppose that $\Lscr \in \LieLN_\bk(B)$ is relatively maximal and let $A = \ker(\Lscr)$;
so $\Lscr$ is a maximal element of the set $S = \setspec{ \Lscr' \in \LieLN_\bk(B) }{ \ker(\Lscr')=A }$.
By (b), we have $\Lscr \subseteq \Lscr'$ for some $\Lscr' \in \DJL(B) \cap S$.
By maximality of $\Lscr$, we have $\Lscr = \Lscr'$, so $\Lscr \in \DJL(B)$. This proves ``$\supseteq$'' in (a).
For the reverse inclusion, consider an arbitrary $\Lscr \in \DJL(B)$ and let $A = \ker(\Lscr)$.
Proceeding by contradiction, suppose that $\Lscr$ is not relatively maximal. 
Then we have a strict inclusion $\Lscr \subset \Lscr'$ for some $\Lscr' \in \LieLN_\bk(B)$ such that $\ker(\Lscr')=A$.
By (b), we have  $\Lscr' \subseteq \Lscr''$ for some $\Lscr'' \in \DJL(B)$ such that $\ker(\Lscr'')=A$.
So we have a strict inclusion $\Lscr \subset \Lscr''$ where $\Lscr, \Lscr'' \in \DJL(B)$ and $\ker(\Lscr'')=\ker(\Lscr)$,
which contradicts Remark \ref{5A4B8846-954B-45B2-AD99-00914B6F31F0}.
This shows that every element of $\DJL(B)$ is a relatively maximal element of $\LieLN_\bk(B)$.
\end{proof}

%%%%%%%%%%%%%%%%%%%%%%%%%%%%%%%%%%%%%%%%%%%%%%%%%%%%%%%%%%%%%%%%
%%%%%%%%%%%%%%%%%%%%%%%%%%%%%%%%%%%%%%%%%%%%%%%%%%%%%%%%%%%%%%%%

%%%%%%%%%%%%%%%%%%%%%%%%%%%%%%%%%%%%%%%%%%%%%%%%%%%%%%%%%%%%%%%%
%%%%%%%%%%%%%%%%%%%%%%%%%%%%%%%%%%%%%%%%%%%%%%%%%%%%%%%%%%%%%%%%

\section{Integral elements of $\DJL(B)$}
\label {SecintegralBorels}

\begin{definition}  \label {3D422524-4083-45E5-BA2D-D0736D2D3CF2}
Let $B$ be a domain of characteristic zero.
An element $\Lscr$ of $\DJL(B)$ is {\it integral\/} if there exists an admissible sequence $(D_1,\dots,D_m)$ of derivations of $B$
such that $\Lscr = L(D_1,\dots,D_m)$.
\end{definition}

The word ``integral'' in Definition \ref{3D422524-4083-45E5-BA2D-D0736D2D3CF2} suggests the absence of denominators
when elements of $\Lscr$ are expressed as linear combinations of $D_1,\dots,D_m$.
The main results of this section are:

\begin{proposition}\label{dim3Straight}
If $\bk$ is an algebraically closed field of characteristic zero and $B$ a factorial affine $\bk$-domain of dimension at most $3$
then each element of $\DJL(B)$ is integral.
\end{proposition}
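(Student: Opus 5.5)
The plan is to argue case by case on $m$, where $\Lscr\in\DJL(B)$ has kernel sequence $\kerseq(\Lscr)=(A_1,\dots,A_m)$. Remark \ref{D9D95302-2C52-4CBD-A475-2C8367B13B8F} gives $m=\trdeg(B:A_1)\le \dim B\le 3$. By Proposition \ref{0615A9A0-CFB7-445C-B34E-44BA30B13CFA} we have $\Lscr=\Lscr(A_1,\dots,A_m)$, so in each case the task is to produce an admissible sequence $(D_1,\dots,D_m)$ with this kernel sequence for which $\Lscr(A_1,\dots,A_m)\subseteq \sum A_iD_i$. The reverse inclusion is automatic. Throughout I will use that each $A_i$ is factorially closed in $B$ (Remark \ref{B32B4DC5-A0C7-4017-8426-C822141AF8D4}), hence a UFD, and that $A_i\in\klnd(A_{i+1})$ (Lemma \ref{860EF548-B386-4610-B6AC-3864D47DDD15}(c)).

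\emph{Case $A_1=\bk$.} This covers $m=0$ trivially, and it covers $m=3$ because then $A_1$ is algebraic over $\bk$ and factorially closed, so $A_1=\bk$ since $\bk$ is algebraically closed. Here $S=A_1\setminus\{0\}=\bk^*$ consists of units, so Lemma \ref{BAD0996F-5C99-407B-B702-0A97D9510736} gives $B=\bk[s_1,\dots,s_m]=\bk^{[m]}$ and $\Lscr=\Lscr^{(S)}\subseteq L(\partial_{s_1},\dots,\partial_{s_m})$. Remark \ref{CEC40263-FB7A-4204-8EFF-32CC255344BB}(b) then shows $\Lscr=L(\partial_{s_1},\dots,\partial_{s_m})$ is integral.

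\emph{Case $m=1$.} Since $B$ is factorial and affine, $A=A_1$ is the kernel of an irreducible $D_1\in\lnd(B)$. For any $D\in\lnd_A(B)$, the standard UFD argument (compare $D$ and $D_1$ on a local slice, using $\trdeg_A B=1$ and factorial closedness of $A$) gives $D=aD_1$ with $a\in A$. Hence $\Lscr=\lnd_A(B)=AD_1=L(D_1)$.

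\emph{Case $m=2$ with $A_1\neq\bk$.} Take $D_2$ irreducible in $\lnd(B)$ with kernel $A_2$. Applying the $m=1$ argument to the factorial ring $A_2$, choose $E\in\lnd(A_2)$ irreducible with kernel $A_1$. The restriction map $\Lscr\to\lnd_{A_1}(A_2)=A_1E$ has image $IE$ for an $A_1$-submodule (ideal) $I\subseteq A_1$. Here $A_1\cong\bk^{[1]}$, being the kernel of a nonzero LND of the $2$-dimensional ring $A_2$ (or by Rentschler-type arguments), so $I=gA_1$ is principal.
\begin{enumerate}[\rm(1)]
\item Pick $D_1\in\Lscr$ with $D_1|_{A_2}=gE$.
\item For any $D\in\Lscr$ we have $D|_{A_2}=bgE$ with $b\in A_1$. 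Then $D-bD_1\in\Lscr$ kills $A_2$, so $D-bD_1\in\lnd_{A_2}(B)=A_2D_2$ by the $m=1$ case.
\item This gives $\Lscr=A_1D_1+A_2D_2$.
\end{enumerate}
It remains to check that $(D_1,D_2)$ is admissible: the kernels are right, and linear independence is clear.

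The step I expect to be the main obstacle is showing that $D_1$ and $D_2$ commute. Since $[D_1,D_2]\in\Lscr$ kills $A_2$, we get $[D_1,D_2]=cD_2$ with $c\in A_2$. Because $\Lscr$ is a locally nilpotent subset of $\Der_\bk(B)$ (Lemma \ref{A4919C28-E3E4-4D7F-AE02-8F709B2AF0B7}\ref{SkutinProp}), $\ad D_1$ should act nilpotently on $D_2$; I would justify this via the Lie algebra generated by $D_1$ and $A_2D_2$. On the other hand $(\ad D_1)^n(D_2)=p_n D_2$ with $p_{n+1}=D_1(p_n)+cp_n$. If $c\neq0$, the degree function $\deg_{D_1}$ on $A_2$ gives $\deg p_{n+1}=\deg p_n+\deg c$, since $\deg D_1(p_n)<\deg p_n$. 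So no $p_n$ vanishes, a contradiction; hence $c=0$.

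If the nilpotency of $\ad D_1$ is hard to extract directly, I would fall back on Lemma \ref{E881691D-8557-4672-B859-92E3341ADF4B}. That lemma realizes $\Lscr=\Lscr(D_1',D_2')$ with commuting $D_i'$, and from $sD_1\in L(D_1',D_2')$ one can compute $[D_1,D_2]$ directly via Corollary \ref{FE9079CB-02CC-4FD7-8149-9E402ABC4851}.
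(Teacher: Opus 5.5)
Your proof is correct in substance, apart from one false justification that is easily repaired (below). For the only nontrivial case ($\dim B=3$, $m=2$) it takes a genuinely different route from the paper.

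\textbf{The paper's route.} Lemma \ref{2345A6EB-195E-46CB-BBBF-A4CB58DA2639} uses an extremal argument. Among all $D'$ with $(D',E)$ admissible and $L(D,E)\subseteq L(D',E)\subseteq\Lscr(D',E)=\Lscr(D,E)$, it picks one for which the principal ideal $D'(\beta)B$ is maximal. It then shows that a denominator $p$ (a prime of $A_1$) would produce $D_1=\delta-a_1'D'$ with $pD_1=\mu D'+a_2E$, contradicting maximality. Writing $a_1=\mu+pa_1'$ with $\mu\in\bk^*$ is exactly where $A_1/pA_1\cong\bk$, i.e.\ algebraic closedness of $\bk$, is used.

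\textbf{Your route.} You use the $A_1$-linear restriction map $\Lscr\to\lnd_{A_1}(A_2)=A_1E$. Its kernel is $\lnd_{A_2}(B)=A_2D_2$ and its image is $IE$. Principality of $I$ splits this extension, and a lift of the generator is the desired $D_1$.

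\textbf{Comparison.} Your approach is more structural. It shows that for $m=2$ and $B$ factorial, $\Lscr$ is integral if and only if this image is a cyclic $A_1$-module. This also explains Example \ref{C60B4C67-6814-44F3-8DDF-E43EEF900AFA}, where the image is $(u,v)\partial_Q$. Moreover, your $m=2$ step never needs $\bk$ algebraically closed. The paper's argument keeps $E$ fixed and improves $D'$ by elementary divisibility moves, so it needs no description of $\Lscr$, but it depends on the residue fields of $A_1$ being $\bk$.

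\textbf{Repair 1: $A_1$ need not be a polynomial ring.} The claim $A_1\cong\bk^{[1]}$ is false. Take $B=\bk[t,t^{-1},x,y]$, which is factorial, affine and $3$-dimensional, with the admissible sequence $(\partial_x,\partial_y)$. Then $A_1=\bk[t,t^{-1}]$. Rentschler's theorem does not apply either, since $A_2=\bk[t,t^{-1},x]\neq\kk2$. What you actually need is only that $A_1$ is a PID. This holds because $A_1$ is factorially closed in $B$, hence a UFD, and has transcendence degree $1$, hence Krull dimension at most $1$. The paper records the same fact.

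\textbf{Repair 2: nilpotency of $\ad D_1$.} The nilpotency of $\ad D_1$ on $D_2$ is best justified directly, not via local nilpotency of the set $\Lscr$. We have $(\ad D_1)^n(D_2)=\sum_{k=0}^n\binom nk D_1^kD_2(-D_1)^{n-k}$, and this kills any given $b\in B$ once $n\gg0$. Applying this to finitely many generators of $B$ gives $(\ad D_1)^n(D_2)=0$ for large $n$.

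Your fallback is even simpler, and it is the mechanism the paper uses implicitly when it says $(D_1,E)$ is ``easily'' admissible. Let $(D_1^o,D_2^o)$ be the original sequence; then $D_2^o=hD_2$ with $h\in A_2$. The derivation $D_2^o$ commutes with $L(D_1^o,D_2^o)$, and hence with all of $\Lscr$ by Corollary \ref{FE9079CB-02CC-4FD7-8149-9E402ABC4851}. Lemma \ref{F45F968B-ECF9-45AC-9626-A51DA798D02B} then gives $[D_2,\Lscr]=0$.
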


\begin{proposition} \label {6BC46AAB-2493-4034-AE60-7FCC9264ECB8}
Let $\bk$ be a field of characteristic zero.
For each $n\ge4$, there exists a maximal element of $\DJL( \bk^{[n]})$ that is not integral.
\end{proposition}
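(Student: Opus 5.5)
The plan is to realize a non-integral maximal element explicitly, building on Example \ref{BF979604-4D5B-4A13-A6AA-E9EDC00540C7}. Non-integrality will come from a determinant obstruction, and the argument will not depend on which maximal element we pick above a suitable starting algebra.

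\emph{Step 1: choose a starting algebra.} Write $B = \bk[u,v,y,z,w_5,\dots,w_n] = \kk n$, put $W=\bk[w_5,\dots,w_n]$ and $Q = uy+vz$. Let $D=\partial_y$ and $E = u\partial_z - v\partial_y$, now viewed as derivations of $B$. As in Example \ref{BF979604-4D5B-4A13-A6AA-E9EDC00540C7}, $(D,E)$ is admissible, with kernel sequence $(\bk[u,v]\otimes W,\ \bk[u,v,Q]\otimes W)$. By Remark \ref{48B77BA7-E2B4-4887-BF9D-B75658D0CC7C}, $\Lscr(D,E)$ is contained in some maximal element $\Lscr$ of $\DJL(B)$. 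By Proposition \ref{6CFC46F0-0103-4BE1-8ACB-BDA7B53CB093}, the kernel sequence $(A_1,\dots,A_N)$ of $\Lscr$ satisfies
$$A_{N-1}=\bk[u,v]\otimes W \quad\text{and}\quad A_N=\bk[u,v,Q]\otimes W.$$
We show that no maximal element above $\Lscr(D,E)$ is integral. This handles every $n\ge4$ at once, so the delicate maximality questions for the longer sequences never arise.

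\emph{Step 2: two elements forced into $\Lscr$.} The derivations $\partial_y$ and $\partial_z$ are locally nilpotent and are $A_{N-1}$-derivations. They map $A_N$ into itself, because $\partial_y Q = u$ and $\partial_z Q = v$. Hence they lie in $\Lscr(A_{N-1},A_N)=\Lscr(D,E)\subseteq\Lscr$.

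\emph{Step 3: the obstruction.} Suppose $\Lscr = L(E_1,\dots,E_N)$ for some admissible sequence; its kernel sequence is $(A_1,\dots,A_N)$. Take $\delta\in L(E_1,\dots,E_N)$ with $A_{N-1}\subseteq\ker\delta$. The argument in the proof of Lemma \ref{BC319DE3-1827-4876-96D9-C0E8DBBECECB}, run with $\alpha=1$ and using $B$-linear independence of the $E_i$, shows $\delta\in A_{N-1}E_{N-1}+A_NE_N$. Applying this to $\partial_y$ and $\partial_z$ gives
$$\partial_y = a E_{N-1}+bE_N,\qquad \partial_z = a'E_{N-1}+b'E_N$$
with coefficients in $B$. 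Both $E_{N-1}$ and $E_N$ kill $A_{N-1}\ni u,v,w_j$, so they have the form $p\partial_y+q\partial_z$. Write $E_{N-1}=p\partial_y+q\partial_z$ and $E_N = p'\partial_y+q'\partial_z$. Since $E_N(Q)=0$, we get $p'u+q'v=0$; as $B$ is a UFD, $(p',q')=c(-v,u)$ for some $c\in B$. Comparing the coefficient determinants with respect to the basis $\partial_y,\partial_z$ gives
$$1=(ab'-a'b)\cdot\det\begin{pmatrix}p&q\\p'&q'\end{pmatrix}=(ab'-a'b)\,c\,(pu+qv).$$
The right-hand side lies in the ideal $(u,v)$, which is a contradiction. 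Hence $\Lscr$ is not integral.

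\emph{Main obstacle.} The step I expect to need the most care is the coefficient-vanishing claim in Step 3. We must check that the proof of Lemma \ref{BC319DE3-1827-4876-96D9-C0E8DBBECECB} transfers verbatim to $L(E_1,\dots,E_N)$ without any denominator $\alpha$. It uses only the choice of $x\in A_{r+1}\setminus A_r$ and the inclusion $A_r\subseteq\ker E_i$ for $i\ge r$, so it should go through. The remaining facts to verify are routine: that $(D,E)$ stays admissible after adjoining $W$, and that the kernel sequence of $\Lscr$ ends as claimed, which follows from $\preceq$ in $\KS(B)$.
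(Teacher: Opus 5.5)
Your proof is correct, but it reaches the statement by a more direct route than the paper.

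The paper proceeds in three stages:
\begin{enumerate}
\item It proves non-integrality only for the non-maximal algebra $\Lscr(D,E)$ in $\kk4$ (Example~\ref{C60B4C67-6814-44F3-8DDF-E43EEF900AFA}). There it obtains $E'=hE$ from the irreducibility of $E$.
\item It transports non-integrality to $\kk n$ via the descent Lemma~\ref{A7B19E49-A7C3-46A5-B235-23A05DBE5BC7} (the maps $D\mapsto\tilde D$, $E\mapsto\bar E$) and Corollary~\ref{B9348BEF-D82C-4F54-92EA-A6A04ADA2B2B}.
\item It passes to a maximal element using Lemma~\ref{33206224-E440-46C6-BE36-BAB82E58772E}(b,c). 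The proof of that lemma contains essentially your coefficient-vanishing step.
\end{enumerate}

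You instead treat $w_5,\dots,w_n$ as inert parameters from the start. You then run the determinant obstruction directly against a hypothetical integral presentation of an arbitrary maximal element above $\Lscr(D,E)$. In doing so you inline Lemma~\ref{33206224-E440-46C6-BE36-BAB82E58772E}(a), and you replace the irreducibility argument by $E_N(Q)=0$ together with the coprimality of $u$ and $v$. This works because the obstruction only uses that $u,v$ are coprime and that $(u,v)$ is a proper ideal, and both survive adjoining variables.

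Your version is shorter and self-contained. The paper's version buys reusable general statements: non-integrality persists when passing from any domain $B$ to $B^{[n]}$, with derived length preserved, and integrality is inherited by smaller elements of $\DJL(B)$. The paper uses these tools elsewhere, e.g.\ in Corollary~\ref{nonstrightdim3}.

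One small point: your assertion that an integral presentation $\Lscr=L(E_1,\dots,E_{N'})$ has $N'=N$ and kernel sequence $(A_1,\dots,A_N)$ deserves a citation, namely Remark~\ref{214ED250-CA2F-4270-BC00-AEC2B0FADA07} together with Lemma~\ref{E881691D-8557-4672-B859-92E3341ADF4B}(c). It is routine, but it is what lets you index the last two terms as $E_{N-1},E_N$ with kernels $A_{N-1},A_N$.
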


We also give examples of $3$-dimensional smooth affine varieties $X$ such that  $\DJL( \bk[X] )$ contains non-integral maximal elements,
thus showing that the factoriality assumption in Proposition \ref{dim3Straight} cannot be entirely relaxed.

\begin{remark}  \label {214ED250-CA2F-4270-BC00-AEC2B0FADA07}
Let $(D_1,\dots,D_m)$ and $(E_1,\dots,E_n)$ be admissible sequences of derivations of a domain of characteristic zero.
If $\Lscr(D_1,\dots,D_m) = L(E_1,\dots,E_n)$, then $m=n$ by Lemma \ref{E881691D-8557-4672-B859-92E3341ADF4B}(b).
\end{remark}

\begin{lemma}  \label {3E5072F3-B726-4AEF-9B98-0AC1619775F2}
Let $B$ be a domain of characteristic zero and $(D_1,\dots,D_m)$ an admissible sequence of derivations of $B$.
\begin{enumerate}[\rm(a)]

\item $L(D_1,\dots,D_m) \in \DJL(B)$ if and only if $L(D_1,\dots,D_m) = \Lscr(D_1,\dots,D_m)$. 

\item If $L(D_1,\dots,D_m) = \Lscr(D_1,\dots,D_m)$ then $D_1, \dots, D_m$ are irreducible.

\end{enumerate}
\end{lemma}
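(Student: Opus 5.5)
For part (a), the direction ``$\Leftarrow$'' is immediate, since $\Lscr(D_1,\dots,D_m)\in\DJL(B)$ by definition. For ``$\Rightarrow$'', I would assume $L:=L(D_1,\dots,D_m)\in\DJL(B)$ and write $L=\Lscr(E_1,\dots,E_n)$ for some admissible sequence $(E_1,\dots,E_n)$. Since $L\subseteq\Lscr(D_1,\dots,D_m)$ and $L(E_1,\dots,E_n)\subseteq\Lscr(E_1,\dots,E_n)$, Lemma \ref{00A23F46-9C1D-449A-BE41-68ACD0236AC5}(b) applies in both directions. From $L(D_1,\dots,D_m)\subseteq\Lscr(E_1,\dots,E_n)$ we get $(A_1,\dots,A_m)\preceq\kerseq(E)$. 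Conversely, $\Lscr(E_1,\dots,E_n)=L\subseteq\Lscr(D_1,\dots,D_m)$, so Lemma \ref{00A23F46-9C1D-449A-BE41-68ACD0236AC5}(c) gives $\kerseq(E)\preceq(A_1,\dots,A_m)$. By antisymmetry of $\preceq$, the two kernel sequences coincide. Corollary \ref{ujd3876g20oigjh2vy68} then gives $\Lscr(E_1,\dots,E_n)=\Lscr(D_1,\dots,D_m)$, so $L=\Lscr(D_1,\dots,D_m)$. A shortcut is also available: apply Remark \ref{CEC40263-FB7A-4204-8EFF-32CC255344BB}(b) once we know $m=n$, and $m=n$ follows from Lemma \ref{E881691D-8557-4672-B859-92E3341ADF4B}(b) via $\dl(L)=m$ and $\dl(\Lscr(E))=n$.

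For part (b), I would assume $L=\Lscr$ and suppose some $D_i=b\delta$ with $b\in B$ and $\delta\in\Der(B)$. The goal is to show $b\in B^*$. By Corollary \ref{58D561A8-FF48-417A-9D8F-635BA02F689F}, replacing $D_i$ by $\delta$ gives an admissible sequence with the same kernel sequence. Corollary \ref{FE9079CB-02CC-4FD7-8149-9E402ABC4851} gives $\delta\in\lnd(B)$ and $b\in\ker\delta$, and since $\delta$ commutes with the $D_j$, also $b\in\ker D_j$ for all $j$; hence $b\in A_1$. In particular $b\neq0$. Then $b\delta=D_i\in L$, so $\delta\in\Lscr(D_1,\dots,D_m)=L$. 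Therefore $\delta=\sum_j a_jD_j$ with $a_j\in A_j$. Multiplying by $b$ gives $D_i=\sum_j ba_jD_j$. Linear independence over $B$ then forces $ba_i=1$, so $b\in B^*$.

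The only slightly delicate point is in (b): justifying that $b\in A_1$, i.e.\ that $b$ is in the kernel of every $D_j$ and not only of $\delta$. This uses commutation via Corollary \ref{FE9079CB-02CC-4FD7-8149-9E402ABC4851} applied to the pair $(b\delta, D_j)$ with coefficient $1$ on $D_j$. The point is needed because membership of $\delta$ in $\Lscr$ requires a denominator from $A_1\setminus\{0\}$.
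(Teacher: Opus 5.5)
Your proposal is correct and essentially matches the paper's proof. For (a), the paper takes essentially your shortcut: $m=n$ by Remark \ref{214ED250-CA2F-4270-BC00-AEC2B0FADA07}, then Remark \ref{CEC40263-FB7A-4204-8EFF-32CC255344BB}(a) applied to $\Lscr(E_1,\dots,E_n)\subseteq\Lscr(D_1,\dots,D_m)$. Your antisymmetry route through Lemma \ref{00A23F46-9C1D-449A-BE41-68ACD0236AC5} is an equally valid variant. For (b), the paper gets $b\in A_1$ directly from Lemma \ref{F45F968B-ECF9-45AC-9626-A51DA798D02B}, which is the lemma behind Corollary \ref{FE9079CB-02CC-4FD7-8149-9E402ABC4851}, and then argues exactly as you do. One small ordering fix: $b\neq 0$ comes from $D_i\neq 0$, and you need it before invoking the corollary, not as a consequence of $b\in A_1$.
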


\begin{proof}
(a) If $L(D_1,\dots,D_m) \in \DJL(B)$ then there exists an admissible sequence $(E_1,\dots,E_n)$ such that $L(D_1,\dots,D_m) = \Lscr(E_1,\dots,E_n)$.
Note that $m=n$ by Remark \ref{214ED250-CA2F-4270-BC00-AEC2B0FADA07};
this equality together with $\Lscr(E_1,\dots,E_n) \subseteq \Lscr(D_1,\dots,D_m)$ implies,
by Remark \ref{CEC40263-FB7A-4204-8EFF-32CC255344BB}(a), that $\Lscr(E_1,\dots,E_n) = \Lscr(D_1,\dots,D_m)$
and hence $L(D_1,\dots,D_m) = \Lscr(D_1,\dots,D_m)$.
This proves $(\Rightarrow)$, and the converse is obvious.

(b) Suppose that $L(D_1,\dots,D_m) = \Lscr(D_1,\dots,D_m)$.
Let $A_1 \subset \cdots \subset A_m$ be the kernel sequence of $(D_1,\dots,D_m)$.
Let $i \in \{1,\dots,m\}$ and suppose that $D_i = b \delta_i$ for some $b \in B$ and $\delta_i \in \Der(B)$.
Then $b \delta_i \neq 0$, so Lemma \ref{F45F968B-ECF9-45AC-9626-A51DA798D02B} implies that 
$b \in A_1$, because $b\delta_i$ commutes with each of $D_1, \dots, D_m$.
As $b \delta_i \in L(D_1,\dots,D_m)$ and $b \in A_1 \setminus \{0\}$, we have $\delta_i \in \Lscr(D_1,\dots,D_m) = L(D_1,\dots,D_m)$ and hence
$\delta_i = \sum_{j=1}^m a_j D_j$ with $a_j \in A_j$ for all $j=1,\dots,m$.
Thus, $D_i = b \delta_i = \sum_{j=1}^m (b a_j) D_j$;
as $D_1,\dots,D_m$ are linearly independent over $B$, we get $b a_i = 1$ and hence $b \in B^*$, showing that $D_i$ is irreducible.
\end{proof}

\begin{noname}  \label {DBC3814A-C9F4-4861-99E0-5A680C6D397E}
Recall the following facts (see, e.g., \cite[Section 2.1]{Freud:Book-new}).
Let $B$ be a domain of characteristic zero.
\begin{enumerate}[\rm(a)]

\item If $B$ satisfies the ascending chain condition (ACC) on principal ideals
then for each $D \in \lnd(B) \setminus \{0\}$ we have $D = a \delta$ for some $a \in \ker(D)$ and some irreducible $\delta \in \lnd(B)$.
(Recall that every UFD and every Noetherian ring satisfies the ACC on principal ideals.)

\item Suppose that $B$ is a UFD and let $A \in \klnd(B)$.
Then the nonzero $A$-module $\lnd_A(B)$ is cyclic, i.e., there exists $\delta \in \lnd_A(B)$ such that $\lnd_A(B) = A \delta$. 
Moreover, an element $\delta \in \lnd_A(B)$ satisfies $\lnd_A(B) = A \delta$ if and only if it is irreducible.

\end{enumerate}
\end{noname}

\begin{remark}  \label{0DFAFE39-F150-42CB-A719-FA7CFC5F62BE}
Let $B$ be a domain of characteristic zero that satisfies the ACC on principal ideals.
For each $\Lscr \in \DJL(B)$,
there exists an admissible sequence $(\delta_1,\dots,\delta_m)$ of derivations of $B$
such that $\delta_1,\dots,\delta_m$ are irreducible and $\Lscr = \Lscr(\delta_1,\dots,\delta_m)$.
(This follows from \ref{DBC3814A-C9F4-4861-99E0-5A680C6D397E}(a) and Corollary \ref{58D561A8-FF48-417A-9D8F-635BA02F689F}(b).)
\end{remark}

\begin{remark}  \label {5E82FF80-4404-441B-93BD-AB4A430ABB65}
Let $\bk$ be a field of characteristic zero, $B$ a $\bk$-domain with $\trdeg_\bk(B) = n < \infty$,
and $(D_1,\dots,D_m)$ an admissible sequence of derivations of $B$.
Assume that either $m=n$ or
$$
\text{$m=1$ and $B$ is a UFD.}
$$
Then $\Lscr(D_1,\dots,D_m)$ is integral.
Indeed, if $m=n$ then $A_1 = \bigcap_{i=1}^n \ker(D_i)$ is algebraic over $\bk$
(because $m = \trdeg_{A_1}(B)$ by Remark \ref{D9D95302-2C52-4CBD-A475-2C8367B13B8F}) and hence is a field;
so $\Lscr(D_1,\dots,D_m) = L(D_1,\dots,D_m)$.
Suppose that $m=1$ and $B$ is a UFD, and let $A_1 = \ker(D_1)$. 
We have $\Lscr(D_1) = \lnd_{A_1}(B)$ by Example \ref{B6D5B613-5754-4ECF-8B69-9CFB86A83A7D},
and \ref{DBC3814A-C9F4-4861-99E0-5A680C6D397E}(b) implies that there exists $\delta \in \lnd_{A_1}(B)$ such that $\lnd_{A_1}(B) = A_1 \delta$.
So $\Lscr(D_1) = A_1 \delta = L(\delta)$, which shows that $\Lscr(D_1)$ is integral.
\end{remark}

\begin{lemma}  \label {2345A6EB-195E-46CB-BBBF-A4CB58DA2639}
Let $\bk$ be an algebraically closed field of characteristic zero and $B$ a factorial affine $\bk$-domain of dimension $3$.
Suppose that $(D,E)$ is an admissible sequence of derivations of $B$ such that $E$ is irreducible.
Then there exists $D'$ such that $(D',E)$ is admissible and $\Lscr(D,E) = L(D',E)$.
\end{lemma}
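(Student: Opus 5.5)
Write $A_1 \subset A_2$ for the kernel sequence of $(D,E)$, so $A_2 = \ker E$ and $A_1 = \ker D \cap \ker E$. By Remark \ref{D9D95302-2C52-4CBD-A475-2C8367B13B8F}, $\trdeg_\bk A_1 = 1$ and $\trdeg_\bk A_2 = 2$. Since $B$ is factorial of dimension $3$ and $A_1, A_2$ are factorially closed kernels, I would first establish the structure of these rings. $A_2$ is a factorial affine surface, and it is the kernel of an LND on a factorial threefold, so it is finitely generated (Zariski, in dimension $\le 3$). $A_1 \in \klnd(A_2)$ by Lemma \ref{860EF548-B386-4610-B6AC-3864D47DDD15}(c). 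Being a factorially closed subring of transcendence degree $1$ in a UFD with $\bk$ algebraically closed, and having trivial units issues handled by factoriality, $A_1 = \bk[f] = \bk^{[1]}$ for some $f$. This uses the standard fact that a kernel of an LND on a factorial affine surface is a polynomial ring in one variable.

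The key step is to produce a good $D'$. By \ref{DBC3814A-C9F4-4861-99E0-5A680C6D397E}(b) applied to the UFD $A_2$ and $A_1 \in \klnd(A_2)$, there is an irreducible $d \in \lnd_{A_1}(A_2)$ with $\lnd_{A_1}(A_2) = A_1 d$. Given $\delta \in \Lscr(D,E)$, write $s\delta = aD + bE$ with $s \in A_1\setminus\{0\}$, $a\in A_1$, $b\in A_2$. Then $\delta|_{A_2} = (a/s) D|_{A_2}$ is an LND of $A_2$ killing $A_1$, so $\delta|_{A_2} = c\, d$ with $c \in A_1$. I would then try to lift $d$: I want $D' \in \Lscr(D,E)$ with $D'|_{A_2} = d$ and $D'$ commuting with $E$ and killing $A_1$. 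Since $D|_{A_2} = a_0 d$ with $a_0\in A_1$, the candidate is $D' = D/a_0$ adjusted by a multiple of $E$. Precisely, I seek $D' = (D + gE)/a_0$ with $g \in A_2$ chosen so that $D'$ maps $B$ into $B$.

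The main obstacle is showing that such a lift with $D'(B)\subseteq B$ exists. Here I would use that $a_0 \in \bk[f]$ factors into linear factors $f-\lambda$, and handle one prime $p = f-\lambda$ at a time. Reduce modulo $p$: $\bar D = -\bar g \bar E$ is needed on $B/pB$, which is a domain since $p$ is prime in $B$ (factorially closed $A_1$ and factorial $B$). On $B/pB$, $\bar D$ vanishes on $A_2/(pB\cap A_2)$ because $D|_{A_2} = a_0 d \equiv 0$. Both $\bar D$ and $\bar E$ are then LNDs of the surface $B/pB$ that commute and kill the image of $A_2$, a curve's worth of functions. Irreducibility of $E$ should give $\bar E \neq 0$, and the kernels coincide by transcendence degree. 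One then needs $\bar D = \bar h \bar E$ with $\bar h$ in the image of $A_2$, which follows if $\bar E$ generates the relevant LND module; this is where factoriality and dimension $3$ are used, and is the delicate point. Lifting $\bar h$ to $h\in A_2$ and replacing $D$ by $(D - hE)/p$, which is still an LND commuting with $E$ by Corollary \ref{FE9079CB-02CC-4FD7-8149-9E402ABC4851}-type arguments, lowers $a_0$ by one factor. Induction terminates with $a_0 \in \bk^*$.

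With $D'|_{A_2} = d$ irreducible and $E$ irreducible, $(D',E)$ is admissible with the same kernel sequence, so $\Lscr(D',E) = \Lscr(D,E)$ by Corollary \ref{ujd3876g20oigjh2vy68}. For $\delta$ as above, $\delta - cD'$ kills $A_2$ and lies in $\Lscr(D,E)$. Hence it lies in $\lnd_{A_2}(B) = A_2E$ by \ref{DBC3814A-C9F4-4861-99E0-5A680C6D397E}(b) and irreducibility of $E$, so $\delta \in A_1D' + A_2E = L(D',E)$.
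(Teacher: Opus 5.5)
Your plan fails at the step you call delicate, and the failure is not merely technical. The target $D'|_{A_2}=d$, with $d$ a generator of $\lnd_{A_1}(A_2)$, is unattainable in general, so the induction ``until $a_0\in\bk^*$'' cannot be carried out. Take $B=\bk[x,y,z]$, $E=z\partial_y-2y\partial_x$ (irreducible) and $D=\partial_x$. Then $A_2=\bk[z,f]$ with $f=xz+y^2$, $A_1=\bk[z]$, $d=\partial_f$, and $D|_{A_2}=z\,\partial_f$, so $a_0=z$. A derivation $\delta$ of $B$ with $\delta(z)=0$ and $\delta(f)=1$ would satisfy $z\,\delta(x)+2y\,\delta(y)=1$, which is impossible modulo $z$. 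Hence no element of $\Lscr(D,E)$ restricts to $d$. Correspondingly, your mod-$p$ claim $\bar D=\bar h\bar E$ is false here: on $B/zB=\bk[x,y]$ one has $\bar D=\partial_x$ and $\bar E=-2y\,\partial_x$. The kernels do coincide, but $B/pB$ need not be factorial and $\bar E$ need not be irreducible there, so nothing forces $\bar D$ into $(A_2/pA_2)\bar E$. In this example $D$ itself already works ($\Lscr(D,E)=L(D,E)$) even though $a_0$ is not a unit, so your stopping criterion is the wrong one. Separately, $A_1=\bk^{[1]}$ is false in general: for $B=\bk[x^{\pm1},y,z]$, $D=\partial_y$, $E=\partial_z$ one gets $A_1=\bk[x^{\pm1}]$. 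What is true, and all that is needed, is that $A_1$ is a PID.

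The missing idea is to divide only when $\Lscr(D,E)$ actually allows it. The paper does this with a maximality argument. Among all $D'$ with $(D',E)$ admissible and $L(D,E)\subseteq L(D',E)\subseteq\Lscr(D',E)=\Lscr(D,E)$, it picks one with $D'(\beta)B$ maximal, for a fixed $\beta\in A_2\setminus A_1$. If some $\delta\in\Lscr(D',E)\setminus L(D',E)$ existed, one could arrange $p\delta=a_1D'+a_2E$ with $p$ prime in $A_1$. Irreducibility of $E$ forces $p\nmid a_1$, so $a_1=\mu+pa_1'$ with $\mu\in\bk^*$. Then $D_1=\delta-a_1'D'$ satisfies $pD_1=\mu D'+a_2E$, giving $D'(\beta)B\subset D_1(\beta)B$, a contradiction. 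Thus the non-integral element itself supplies the $E$-correction you were trying to extract from $B/pB$.

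Alternatively, your last paragraph goes through almost unchanged with one substitution. Replace $d$ by a generator $c_0d$ of the $A_1$-module $\{\delta|_{A_2}:\delta\in\Lscr(D,E)\}$; this is a nonzero submodule of $A_1d\cong A_1$, hence principal because $A_1$ is a PID. Take $D'\in\Lscr(D,E)$ with $D'|_{A_2}=c_0d$. Then $(D',E)$ is admissible with kernel sequence $(A_1,A_2)$, since $\ker(c_0d)=A_1$. Every $\delta\in\Lscr(D,E)$ satisfies $\delta|_{A_2}=c\,D'|_{A_2}$ for some $c\in A_1$, so $\delta-cD'\in\lnd_{A_2}(B)=A_2E$, and no lifting problem arises.
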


\begin{proof}
The kernel sequence of $(D,E)$ is $A_1 \subset A_2$ where $A_1 = \ker(D) \cap \ker(E)$ and $A_2 = \ker(E)$.
Note that $A_1$ is affine over $\bk$ (because $\trdeg_\bk(A_1)=1$ and $A_1$ is contained in the affine $\bk$-domain $B$),
and is a PID (because it is factorially closed in $B$).
Since $\bk$ is algebraically closed, it follows that for every prime element $p \in A_1$
the composition $\bk \to A_1 \to A_1/pA_1$ is an isomorphism.
If $\alpha \in A_1 \setminus \{0\}$ and $\alpha = u p_1 \cdots p_n$ where $p_1,\dots,p_n$ are prime elements of $A_1$ and $u \in A_1^*$,
we define the {\it length\/} of $\alpha$ to be $\ell(\alpha) = n$ (so $\ell(u)=0$ for all $u \in A_1^*$).
Let $\Dscr$ be the set of $D' \in \Der(B)$ that satisfy the two conditions:
\begin{itemize}

\item $(D',E)$ is admissible,

\item $L(D,E) \subseteq L(D',E) \subseteq \Lscr(D',E) = \Lscr(D,E)$.

\end{itemize}
Note that $\Dscr \neq \emptyset$ (because $D \in \Dscr$) and that if $D' \in \Dscr$ then $(D',E)$ has kernel sequence $A_1 \subset A_2$
(the same as that of $(D,E)$).
Fix $\beta \in A_2 \setminus A_1$ and consider the set $\Omega = \setspec{ D'(\beta)B }{ D' \in \Dscr }$.
Then $\Omega \neq \emptyset$ and each element of $\Omega$ is a nonzero principal ideal of $B$.
Let us choose $D' \in \Dscr$ such that $D'(\beta)B$ is a maximal element of the poset $(\Omega,\subseteq)$
(we can do this because $B$ is Noetherian).

We claim that $L(D',E) = \Lscr(D',E)$.
Proceeding by contradiction, we assume that  $L(D',E) \neq \Lscr(D',E)$.
Then there exists $\delta_1 \in \Lscr(D',E) \setminus L(D',E)$.
Choose $\alpha \in A_1 \setminus \{0\}$ of minimal length such that $\alpha \delta_1 \in L(D',E)$,
and observe that $\ell(\alpha)>0$.
So $\alpha = p \alpha_1$ for some prime element $p$ of $A_1$ and some $\alpha_1 \in A_1 \setminus \{0\}$;
since $\ell(\alpha_1)<\ell(\alpha)$, we have $\alpha_1 \delta_1 \notin L(D',E)$.
Let $\delta = \alpha_1 \delta_1$ and note that
$$
\delta \in \Lscr(D',E) \setminus L(D',E) \quad \text{and} \quad p \delta \in L(D',E) .
$$
Write $p\delta = a_1 D' + a_2 E$ with $a_1 \in A_1$ and $a_2 \in A_2$.
We claim that $p \nmid a_1$ in $A_1$.
To see this, first note that $p$ is prime in $B$, because $A_1$ is factorially closed in $B$;
since $p \notin B^*$ and $E$ is irreducible, there exists $b \in B$ such that $E(b) \notin pB$.
If $p \mid a_1$ then the equation $p\delta(b) = a_1 D'(b) + a_2 E(b)$ shows that $a_2 E(b) \in pB$,
so $a_2 \in pB$; as $A_2$ is factorially closed in $B$, we then have $a_2 \in p A_2$
and consequently $\delta = \frac{a_1}{p} D' + \frac{a_2}{p} E \in L(D',E)$ is a contradiction.
So $p \nmid a_1$ in $A_1$;
since the composition $\bk \to A_1 \to A_1/pA_1$ is an isomorphism,
we have $a_1 = \mu + p a_1'$ for some $\mu \in \bk^*$ and $a_1' \in A_1$.
Define $D_1 = \delta -  a_1' D' \in \Der(B)$ and note that 
$p D_1  = p \delta - pa_1'D' = a_1 D' + a_2 E - pa_1'D'$, i.e., 
\begin{equation} \label {EC566A71-504F-4E00-8390-5B36AB4B458F}
p D_1 = \mu D' + a_2 E .
\end{equation}
It is easy to see that $(D_1,E)$ is admissible and that
$$
L(D',E) \subseteq L(D_1,E) \subseteq \Lscr(D_1,E) = \Lscr(D',E) ;
$$
so $D_1 \in \Dscr$ and consequently $D_1(\beta)B \in \Omega$.
Equation \eqref{EC566A71-504F-4E00-8390-5B36AB4B458F} also shows that $p D_1(\beta) = \mu D'(\beta)$,
so we have a strict inclusion $D'(\beta)B \subset D_1(\beta)B$ of elements of $\Omega$.
This contradicts the fact that $D'(\beta)B$ is a maximal element of $(\Omega,\subseteq)$.
This contradiction shows that  $L(D',E) = \Lscr(D',E)$, as required.
\end{proof}

\begin{noname} \label {C7BF28E5-4695-4BF8-9DA1-D3A730D28EAD}
{\bf Proof of Proposition \ref{dim3Straight}.}
We have to show that each element $\Lscr(D_1,\dots,D_m)$ of $\DJL(B)$ is integral.
By Remark \ref{5E82FF80-4404-441B-93BD-AB4A430ABB65}, we may assume that $\dim(B)=3$ and $m=2$.
So it suffices to show that if $(D_1,D_2)$ is an admissible sequence of derivations of $B$ then $\Lscr(D_1,D_2)$ is integral.
By Remark \ref{0DFAFE39-F150-42CB-A719-FA7CFC5F62BE},
there is an admissible sequence $(D,E)$ such that $\Lscr(D_1,D_2) = \Lscr(D,E)$ and $D,E$ are irreducible.
Lemma \ref{2345A6EB-195E-46CB-BBBF-A4CB58DA2639} implies that there exists $D'$ such that  $\Lscr(D,E) = L(D',E)$.
So $\Lscr(D_1,D_2)$ is integral.
\hfill\qedsymbol
\end{noname}

Our next objective is to give examples of maximal elements of $\DJL(B)$ that are not integral, when $B$ is a $\bk$-domain with good properties.

\begin{lemma}  \label {33206224-E440-46C6-BE36-BAB82E58772E}
Let $B$ be a domain of characteristic zero.
\begin{enumerate}[\rm(a)]

\item Let $(D_1,\dots,D_m)$ and $(E_1,\dots,E_n)$ be admissible sequences of derivations of $B$.
If \linebreak $\Lscr(D_1,\dots,D_m) \subseteq L(E_1,\dots,E_n)$
then $m \le n$ and $\Lscr(D_1,\dots,D_m) = L( E_{n-m+1}, \dots, E_n )$, so in particular $\Lscr(D_1,\dots,D_m)$ is integral.

\item Let $\Lscr_1, \Lscr_2 \in \DJL(B)$. If $\Lscr_1 \subseteq \Lscr_2$ and $\Lscr_2$ is integral, then $\Lscr_1$ is integral.

\item Assume that $B$ has finite transcendence degree over a field.
If $\DJL(B)$ contains a non-integral element, then it contains a non-integral maximal element.

\end{enumerate}
\end{lemma}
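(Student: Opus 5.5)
For part (a), I will reduce to the comparison machinery of Lemma \ref{00A23F46-9C1D-449A-BE41-68ACD0236AC5}. Since $L(E_1,\dots,E_n) \subseteq \Lscr(E_1,\dots,E_n)$, the hypothesis gives $\Lscr(D_1,\dots,D_m) \subseteq \Lscr(E_1,\dots,E_n)$. Lemma \ref{00A23F46-9C1D-449A-BE41-68ACD0236AC5}(c) then yields $m \le n$, $(A_1,\dots,A_m) = (B_{n-m+1},\dots,B_n)$, and
$$\Lscr(D_1,\dots,D_m) = \Lscr( E_{n-m+1}, \dots, E_n ).$$
Note that $(E_{n-m+1},\dots,E_n)$ is admissible, with kernel sequence $(B_{n-m+1},\dots,B_n)$. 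It therefore remains to show that
$$\Lscr(E_{n-m+1},\dots,E_n) \subseteq L(E_{n-m+1},\dots,E_n),$$
the reverse inclusion being trivial.

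To do this, take $D \in \Lscr(D_1,\dots,D_m)$. By hypothesis $D \in L(E_1,\dots,E_n)$, so $D = \sum_{i=1}^n a_i E_i$ with $a_i \in B_i$. Also $A_1 = B_{n-m+1} \subseteq \ker D$. I will now repeat the argument in the proof of Lemma \ref{BC319DE3-1827-4876-96D9-C0E8DBBECECB}, but with $\alpha = 1$. If $D \neq 0$, let $r$ be the smallest index with $a_r \neq 0$. Pick $x \in B_{r+1} \setminus B_r$, so that $D(x) = a_r E_r(x) \neq 0$. Hence $B_{r+1} \not\subseteq \ker D$, which forces $r \ge n-m+1$. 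Thus $a_i = 0$ for $i \le n-m$ and $D \in L(E_{n-m+1},\dots,E_n)$. This gives the equality in (a), and integrality is immediate from the definition.

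For part (b), write $\Lscr_1 = \Lscr(D_1,\dots,D_m)$ and $\Lscr_2 = L(E_1,\dots,E_n)$ with $(E_i)$ admissible; this representation exists because $\Lscr_2$ is integral. Then part (a) applies directly and shows that $\Lscr_1$ is integral.

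For part (c), let $\Lscr$ be a non-integral element of $\DJL(B)$. By Remark \ref{48B77BA7-E2B4-4887-BF9D-B75658D0CC7C}, which uses the finite transcendence degree hypothesis, $\Lscr$ is contained in some maximal element $\Mscr$ of $\DJL(B)$. If $\Mscr$ were integral, part (b) would make $\Lscr$ integral, a contradiction. Hence $\Mscr$ is a non-integral maximal element. The only delicate point in the whole argument is the index bookkeeping in (a); everything else is direct citation.
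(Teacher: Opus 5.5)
Your proof is correct and follows the paper's structure. You reduce via Lemma~\ref{00A23F46-9C1D-449A-BE41-68ACD0236AC5}(c), show that the coefficients $a_1,\dots,a_{n-m}$ vanish, and deduce (b) and (c) exactly as the paper does. The one small difference is in the vanishing step: the paper writes $\alpha\delta$ in two ways and invokes $B$-linear independence of $E_1,\dots,E_n$, while you evaluate at some $x \in B_{r+1}\setminus B_r$ using $B_{n-m+1}\subseteq\ker D$ (the argument of Lemma~\ref{BC319DE3-1827-4876-96D9-C0E8DBBECECB} with $\alpha=1$), and both arguments are valid.
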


\begin{proof}
(a) Since $\Lscr(D_1,\dots,D_m) \subseteq L(E_1,\dots,E_n) \subseteq \Lscr(E_1,\dots,E_n)$,
Lemma \ref{00A23F46-9C1D-449A-BE41-68ACD0236AC5} implies that $m \le n$ and $\Lscr(D_1,\dots,D_m) = \Lscr( E_{n-m+1}, \dots, E_n )$.
Thus,
$$
\Lscr(D_1,\dots,D_m) = \Lscr( E_{n-m+1}, \dots, E_n ) \cap L(E_1,\dots,E_n) .
$$
To finish the proof of (a), it therefore suffices to show that 
\begin{equation} \label {B3C9E005-6521-448B-A3B5-30BBBFC3B98E}
\Lscr( E_{n-m+1}, \dots, E_n ) \cap L(E_1,\dots,E_n) = L( E_{n-m+1}, \dots, E_n ) ,
\end{equation}
where ``$\supseteq$'' is obvious.
Let $B_1 \subset \cdots \subset B_{n}$ be the kernel sequence of $(E_1,\dots,E_n)$.
If $\delta$ belongs to the left side of \eqref{B3C9E005-6521-448B-A3B5-30BBBFC3B98E} then $\delta = \sum_{i=1}^n a_i E_i$
with $a_i \in B_i$ for all $i = 1,\dots,n$,
and for some $\alpha \in B_{n-m+1} \setminus \{0\}$ 
we have $\alpha \delta = \sum_{i=n-m+1}^n \hat a_i E_i$ with $\hat a_i \in B_i$ for all $i=n-m+1, \dots, n$;
since $\alpha\delta =  \sum_{i=1}^n (\alpha a_i) E_i$ and $E_1,\dots,E_n$ are linearly independent over $B$,
we obtain $\alpha a_i = 0$ (so $a_i=0$) for all $i < n-m+1$,
so $\delta = \sum_{i=n-m+1}^n a_i E_i \in L( E_{n-m+1}, \dots, E_n )$.
This proves the inclusion ``$\subseteq$'' in \eqref{B3C9E005-6521-448B-A3B5-30BBBFC3B98E}.
It follows that $\Lscr(D_1,\dots,D_m) = L( E_{n-m+1}, \dots, E_n )$, so $\Lscr(D_1,\dots,D_m)$ is integral.
This proves (a), and (b) easily follows. 
If $B$ has finite transcendence degree over a field then Remark \ref{48B77BA7-E2B4-4887-BF9D-B75658D0CC7C} guarantees
that each element of $\DJL(B)$ is included in a maximal element.
This together with (b) implies (c).
\end{proof}

\begin{remark}  \label {3B19CFB2-4364-4D68-B7B0-CB01D06CC1A6}
Let $B$ be a domain of characteristic zero and $A \in \klnd(B)$.
Then $\lnd_A(B)$ is a nonzero $A$-module and Example \ref{B6D5B613-5754-4ECF-8B69-9CFB86A83A7D} implies that
$\lnd_A(B) \in \DJL(B)$. Clearly,
$$
\text{$\lnd_A(B)$ is an integral element of $\DJL(B)$ if and only if it is a cyclic $A$-module.}
$$
\end{remark}

\begin{lemma}  \label {2423D678-4B8A-4F44-831B-268ACD0A67EA}
Suppose that $R$ is a domain of characteristic zero and that $p,q,a,b \in R$ satisfy:
\begin{equation} \label {3310A422-FE1C-43BE-B309-BC0698363C53}
\tag{$*$}
\text{$p$ and $q$ are irreducible and not associates, \quad $pa = qb$, \quad $p \nmid b$ \quad and \quad $q \nmid a$.}
\end{equation}
Let $B = R[x,y] = R^{[2]}$ and define $D, E \in \Der_R(B)$ by
$$
D = a \partial_x + q \partial_y \quad \text{and} \quad  E = b \partial_x + p \partial_y .
$$
Then $D$ and $E$ are locally nilpotent and irreducible, $\ker(D) = \ker(E)$, but no unit $u \in B^*$ is such that $uD=E$.
Consequently, if we define $A = \ker(D) = \ker(E)$ then the $A$-module $\lnd_A(B)$ is not cyclic.
\end{lemma}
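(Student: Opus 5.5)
Since $D$ and $E$ are $R$-derivations of $B=R[x,y]$ with $D(x),D(y),E(x),E(y)\in R\subseteq\ker D\cap\ker E$, both are triangular with respect to $(x,y)$, so by \ref{CA51E646-7FBD-40C8-9978-60D269C64CB6} they are locally nilpotent. For the kernels I would use the element $f = qx - ay$: we have $D(f)=qa-aq=0$, and also $E(f) = qb - ap$, which vanishes because $pa=qb$. Hence $R[f]\subseteq \ker D\cap\ker E$. To get $\ker D=\ker E$ without computing either kernel exactly, I would pass to $K=\Frac R$. There $S^{-1}D$ and $S^{-1}E$ (with $S=R\setminus\{0\}$) are $K$-multiples of one another, because $E = (p/q)D$ as derivations of $K[x,y]$: indeed $(p/q)a = b$ and $(p/q)q=p$. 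So the localized kernels coincide, and intersecting with $B$ gives $\ker D = B\cap \ker(S^{-1}D) = B\cap\ker(S^{-1}E)=\ker E$ (using \ref{7E1F8E6C-B326-4B86-A442-CA26364C2C1C}(ii) and factorial closedness, \ref{04E41BA5-443D-48FF-9B44-9279A9BECCA5}).

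Irreducibility comes next. Suppose $D=c\delta$ with $c\in B$ and $\delta\in\Der(B)$. Then $c$ divides both $D(x)=a$ and $D(y)=q$ in $B=R[x,y]$. Degree considerations force $c\in R$. Since $q$ is irreducible in $R$, either $c$ is a unit or $c$ is an associate of $q$. In the latter case $q\mid a$, contrary to $(*)$. So $c\in R^*=B^*$. The symmetric argument, using $p$ irreducible and $p\nmid b$, handles $E$. Here I need that irreducibility of $p,q$ in $R$ persists in $R[x,y]$ for divisors; this holds because units of $R[x,y]$ are units of $R$ and any divisor of a constant is a constant.

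Now suppose $uD=E$ with $u\in B^*=R^*$. Then $uq=p$, so $p$ and $q$ are associates, which contradicts $(*)$. For the final claim, assume $\lnd_A(B)=A\delta$. Then $D=\alpha\delta$ and $E=\beta\delta$ for some $\alpha,\beta\in A$. Irreducibility of $D$ and $E$ forces $\alpha,\beta\in B^*$, so $E=\beta\alpha^{-1}D$ with $\beta\alpha^{-1}$ a unit, which is a contradiction.

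The main obstacle is the kernel equality. The proportionality $E=(p/q)D$ over $\Frac R$ is the key observation. The one point needing care is that $\ker(S^{-1}D)\cap B=\ker D$, which is immediate since $S\subseteq\ker D$ and $S^{-1}D$ restricts to $D$ on $B$.
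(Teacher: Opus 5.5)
Your proof is correct and follows essentially the same route as the paper: local nilpotency from triangularity ($D^2(x)=D^2(y)=E^2(x)=E^2(y)=0$), irreducibility from the fact that a common divisor of $a$ and $q$ in $R[x,y]$ lies in $R$, and the argument $uq=p$ for the unit. You also correctly spell out the non-cyclicity step that the paper leaves to the reader; the only difference is that the paper gets $\ker D=\ker E$ directly from the identity $pD=qE$ in $B$ (which suffices since $B$ is a domain and $p,q\neq 0$), so your passage to $\Frac R$ and the element $qx-ay$ are not needed.
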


\begin{proof}
Since $D^2(x) = D^2(y) = E^2(x) = E^2(y) = 0$, $D$ and $E$ are locally nilpotent.
If $c \in B$ is such that $D(B) \subseteq cB$ then $c$ is a common divisor of $D(x) = a$ and $D(y)= q$ in $B$,
so $c \mid a$ and $c \mid q$ in $R$; since $q$ is irreducible in $R$ and $q \nmid a$ in $R$, it follows that $c \in R^*$.
This shows that $D$ is irreducible, and a similar argument shows that $E$ is irreducible.
As $pD = qE$, we have $\ker(D) = \ker(E)$.
If $u \in B^*$ ($=R^*$) satisfies $uD=E$ then $uq = uD(y) = E(y) = p$, which contradicts the fact that $p,q$ are not associates in $R$.
So no unit $u \in B^*$ is such that $uD=E$.
The fact that $\lnd_A(B)$ is not cyclic easily follows.
\end{proof}

\begin{corollary}\label{nonstrightdim3}
Let $\bk$ be a field of characteristic zero, $R$ an affine $\bk$-domain that is not a UFD, and $B = R^{[2]}$. 
\begin{enumerate}[\rm(a)]

\item There exists $A \in \klnd(B)$ such that the $A$-module $\lnd_A(B)$ is not cyclic.

\item There exists a maximal element of $\DJL(B)$ that is not integral.

\end{enumerate}
\end{corollary}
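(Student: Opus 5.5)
For part (a), the plan is to reduce to Lemma \ref{2423D678-4B8A-4F44-831B-268ACD0A67EA}: it suffices to produce elements $p,q,a,b \in R$ satisfying condition $(*)$. Since $R$ is an affine $\bk$-domain, it is Noetherian and hence satisfies the ACC on principal ideals, so every nonzero nonunit of $R$ is a product of irreducibles. Because $R$ is not a UFD, some irreducible element is not prime. Let $p$ be such an element. Then there exist $x,y \in R$ with $p \mid xy$, $p \nmid x$, $p \nmid y$; write $xy = pc$. Factor $x$ and $y$ into irreducibles. Since $p$ divides the product but divides neither factor, I want to extract a single irreducible $q$ with $p \nmid$ the ``complementary'' piece. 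Concretely, I would minimize: among all relations $pa' = q_1\cdots q_k \cdot (\text{stuff})$ pick a counterexample to primality $p \mid uv$, $p\nmid u$, $p \nmid v$ with $v$ irreducible and $u$ having minimal number of irreducible factors. This is possible since one can move irreducible factors one at a time: if $p \mid u v$ with $v = v_1 v_2$, then either $p \mid u v_1$ (shrink $v$) or $p \nmid uv_1$ (replace $u$ by $uv_1$, $v$ by $v_2$). So we reach $p \mid uq$ with $q$ irreducible, $p \nmid u$, $p \nmid q$. Set $b = u$, and write $qb = pa$. Then $p \nmid b$ by construction, and $p,q$ are non-associates since $p \nmid q$. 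The remaining condition is $q \nmid a$. If $q \mid a$, say $a = qa_1$, then $qb = pqa_1$ gives $b = pa_1$, contradicting $p \nmid b$. So $(*)$ holds, and Lemma \ref{2423D678-4B8A-4F44-831B-268ACD0A67EA} gives $A = \ker D \in \klnd(B)$ with $\lnd_A(B)$ not cyclic. (The only mildly delicate step is this extraction of $q$; I expect it to go through by the descent just sketched.)

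For part (b), by Remark \ref{3B19CFB2-4364-4D68-B7B0-CB01D06CC1A6}, part (a) shows that $\lnd_A(B)$ is a non-integral element of $\DJL(B)$. Since $B$ is an affine $\bk$-domain, it has finite transcendence degree over $\bk$. Therefore Lemma \ref{33206224-E440-46C6-BE36-BAB82E58772E}(c) yields a non-integral maximal element of $\DJL(B)$, which completes the proof.
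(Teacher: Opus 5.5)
Your argument is correct and is essentially the paper's: you produce $p,q,a,b \in R$ satisfying $(*)$ by taking the first partial product of irreducible factors that $p$ divides, apply Lemma \ref{2423D678-4B8A-4F44-831B-268ACD0A67EA}, and deduce (b) from Remark \ref{3B19CFB2-4364-4D68-B7B0-CB01D06CC1A6} and Lemma \ref{33206224-E440-46C6-BE36-BAB82E58772E}(c). The only differences are cosmetic: the paper starts from a relation $p_1\cdots p_m = q_1\cdots q_n$ between pairwise non-associate irreducibles rather than from a non-prime irreducible $p \mid xy$, and your ``minimal number of irreducible factors'' phrasing is unnecessary, since walking one factor at a time along a fixed factorization of $y$ already produces $u$ and $q$ with the required properties.
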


\begin{proof}
Since $R$ is a Noetherian domain, each nonzero nonunit element of $R$ is a product of irreducible elements.
As $R$ is not a UFD, there exists an equation $p_1 \cdots p_m = q_1 \cdots q_n$ in $R$
with $m,n \ge 2$, all $p_i, q_j$ irreducible and no $p_i$ associate to any $q_j$.
Since $p_1 \mid q_1 \cdots q_{n}$ and  $p_1 \nmid q_1$,
there exists a unique $j \in \{2,\dots,n\}$ such that $p_1 \mid q_1 \cdots q_{j}$ and  $p_1 \nmid q_1 \cdots q_{j-1}$.
Let $a = q_1 \cdots q_{j}/p_1 \in R$ and $b = q_1 \cdots q_{j-1} \in R$.
Then $p_1 a = q_j b$, and since $p_1 \nmid b$ it follows that $q_j \nmid a$.
So, with $p = p_1$ and $q = q_j$, the elements $p,q,a,b \in R$ satisfy the condition \eqref{3310A422-FE1C-43BE-B309-BC0698363C53}
of Lemma \ref{2423D678-4B8A-4F44-831B-268ACD0A67EA}.
Since $B = R^{[2]}$, that Lemma implies that there exists $A \in \klnd(B)$ such that $\lnd_A(B)$ is not cyclic. This proves (a).
By  Remark \ref{3B19CFB2-4364-4D68-B7B0-CB01D06CC1A6}, $\lnd_A(B)$ is a non-integral element of $\DJL(B)$, so (b) follows from Lemma \ref{33206224-E440-46C6-BE36-BAB82E58772E}(c).
\end{proof}

\begin{example} \label {nonIntegralExample}
There exist smooth affine varieties $X$ of dimension $3$ such that $\DJL( \bk[X] )$ has a non-integral maximal element.
For instance, let $X = C \times \Aff^{2}$ where $C$ is a non-rational smooth affine curve.
Since $\bk[C]$ is not a UFD, Corollary \ref{nonstrightdim3} implies that $\DJL( \bk[X] )$ has a non-integral maximal element.
\end{example}

\medskip

\begin{center}
\textbf{Non-integral elements of $\DJL(\bk^{[n]})$, $n \geq 4$.}
\end{center}

\smallskip

The following Example proves the case $n=4$ of Proposition \ref{6BC46AAB-2493-4034-AE60-7FCC9264ECB8}.

\begin{example} \label {C60B4C67-6814-44F3-8DDF-E43EEF900AFA}
Let $\bk$ be a field of characteristic zero and $B = \bk[u,v,y,z] = \bk^{[4]}$.
We give examples of maximal elements of $\DJL(B)$ that are not integral and have derived length $3$.

Consider the derivations $D = \partial_y$ and $E = u \partial_z - v \partial_y$ of Example \ref{BF979604-4D5B-4A13-A6AA-E9EDC00540C7}.
Recall that $\Lscr(D,E) \in \DJL(B)$ and that the kernel sequence of $\Lscr(D,E)$ is $A_1=\bk[u,v] \subset A_2=\bk[u,v,Q]$, where  $Q = uy+vz$.
We claim:
\begin{equation}  \label {1C38309B-7681-4D98-83B1-071614B914AD}
\text{the element $\Lscr(D,E)$ of $\DJL(B)$ is not integral.}
\end{equation}
Arguing by contradiction, suppose that $\Lscr(D,E)$ is integral, i.e., that there exists an admissible sequence $(D',E')$ of derivations of $B$
such that $\Lscr(D,E) = L(D',E')$.
By Lemma \ref{00A23F46-9C1D-449A-BE41-68ACD0236AC5}, $(D,E)$ and $(D',E')$ have the same kernel sequence. 
In particular, $\ker(E') = \ker(E)$.
Since $E$ is irreducible, $E' = h E$ for some $h \in \ker(E)$.
As $\partial_y = D \in \Lscr(D,E) = L(D',E')$, we have $\partial_y = a D' + b E'$ for some $a \in A_1$ and $b \in A_2$.
Since $u \partial_z = v \partial_y + E \in L(D,E)$ and $u \in A_1 \setminus \{0\}$,
we have $\partial_z \in \Lscr(D,E) = L(D',E')$ and hence $\partial_z = c D' + d E'$ for some $c \in A_1$ and $d \in A_2$.
Thus,
\begin{align*}
\begin{pmatrix} 1 & 0 \\ 0 & 1 \end{pmatrix}
&= \begin{pmatrix} \partial_y(y) & \partial_y(z) \\ \partial_z(y) & \partial_z(z) \end{pmatrix}
= \begin{pmatrix} a D'(y) + b E'(y) & a D'(z) + b E'(z) \\ c D'(y) + d E'(y) & c D'(z) + d E'(z) \end{pmatrix} \\
&= \begin{pmatrix} a & b \\ c & d \end{pmatrix} \begin{pmatrix} D'(y) & D'(z) \\ E'(y) & E'(z) \end{pmatrix}
= \begin{pmatrix} a & b \\ c & d \end{pmatrix} \begin{pmatrix} D'(y) & D'(z) \\ -hv & hu \end{pmatrix} .
\end{align*}
Taking determinants gives $1 = (ad-bc)(D'(y)u + D'(z)v)h$ where $a,b,c,d,h, D'(y), D'(z) \in \bk[u,v,y,z]$,
so $1$ belongs to the ideal $(u,v)$ of $\bk[u,v,y,z]$, a contradiction.  
This proves \eqref{1C38309B-7681-4D98-83B1-071614B914AD}.

Example \ref{BF979604-4D5B-4A13-A6AA-E9EDC00540C7} shows that $\Lscr(D,E)$ is not maximal in $\DJL(B)$
and that each one of its immediate successors is maximal and has derived length $3$.
It follows from \eqref{1C38309B-7681-4D98-83B1-071614B914AD} and Lemma \ref{33206224-E440-46C6-BE36-BAB82E58772E}(b)
that each immediate successor of $\Lscr(D,E)$  is a maximal element of $\DJL(B)$ that is not integral.

The immediate successors of $\Lscr(D,E)$ are described in Example \ref{BF979604-4D5B-4A13-A6AA-E9EDC00540C7},
but let us display one of them as a concrete example.
If $\Agoth = (A_1,A_2)$ denotes the kernel sequence of $\Lscr(D,E)$
then assertion (a) of Example \ref{BF979604-4D5B-4A13-A6AA-E9EDC00540C7} shows that $\bk[u] \in \Omega(\Agoth)$,
so assertion (b) implies that $\Lscr( \bk[u], A_1, A_2 )$ is an immediate successor of $\Lscr(D,E)$.
For the benefit of readers who prefer to specify an element of $\DJL(B)$ by giving an admissible sequence,
we note that  $\Lscr( \bk[u], A_1, A_2) = \Lscr(F,D,E)$ where $F = z \partial_y - u \partial_v$.
Hence, $\Lscr(F,D,E)$ is a maximal element of $\DJL(B)$ that is not integral.
\end{example}

\smallskip

The following trivial observation is used in the proof of Lemma \ref{A7B19E49-A7C3-46A5-B235-23A05DBE5BC7}:

\begin{noname} \label {2C2C5C78-9245-4643-8CF6-03695085BD9F}
\it Let $B$ be a domain of characteristic zero and $D_1,\dots,D_m, D'_1, \dots, D'_m \in \Der(B)$.
Suppose that $D_1, \dots, D_m$ belong to the $B$-submodule of $\Der(B)$ generated by $D'_1, \dots, D'_m$
and that $D_1, \dots, D_m$ are linearly independent over $B$. Then $D'_1, \dots, D'_m$ are linearly independent over $B$. 
\end{noname}

% \begin{proof}
% Let $K = \Frac B$ and extend $D_1,\dots,D_m, D'_1, \dots, D'_m$ to $\hat D_1,\dots,\hat D_m, \hat D'_1, \dots, \hat D'_m \in \Der(K)$.
% Then $\Der(K)$ is a vector space over $K$;
% since the vectors $\hat D_1,\dots,\hat D_m$ are linearly independent and belong to the subspace spanned by $\hat D'_1, \dots, \hat D'_m$,
% it follows that $\hat D'_1, \dots, \hat D'_m$ are linearly independent over $K$
% and hence that $D'_1, \dots, D'_m$ are linearly independent over $B$. 
% \end{proof}

\begin{noname} \label {C2B8A618-47F7-474D-B819-5799968DAAB5}
We introduce the notation for Lemma \ref{A7B19E49-A7C3-46A5-B235-23A05DBE5BC7} and its proof.
Let $B$ be a domain of characteristic zero and $\tilde B = B[x_1,\dots,x_n] = B^{[n]}$.
If $A$ is a subring of $B$, let $\tilde A$ be the subring $A[x_1,\dots,x_n]$ of $\tilde B$.
Let $\Der_x( \tilde B ) = \setspec{ D \in \Der(\tilde B) }{ D(x_1) = \cdots = D(x_n) = 0 }$.
Each $D \in \Der(B)$ extends uniquely to an element $\tilde D$ of $\Der_x(\tilde B)$,
and the map
$$
\Der(B) \to \Der_x(\tilde B), \quad D \mapsto \tilde D
$$
has the following properties:
\begin{itemize}

\item if $b \in B$ and $D \in \Der(B)$ then $\widetilde{bD} = b \, \tilde D$;

\item if $D_1,D_2 \in \Der(B)$ then $(D_1 + D_2){\mbox{\!\large$\widetilde{\ \ }$}} = \tilde D_1 + \tilde D_2$
and $[D_1,D_2]{\mbox{\large$\widetilde{\ \ }$}} = [\tilde D_1, \tilde D_2]$;

\item if $D \in \lnd(B)$ then $\tilde D \in \lnd(\tilde B)$.

\end{itemize}
There is also a map
\begin{equation*}
\Der_x(\tilde B) \to \Der(B), \quad E \mapsto \bar E , 
\end{equation*}
where given $E \in \Der_x(\tilde B)$ we define $\bar E$ to be the composition $B \xrightarrow{\nu} \tilde B \xrightarrow{E} \tilde B \xrightarrow{\pi} B$,
where $\nu$ is the inclusion map and $\pi$ sends $f(x_1,\dots,x_n)$ to $f(0,\dots,0)$.
The map $E \mapsto \bar E$ satisfies:
\begin{itemize}

\item if $E \in \Der_x(\tilde B)$ then $\pi \circ E = \bar E \circ \pi$;

\item if $f \in \tilde B$ and $E \in \Der_x(\tilde B)$ then $\overline{fE} = \pi(f) \bar E$;

\item if $E_1,E_2 \in \Der_x(\tilde B)$ then $\overline{E_1+E_2}= \bar E_1 + \bar E_2$ and $\overline{[E_1,E_2]} = [ \bar E_1, \bar E_2 ]$,

\item if $E \in \Der_x(\tilde B) \cap \lnd(\tilde B)$ then $\bar E \in \lnd(B)$.

\end{itemize}
Finally, if $D \in \Der(B)$ then $\overline{ \tilde D } = D$.
\end{noname}

\begin{lemma} \label {A7B19E49-A7C3-46A5-B235-23A05DBE5BC7}
Let $B$ be a domain of characteristic zero and
$(D_1,\dots,D_m)$ an admissible sequence of derivations of $B$ with kernel sequence $A_1 \subset \cdots \subset A_{m}$.
Let $\tilde B = B[x_1,\dots,x_n] = B^{[n]}$ and let us use the notation of \ref{C2B8A618-47F7-474D-B819-5799968DAAB5}.
\begin{enumerate}[\rm(a)]

\item $(\tilde D_1, \dots, \tilde D_m)$ is an admissible sequence of derivations of $\tilde B$
and $\tilde A_1 \subset \cdots \subset \tilde A_{m}$ is its kernel sequence.

\item If $\Lscr(\tilde D_1, \dots, \tilde D_m)$ is integral then so is $\Lscr(D_1,\dots,D_m)$.

\end{enumerate}
\end{lemma}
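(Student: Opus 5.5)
For part (a), I would argue directly from the properties listed in \ref{C2B8A618-47F7-474D-B819-5799968DAAB5}. The derivations $\tilde D_i$ are locally nilpotent and pairwise commute, since $[\tilde D_i,\tilde D_j] = [D_i,D_j]\tilde{\ } = 0$.

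Next I compute kernels. Since $\tilde D_j$ acts coefficientwise on monomials in $x_1,\dots,x_n$, we get $\ker(\tilde D_j) = (\ker D_j)[x_1,\dots,x_n]$. Intersecting these for $j \ge i$ gives $\tilde A_i$.

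For linear independence I would use condition (iii) of Lemma \ref{C0519934-54B1-477F-8677-BF8168C22DE9}. A system of local slices $(s_1,\dots,s_m)$ of $(D_1,\dots,D_m)$ is still a system of local slices for $(\tilde D_1,\dots,\tilde D_m)$, because $\tilde D_i(s_j) = D_i(s_j)$ and $A_1 \setminus\{0\} \subseteq \tilde A_1\setminus\{0\}$. This proves (a).

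For part (b), suppose $\Lscr(\tilde D_1,\dots,\tilde D_m) = L(E_1,\dots,E_m)$ for some admissible $(E_1,\dots,E_m)$; the lengths agree by Remark \ref{214ED250-CA2F-4270-BC00-AEC2B0FADA07}. By Corollary \ref{ujd3876g20oigjh2vy68}, $(E_i)$ has kernel sequence $\tilde A_1\subset\cdots\subset\tilde A_m$.

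Each $E_i$ lies in $\Der_x(\tilde B)$, because $x_1,\dots,x_n \in \tilde A_1 = \ker L(E_1,\dots,E_m)$. So I set $D'_i = \bar E_i$ and aim to show that $(D'_1,\dots,D'_m)$ is admissible with $\Lscr(D_1,\dots,D_m) = L(D'_1,\dots,D'_m)$. The bar properties give that the $D'_i$ are locally nilpotent and pairwise commute.

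\emph{Step 1: containment.} Let $\delta \in \Lscr(D_1,\dots,D_m)$, so $s\delta \in L(D_1,\dots,D_m)$ for some $s \in A_1\setminus\{0\}$. Applying tilde gives $s\tilde\delta \in L(\tilde D_1,\dots,\tilde D_m)$, hence $\tilde\delta \in \Lscr(\tilde D_1,\dots,\tilde D_m) = L(E_1,\dots,E_m)$. Write $\tilde\delta = \sum f_i E_i$ with $f_i \in \tilde A_i$. Applying bar yields $\delta = \sum \pi(f_i) D'_i$ with $\pi(f_i) \in A_i$. This needs $\pi(\tilde A_i) = A_i$, which is clear.

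\emph{Step 2: linear independence.} Each $D_i$ lies in $\Lscr(D_1,\dots,D_m)$, so by Step 1 it lies in the $B$-span of $D'_1,\dots,D'_m$. Then \ref{2C2C5C78-9245-4643-8CF6-03695085BD9F} shows $D'_1,\dots,D'_m$ are linearly independent over $B$. Hence $(D'_1,\dots,D'_m)$ is admissible.

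\emph{Step 3: matching kernel sequences.} To finish, Remark \ref{CEC40263-FB7A-4204-8EFF-32CC255344BB}(b) should give the equality $\Lscr(D_1,\dots,D_m) = L(D'_1,\dots,D'_m)$ from the containment $\Lscr(D_1,\dots,D_m) \subseteq L(D'_1,\dots,D'_m)$. That remark applies once $(D'_1,\dots,D'_m)$ is admissible of the same length $m$, which Step 2 provides. Since the remark rests on Lemma \ref{00A23F46-9C1D-449A-BE41-68ACD0236AC5}, I still want to confirm the kernel sequences agree, so that the conclusion $L(D'_1,\dots,D'_m)$ is an honest equality.

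One direction is easy: the kernel of $D'_j$ contains $\pi(\ker E_j \cap B)$. Because $E_j$ kills $\tilde A_j$, $D'_j$ kills $A_j$. I expect this kernel bookkeeping to be the main obstacle, but Lemma \ref{00A23F46-9C1D-449A-BE41-68ACD0236AC5}(b) should settle it: it forces the kernel sequences to coincide once the containment holds and both sequences are admissible.
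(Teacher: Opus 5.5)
Your proposal is correct and follows the paper's proof essentially step for step: the same $\bar E_i$, the same tilde/bar transfer giving $\Lscr(D_1,\dots,D_m)\subseteq\sum_i A_i\bar E_i$, linear independence via \ref{2C2C5C78-9245-4643-8CF6-03695085BD9F}, and the finish by Remark \ref{CEC40263-FB7A-4204-8EFF-32CC255344BB}(b). Two small points. First, Corollary \ref{ujd3876g20oigjh2vy68} needs $\Lscr(\tilde D_1,\dots,\tilde D_m)=\Lscr(E_1,\dots,E_m)$, so you should first note $L(E_1,\dots,E_m)=\Lscr(E_1,\dots,E_m)$ by Remark \ref{CEC40263-FB7A-4204-8EFF-32CC255344BB}(b). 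Second, the easy inclusion $A_j\subseteq\bigcap_{i\ge j}\ker\bar E_i$ is exactly what upgrades your Step 1 to $\Lscr(D_1,\dots,D_m)\subseteq L(\bar E_1,\dots,\bar E_m)$; after that the remark alone gives equality, so neither your appeal to Lemma \ref{00A23F46-9C1D-449A-BE41-68ACD0236AC5}(b) nor the paper's transcendence-degree argument for the reverse inclusion is actually needed.
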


\begin{proof}
Verification of (a) is straightforward.
For (b), suppose that $\Lscr(\tilde D_1, \dots, \tilde D_m)$ is integral.
Then $\Lscr(\tilde D_1, \dots, \tilde D_m) = L(E_1,\dots,E_n)$ for some admissible sequence $(E_1,\dots,E_n)$ of derivations of $\tilde B$.
Remark \ref{214ED250-CA2F-4270-BC00-AEC2B0FADA07} gives $m=n$,
so Remark \ref{CEC40263-FB7A-4204-8EFF-32CC255344BB}(b) (or Lemma \ref{3E5072F3-B726-4AEF-9B98-0AC1619775F2}(a)) gives
\begin{equation} \label {11FD99C2-2FDF-4F6D-9BB9-3649D45F7855}
\Lscr(\tilde D_1, \dots, \tilde D_m) = L(E_1,\dots,E_m) = \Lscr(E_1,\dots,E_m).
\end{equation}
By Proposition \ref{6CFC46F0-0103-4BE1-8ACB-BDA7B53CB093}, $(\tilde D_1, \dots, \tilde D_m)$ and $(E_1,\dots,E_m)$ have the same kernel sequence; so
$$
\text{the kernel sequence of $(E_1,\dots,E_m)$ is $\tilde A_1 \subset \cdots \subset \tilde A_{m}$.}
$$
Since $x_1,\dots,x_n \in \tilde A_1$, it follows in particular that $E_i(x_j)=0$ for all $i \in \{1,\dots,m\}$ and $j \in \{1,\dots,n\}$,
so $E_1,\dots,E_m \in \Der_x(\tilde B) \cap \lnd(\tilde B)$;
consequently, $\bar E_1, \dots, \bar E_m \in \lnd(B)$ are defined.
Since $E \mapsto \bar E$ preserves the Lie bracket, $\bar E_1, \dots, \bar E_m$ pairwise commute.
To show that $(\bar E_1, \dots, \bar E_m)$ is an admissible sequence of derivations of $B$, it only remains to check that 
\begin{equation} \label {AAC0D747-4DC2-4966-A852-F2415DAD5125}
\text{$\bar E_1, \dots, \bar E_m$ are linearly independent over $B$.}
\end{equation}
We will prove \eqref{AAC0D747-4DC2-4966-A852-F2415DAD5125} later. Consider an arbitrary $\delta \in \Lscr(D_1,\dots,D_m)$.
There exists $\alpha \in A_1 \setminus \{0\}$ such that $\alpha \delta \in L(D_1,\dots,D_m)$; write $\alpha \delta = \sum_{i=1}^m a_i D_i$ 
with $a_i \in A_i$ for all $i=1,\dots,m$. Since
$$
\textstyle
\alpha \tilde\delta = \widetilde{ \alpha \delta }
= (\sum_{i=1}^m a_i D_i){\mbox{\!\large$\widetilde{\ \ }$}}
= \sum_{i=1}^m a_i \tilde D_i
$$
and $a_i \in A_i \subseteq \tilde A_i$ for $i=1,\dots,m$, we get
$\alpha \tilde\delta \in L(\tilde D_1, \dots, \tilde D_m) \subseteq \Lscr(\tilde D_1, \dots, \tilde D_m) = L(E_1,\dots,E_m)$;
since $\alpha \in A_1 \setminus \{0\} \subseteq \tilde A_1 \setminus \{0\}$, it therefore follows that 
$\tilde\delta \in \Lscr(E_1,\dots,E_m) = L(E_1,\dots,E_m)$, the last equality by \eqref{11FD99C2-2FDF-4F6D-9BB9-3649D45F7855}.
So we can write $\tilde\delta = \sum_{i=1}^m f_i E_i$ with $f_i \in \tilde A_i$ for all $i=1,\dots,m$.
Consequently,
$$
\textstyle
\delta = \overline{ \tilde \delta } = \overline{ \sum_{i=1}^m f_i E_i } = \sum_{i=1}^m \pi(f_i) \bar E_i .
$$
It is obvious that $\pi( \tilde A_i ) = A_i$, so $\pi(f_i) \in A_i$ for all $i=1,\dots,m$, so $\delta \in \sum_{i=1}^m A_i \bar E_i$.
This shows that
\begin{equation}  \label {65ACA082-A0AE-46DD-AB60-304C13C83A14}
\textstyle
\Lscr(D_1,\dots,D_m) \subseteq \sum_{i=1}^m A_i \bar E_i .
\end{equation}
In particular, $D_1,\dots,D_m \in \sum_{i=1}^m A_i \bar E_i$; as $D_1,\dots,D_m$ are linearly independent over $B$,
the fact stated in \ref{2C2C5C78-9245-4643-8CF6-03695085BD9F} implies that \eqref{AAC0D747-4DC2-4966-A852-F2415DAD5125} is true; so 
$$
\text{$(\bar E_1, \dots, \bar E_m)$ is an admissible sequence of derivations of $B$.}
$$
For each $i$, we have $\pi \circ E_i = \bar E_i \circ \pi$, so $\pi( \ker E_i ) \subseteq \ker \bar E_i$.
If $B_1 \subset \cdots \subset B_{m}$ denotes the kernel sequence of 
$(\bar E_1, \dots, \bar E_m)$ then, for each $j \in \{1,\dots,m\}$,
$$
\textstyle
A_j = \pi\big( \tilde A_j \big) = \pi\big( \bigcap_{i=j}^{m} \ker E_i \big) 
\subseteq \bigcap_{i=j}^{m} \pi( \ker E_i ) 
\subseteq \bigcap_{i=j}^{m} \ker \bar E_i = B_j .
$$
We have $\trdeg_{A_j}(B) = m+1-j = \trdeg_{B_j}(B)$ by Lemma  \ref{BAD0996F-5C99-407B-B702-0A97D9510736}(c), 
so $B_j$ is algebraic over $A_j$; since $A_j$ is algebraically closed in $B$, we therefore have $B_j=A_j$ for
all $j = 1,\dots,m$. So the kernel sequence of $(\bar E_1, \dots, \bar E_m)$ is $A_1 \subset \cdots \subset A_{m}$,
and \eqref{65ACA082-A0AE-46DD-AB60-304C13C83A14} means that $\Lscr(D_1,\dots,D_m) \subseteq L(\bar E_1, \dots, \bar E_m)$.
We obtain $\Lscr(D_1,\dots,D_m) = L(\bar E_1, \dots, \bar E_m)$ by Remark \ref{CEC40263-FB7A-4204-8EFF-32CC255344BB}(b),
so $\Lscr(D_1,\dots,D_m)$ is integral.
\end{proof}

Lemma \ref{A7B19E49-A7C3-46A5-B235-23A05DBE5BC7} has the following obvious consequence:

\begin{corollary}  \label {B9348BEF-D82C-4F54-92EA-A6A04ADA2B2B}
Let $B$ be a domain of characteristic zero and $\Lscr$ a non-integral element of $\DJL(B)$.
For each $n \in \Nat$, $\DJL(B^{[n]})$ contains a non-integral element whose derived length equals that of $\Lscr$.
\end{corollary}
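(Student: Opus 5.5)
Write $\Lscr = \Lscr(D_1,\dots,D_m)$ for some admissible sequence $(D_1,\dots,D_m)$ of derivations of $B$, with kernel sequence $A_1 \subset \cdots \subset A_m$, and let $\tilde B = B[x_1,\dots,x_n] = B^{[n]}$. The candidate is the obvious one: the element $\tilde\Lscr := \Lscr(\tilde D_1,\dots,\tilde D_m)$ of $\DJL(\tilde B)$, built from the extensions $\tilde D_i \in \Der_x(\tilde B)$ of \ref{C2B8A618-47F7-474D-B819-5799968DAAB5}.

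First, Lemma \ref{A7B19E49-A7C3-46A5-B235-23A05DBE5BC7}(a) shows that $(\tilde D_1,\dots,\tilde D_m)$ is admissible with kernel sequence $\tilde A_1 \subset \cdots \subset \tilde A_m$. Hence $\tilde\Lscr \in \DJL(\tilde B)$ is well defined.

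Second, Lemma \ref{A7B19E49-A7C3-46A5-B235-23A05DBE5BC7}(b) applies in contrapositive form. If $\tilde\Lscr$ were integral, then $\Lscr$ would be integral, contrary to the hypothesis. So $\tilde\Lscr$ is non-integral.

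Third, Lemma \ref{E881691D-8557-4672-B859-92E3341ADF4B}(b) computes both derived lengths as the length of the admissible sequence: $\dl(\tilde\Lscr) = m = \dl(\Lscr)$. This completes the argument.

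There is no real obstacle, since all the work was done in Lemma \ref{A7B19E49-A7C3-46A5-B235-23A05DBE5BC7}. The only points to note are that the case $n=0$ is trivial, and that the derived length equality needs both Lie algebras to come from admissible sequences of the same length $m$, which part (a) of that lemma guarantees.
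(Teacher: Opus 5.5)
Your proof is correct and matches the paper's argument. The paper states the corollary as an obvious consequence of Lemma \ref{A7B19E49-A7C3-46A5-B235-23A05DBE5BC7}. It uses part (a) for the admissibility of $(\tilde D_1,\dots,\tilde D_m)$, the contrapositive of part (b) for non-integrality, and Lemma \ref{E881691D-8557-4672-B859-92E3341ADF4B}(b) to get derived length $m$ on both sides.
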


\begin{noname}
{\bf Proof of Proposition \ref{6BC46AAB-2493-4034-AE60-7FCC9264ECB8}.}
Let $n\ge4$.
Since $\DJL( \kk4 )$ has a non-integral element by Example \ref{C60B4C67-6814-44F3-8DDF-E43EEF900AFA}, 
it follows from Corollary \ref{B9348BEF-D82C-4F54-92EA-A6A04ADA2B2B} that $\DJL( \kk n )$ has a non-integral element
and hence a non-integral maximal element by Lemma \ref{33206224-E440-46C6-BE36-BAB82E58772E}(c).
\hfill\qedsymbol
\end{noname}

%%%%%%%%%%%%%%%%%%%%%%%%%%%%%%%%%%%%%%%%%%%%%%%%%%%%%%%%%%%%%%%%%%%%%%%%%%%%%%%%%%%%%%%%%%%%%%%%%%%%%%%%%%%%%%%%
%%%%%%%%%%%%%%%%%%%%%%%%%%%%%%%%%%%%%%%%%%%%%%%%%%%%%%%%%%%%%%%%%%%%%%%%%%%%%%%%%%%%%%%%%%%%%%%%%%%%%%%%%%%%%%%%
%%%%%%%%%%%%%%%%%%%%%%%%%%%%%%%%%%%%%%%%%%%%%%%%%%%%%%%%%%%%%%%%%%%%%%%%%%%%%%%%%%%%%%%%%%%%%%%%%%%%%%%%%%%%%%%%

\section{Gradings}  \label {Liealggradsec}

Throughout this section, $\bk$ is a field of characteristic zero and $B$ is an affine $\bk$-domain.

\begin{noname} \label {8736egd9fv092b5}
Let $(G,+)$ be an abelian group.
A {\it grading\/} of $B$ by $G$ (or a {\it $G$-grading\/} of $B$) is a family $\big( B_i \big)_{i \in G}$ of subgroups of $(B,+)$ satisfying $B = \bigoplus_{i \in G} B_i$, 
$B_iB_j\subseteq B_{i+j}$ for all $i,j \in G$, and $\bk \subseteq B_0$.
Suppose that $\ggoth = \big( B_i \big)_{i \in G}$ is a $G$-grading of $B$.
\begin{enumerate}

\item An element of $B$ is {\it $\ggoth$-homogeneous\/} if it belongs to $\bigcup_{i \in G} B_i$.
If $x$ is a nonzero $\ggoth$-homogeneous element of $B$ then $\deg_\ggoth(x)$ denotes the unique $i \in G$ such that $x \in B_i$.  Given $x \in B$, the nonzero $x_i$ that appear in the expression $x = \sum_{i \in G} x_i$ (with $x_i \in B_i$ for all $i$)
are called the {\it $\ggoth$-homogeneous components\/} of $x$.
A subset $A$ of $B$ is called {\it $\ggoth$-homogeneous\/} if, for each $a \in A$, all $\ggoth$-homogeneous components of $a$ belong to $A$.

\item The $G$-grading $\ggoth$ induces a $G$-grading on the $B$-module $\Der_\bk(B)$, given by
$$
\textstyle
\Der_\bk(B) = \bigoplus_{ d \in G } \Der_\bk(B,\ggoth)_d
$$
where for each $d \in G$ we define 
$$
\Der_\bk(B,\ggoth)_d = \setspec{ D \in \Der_\bk(B) }{ \text{$D(B_i) \subseteq B_{i+d}$ for all $i \in G$} } .
$$
An element of $\Der_\bk(B)$ is {\it $\ggoth$-homogeneous\/} if it belongs to  $\bigcup_{ d \in G } \Der_\bk(B,\ggoth)_d$.
If $D \in \Der_\bk(B)$ is nonzero and $\ggoth$-homogeneous then $\deg_\ggoth(D)$ is the unique $d \in G$ such that $D \in \Der_\bk(B,\ggoth)_d$.
We define the set $\hlnd(B,\ggoth) = \setspec{ D \in \lnd(B) }{ \text{$D$ is $\ggoth$-homogeneous} }$.

Consider $D \in \Der_\bk(B)$ and write $D = \sum_{d \in G} D_d$ with $D_d \in \Der_\bk(B,\ggoth)_d$ for all $d$.
We define the finite subset  $\Supp_\ggoth(D) = \setspec{ d \in G }{ D_d \neq 0 }$ of $G$.
The $D_d$ with $d \in \Supp_\ggoth(D)$ are called the {\it $\ggoth$-homogeneous components\/} of $D$
and the expression $D = \sum_{ d \in \Supp_\ggoth(D) } D_d$ is called the {\it $\ggoth$-homogeneous decomposition\/} of $D$.

A subset $\Lscr$ of $\Der_\bk(B)$ is called {\it $\ggoth$-homogeneous\/} if,
for each $D \in \Lscr$, all $\ggoth$-homogeneous components of $D$ belong to $\Lscr$.
We write $\DJL(B,\ggoth) = \setspec{ \Lscr \in \DJL(B) }{ \text{$\Lscr$ is $\ggoth$-homogeneous} }$
and $\LieLN_\bk(B,\ggoth) = \setspec{ \Lscr \in \LieLN_\bk(B) }{ \text{$\Lscr$ is $\ggoth$-homogeneous} }$.

\end{enumerate}
\end{noname}

\begin{remark} \label {51C3D8B8-B46A-47AD-A51F-3A83DCCEF6CA}
An abelian group $G$ is said to be {\it orderable\/} if
there exists a total order $\le$ on the set $G$ such that $(G,\le)$ is a totally ordered abelian group.
It is known that an abelian group is orderable if and only if it is torsion-free.
In particular, $\Integ^n$ is orderable.
\end{remark}

\begin{lemma}\label{homogeneousPrelim}
Let $\ggoth$ be a grading of $B$ by a torsion-free abelian group.
\begin{enumerate}[\rm(a)]
\item If $x,y \in B \setminus \{0\}$ and $xy$ is $\ggoth$-homogeneous then $x$ and $y$ are $\ggoth$-homogeneous.
\item If $x \in B \setminus \{0\}$ and $D \in \Der_\bk(B) \setminus \{0\}$ are such that $xD$ is $\ggoth$-homogeneous, then $x$  and $D$ are $\ggoth$-homogeneous.
\end{enumerate}
\end{lemma}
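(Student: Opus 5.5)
We argue by choosing a total order on the grading group and comparing extreme homogeneous components. By Remark \ref{51C3D8B8-B46A-47AD-A51F-3A83DCCEF6CA}, fix a total order $\le$ on $G$ making it a totally ordered abelian group. For nonzero $x \in B$, write $x = \sum_{i \in I} x_i$ with $I$ the finite set of degrees of its $\ggoth$-homogeneous components, and let $\min(x)$, $\max(x)$ be the least and greatest elements of $I$. Then $x$ is homogeneous if and only if $\min(x) = \max(x)$.

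For (a), let $x,y \neq 0$ and write $x = x_{i_0} + \cdots + x_{i_1}$ and $y = y_{j_0} + \cdots + y_{j_1}$, with $i_0 = \min(x)$, $i_1 = \max(x)$, $j_0 = \min(y)$, $j_1 = \max(y)$. Since the order is compatible with addition, the degree-$(i_0+j_0)$ component of $xy$ is exactly $x_{i_0}y_{j_0}$, and the degree-$(i_1+j_1)$ component is $x_{i_1}y_{j_1}$. Both are nonzero because $B$ is a domain. Because $xy$ is homogeneous, these two degrees coincide: $i_0+j_0 = i_1+j_1$. Since $i_0 \le i_1$ and $j_0 \le j_1$, this forces $i_0=i_1$ and $j_0=j_1$. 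So $x$ and $y$ are homogeneous.

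For (b), the plan is to reduce to (a) by evaluating $D$ on homogeneous elements.
\begin{itemize}
\item[(1)] Since $B$ is affine, it is generated by finitely many homogeneous elements (the homogeneous components of any finite generating set). As $D \neq 0$, some homogeneous $t \in B$ satisfies $D(t) \neq 0$.
\item[(2)] $xD$ is homogeneous of some degree $e$, so $xD(t)$ is homogeneous. By (a), $x$ is homogeneous.
\item[(3)] Fix the homogeneous decomposition $D = \sum_d D_d$. Then $xD = \sum_d xD_d$, and each $xD_d$ lies in $\Der_\bk(B,\ggoth)_{d+\deg x}$.
\item[(4)] Uniqueness of the homogeneous decomposition of $xD$ forces all but one $xD_d$ to vanish. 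Since $B$ is a domain and $x \neq 0$, all but one $D_d$ vanish, so $D$ is homogeneous.
\end{itemize}

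The main point to verify is that the $G$-grading of $\Der_\bk(B)$ really is a direct sum, i.e.\ that the decomposition in \ref{8736egd9fv092b5}(2) is unique. This is where finite generation of $B$ is used, and the paper takes it as given there. The only other delicate step is (1): we need a homogeneous $t$ with $D(t)\neq 0$. If $D(t)=0$ for all homogeneous $t$, then $D$ would kill all of $B$, since homogeneous elements span $B$.
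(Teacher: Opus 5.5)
Your proof is correct. Part (a) is the paper's argument. The paper fixes a total order on $G$, sets $w(x)=\max S_x-\min S_x$, and uses $w(xy)=w(x)+w(y)$. This is exactly your observation that the lowest and highest components of $xy$ are $x_{i_0}y_{j_0}$ and $x_{i_1}y_{j_1}$, both nonzero because $B$ is a domain.

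For (b) your route differs mildly. The paper proves the parallel identity $W(xD)=w(x)+W(D)$, where $W(D)=\max\Supp_\ggoth(D)-\min\Supp_\ggoth(D)$; the extreme components of $xD$ are $x_{i_0}D_{d_0}$ and $x_{i_1}D_{d_1}$, so $W(xD)=0$ gives $w(x)=0=W(D)$ at once. You instead recycle (a): evaluating at a homogeneous $t$ with $D(t)\neq 0$ shows $x$ is homogeneous. You then use that multiplication by a homogeneous $x\neq 0$ shifts degrees injectively and kills no nonzero component. The paper's version treats (a) and (b) uniformly with one width function. Yours avoids a second extreme-component computation, at the price of the auxiliary element $t$.

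One correction to your closing remark. Uniqueness of the homogeneous decomposition (directness of $\sum_d \Der_\bk(B,\ggoth)_d$) holds for any graded $B$: a relation $\sum_d E_d=0$, evaluated on a homogeneous element, splits into distinct degrees. What finite generation of $B$ provides is existence, i.e., that each derivation has only finitely many nonzero homogeneous components. Similarly, step (1) needs only that homogeneous elements span $B$, not that $B$ is affine.
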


\begin{proof}
Let $\ggoth = (B_i)_{ i \in G }$.
By Remark \ref{51C3D8B8-B46A-47AD-A51F-3A83DCCEF6CA}, we can choose an order $\le$ such that $(G,\le)$ a totally ordered abelian group.
Given an element $x = \sum_{i \in G} x_i$ of $B \setminus \{0\}$ (with $x_i \in B_i$ for all $i$),
note that $S_x := \setspec{ i \in G }{ x_i \neq 0 }$ is a nonempty finite subset of $G$ and define $w(x) = \max S_x - \min S_x$.
Given $D \in \Der_\bk(B) \setminus \{0\}$, define $W(D) = \max \Supp_\ggoth(D) - \min \Supp_\ggoth(D)$.
This defines maps $w : B \setminus \{0\} \to G_{\ge0}$ and $W : \Der_\bk(B) \setminus \{0\} \to G_{\ge0}$,
where $G_{\ge0} = \setspec{ i \in G }{ i \ge 0 }$,
and one can check that $w(xy) = w(x) + w(y)$ and $W(xD) = w(x) + W(D)$ 
for all $x,y \in B \setminus \{0\}$ and $D \in \Der_\bk(B) \setminus \{0\}$.
So $w(xy)=0$ implies $w(x)=0=w(y)$ and $W(xD)=0$ implies $w(x)=0=W(D)$, as desired.
\end{proof}

\begin{definition} \label {C3246E05-854F-427C-828A-669E2281D276}
Let $(G,\le)$ be a totally ordered abelian group and let $\ggoth = (B_i)_{i \in G}$ be a $G$-grading of $B$. Given $x = \sum_{i \in G} x_i \in B$ ($x_i \in B_i$ for all $i$),
we define $\deg_\ggoth(x) = \max\setspec{ i \in G }{ x_i \neq 0 }$ if $x \neq 0$, and $\deg_\ggoth(x) = -\infty$ if $x = 0$.
Given $D = \sum_{i \in G} D_i \in \Der_\bk(B)$ ($D_i \in \Der_\bk(B,\ggoth)_i$ for all $i$),
we define $\deg_\ggoth(D) = \max \setspec{ i \in G }{ D_i \neq 0 }$ if $D \neq 0$,
and $\deg_\ggoth(D)=-\infty$ if $D=0$.
This defines two maps, $\deg_\ggoth : B \to G \cup \{-\infty\}$ and $\deg_\ggoth : \Der_\bk(B) \to G \cup \{-\infty\}$.

Given a nonzero $D = \sum_{i \in G} D_i \in \Der_\bk(B)$  ($D_i \in \Der_\bk(B,\ggoth)_i$ for all $i$),
we define the {\it highest homogeneous component\/} $\hat D$ of $D$
by $\hat D = D_d$ where $d = \deg_\ggoth(D)$. If $D=0$, we define $\hat D=0$.
It is clear that $D \neq 0$ implies $\hat D \neq 0$, and Lemma \ref{E7261F32-B2ED-4D50-8F3E-E694DB49CF21} recovers the well-known fact
that if $D \in \lnd(B)$ then $\hat D \in \hlnd(B,\ggoth)$. 
\end{definition}

\begin{lemma}  \label {E7261F32-B2ED-4D50-8F3E-E694DB49CF21}
Let $\ggoth$ be a grading of $B$ by a totally ordered abelian group $G$.
Given $D \in \Der_\bk(B)$, let $\hat D$ denote the highest homogeneous component of $D$.
If $S$ is a locally nilpotent subset of $\Der(B)$ then so is $\setspec{ \hat D }{ D \in S }$.
\end{lemma}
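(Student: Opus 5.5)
We prove the statement directly from Definition \ref{582852DA-9D57-47AC-B351-A3A2CC446141}(a), using the fact that taking highest homogeneous components is multiplicative with respect to degree. Fix $x \in B$ and an infinite sequence $(\hat F_1, \hat F_2, \dots)$, where $F_1, F_2, \dots \in S$. We must find $n>0$ with $(\hat F_n \cdots \hat F_1)(x) = 0$. Write $x = \sum_{i} x_i$ as a sum of $\ggoth$-homogeneous components. Since the operators $\hat F_k$ are additive, it suffices to treat each $x_i$ separately: take the maximum of the resulting $n$'s over the finitely many components. This works because once a composite kills $x_i$, any longer composite also kills it. So we may assume $x$ is homogeneous and nonzero.

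The key observation concerns a homogeneous nonzero $y \in B_i$ and any $F \in \Der_\bk(B)$ with $F \neq 0$ and $d = \deg_\ggoth(F)$. We have $F(y) = \sum_{e} F_e(y)$ with $F_e(y) \in B_{i+e}$, and the component of $F(y)$ in degree $i+d$ is exactly $\hat F(y)$. Hence either $\hat F(y) = 0$, or $\hat F(y) \neq 0$ is the highest homogeneous component of $F(y)$, in which case $\deg_\ggoth F(y) = i+d$. (If $F=0$, then $\hat F = 0$ and the statement is trivial.)

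We now argue by contradiction. Suppose $y_k := (\hat F_k \cdots \hat F_1)(x) \neq 0$ for all $k$. We claim by induction on $k$ that $y_k$ is the highest homogeneous component of $z_k := (F_k \cdots F_1)(x)$. For $k=0$ this is immediate, since $x$ is homogeneous. For the inductive step, write $z_k = y_k + (\text{terms of degree} < \deg y_k)$. Set $e_k = \deg y_k$ and $d = \deg_\ggoth F_{k+1}$. Then $F_{k+1}$ sends the lower terms into degrees $< e_k + d$ (total order compatible with addition). Meanwhile $F_{k+1}(y_k)$ has top component $\hat F_{k+1}(y_k) = y_{k+1} \neq 0$ in degree $e_k + d$. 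Therefore $z_{k+1}$ has highest component $y_{k+1}$, and in particular $z_{k+1} \neq 0$. This holds for all $k$, contradicting the local nilpotency of $S$, which provides $n$ with $z_n = 0$.

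The only delicate point is the inductive bookkeeping of degrees. It requires that $G$ is totally ordered so that $\deg(a + b) \le \max$ with the expected top component, and that every element of $S$ has finite support. Both hold by the setup of Definition \ref{C3246E05-854F-427C-828A-669E2281D276}, so no serious obstacle is expected.
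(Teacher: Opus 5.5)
Your proof is correct and follows essentially the same route as the paper: both reduce to a homogeneous element and show by induction that $(\hat F_k\cdots\hat F_1)(x)$ is the component of $(F_k\cdots F_1)(x)$ in degree $\deg x+\sum_{j\le k}\deg F_j$, so local nilpotency of $S$ forces the former to vanish. The only difference is cosmetic: the paper proves this component identity unconditionally and concludes directly, whereas you argue by contradiction under the assumption that all $y_k\neq 0$.
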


\begin{proof}
See Section \ref{sectionLocallyNilpotentandLocallyFinite} for the concept of a locally nilpotent subset of $\Der(B)$.
Let $(D_1,D_2,\dots)$ be an infinite sequence of elements of $\setspec{ \hat D }{ D \in S }$
and let $b$ be a homogeneous element of $B$.
It suffices to show that $(D_n D_{n-1} \cdots D_1)(b)=0$ for $n$ large enough.
We may assume that $b \neq 0$ and that $D_i \neq 0$ for all $i$, otherwise the claim is trivial.

For each $i\ge1$, choose $D_i' \in S$ such that $\widehat{D_i'} = D_i$ and define
$d_i = \deg_\ggoth(D_i') = \deg_\ggoth(D_i)$ and $\dgoth_i = \deg_\ggoth(b) + \sum_{j=1}^i d_j$.
For each $g \in G$, let $p_g : B \to B_g$ be the canonical projection (we use the notation $B = \bigoplus_{g \in G}B_g$ for $\ggoth$).
It is easily proved by induction on $i$
that $\deg_\ggoth[ (D_i' D_{i-1}' \cdots D_1')(b) ] \le \dgoth_i$ for all $i \ge1$, and that
\begin{equation}  \label {CE4E4194-924C-4DBA-8061-AFEFCBDD57BC}
p_{\dgoth_i} [ (D_i' D_{i-1}' \cdots D_1')(b) ] = (D_i D_{i-1} \cdots D_1)(b) \in B_{\dgoth_i} \quad \text{for all $i \ge1$.}
\end{equation}
Since $S$ is a locally nilpotent subset of $\Der(B)$,
there exists $n\ge1$ such that $(D_n' D_{n-1}' \cdots D_1')(b)=0$.
By \eqref{CE4E4194-924C-4DBA-8061-AFEFCBDD57BC}, it follows that $(D_n D_{n-1} \cdots D_1)(b)=0$.
\end{proof}

\begin{proposition} \label{DDB3B9BE-7B31-4FF9-A45C-02A0882AB545}
Let $\Lscr \in \DJL(B)$ and $(A_1,\dots,A_m) = \kerseq(\Lscr)$.
If $\ggoth$ is a $\Integ^d$-grading of $B$ then:
$$
\text{$\Lscr$ is $\ggoth$-homogeneous if and only if $A_1$, \dots, $A_m$ are $\ggoth$-homogeneous subrings of $B$.}
$$
\end{proposition}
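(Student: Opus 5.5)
For ($\Rightarrow$), I would use Lemma \ref{E881691D-8557-4672-B859-92E3341ADF4B}(c), which gives $A_i = \ker(\Lscr^{(i-1)})$. If $\Lscr$ is $\ggoth$-homogeneous, then so is each $\Lscr^{(i-1)}$. Indeed, the bracket of homogeneous derivations is homogeneous of the summed degree, so the bracket of two homogeneous subspaces is homogeneous, and the Lie subalgebra generated by a homogeneous subspace is homogeneous. Next, the kernel of a homogeneous set $\Delta$ of derivations is a homogeneous subring. It suffices to check this for one homogeneous $D$: if $b=\sum_i b_i$ and $D(b)=0$, then the components $D(b_i)\in B_{i+\deg D}$ lie in distinct degrees, so each $D(b_i)=0$. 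Since each element of $\Delta$ is a sum of homogeneous elements of $\Delta$, $\ker\Delta$ is the intersection of the kernels of the homogeneous elements of $\Delta$, hence homogeneous. This direction is routine.

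For ($\Leftarrow$), suppose $A_1,\dots,A_m$ are homogeneous. By Proposition \ref{0615A9A0-CFB7-445C-B34E-44BA30B13CFA}, $\Lscr=\Lscr(A_1,\dots,A_m)$ is the set of $D\in\lnd_{A_1}(B)$ with $D(A_i)\subseteq A_i$ for all $i$. Take $D\in\Lscr$ with homogeneous decomposition $D=\sum_d D_d$. I must show each $D_d\in\Lscr$. Preservation of the subrings is easy. If $a\in A_i$ is homogeneous, then $D(a)\in A_i$, so its components, which are exactly the $D_d(a)$, lie in $A_i$ because $A_i$ is homogeneous. Every element of $A_i$ is a sum of homogeneous elements of $A_i$, so $D_d(A_i)\subseteq A_i$; similarly $D_d(A_1)=0$. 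The real issue is local nilpotency of every component $D_d$, not merely of the highest one.

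To handle this, I would fix a total order on $\Integ^d$. Lemma \ref{E7261F32-B2ED-4D50-8F3E-E694DB49CF21} shows that $\hat D$ is locally nilpotent, so $\hat D\in\Lscr$. Then $D-\hat D\in\Lscr$, since $\Lscr$ is a $\bk$-vector space (it is a Lie algebra over $\ker\Lscr\supseteq\bk$). Its highest component is the next component of $D$, and induction on $|\Supp_\ggoth(D)|$ puts every component in $\Lscr$. More cleanly: $\Lscr$ is a Lie algebra inside $\lnd(B)$, hence a locally nilpotent set by Lemma \ref{A4919C28-E3E4-4D7F-AE02-8F709B2AF0B7}(a), and $\{\hat E : E\in\Lscr\}$ is locally nilpotent by Lemma \ref{E7261F32-B2ED-4D50-8F3E-E694DB49CF21}. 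Either way, the key point is that $\Lscr$ is described intrinsically as the set of all locally nilpotent $A_1$-derivations stabilizing each $A_i$. It is therefore closed under subtraction, and membership of $\hat D$ needs only local nilpotency plus the stabilization already shown.

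The main obstacle I anticipate is the local nilpotency of $\hat D$. Lemma \ref{E7261F32-B2ED-4D50-8F3E-E694DB49CF21} supplies it directly, but it requires a totally ordered grading group, which is why the torsion-free hypothesis on $\Integ^d$ enters. One also needs $B$ to be an affine $\bk$-domain so that homogeneous decompositions of derivations are finite. This holds under the standing assumptions of this section.
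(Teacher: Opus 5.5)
Your proof is correct and follows the paper's argument. For ($\Rightarrow$) you use $A_i=\ker(\Lscr^{(i-1)})$ together with homogeneity of the derived series, and for ($\Leftarrow$) you use $\Lscr=\Lscr(A_1,\dots,A_m)$, check that each homogeneous component stabilizes the $A_i$ and kills $A_1$, and then use local nilpotency of $\hat D$. The only difference is that you spell out the induction on $|\Supp_\ggoth(D)|$ via $D-\hat D\in\Lscr$, which the paper compresses into ``it suffices to show $\hat D\in\Lscr$''. One small wording point: closure of $\Lscr$ under subtraction comes from $\Lscr$ being a Lie algebra, as you correctly say earlier, not from its description as the set of locally nilpotent derivations stabilizing the $A_i$.
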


\begin{proof}
If $\Lscr$ is $\ggoth$-homogeneous then so are $\Lscr^{(0)}$, $\Lscr^{(1)}$, \dots, $\Lscr^{(m-1)}$.
By Lemma \ref{E881691D-8557-4672-B859-92E3341ADF4B}, $A_i = \ker( \Lscr^{(i-1)} )$ for all $i=1,\dots,m$; so $A_1$, \dots, $A_m$ are $\ggoth$-homogeneous subrings of $B$.

For the converse, choose an order $\le$ that makes $(\Integ^d,\le)$ a totally ordered abelian group
and consider the map $\lnd(B) \to \hlnd(B,\ggoth)$, $D \mapsto \hat D$, determined by that choice (see \ref{C3246E05-854F-427C-828A-669E2281D276}).
Recall that $\Lscr = \Lscr(A_1, \dots, A_m)$ by Proposition \ref{0615A9A0-CFB7-445C-B34E-44BA30B13CFA}.
Suppose that $A_1$, \dots, $A_m$ are $\ggoth$-homogeneous subrings of $B$;
to show that $\Lscr$ is $\ggoth$-homogeneous, it suffices to show that $D \in \Lscr(A_1, \dots, A_m)$ implies $\hat D \in \Lscr(A_1, \dots, A_m)$.
Consider $D \in \Lscr(A_1, \dots, A_m)$ and write $D = \sum_{\delta \in \Delta} \delta$ where
$\Delta$ is a finite set whose elements are nonzero $\ggoth$-homogeneous $\bk$-derivations (of $B$) of distinct degrees.
Let $i \in \{1,\dots,m\}$. We have $D(A_i) \subseteq A_i$ because $D \in \Lscr(A_1, \dots, A_m)$,
so if $h$ is a $\ggoth$-homogeneous element of $A_i$ then $A_i \ni D(h) = \sum_{\delta \in \Delta} \delta(h)$,
where the nonzero terms in the sum are $\ggoth$-homogeneous of distinct degrees;
since $A_i$ is a $\ggoth$-homogeneous subring of $B$, we obtain $\delta(h) \in A_i$ for all $\delta \in \Delta$.
Since the group $(A_i,+)$ is generated by its $\ggoth$-homogeneous elements, this implies that $\delta(A_i) \subseteq A_i$ for all $\delta \in \Delta$,
i.e., each $\delta \in \Delta$ maps each $A_i$ into itself.
Since $D$ is an $A_1$-derivation, a similar argument shows that each $\delta \in \Delta$ is an $A_1$-derivation
(let $h$ be a $\ggoth$-homogeneous element of $A_1$ and write $0 = D(h) = \sum_{\delta \in \Delta} \delta(h)$, etc.).
In particular, $\hat D$ is an $A_1$-derivation that maps each $A_i$ into itself. Since $\hat D$ is locally nilpotent, it belongs to $\Lscr = \Lscr(A_1, \dots, A_m)$.
This proves the claim.
\end{proof}

\begin{proposition}  \label {C70CF166-08AB-4FA3-971E-432644916C40}
Let $\ggoth$ be a $\Integ^d$-grading of $B$.
\begin{enumerate}[\rm(a)]

\item Every maximal element of $\LieLN_\bk(B,\ggoth)$ is a maximal element of $\LieLN_\bk(B)$.

\item Every maximal element of $\DJL(B,\ggoth)$ is a maximal element of $\DJL(B)$.

\end{enumerate}
\end{proposition}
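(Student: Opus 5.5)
Fix a total order on $\Integ^d$ making it an ordered group, and use the highest-homogeneous-component map $D \mapsto \hat D$ of Definition \ref{C3246E05-854F-427C-828A-669E2281D276}. The plan is to prove (a) by a degeneration argument and then deduce (b) from (a) together with Propositions \ref{6D6E22A4-236A-4B28-96D9-86157812D34A} and \ref{DDB3B9BE-7B31-4FF9-A45C-02A0882AB545}.

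For (a), let $\Lscr$ be maximal in $\LieLN_\bk(B,\ggoth)$ and suppose $\Lscr \subseteq \Mscr$ with $\Mscr \in \LieLN_\bk(B)$. Consider the set $\hat\Mscr = \setspec{\hat D}{D \in \Mscr}$. By Lemma \ref{E7261F32-B2ED-4D50-8F3E-E694DB49CF21}, $\hat\Mscr$ is a locally nilpotent subset of $\Der_\bk(B)$. Let $\Mscr_0$ be the $\bk$-span of $\hat\Mscr$. The key observation is that $\Mscr_0$ is a Lie subalgebra consisting of homogeneous components. Indeed, for homogeneous $\hat D,\hat E$, the bracket $[\hat D,\hat E]$ is either $0$ or equal to $\widehat{[D,E]}$ when its degree is $\deg D+\deg E$. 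In general, $\Mscr_0$ is exactly the span of the leading forms of the subspace $\Mscr$, which is the associated graded space. The bracket of two leading forms is either zero or the leading form of the bracket, so $\Mscr_0$ is closed under brackets and is $\ggoth$-homogeneous. By Remark \ref{03442952-E213-4085-B07A-FABC7408E85A}, $\Mscr_0$ is locally nilpotent. So $\Mscr_0 \in \LieLN_\bk(B,\ggoth)$, and since $\Lscr$ is homogeneous, $\Lscr \subseteq \Mscr_0$. Maximality then gives $\Mscr_0 = \Lscr$.

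It remains to show that $\Mscr = \Lscr$. Take $D \in \Mscr \setminus \Lscr$ with $\deg D$ minimal. This requires a well-ordering argument, which I expect to be the main obstacle, since the degrees lie in $\Integ^d$ with a non-well-ordered order. Then $\hat D \in \Mscr_0 = \Lscr \subseteq \Mscr$, so $D - \hat D \in \Mscr \setminus \Lscr$ has strictly smaller degree. To avoid needing well-ordering, I would instead argue as follows. Each $D \in \Mscr$ has finite support. Define $D_1 = D - \hat D$, which lies in $\Mscr$, and note $\Supp D_1 \subset \Supp D$ has one fewer element. Iterating peels off components: every homogeneous component of $D$ lies in $\Lscr$, by induction on $|\Supp_\ggoth(D)|$. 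Hence $D \in \Lscr$ and $\Mscr = \Lscr$. This finite-support induction replaces the minimality argument, so the real care is only in the claim that $\Mscr_0$ is a Lie algebra.

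For (b), let $\Lscr$ be maximal in $\DJL(B,\ggoth)$. I would first show that $\Lscr$ is maximal in $\LieLN_\bk(B,\ggoth)$. By Proposition \ref{6D6E22A4-236A-4B28-96D9-86157812D34A}(d) applied in the graded setting, $\Lscr$ lies in some maximal element $\Mscr$ of $\LieLN_\bk(B,\ggoth)$; Zorn's lemma applies with Lemma \ref{DEA84E5E-79BC-4D4B-BF2D-C5735262EA6D}(a). By (a), $\Mscr$ is maximal in $\LieLN_\bk(B)$, hence $\Mscr \in \DJL(B)$ by Proposition \ref{6D6E22A4-236A-4B28-96D9-86157812D34A}(c), and $\Mscr$ is homogeneous. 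So $\Mscr \in \DJL(B,\ggoth)$, and maximality of $\Lscr$ gives $\Lscr = \Mscr$, which is maximal in $\DJL(B)$.
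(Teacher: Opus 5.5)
Your proof is correct and essentially the paper's. Part (b) is the same Zorn's Lemma argument, going through (a) and Proposition \ref{6D6E22A4-236A-4B28-96D9-86157812D34A}(c). Part (a) uses the same ingredients: Lemma \ref{E7261F32-B2ED-4D50-8F3E-E694DB49CF21}, maximality of $\Lscr$, and induction on the number of $\ggoth$-homogeneous components. The only variation is that you pass at once to the span of all leading forms $\setspec{\hat D}{D\in\Mscr}$, which requires your check that $[\hat D,\hat E]$ is either $0$ or $\widehat{[D,E]}$. The paper instead takes a counterexample $D$ with the fewest components and uses the Lie algebra generated by the homogeneous elements of $\Lscr$ together with $\hat D$, which avoids that bracket-closure verification.
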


\begin{proof}
(a) Choose an order $\le$ that makes $(\Integ^d,\le)$ a totally ordered abelian group
and consider the map $\lnd(B) \to \hlnd(B,\ggoth)$, $D \mapsto \hat D$, determined by that choice.
By contradiction, suppose that $\Lscr \subset \Lscr'$ is a strict inclusion where
$\Lscr$ is a maximal element of $\LieLN_\bk(B,\ggoth)$ and $\Lscr' \in \LieLN_\bk(B)$.
For each $D \in \Lscr' \setminus \Lscr$, let $\#(D) \in \Nat \setminus \{0\}$ denote the number of $\ggoth$-homogeneous components of $D$.
Let $m = \min\setspec{ \#(D) }{ D \in \Lscr' \setminus \Lscr }$ and choose $D \in \Lscr' \setminus \Lscr$ such that $\#(D) = m$.
Let $H = \setspec{ E \in \Lscr }{ \text{$E$ is $\ggoth$-homogeneous} }$ and observe that $H$ is 
a generating set for the Lie algebra $\Lscr$.
Since $H \cup \{D\} \subseteq \Lscr' \in \LieLN_\bk(B)$, $H \cup \{D\}$ is a locally nilpotent subset of $\Der_\bk(B)$.
So Lemma \ref{E7261F32-B2ED-4D50-8F3E-E694DB49CF21} implies that $H \cup \{ \hat D \}$ is a locally nilpotent subset of $\Der_\bk(B)$.
By Remark \ref{03442952-E213-4085-B07A-FABC7408E85A}, the Lie subalgebra $\bar\Lscr$ of $\Der_\bk(B)$
generated by $H \cup \{ \hat D \}$ is a locally nilpotent subset of $\Der_\bk(B)$, so $\bar\Lscr \in \LieLN_\bk(B)$.
Since $\bar \Lscr$ is generated by homogeneous derivations, we have in fact $\bar\Lscr \in \LieLN_\bk(B,\ggoth)$.
We also have $\Lscr \cup \{\hat D\} \subseteq \bar\Lscr$, so the maximality of $\Lscr$ implies that
$\Lscr = \bar\Lscr$ and $\hat D \in \Lscr$. It follows that $\hat D \in \Lscr'$ and consequently $D-\hat D \in \Lscr'$.
However,  $D-\hat D \notin \Lscr$ because $D \notin\Lscr$.
So $D-\hat D \in \Lscr' \setminus \Lscr$ and $\#(D - \hat D) < \#(D)$, which contradicts our choice of $D$. This proves (a).

(b) Let $\Lscr$ be a maximal element of $\DJL(B,\ggoth)$.
Then $\Lscr \in \LieLN_\bk(B,\ggoth)$ by Remark \ref{D8E0AD3A-6104-479D-B7E2-D38E67645BBD},
so Zorn's Lemma and Lemma \ref{DEA84E5E-79BC-4D4B-BF2D-C5735262EA6D} show that 
there exists a maximal element $\Lscr'$ of $\LieLN_\bk(B,\ggoth)$ such that $\Lscr \subseteq \Lscr'$.
By (a), $\Lscr'$ is a maximal element of $\LieLN_\bk(B)$ and hence (by Proposition \ref{6D6E22A4-236A-4B28-96D9-86157812D34A}(c)) a maximal element of $\DJL(B)$.
As $\Lscr'$ is homogeneous, we have $\Lscr' \in \DJL(B,\ggoth)$, so the fact that $\Lscr$ is maximal in $\DJL(B,\ggoth)$
implies that $\Lscr = \Lscr'$. So $\Lscr$ is a maximal element of $\DJL(B)$.
\end{proof}

\begin{notation} \label {FA176F6C-52D4-43FA-A7AF-B41C94E23E8B}  
Let $\ggoth = (B_i)_{i \in G}$ be a $G$-grading of $B$, where $G = \Integ^d$ for some $d \in \Nat$.

\smallskip

\noindent(i) We define $\rank(\ggoth)$ to be the rank of the subgroup of $G$ generated by $\setspec{ i \in G }{ B_i \neq 0 }$.
It is well known that $\rank(\ggoth) \le \rtrdeg_\bk(B)$.

\smallskip

\noindent(ii)
Let $\Hom(G,\bk)$ denote the $\bk$-vector space whose elements are the group homomorphisms from $G$ to the additive group of $\bk$.
For each $\phi \in \Hom(G,\bk)$, define the $\bk$-derivation $D_\ggoth^\phi : B \to B$ by
declaring that if $i \in G$ and $x \in B_i$ then $D_\ggoth^\phi(x) = \phi(i) x$.
Then $D_\ggoth^\phi$ is semisimple and hence locally finite.
Note that $\phi \mapsto D_\ggoth^\phi$ is a $\bk$-linear map $\Hom(G,\bk) \to \Der_\bk(B)$;
we denote its image by
$$
L(\ggoth) = \setspec{D_\ggoth^\phi}{ \phi \in \Hom(G,\bk) } .
$$
As $[D_\ggoth^\phi , D_\ggoth^\psi ] = 0$ for all $\phi,\psi \in \Hom(G,\bk)$, $L(\ggoth)$ is a commutative Lie subalgebra of $\Der_\bk(B)$.
Moreover, the dimension of $L(\ggoth)$ as a $\bk$-vector space is equal to $\rank(\ggoth)$,
and it is clear that
\begin{equation}  \label {DA19998A-8871-4661-A7A2-9B7A36F645D9}
L(\ggoth) \cap \lnd(B) = \{0\} .
\end{equation}
Also observe that if $\phi \in \Hom(G,\bk)$ then
$D_\ggoth^\phi$ is $\ggoth$-homogeneous of degree $0$,
and if $E \in \Der_\bk(B)$ is $\ggoth$-homogeneous of degree $d \in G$ then $[D_\ggoth^\phi , E] = \phi(d)E$.
\end{notation}

\begin{remark} \label {A76E8B96-799E-45EC-9363-51DAC324DFFD}
Assume that $\bk$ is algebraically closed and let $X = \Spec B$.
We will see in paragraph \ref{92E834E4-3895-42FF-A577-61C5AF52410C}(a) that $L(\ggoth)$ is the Lie algebra of the algebraic torus $T_\ggoth \subseteq \Aut(X)$ determined by $\ggoth$.
\end{remark}

We leave it to the reader to verify the following assertion (see \ref{EDCCFF3C-F3DF-4BCD-82AC-363D73AFAA4B} for the notation  $\Norm(\Lscr)$):

\begin{lemma}  \label {98BA9308-0D1C-49E9-8D0D-7F648E5E2B67}
Let $\ggoth$ be a $\Integ^d$-grading of $B$.
\begin{enumerate}[\rm(a)]

\item A derivation $E \in \Der_\bk(B)$ is $\ggoth$-homogeneous of degree $0$ if and only if $[ E, L(\ggoth) ] = 0$.

\item Let $\Lscr$ be a Lie subalgebra of $\Der_\bk(B)$.
\begin{itemize}
\item If $\Lscr$ is $\ggoth$-homogeneous then so is $\Norm(\Lscr)$.
\item $\Lscr$ is $\ggoth$-homogeneous if and only if $L(\ggoth) \subseteq \Norm(\Lscr)$.
\end{itemize}
\end{enumerate}
\end{lemma}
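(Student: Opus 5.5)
Write $\ggoth = (B_i)_{i \in G}$ with $G = \Integ^d$. All the assertions reduce to the identity $[D_\ggoth^\phi, E] = \phi(e)E$ for $E$ homogeneous of degree $e$, recorded in Notation \ref{FA176F6C-52D4-43FA-A7AF-B41C94E23E8B}(ii), together with the fact that enough $\phi \in \Hom(G,\bk)$ exist to separate the elements of $G$. Since $\bk$ has characteristic zero, $\bk \supseteq \Rat$, and $\Hom(G,\bk) \cong \bk^d$ via the coordinate functionals. In particular, for any finite set of distinct elements $e_1,\dots,e_r$ of $G$ there is a $\phi$ with $\phi(e_1),\dots,\phi(e_r)$ pairwise distinct: take $\phi(x) = \sum_j c_j x_j$ with integers $c_j$ chosen to avoid finitely many hyperplanes.

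(a) Let $E = \sum_{e \in \Supp_\ggoth(E)} E_e$ be the homogeneous decomposition. Then $[D_\ggoth^\phi, E] = \sum_e \phi(e)E_e$. If $E$ is homogeneous of degree $0$, this is zero for all $\phi$. Conversely, suppose $[E, L(\ggoth)] = 0$. Then $\phi(e)E_e = 0$ for every $\phi$ and every $e$, because homogeneous components of distinct degrees are independent in the direct sum. For $e \neq 0$ some coordinate functional satisfies $\phi(e) \neq 0$, which forces $E_e = 0$. Hence $E = E_0$.

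(b), second bullet. Suppose first that $\Lscr$ is $\ggoth$-homogeneous and $D \in \Lscr$. Each component $D_e$ lies in $\Lscr$, so $[D_\ggoth^\phi, D] = \sum_e \phi(e)D_e \in \Lscr$. Thus $L(\ggoth) \subseteq \Norm(\Lscr)$. Conversely, suppose $L(\ggoth) \subseteq \Norm(\Lscr)$ and $D \in \Lscr$ has support $\{e_1,\dots,e_r\}$. Pick $\phi$ with the values $\lambda_k = \phi(e_k)$ pairwise distinct. Iterating gives $(\ad D_\ggoth^\phi)^j(D) = \sum_k \lambda_k^j D_{e_k} \in \Lscr$ for $j = 0,\dots,r-1$. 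The Vandermonde matrix $(\lambda_k^j)$ is invertible over $\bk$, and $\Lscr$ is a $\bk$-subspace, so each $D_{e_k}$ is a $\bk$-linear combination of elements of $\Lscr$ and therefore lies in $\Lscr$.

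(b), first bullet. Assume $\Lscr$ is homogeneous and let $N \in \Norm(\Lscr)$. By the second bullet it suffices to show that $L(\ggoth) \subseteq \Norm(\Norm(\Lscr))$, that is, $[D_\ggoth^\phi, N] \in \Norm(\Lscr)$. Using the Jacobi identity, for $X \in \Lscr$,
\[
[[D_\ggoth^\phi, N], X] = [D_\ggoth^\phi, [N,X]] - [N, [D_\ggoth^\phi, X]].
\]
Here $[N,X] \in \Lscr$, and $D_\ggoth^\phi$ normalizes $\Lscr$ by the second bullet, so the first term lies in $\Lscr$. The second term also lies in $\Lscr$, since $[D_\ggoth^\phi, X] \in \Lscr$ and $N$ normalizes $\Lscr$. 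Thus $[D_\ggoth^\phi, N] \in \Norm(\Lscr)$, and $\Norm(\Lscr)$ is homogeneous.

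The only step that is more than bookkeeping is choosing $\phi$ to separate a finite set of degrees, which is where torsion-freeness of $\Integ^d$ and characteristic zero are used. Once that choice is made, the Vandermonde argument finishes the proof.
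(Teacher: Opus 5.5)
The paper gives no proof of this lemma (it is explicitly left to the reader), so there is no official argument to compare against. Your proposal is a correct and complete verification.

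The ingredients you use are exactly what is needed:
\begin{itemize}
\item the identity $[D_\ggoth^\phi,E]=\phi(e)E$ for $E$ homogeneous of degree $e$;
\item the directness of the decomposition $\Der_\bk(B)=\bigoplus_e \Der_\bk(B,\ggoth)_e$;
\item the existence of an integral $\phi$ separating finitely many distinct degrees, which is valid because $\Integ^d$ is torsion-free and $\bk$ has characteristic zero.
\end{itemize}
The Vandermonde step then correctly recovers each homogeneous component of $D$ from the iterates $(\ad D_\ggoth^\phi)^j(D)\in\Lscr$.

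For the first bullet, your Jacobi-identity reduction is clean. It amounts to saying that the normalizer of an $\ad D_\ggoth^\phi$-stable subalgebra is again $\ad D_\ggoth^\phi$-stable, and then applying the second bullet to the subalgebra $\Norm(\Lscr)$.

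A more hands-on alternative also works. Take $N\in\Norm(\Lscr)$ and a homogeneous $X\in\Lscr$ of degree $x$. Then $[N,X]=\sum_e[N_e,X]$, where each $[N_e,X]$ is homogeneous of degree $e+x$. Since $\Lscr$ is homogeneous, each $[N_e,X]$ lies in $\Lscr$. Because $\Lscr$ is spanned by its homogeneous elements, this gives $[N_e,\Lscr]\subseteq\Lscr$.

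Both routes are equally short. Yours has the advantage of reusing the second bullet instead of repeating a degree computation.
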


\begin{definition} \label {B77C75D9-E381-4069-A82A-870BECEA6849}
Let $\Lscr$ be a Lie subalgebra of $\Der_\bk(B)$.
\begin{enumerate}[\rm(i)]

\item Let $\Grad(\Lscr)$ denote the set of all $\Integ^n$-gradings $\ggoth$ of $B$ (with $n$ arbitrary) such that $\Lscr$ is $\ggoth$-homogeneous.
Note that $\Grad(\Lscr)$ is not empty, as it contains the trivial $\Integ$-grading of $B$.

\item We define the {\it grading rank\/} of $\Lscr$ by $\grank(\Lscr) = \max\setspec{ \rank\ggoth }{ \ggoth \in \Grad(\Lscr) }$.

\end{enumerate}
We have $\grank(\Lscr) \le \rtrdeg_\bk(B)$ by \ref{FA176F6C-52D4-43FA-A7AF-B41C94E23E8B}(i).
Also observe that
\begin{equation}  \label {391D2623-B472-4F59-9487-EE08025E5C46}
\text{for each $\ggoth \in \Grad(\Lscr)$,} \qquad
[L(\ggoth),L(\ggoth)] = \{0\} , \quad L(\ggoth) \subseteq \lfd_\bk(B)  \quad  \text{and} \quad L(\ggoth) \subseteq \Norm(\Lscr)
\end{equation}
by paragraph \ref{FA176F6C-52D4-43FA-A7AF-B41C94E23E8B} and Lemma \ref{98BA9308-0D1C-49E9-8D0D-7F648E5E2B67}(b).
\end{definition}

\begin{remark}  \label {3E8A3225-A052-47E0-99B6-C8935E9A664C}
Let $\bbL$ be either $\LieWLF_\bk(B)$ or $\LieWLF_\bk(B)_{\text{\rm sol}}$ and let $\Lscr \in \bbL$.
\begin{enumerate}[\rm(a)]

\item If $L$ is a Lie subalgebra of $\Norm(\Lscr)$ such that $L \subseteq \lfd_\bk(B)$ and $[L,L] \subseteq \Lscr$, then $L + \Lscr \in \bbL$.

\item If $\ggoth \in \Grad(\Lscr)$ then $L(\ggoth) + \Lscr  \in \bbL$.

\item If $\Lscr$ is a maximal element of $\bbL$ then $L(\ggoth) \subseteq \Lscr$ for all $\ggoth \in \Grad(\Lscr)$.

\end{enumerate}
Indeed, part (a) follows by applying Lemma \ref{CA556DEE-FE7D-4364-A217-A983EBE59700} to the set $T = L$
(note that, in Lemma \ref{CA556DEE-FE7D-4364-A217-A983EBE59700}, if $\Lscr$ is solvable then so is $\Mscr$).
Statement \eqref{391D2623-B472-4F59-9487-EE08025E5C46} shows that (b) is a special case of (a), and it is clear that (c) is a special case of (b).
\end{remark}

To prove the next Proposition we require the following well-known fact, whose proof we omit:

\begin{lemma} \label {C6AEC471-BCDE-4A94-B59D-72C4AAF45868}
Let $\ggoth$ be a grading of $B$ by a totally ordered abelian group $(G,\le)$ and let $D \in \lfd_\bk(B)$.
Then the highest homogeneous component $\hat D$ of $D$ belongs to $\lfd_\bk(B)$, and if $\deg_\ggoth( \hat D ) \neq 0$ then $\hat D$ is locally nilpotent.
\end{lemma}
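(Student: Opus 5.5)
Let $d = \deg_\ggoth(\hat D)$; we may assume $D \neq 0$. The plan is to work one element at a time. Fix $b \in B$ and a finite-dimensional $D$-stable subspace $U \ni b$. We want a finite-dimensional $\hat D$-stable subspace containing $b$. By linearity we may assume $b$ is homogeneous. Enlarging $U$, we may replace it by the $D$-stable span of finitely many elements, say $U = \sum_{k\ge0} \bk D^k(b)$, which is finite dimensional.

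\emph{Key step: top components.} As in the proof of Lemma \ref{E7261F32-B2ED-4D50-8F3E-E694DB49CF21}, an induction on $k$ gives
\[
\deg_\ggoth\bigl(D^k(b)\bigr) \le \deg_\ggoth(b) + kd,
\qquad
p_{\deg(b)+kd}\bigl(D^k(b)\bigr) = \hat D^k(b).
\]
So each $\hat D^k(b)$ is a homogeneous component of an element of $U$.

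\emph{Case $d \neq 0$.} The elements $\hat D^k(b)$ that are nonzero lie in the distinct degrees $\deg(b)+kd$. The set of degrees of homogeneous components of elements of $U$ is finite, because $U$ is finite dimensional and spanned by finitely many vectors, each with finite support. Hence $\hat D^k(b) = 0$ for $k$ large, so $\hat D$ is locally nilpotent on homogeneous elements and therefore on all of $B$. In particular $\hat D \in \lfd_\bk(B)$.

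\emph{Case $d = 0$.} Let $W$ be the span of all homogeneous components of all elements of $U$. This is finite dimensional. We need $W$ to be $\hat D$-stable, and more precisely we want to show $\hat D(W) \subseteq W'$ for some finite-dimensional space independent of iteration. The cleaner route: $\hat D$ has degree $0$, so it preserves each $B_g$. For homogeneous $b$ of degree $g$, the sequence $\hat D^k(b)$ lies in $p_g$ of the span of $\{D^k(b)\}$, namely $p_g(U)$, which is finite dimensional. Thus $\sum_k \bk \hat D^k(b) \subseteq p_g(U)$ is finite dimensional and $\hat D$-stable, and $\hat D$ is locally finite.

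The main obstacle is the displayed top-component identity for $d\neq0$ when the lower components of $D$ contribute. Write $D = \hat D + D_{<}$ with all degrees of $D_{<}$ strictly less than $d$. Expanding $D^k$, every word other than $\hat D^k$ has total degree less than $kd$, because the order is compatible with addition. This settles the identity, and the same expansion gives $p_g(D^k b) = \hat D^k b$ in the $d=0$ case.
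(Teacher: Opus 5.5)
Your argument is correct. The paper states this lemma as well known and omits its proof, and yours is the natural proof. It rests on the same top-component identity $p_{\deg(b)+kd}\bigl(D^k(b)\bigr)=\hat D^k(b)$ that the paper uses in the proof of Lemma~\ref{E7261F32-B2ED-4D50-8F3E-E694DB49CF21}. You combine it with the finiteness of the set of degrees occurring in $U=\sum_k \bk D^k(b)$ when $d\neq 0$, and with the finite-dimensionality of $p_g(U)$ when $d=0$.

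The only blemish is the abandoned first attempt with the space $W$ in the case $d=0$. It should be deleted, since the ``cleaner route'' that follows is complete on its own.
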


\begin{proposition} \label {8E965CF0-961C-4AC9-859F-398B4CEA6E5E}
Let $\Lscr$ be a maximal element of $\LieLN_\bk(B)$.
\begin{enumerate}[\rm(a)]

\item $[ L(\ggoth), L(\hgoth) ] \subseteq \Lscr$ for all $\ggoth, \hgoth \in \Grad(\Lscr)$.

\item If we define $\Mscr = \Lscr + \sum_{\ggoth \in \Grad(\Lscr)} L(\ggoth)$ then
$$
\Mscr \in \LieWLF_\bk(B)_{\text{\rm sol}} , \quad [\Mscr,\Mscr] \subseteq \Lscr \quad \text{and} \quad \Mscr \cap \lnd(B) = \Lscr .
$$

\end{enumerate}
\end{proposition}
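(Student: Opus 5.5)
Throughout, $\Lscr$ is a maximal element of $\LieLN_\bk(B)$. By Proposition \ref{6D6E22A4-236A-4B28-96D9-86157812D34A}(c) it is also maximal in $\DJL(B)$, and Lemma \ref{CAA13785-2B53-4A53-80E3-3857241586CB}(b) gives $\Norm(\Lscr)\cap\lnd(B)=\Lscr$.

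\emph{Part (a).} Fix $\ggoth,\hgoth\in\Grad(\Lscr)$, $\phi\in\Hom(\Integ^d,\bk)$ and $\psi\in\Hom(\Integ^e,\bk)$, and put $F=[D^\phi_\ggoth,D^\psi_\hgoth]$. Both $D^\phi_\ggoth$ and $D^\psi_\hgoth$ normalize $\Lscr$ by \eqref{391D2623-B472-4F59-9487-EE08025E5C46}, so $F\in\Norm(\Lscr)$. It therefore suffices to show $F\in\lnd(B)$.

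I would first make $F$ homogeneous. Write $D^\psi_\hgoth=\sum_c E_c$, its $\ggoth$-homogeneous decomposition. Then
\[ F=\sum_c \phi(c)E_c, \]
so $F$ has no component of $\ggoth$-degree $0$. Since $D^\psi_\hgoth$ is locally finite, Lemma \ref{C6AEC471-BCDE-4A94-B59D-72C4AAF45868} suggests that each nonzero-degree component $E_c$ should be locally nilpotent, at least for the extremal ones.

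A cleaner route uses the combined grading. Because $D^\psi_\hgoth$ is semisimple and normalizes the torus algebra $L(\ggoth)$, I would try to show, up to replacing $\hgoth$ by a refinement, that $\ggoth$ and $\hgoth$ generate a common $\Integ^{d+e}$-grading. If they commute, $F=0$ and the claim is trivial. In general I would argue via the locally finite solvable algebra
\[ \Mscr_0=\LieAlg{L(\ggoth)\cup L(\hgoth)\cup\Lscr}. \]
This lies in $\LieWLF_\bk(B)$: apply Lemma \ref{CA556DEE-FE7D-4364-A217-A983EBE59700} with $\Lscr'=L(\ggoth)+\Lscr\in\LieWLF$ (Remark \ref{3E8A3225-A052-47E0-99B6-C8935E9A664C}(b)) and $T=L(\hgoth)$. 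This needs $[T,\Lscr']\subseteq\Lscr'$, which is exactly the nontrivial point. For a weakly locally finite solvable algebra, commutators lie in the nilradical, hence act nilpotently on each finite-dimensional invariant subspace, so $[\Mscr_0,\Mscr_0]\subseteq\lnd(B)$. Since $\Mscr_0$ normalizes $\Lscr$, each element of $[\Mscr_0,\Mscr_0]$ lies in $\Norm(\Lscr)\cap\lnd(B)=\Lscr$.

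\emph{Main obstacle.} This circularity is the hard step: proving that $L(\hgoth)$ normalizes $L(\ggoth)+\Lscr$ without already knowing (a). I would attack it by applying the Lie-type argument to the finite-dimensional algebra spanned by $D^\phi_\ggoth$, $D^\psi_\hgoth$ and their iterated brackets acting on a finite-dimensional invariant subspace containing a given $b\in B$. Weak local finiteness of $\{D^\phi_\ggoth,D^\psi_\hgoth\}\cup\Lscr$ is obtained from Lemma \ref{21AFB0A0-0700-4E7F-8AE2-D3557F9656CC}(a) applied to the Lie algebra they generate, whose derived algebra should lie in the locally nilpotent algebra generated by $\Lscr$ and $F$ (Lemma \ref{SkutinLemma2}). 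Establishing that $F$ itself is locally nilpotent is where Lemma \ref{C6AEC471-BCDE-4A94-B59D-72C4AAF45868} and the decomposition $F=\sum_c\phi(c)E_c$ enter: $F$ is a sum of nonzero-degree components, each of which should be locally nilpotent, but a sum of locally nilpotent components need not be. I would handle this by induction on the number of components, peeling off the highest one as in the proof of Proposition \ref{C70CF166-08AB-4FA3-971E-432644916C40}(a).

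\emph{Part (b).} Given (a), $\Mscr$ is a Lie algebra. Each $L(\ggoth)$ normalizes $\Lscr$ and brackets between the $L(\ggoth)$ land in $\Lscr$, so $[\Mscr,\Mscr]\subseteq\Lscr$. Apply Lemma \ref{CA556DEE-FE7D-4364-A217-A983EBE59700} with $T=\bigcup_{\ggoth}L(\ggoth)\subseteq\lfd_\bk(B)$, using $[T,T]\subseteq\Lscr$ and $[T,\Lscr]\subseteq\Lscr$; this gives $\Mscr\in\LieWLF_\bk(B)$. Solvability follows since $\Mscr^{(1)}\subseteq\Lscr$ and $\Lscr$ is solvable. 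Finally $\Mscr\subseteq\Norm(\Lscr)$, so
\[ \Mscr\cap\lnd(B)\subseteq\Norm(\Lscr)\cap\lnd(B)=\Lscr, \]
while $\Lscr\subseteq\Mscr\cap\lnd(B)$ is clear.
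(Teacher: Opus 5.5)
Your part (b) is correct and is the paper's argument once (a) is available. Part (a), however, is not proved. You correctly reduce it to showing $F=[D^\phi_\ggoth,D^\psi_\hgoth]\in\lnd(B)$, but none of your proposed routes delivers this.

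\begin{itemize}
\item The ``cleaner route'', the finite-dimensional Lie-type argument, and the appeal to Lemma~\ref{21AFB0A0-0700-4E7F-8AE2-D3557F9656CC}(a) all presuppose that $D^\phi_\ggoth$ and $D^\psi_\hgoth$ have common finite-dimensional invariant subspaces. That is, they assume weak local finiteness of an algebra containing $L(\ggoth)\cup L(\hgoth)$. This is essentially what (a) asserts, and it fails without maximality of $\Lscr$ (see the remark after the proposition, with $\Lscr=\{0\}$ in $\bk^{[2]}$).
\item The peeling idea is applied to $F$, which is not known to be locally finite, so Lemma~\ref{C6AEC471-BCDE-4A94-B59D-72C4AAF45868} cannot be applied to it.
\item Even for a locally finite derivation, that lemma only controls a component that is highest for some ordering. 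A peeling induction therefore needs a reason why local finiteness survives each removal, and you give none.
\end{itemize}

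The missing step is to peel $\delta:=D^\psi_\hgoth$ itself, over the base $\Lscr$ alone rather than $L(\ggoth)+\Lscr$. This removes the circularity.

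\begin{itemize}
\item Since $\delta\in\lfd_\bk(B)\cap\Norm(\Lscr)$ and $[\bk\delta,\bk\delta]=0$, Lemma~\ref{CA556DEE-FE7D-4364-A217-A983EBE59700} (Remark~\ref{3E8A3225-A052-47E0-99B6-C8935E9A664C}(a)) gives $\bk\delta+\Lscr\in\LieWLF_\bk(B)$.
\item Because $\Norm(\Lscr)$ is $\ggoth$-homogeneous (Lemma~\ref{98BA9308-0D1C-49E9-8D0D-7F648E5E2B67}(b)), every $\ggoth$-component $E_c$ of $\delta$ lies in $\Norm(\Lscr)$.
\item Let $P$ be the sum of those $E_c$ that lie in $\Lscr$, and put $R=\delta-P$. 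The components of $R$ are exactly the $E_c\notin\Lscr$, and $R\in\bk\delta+\Lscr$ is locally finite.
\item Suppose $\Supp_\ggoth(R)\not\subseteq\{0\}$. Choose an ordering of $\Integ^d$ for which $j=\max\Supp_\ggoth(R)\neq0$. Lemma~\ref{C6AEC471-BCDE-4A94-B59D-72C4AAF45868} makes $E_j$ locally nilpotent, so $E_j\in\Norm(\Lscr)\cap\lnd(B)=\Lscr$ by Lemma~\ref{CAA13785-2B53-4A53-80E3-3857241586CB}(b). This contradicts $E_j\notin\Lscr$.
\item Hence every $E_c$ with $c\neq0$ lies in $\Lscr$.
\end{itemize}

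Your own formula $F=\sum_c\phi(c)E_c$ then gives $F\in\Lscr$ immediately, with no separate local-nilpotence argument for $F$ needed. This is exactly how the paper proves (a).
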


\begin{proof}
(a) We use the abbreviation $\bbL = \LieWLF_\bk(B)_{\text{\rm sol}}$.
We claim that
\begin{equation}  \label {20E16C55-F188-4712-89CB-293495FEDE5E}
\text{for each $\delta \in \lfd_\bk(B) \cap \Norm(\Lscr)$ and each $\ggoth \in \Grad(\Lscr)$,\ \ $[\delta, L(\ggoth)] \subseteq \Lscr$.}
\end{equation}
Indeed, consider a $\Integ^d$-grading $\ggoth \in \Grad(\Lscr)$ and let $\delta \in \lfd_\bk(B) \cap \Norm(\Lscr)$.
Observe that $\Lscr \in \bbL$ by \eqref{1354F7B8-F676-4D8A-9F67-8316E680B009} (in Definition \ref{67BC6576-67FE-41A8-975F-597EE29DC120});
so  Remark \ref{3E8A3225-A052-47E0-99B6-C8935E9A664C}(a) implies that $\bk \delta + \Lscr \in \bbL$.
Let $S = \Supp_\ggoth(\delta)$ and let $\delta = \sum_{i \in S} \delta_i$ be the $\ggoth$-homogeneous decomposition of $\delta$;
this notation assumes that, for each $i \in S$, the derivation $\delta_i \in \Der_\bk(B)$ is nonzero and $\ggoth$-homogeneous of degree $i \in \Integ^d$.
Since $\Lscr$ is $\ggoth$-homogeneous, so is $\Norm(\Lscr)$ by Lemma \ref{98BA9308-0D1C-49E9-8D0D-7F648E5E2B67}(b);
so the fact that $\delta \in \Norm(\Lscr)$ implies that $\delta_i \in \Norm(\Lscr)$ for all $i \in S$.
Let $S' = \setspec{ i \in S }{ \delta_i \in \Lscr }$ and write $\delta = D + E$ where
$E = \sum_{i \in S'} \delta_i \in \Lscr$ and $D = \sum_{i \in S \setminus S'} \delta_i$.
Obviously, $\Supp_\ggoth(D) = S \setminus S'$ and  $D = \sum_{i \in S \setminus S'} \delta_i$ is the $\ggoth$-homogeneous decomposition of $D$.
Since $D \in \bk \delta + \Lscr \in \bbL$, we have $D \in \lfd_\bk(B)$.

We claim that $D$ is $\ggoth$-homogeneous of degree $0$.
Indeed, assume the contrary. Then $\Supp_\ggoth(D) \nsubseteq \{0\}$, so we can choose an order $\le$
that makes $(\Integ^d,\le)$ a totally ordered abelian group and with respect to which $j := \max(\Supp_\ggoth D)$ is not $0$;
since $D$ is locally finite, it then follows that $\delta_j$ is locally nilpotent by Lemma \ref{C6AEC471-BCDE-4A94-B59D-72C4AAF45868},
so $\delta_j \in \Norm(\Lscr) \cap \lnd(B) = \Lscr$ where the equality is Lemma \ref{CAA13785-2B53-4A53-80E3-3857241586CB}(b).
Since $\delta_j \in \Lscr$, we have $j \in S'$, which contradicts $j \in \Supp_\ggoth(D) = S \setminus S'$.
This contradiction proves that $D$ is $\ggoth$-homogeneous of degree $0$.
Then $[D, L(\ggoth)] = 0$ by Lemma \ref{98BA9308-0D1C-49E9-8D0D-7F648E5E2B67}(a).
Since $\Lscr$ is $\ggoth$-homogeneous, we have $L(\ggoth) \subseteq \Norm(\Lscr)$  by Lemma \ref{98BA9308-0D1C-49E9-8D0D-7F648E5E2B67}(b),
so $[E, L(\ggoth)] \subseteq [\Lscr,L(\ggoth)] \subseteq \Lscr$.
Thus, $[\delta, L(\ggoth)] = [D+E, L(\ggoth)] \subseteq \Lscr$,
which proves \eqref{20E16C55-F188-4712-89CB-293495FEDE5E}.
It follows that (a) is true, because \eqref{391D2623-B472-4F59-9487-EE08025E5C46} implies that
$L(\hgoth) \subseteq \lfd_\bk(B) \cap \Norm(\Lscr)$ for each $\hgoth \in \Grad(\Lscr)$.

(b) Let $T = \bigcup_{\ggoth \in \Grad(\Lscr)} L(\ggoth)$; then  $[T,T] \subseteq \Lscr$ by part (a).
Since $T \subseteq \lfd_\bk(B)$ and $[T,\Lscr] \subseteq \Lscr$ by \eqref{391D2623-B472-4F59-9487-EE08025E5C46},
Lemma \ref{CA556DEE-FE7D-4364-A217-A983EBE59700} implies that  $\Nscr := \LieAlg{T \cup \Lscr}$ belongs to $\bbL$ and $[\Nscr,\Nscr] \subseteq \Lscr$. 
Using $[T,T] \subseteq \Lscr$ and $[T,\Lscr] \subseteq \Lscr$, we easily obtain $\Mscr = \Nscr$, so  $\Mscr \in \bbL$ and $[\Mscr,\Mscr] \subseteq \Lscr$. 
The fact that $[\Mscr,\Mscr] \subseteq \Lscr$ implies in particular that $\Mscr \subseteq \Norm(\Lscr)$.
Since $\Norm(\Lscr) \cap \lnd(B) = \Lscr$ by Lemma \ref{CAA13785-2B53-4A53-80E3-3857241586CB}(b), it follows that $\Mscr \cap \lnd(B) = \Lscr$.
\end{proof}

\begin{remark}
If $\Lscr$ is a non-maximal element of $\LieLN_\bk(B)$
then $\Lscr + \sum_{\ggoth \in \Grad(\Lscr)} L(\ggoth)$ is not necessarily weakly locally finite,
contrary to what happens in Proposition \ref{8E965CF0-961C-4AC9-859F-398B4CEA6E5E}. 
For instance one can see that if $B = \kk2$ and $\Lscr = \{0\}$ then
$\Lscr + \sum_{\ggoth \in \Grad(\Lscr)} L(\ggoth)$ is not even included in $\lfd_\bk(B)$.
%
% Consider the following example.
% Let $B = \kk2$ and $\Lscr = \{0\} \in \LieLN_\bk(B)$.
% If $\gamma = (u,v)$ is a coordinate system of $B$
% then let $\ggoth_{\gamma} = (B_i)_{ i \in \Nat }$ be the $\Integ$-grading of $B$ such that $u \in B_1$ and $v \in B_0$,
% and let $D_\gamma$ be the $\bk$-derivation of $B$ such that $D_\gamma(u) = u$ and $D_\gamma(v) = 0$.
% Then for each coordinate system $\gamma$ of $B$ we have $\ggoth_\gamma \in \Grad(\Lscr)$ and $D_\gamma \in L( \ggoth_\gamma )$.
% Let $x,y$ be such that $B = \bk[x,y]$ and let $\partial_x, \partial_y$ be the partial derivatives.
% If $\gamma_1 = (x+y^2,y)$ then $D_{\gamma_1} = (x+y^2) \partial_x$; 
% if $\gamma_2 = (y+x^2,x)$ then $D_{\gamma_2} = (y+x^2) \partial_y$.
% It is easy to check that the derivation $D_{\gamma_1} + D_{\gamma_2} = (x+y^2) \partial_x + (y+x^2) \partial_y$ is not locally finite.
% So $\bigcup_{\ggoth \in \Grad(\Lscr)} L(\ggoth)$ is not a weakly locally finite set.
\end{remark}

\begin{notation} \label {B6DFEABF-3379-4ECF-9C30-26F2BCDB4FD6}
$\Lgoth(B) = \setspec{ \Lscr + L(\ggoth) }{ \Lscr \in \LieLN_\bk(B) \text{ and } \ggoth \in \Grad(\Lscr) }$
\end{notation}

We will need the following fact about inclusions in $\Lgoth(B)$.

\begin{lemma}  \label {F2922F53-50B0-4424-A59A-C3A8D4BE38A3}
Let $\Lscr \in \LieLN_\bk(B)$ and let $\ggoth_1, \ggoth_2 \in \Grad(\Lscr)$ be such that $L(\ggoth_1) + \Lscr \subseteq L(\ggoth_2) + \Lscr$.
Then $\rank(\ggoth_1) \le \rank(\ggoth_2)$, and $L(\ggoth_1) + \Lscr = L(\ggoth_2) + \Lscr$ $\Leftrightarrow$ $\rank(\ggoth_1) = \rank(\ggoth_2)$.
\end{lemma}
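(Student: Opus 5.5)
The plan is to compare the two sums modulo $\Lscr$ and reduce everything to a comparison of dimensions of finite-dimensional $\bk$-vector spaces.

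First, I would show that for each $\ggoth \in \Grad(\Lscr)$ we have $L(\ggoth) \cap \Lscr = \{0\}$. Since $\Lscr \in \LieLN_\bk(B)$, equality \eqref{D6C764BE-C72A-4279-B9F3-CFD5F5E639ED} gives $\Lscr \subseteq \lnd(B)$. Hence $L(\ggoth) \cap \Lscr \subseteq L(\ggoth) \cap \lnd(B) = \{0\}$ by \eqref{DA19998A-8871-4661-A7A2-9B7A36F645D9}. It follows that the canonical $\bk$-linear map $L(\ggoth) \to (L(\ggoth)+\Lscr)/\Lscr$ is an isomorphism. By \ref{FA176F6C-52D4-43FA-A7AF-B41C94E23E8B}(ii), this gives
$$\dim_\bk \big( (L(\ggoth)+\Lscr)/\Lscr \big) = \dim_\bk L(\ggoth) = \rank(\ggoth).$$
This dimension is finite, because $\rank(\ggoth) \le \rtrdeg_\bk(B) \le \trdeg_\bk(B) < \infty$ by \ref{FA176F6C-52D4-43FA-A7AF-B41C94E23E8B}(i).

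Next, I would use the hypothesis $L(\ggoth_1)+\Lscr \subseteq L(\ggoth_2)+\Lscr$. It says that $V_1 := (L(\ggoth_1)+\Lscr)/\Lscr$ is a $\bk$-subspace of $V_2 := (L(\ggoth_2)+\Lscr)/\Lscr$. Hence $\rank(\ggoth_1) = \dim V_1 \le \dim V_2 = \rank(\ggoth_2)$, which is the first assertion.

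For the equivalence, suppose first that $L(\ggoth_1)+\Lscr = L(\ggoth_2)+\Lscr$. Then $V_1 = V_2$, so the ranks are equal. Conversely, suppose the ranks are equal. Then $V_1 \subseteq V_2$ are finite-dimensional spaces of the same dimension, so $V_1 = V_2$. Taking preimages under the quotient map by $\Lscr$ gives $L(\ggoth_1)+\Lscr = L(\ggoth_2)+\Lscr$, since both sums contain $\Lscr$.

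The only step needing any care is the first one, namely the trivial intersection $L(\ggoth) \cap \Lscr = \{0\}$ together with the finiteness of $\rank(\ggoth)$. Both follow directly from the facts recorded in \ref{FA176F6C-52D4-43FA-A7AF-B41C94E23E8B}, so I expect no real obstacle.
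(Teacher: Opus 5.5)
Your proposal is correct and follows essentially the same route as the paper: both use $\dim_\bk L(\ggoth_i)=\rank(\ggoth_i)$ and $L(\ggoth_i)\cap\Lscr=\{0\}$ (from \eqref{DA19998A-8871-4661-A7A2-9B7A36F645D9} and $\Lscr\subseteq\lnd(B)$), and then reduce to a linear-algebra fact about finite-dimensional complements of $\Lscr$. The paper leaves that fact to the reader; your quotient-space argument supplies exactly the omitted verification.
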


\begin{proof} 
For each $i \in \{1,2\}$,
we have $\dim_\bk L(\ggoth_i) = \rank(\ggoth_i)$ by paragraph \ref{FA176F6C-52D4-43FA-A7AF-B41C94E23E8B}
and $L(\ggoth_i) \cap \Lscr = \{0\}$ by \eqref{DA19998A-8871-4661-A7A2-9B7A36F645D9}. 
So this Lemma is a consequence of the following linear algebra fact, whose proof we leave to the reader:
{\it Let $L_1$, $L_2$ and $\Lscr$ be subspaces of some vector space and assume that
$L_1 \cap \Lscr = \{0\} = L_2 \cap \Lscr$, $L_1$ and $L_2$ are finite dimensional, and $L_1 + \Lscr \subseteq L_2 + \Lscr$.
Then $\dim L_1 \le \dim L_2$, and the condition  $L_1 + \Lscr = L_2 + \Lscr$ is equivalent to $\dim L_1 = \dim L_2$.}
\end{proof}

\begin{lemma} \label {36F36D71-D42B-4E92-8612-63FD8842DBA9}
Let $\Lscr \in \LieLN_\bk(B)$ and $\ggoth \in \Grad(\Lscr)$, and consider the element $\Mscr = L(\ggoth) + \Lscr$ of $\Lgoth(B)$.
\begin{enumerate}[\rm(a)]

\item $\Mscr \in \LieWLF_\bk(B)_{\text{\rm sol}}$

\item $\Mscr \cap \lnd(B) = \Lscr$

\item If $\Mscr$ is a maximal element of $\Lgoth(B)$ then $\Lscr$ is a maximal element of $\LieLN_\bk(B)$.

\item If $\Lscr$ is a maximal element of $\LieLN_\bk(B)$ and $\rank(\ggoth) = \grank(\Lscr)$ then $\Mscr$ is a maximal element of $\Lgoth(B)$.

\end{enumerate}
\end{lemma}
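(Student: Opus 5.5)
We treat the four parts in order; (a) and (b) are direct consequences of earlier results, and (c) and (d) are maximality arguments based on Proposition \ref{8E965CF0-961C-4AC9-859F-398B4CEA6E5E} and Lemma \ref{F2922F53-50B0-4424-A59A-C3A8D4BE38A3}.

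For (a), note that $\Lscr \in \LieLN_\bk(B) \subseteq \LieWLF_\bk(B)_{\text{\rm sol}}$ by \eqref{1354F7B8-F676-4D8A-9F67-8316E680B009}, and that $\ggoth \in \Grad(\Lscr)$. Remark \ref{3E8A3225-A052-47E0-99B6-C8935E9A664C}(b), applied with $\bbL = \LieWLF_\bk(B)_{\text{\rm sol}}$, then gives $\Mscr \in \bbL$ directly.

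For (b), the inclusion $\Lscr \subseteq \Mscr \cap \lnd(B)$ is clear. Conversely, let $D = E + F \in \lnd(B)$ with $E \in L(\ggoth)$ and $F \in \Lscr$. Write $E = D^\phi_\ggoth$; it is $\ggoth$-homogeneous of degree $0$. Choose an order on $\Integ^d$ and look at homogeneous components. Since $\Lscr$ is $\ggoth$-homogeneous, the degree-$0$ component of $D$ is $E + F_0$ with $F_0 \in \Lscr$. I would prove that this degree-$0$ component is locally nilpotent as follows. Let $\hgoth$ be the $\Integ$-grading obtained by composing $\ggoth$ with a suitable homomorphism $\Integ^d \to \Integ$. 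Taking the highest component of $D$ and then the lowest one gives locally nilpotent derivations, so iterating over the supports isolates the degree-$0$ part. A cleaner route is available, however: $\bk D + \Mscr$ is contained in $\Mscr$, which lies in $\LieWLF_\bk(B)_{\text{\rm sol}}$ by (a). Moreover $[\Mscr,\Mscr] \subseteq \Lscr$, because $[L(\ggoth),L(\ggoth)]=0$ and $[L(\ggoth),\Lscr] \subseteq \Lscr$. Hence $D \in \Norm(\Lscr)$, and by Lemma \ref{SkutinLemma2} the set $\{D\} \cup \Lscr$ is locally nilpotent. Since $F \in \Lscr$, the set $\{D - F\}$ together with $\Lscr$ remains within a locally nilpotent Lie algebra (use Remark \ref{03442952-E213-4085-B07A-FABC7408E85A}). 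Thus $E = D - F \in \lnd(B) \cap L(\ggoth) = \{0\}$ by \eqref{DA19998A-8871-4661-A7A2-9B7A36F645D9}, and so $D = F \in \Lscr$.

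For (c), suppose $\Lscr \subset \Lscr'$ with $\Lscr'$ maximal in $\LieLN_\bk(B)$; such an $\Lscr'$ exists by Proposition \ref{6D6E22A4-236A-4B28-96D9-86157812D34A}(d). We cannot directly claim $\ggoth \in \Grad(\Lscr')$. Instead, put $\Nscr = \Mscr + \Lscr'$, which we want to show contradicts maximality. Here is the plan. The normalizer of $\Lscr'$ contains $L(\ggoth)$ if $\Lscr'$ is $\ggoth$-homogeneous, and $\Lscr'$ can be chosen homogeneous. To arrange this, take $\Lscr'$ maximal in $\LieLN_\bk(B,\ggoth)$ containing $\Lscr$, using Zorn's Lemma and Lemma \ref{DEA84E5E-79BC-4D4B-BF2D-C5735262EA6D}; by Proposition \ref{C70CF166-08AB-4FA3-971E-432644916C40}(a) it is maximal in $\LieLN_\bk(B)$. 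Then $\ggoth \in \Grad(\Lscr')$, so $L(\ggoth) + \Lscr' \in \Lgoth(B)$ and it contains $\Mscr$. Maximality of $\Mscr$ forces equality, and intersecting with $\lnd(B)$ via (b) gives $\Lscr = \Lscr'$. The expected difficulty is ensuring that a strict extension exists in the homogeneous setting. This is exactly what Proposition \ref{C70CF166-08AB-4FA3-971E-432644916C40}(a) handles: if $\Lscr$ is not maximal in $\LieLN_\bk(B)$, then it is not maximal in $\LieLN_\bk(B,\ggoth)$ either, since $\Lscr \in \LieLN_\bk(B,\ggoth)$.

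For (d), suppose $\Mscr \subseteq \Mscr' = \Lscr' + L(\ggoth')$ with $\ggoth' \in \Grad(\Lscr')$. Intersecting with $\lnd(B)$ and using (b) gives $\Lscr \subseteq \Lscr'$, so $\Lscr = \Lscr'$ by maximality. Then $\ggoth' \in \Grad(\Lscr)$, so $\rank\ggoth' \le \grank(\Lscr) = \rank\ggoth$. Lemma \ref{F2922F53-50B0-4424-A59A-C3A8D4BE38A3} gives $\rank \ggoth \le \rank \ggoth'$, hence the ranks are equal, and the same lemma then yields $\Mscr = \Mscr'$.
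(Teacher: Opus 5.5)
Your proposal is correct and follows essentially the same route as the paper. Part (a) uses Remark~\ref{3E8A3225-A052-47E0-99B6-C8935E9A664C}(b). Part (b) combines $\Mscr\subseteq\Norm(\Lscr)$ with Lemma~\ref{SkutinLemma2} and Remark~\ref{03442952-E213-4085-B07A-FABC7408E85A}, which is exactly the content of the Lemma~\ref{CAA13785-2B53-4A53-80E3-3857241586CB}(a) that the paper cites. Part (c) combines Proposition~\ref{C70CF166-08AB-4FA3-971E-432644916C40}(a) with (b), and part (d) uses Lemma~\ref{F2922F53-50B0-4424-A59A-C3A8D4BE38A3}.

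The only differences are cosmetic. In (c) you pass through a Zorn-produced maximal element of $\LieLN_\bk(B,\ggoth)$, whereas the paper avoids Zorn: it shows directly that every $\ggoth$-homogeneous extension $\Lscr'$ gives $L(\ggoth)+\Lscr'=\Mscr$, hence $\Lscr'=\Lscr$. You should also delete the abandoned first attempts in (b) and (c), and note that your opening overview cites Proposition~\ref{8E965CF0-961C-4AC9-859F-398B4CEA6E5E} for (c) although your argument actually uses Proposition~\ref{C70CF166-08AB-4FA3-971E-432644916C40}(a).
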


\begin{proof}
We have $\Lscr \in \LieWLF_\bk(B)_{\text{\rm sol}}$ by \eqref{1354F7B8-F676-4D8A-9F67-8316E680B009},
so Remark \ref{3E8A3225-A052-47E0-99B6-C8935E9A664C}(b) gives $\Mscr \in \LieWLF_\bk(B)_{\text{\rm sol}}$, proving (a).
To prove (b), consider $\delta \in \Mscr \cap \lnd(B)$.
We have $L(\ggoth) \subseteq \Norm(\Lscr)$ by \eqref{391D2623-B472-4F59-9487-EE08025E5C46},
so $\Mscr \subseteq \Norm(\Lscr)$ and hence $\delta \in \Norm(\Lscr) \cap \lnd(B)$;
this together with $\Lscr \in \LieLN_\bk(B)$ implies that $\bk \delta + \Lscr \in \LieLN_\bk(B)$ by Lemma \ref{CAA13785-2B53-4A53-80E3-3857241586CB}(a);
consequently, $\bk \delta + \Lscr \subseteq \lnd(B)$.
Note that $\delta = E + D$ for some $E \in L(\ggoth)$ and $D \in \Lscr$. Then $E = \delta - D \in \bk \delta + \Lscr \subseteq \lnd(B)$,
i.e., $E \in L(\ggoth) \cap \lnd(B) = \{0\}$ by \eqref{DA19998A-8871-4661-A7A2-9B7A36F645D9}.
So $\delta = D \in \Lscr$, proving that $\Mscr \cap \lnd(B) \subseteq \Lscr$.
Since $\Lscr \subseteq \Mscr \cap \lnd(B)$ is clear, (b) is proved.

(c) Suppose that $\Mscr$ is maximal in $\Lgoth(B)$.
Note that $\Lscr \in \LieLN_\bk(B,\ggoth)$ and consider any $\Lscr' \in \LieLN_\bk(B,\ggoth)$ such that $\Lscr \subseteq \Lscr'$.
Then $\Mscr' := L(\ggoth) + \Lscr'$ belongs to  $\Lgoth(B)$ and $\Mscr \subseteq \Mscr'$,
so maximality of $\Mscr$ implies that $\Mscr = \Mscr'$.
By part (b), $\Lscr = \Mscr \cap \lnd(B) = \Mscr' \cap \lnd(B) = \Lscr'$.
This shows that $\Lscr$ is maximal in $\LieLN_\bk(B,\ggoth)$;
by Proposition \ref{C70CF166-08AB-4FA3-971E-432644916C40}(a), $\Lscr$ is maximal in $\LieLN_\bk(B)$.

(d) Suppose that $\Lscr$ is maximal in $\LieLN_\bk(B)$ and $\rank(\ggoth) = \grank(\Lscr)$.
Consider $\Mscr' \in \Lgoth(B)$ such that $\Mscr \subseteq \Mscr'$.
We have $\Mscr' = L(\ggoth') + \Lscr'$ for some $\Lscr' \in \LieLN_\bk(B)$ and $\ggoth' \in \Grad(\Lscr')$.
Part (b) shows that $\Lscr = \Mscr \cap \lnd(B) \subseteq \Mscr' \cap \lnd(B) = \Lscr'$,
so maximality of $\Lscr$ implies that $\Lscr = \Lscr'$ and hence $\Mscr' = L(\ggoth') + \Lscr$ where  $\ggoth' \in \Grad(\Lscr)$.
Since $\rank(\ggoth') \le \grank(\Lscr) = \rank(\ggoth)$, Lemma \ref{F2922F53-50B0-4424-A59A-C3A8D4BE38A3} gives $\Mscr = \Mscr'$,
showing that $\Mscr$ is maximal in $\Lgoth(B)$.
\end{proof}

If $\bbL$ is any collection of Lie subalgebras of $\Der_\bk(B)$,
or more generally any collection of sets, we write $\max\bbL$ for the set of maximal elements of the poset $(\bbL,\subseteq)$.

\begin{proposition} \label {CD178309-4EC6-4386-9C1E-34F625BEBC29}
If the condition\footnote{We will see in Corollary \ref{CC2270E1-029B-4F26-89F8-81B03963C6CD} that if $\bk$ is algebraically closed then
the assumption $(*)$ of the Proposition is satisfied.}
\begin{equation} \tag{$*$}
\text{each maximal element of $\LieWLF_\bk(B)_{\text{\rm sol}}$ belongs to $\Lgoth(B)$}
\end{equation}
is satisfied, then the following hold.
\begin{enumerate}[\rm(a)]

\item The map
$$
\rho_B : \max \LieWLF_\bk(B)_{\text{\rm sol}} \longrightarrow \max \LieLN_\bk(B) , \quad   \Mscr \mapsto  \Mscr \cap \lnd(B) ,
$$
is a well-defined bijection. The inverse of $\rho_B$ is described in part {\rm (b)}.

\item Let $\Lscr \in \max \LieLN_\bk(B)$. There exists $\ggoth \in \Grad(\Lscr)$ such that $\rank(\ggoth) = \grank(\Lscr)$,
and for any such $\ggoth$ we have $L(\ggoth) + \Lscr \in \max \LieWLF_\bk(B)_{\text{\rm sol}}$ and $\rho_B(L(\ggoth) + \Lscr) = \Lscr$.

\item For every $\Mscr \in \max \LieWLF_\bk(B)_{\text{\rm sol}}$ and $\Lscr \in \max \LieLN_\bk(B)$,
the condition $\Mscr \supseteq \Lscr$ is equivalent to $\rho_B(\Mscr) = \Lscr$.

\item Two elements of $\max \LieWLF_\bk(B)_{\text{\rm sol}}$ are conjugate\footnote{We say that
Lie subalgebras $L_1$, $L_2$ of $\Der_\bk(B)$ are \textit{conjugate} if there exists an automorphism $\theta \in \Aut_\bk(B)$
such that $\theta L_1 \theta^{-1} = L_2$, where we define $\theta L_1 \theta^{-1} = \setspec{\theta D \theta^{-1}}{ D \in L_1}$.}
if and only if their images by $\rho_B$ are conjugate.

\end{enumerate}
\end{proposition}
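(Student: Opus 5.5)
The plan is to prove (b) first, then deduce well-definedness of $\rho_B$, (c), bijectivity in (a), and finally (d).

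\emph{Proof of (b).} Let $\Lscr \in \max\LieLN_\bk(B)$. The set $\Grad(\Lscr)$ is nonempty, and ranks are bounded by $\rtrdeg_\bk(B)$, so some $\ggoth \in \Grad(\Lscr)$ attains $\rank(\ggoth)=\grank(\Lscr)$. For such $\ggoth$, Lemma \ref{36F36D71-D42B-4E92-8612-63FD8842DBA9}(d) shows that $\Mscr:=L(\ggoth)+\Lscr$ is maximal in $\Lgoth(B)$, part (a) of the same lemma gives $\Mscr\in\LieWLF_\bk(B)_{\text{\rm sol}}$, and part (b) gives $\Mscr\cap\lnd(B)=\Lscr$.

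To see that $\Mscr$ is maximal in $\LieWLF_\bk(B)_{\text{\rm sol}}$, I would argue as follows. By Zorn's Lemma and Lemma \ref{DEA84E5E-79BC-4D4B-BF2D-C5735262EA6D}(b),(c), $\Mscr$ is contained in some maximal $\Mscr'$ of $\LieWLF_\bk(B)_{\text{\rm sol}}$. By $(*)$, $\Mscr'\in\Lgoth(B)$, so maximality of $\Mscr$ in $\Lgoth(B)$ forces $\Mscr=\Mscr'$.

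\emph{Well-definedness in (a).} Let $\Mscr\in\max\LieWLF_\bk(B)_{\text{\rm sol}}$. By $(*)$, $\Mscr=L(\ggoth)+\Lscr$ with $\Lscr\in\LieLN_\bk(B)$ and $\ggoth\in\Grad(\Lscr)$. Lemma \ref{36F36D71-D42B-4E92-8612-63FD8842DBA9}(b) gives $\Mscr\cap\lnd(B)=\Lscr$. Since $\Mscr$ is maximal in the larger poset $\LieWLF_\bk(B)_{\text{\rm sol}}$, it is maximal in $\Lgoth(B)$ (as $\Lgoth(B)\subseteq\LieWLF_\bk(B)_{\text{\rm sol}}$ by part (a) of that lemma). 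Part (c) then says $\Lscr$ is maximal in $\LieLN_\bk(B)$, so $\rho_B$ is well defined.

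\emph{Proof of (c).} The direction $\rho_B(\Mscr)=\Lscr\Rightarrow\Mscr\supseteq\Lscr$ is trivial. Conversely, if $\Mscr\supseteq\Lscr$, then $\Lscr\subseteq\Mscr\cap\lnd(B)=\rho_B(\Mscr)\in\max\LieLN_\bk(B)$, and maximality of $\Lscr$ gives equality.

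\emph{Bijectivity in (a).} Surjectivity is (b). Injectivity is the main obstacle. Suppose $\rho_B(\Mscr_1)=\rho_B(\Mscr_2)=\Lscr$. Write $\Mscr_1=L(\ggoth_1)+\Lscr$ and $\Mscr_2=L(\ggoth_2)+\Lscr$ with $\ggoth_i\in\Grad(\Lscr)$, using $(*)$ and the computation above. The two algebras must be compared through Proposition \ref{8E965CF0-961C-4AC9-859F-398B4CEA6E5E}(b). Set $\Mscr_0=\Lscr+\sum_{\ggoth\in\Grad(\Lscr)}L(\ggoth)$; that proposition gives $\Mscr_0\in\LieWLF_\bk(B)_{\text{\rm sol}}$, and $\Mscr_0\supseteq\Mscr_i$. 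Maximality of each $\Mscr_i$ then forces $\Mscr_1=\Mscr_0=\Mscr_2$.

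As a consistency check, this also shows $\Mscr_0=L(\ggoth)+\Lscr$ for any $\ggoth$ of maximal rank, in agreement with Lemma \ref{F2922F53-50B0-4424-A59A-C3A8D4BE38A3}. It follows that $\rho_B^{-1}(\Lscr)=L(\ggoth)+\Lscr$, as claimed in (b).

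\emph{Proof of (d).} For $\theta\in\Aut_\bk(B)$, conjugation by $\theta$ preserves $\lnd(B)$, $\lfd_\bk(B)$, Lie brackets, solvability and weak local finiteness. Hence it permutes $\max\LieWLF_\bk(B)_{\text{\rm sol}}$ and $\max\LieLN_\bk(B)$, and satisfies $\rho_B(\theta\Mscr\theta^{-1})=\theta\rho_B(\Mscr)\theta^{-1}$. So if two elements of $\max\LieWLF_\bk(B)_{\text{\rm sol}}$ are conjugate, their images are conjugate. Conversely, if $\theta\rho_B(\Mscr_1)\theta^{-1}=\rho_B(\Mscr_2)$, then $\rho_B(\theta\Mscr_1\theta^{-1})=\rho_B(\Mscr_2)$, and injectivity of $\rho_B$ gives $\theta\Mscr_1\theta^{-1}=\Mscr_2$.
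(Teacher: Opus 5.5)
Your proof is correct and follows the paper's argument essentially step by step. It uses $(*)$ together with Lemma \ref{36F36D71-D42B-4E92-8612-63FD8842DBA9}(a)--(d) and Zorn's Lemma to match the maximal elements of $\Lgoth(B)$ and $\LieWLF_\bk(B)_{\text{\rm sol}}$, and it proves injectivity, as the paper does, through the algebra $\Lscr+\sum_{\ggoth\in\Grad(\Lscr)}L(\ggoth)$ of Proposition \ref{8E965CF0-961C-4AC9-859F-398B4CEA6E5E}(b). The one cosmetic difference is in (d): you get $\rho_B(\theta\Mscr\theta^{-1})=\theta\rho_B(\Mscr)\theta^{-1}$ directly from the fact that conjugation preserves $\lnd(B)$, whereas the paper deduces it from part (c).
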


\begin{proof}
By Zorn's Lemma and parts (b) and (c) of Lemma \ref{DEA84E5E-79BC-4D4B-BF2D-C5735262EA6D},
each element of $\LieWLF_\bk(B)_{\text{\rm sol}}$ is included in a maximal element.
Also, Lemma \ref{36F36D71-D42B-4E92-8612-63FD8842DBA9}(a) gives $\Lgoth(B) \subseteq \LieWLF_\bk(B)_{\text{\rm sol}}$.
By these observations and $(*)$, it follows that 
\begin{equation} \label {9F245DFE-A83C-4A78-84B8-6E61BECB4C18}
\text{the posets $(\Lgoth(B),\subseteq)$ and $(\LieWLF_\bk(B)_{\text{\rm sol}},\subseteq)$ have the same maximal elements.}
\end{equation}

(a) If $\Mscr \in \max \LieWLF_\bk(B)_{\text{\rm sol}}$ ($ = \max\Lgoth(B)$ by \eqref{9F245DFE-A83C-4A78-84B8-6E61BECB4C18})
then parts (b) and (c) of Lemma \ref{36F36D71-D42B-4E92-8612-63FD8842DBA9} imply that $\Mscr \cap \lnd(B) \in \max \LieLN_\bk(B)$; so $\rho_B$ is a well-defined map.
To prove injectivity, let $\Lscr \in \max \LieLN_\bk(B)$ and suppose that 
$\Mscr_1, \Mscr_2 \in \max \LieWLF_\bk(B)_{\text{\rm sol}}$ are such that $\rho_B(\Mscr_1) = \Lscr = \rho_B(\Mscr_2)$.
For each $i=1,2$, we have $\Mscr_i \in \Lgoth(B)$ (by $(*)$) and $\Mscr_i \cap \lnd(B) = \Lscr$,
so Lemma \ref{36F36D71-D42B-4E92-8612-63FD8842DBA9}(b) implies that $\Mscr_i = L(\ggoth_i) + \Lscr$ for some $\ggoth_i \in \Grad(\Lscr)$.
Proposition \ref{8E965CF0-961C-4AC9-859F-398B4CEA6E5E} implies that $\Mscr := \Lscr + \sum_{\ggoth \in \Grad(\Lscr)} L(\ggoth)$ belongs to $\LieWLF_\bk(B)_{\text{\rm sol}}$;
since $\Mscr_i \subseteq \Mscr$ and $\Mscr_i$ is maximal, we obtain $\Mscr_i = \Mscr$ for each $i=1,2$, so $\rho_B$ is injective.
Surjectivity will follow from (b), see below.

(b) By definition of $\grank(\Lscr)$, there exists $\ggoth \in \Grad(\Lscr)$ such that $\rank(\ggoth) = \grank(\Lscr)$.
For any such $\ggoth$, parts (b) and (d) of Lemma \ref{36F36D71-D42B-4E92-8612-63FD8842DBA9}
imply that $\Mscr := L(\ggoth) + \Lscr$ is a maximal element of $\Lgoth(B)$ satisfying $\Mscr \cap \lnd(B) = \Lscr$.
We have $\Mscr \in \max \LieWLF_\bk(B)_{\text{\rm sol}}$ by \eqref{9F245DFE-A83C-4A78-84B8-6E61BECB4C18}, so this proves (b).
It follows that $\rho_B$ is surjective, which completes the proof of (a).

(c) Note that $\Mscr \in \Lgoth(B)$ by $(*)$, so  $\Mscr \cap \lnd(B)$ belongs to $\LieLN_\bk(B)$ by Lemma \ref{36F36D71-D42B-4E92-8612-63FD8842DBA9}(b).
Since $\Lscr$ is maximal in $\LieLN_\bk(B)$, we have
$\Lscr \subseteq \Mscr \cap \lnd(B)$ $\Leftrightarrow$ $\Lscr = \Mscr \cap \lnd(B)$.
Since $\Lscr \subseteq \lnd(B)$, we have 
$\Lscr \subseteq \Mscr$ $\Leftrightarrow$ $\Lscr \subseteq \Mscr \cap \lnd(B)$. This proves (c).

(d) Let $\Mscr \in \max \LieWLF_\bk(B)_{\text{\rm sol}}$, $\Lscr = \rho_B(\Mscr) \in \max\LieLN_\bk(B)$ and $\theta \in \Aut_\bk(B)$.
Note that $\theta \Mscr \theta^{-1}  \in \max \LieWLF_\bk(B)_{\text{\rm sol}}$ and $\theta \Lscr \theta^{-1}  \in \max\LieLN_\bk(B)$.
Since $\Lscr \subseteq \Mscr$, we have $\theta \Lscr \theta^{-1} \subseteq \theta \Mscr \theta^{-1}$,
so part (c) implies that $\rho_B( \theta \Mscr \theta^{-1} ) = \theta \Lscr \theta^{-1}$.  Thus,
\begin{equation}  \label {4F139596-F93E-4428-96AB-03A48C79EDAD}
\rho_B( \theta \Mscr \theta^{-1} ) = \theta \, \rho_B(\Mscr) \, \theta^{-1}
\quad \text{for all $\Mscr \in \max \LieWLF_\bk(B)_{\text{\rm sol}}$ and $\theta \in \Aut_\bk(B)$.}
\end{equation}
It follows that if two elements of $\max \LieWLF_\bk(B)_{\text{\rm sol}}$ are conjugate then so are their images by $\rho_B$.
Since $\rho_B$ is injective, the converse is also a consequence of \eqref{4F139596-F93E-4428-96AB-03A48C79EDAD}.
\end{proof}

%%%%%%%%%%%%%%%%%%%%%%%%%%%%%%%%%%%%%%%%%%%%%%%%%%%%%%%%%%%%%%%%%%%%%%%%%%%%%%%%%%%%%%%%%%%%%%%%%%%%%%%%%%%%%%%%
%%%%%%%%%%%%%%%%%%%%%%%%%%%%%%%%%%%%%%%%%%%%%%%%%%%%%%%%%%%%%%%%%%%%%%%%%%%%%%%%%%%%%%%%%%%%%%%%%%%%%%%%%%%%%%%%
%%%%%%%%%%%%%%%%%%%%%%%%%%%%%%%%%%%%%%%%%%%%%%%%%%%%%%%%%%%%%%%%%%%%%%%%%%%%%%%%%%%%%%%%%%%%%%%%%%%%%%%%%%%%%%%%
%%%%%%%%%%%%%%%%%%%%%%%%%%%%%%%%%%%%%%%%%%%%%%%%%%%%%%%%%%%%%%%%%%%%%%%%%%%%%%%%%%%%%%%%%%%%%%%%%%%%%%%%%%%%%%%%
%%%%%%%%%%%%%%%%%%%%%%%%%%%%%%%%%%%%%%%%%%%%%%%%%%%%%%%%%%%%%%%%%%%%%%%%%%%%%%%%%%%%%%%%%%%%%%%%%%%%%%%%%%%%%%%%

\section{Borel subgroups of $\Aut(X)$}
\label{sectionUniqueBorel}

Throughout this section, $\bk$ is an algebraically closed field of characteristic zero,
$B$ is an affine $\bk$-domain and $X = \Spec(B)$ is the corresponding affine $\bk$-variety.
We view $\Aut(X)$ as an ind-group.
We refer to  \cite{FurterKraft} and \cite[Chapter IV]{kumar2012kac} for background on ind-varieties and ind-groups,
and we assume that the reader is familiar with the basic definitions in this area.

\begin{noname}  \label {E6194646-DD25-4CCB-B616-129EAB72740F}
By \cite[Theorem 5.1.1]{FurterKraft}, $\Aut(X)$ is an affine ind-group.
As such, it has a topology (the ind-topology),
and the notion of an ``algebraic subset of $\Aut(X)$'' is defined.\footnote{Given an
ind-variety $V$ defined by a filtration $V_0 \subseteq V_1 \subseteq V_2 \subseteq \cdots$,
a subset of $V$ is {\it algebraic\/} if it is locally closed and contained in some $V_i$.}
Every algebraic subset of $\Aut(X)$ has a natural structure of algebraic variety.
By an {\it algebraic subgroup\/} of $\Aut(X)$, one means a subgroup $G$ of $\Aut(X)$ that is also a closed algebraic subset of $\Aut(X)$.
Such a $G$ is in particular an affine algebraic variety,
and it is easy to see that the group operations of $G$ are compatible with this variety structure.
In other words, each algebraic subgroup $G$ of $\Aut(X)$ is naturally endowed with the structure of an algebraic group,
and (by \cite[Theorem 5.1.1(1)]{FurterKraft})
the map $G \times X \to X$, $(g,x) \mapsto g(x)$, is an algebraic action of this algebraic group on the algebraic variety $X$.
\end{noname}

\begin{noname}
Let $G$ be a closed subgroup of $\Aut(X)$, i.e., a subgroup that is closed in the ind-topology of $\Aut(X)$.
If $V_0 \subseteq V_1 \subseteq V_2 \subseteq \cdots$ is an admissible filtration of $\Aut(X)$,
then $G \cap V_0 \subseteq G \cap V_1 \subseteq G \cap V_2 \subseteq \cdots$ defines a structure of an affine ind-group on $G$,
and this structure is independent of the choice of the admissible filtration $(V_n)_{n\ge0}$ of $\Aut(X)$.
We write $\clo{G}$ to denote $G$ equipped with this structure of affine ind-group.
\end{noname}

\begin{noname}
A subgroup $G$ of $\Aut(X)$ is said to be {\it nested\/} if it is the union of an increasing sequence
$G_0 \subseteq G_1 \subseteq G_2 \subseteq \cdots$ of algebraic subgroups of $\Aut(X)$.
If this is the case then the filtration $(G_n)_{n\ge0}$ defines a structure of affine ind-group on $G$,
and this structure is independent of the choice of the sequence $G_0 \subseteq G_1 \subseteq G_2 \subseteq \cdots$ of algebraic subgroups of $\Aut(X)$ whose union is $G$.\footnote{The independence claim follows from the fact that, over an algebraically closed field of characteristic zero,
every algebraic group contains a dense finitely generated subgroup.}
We write $\nes{G}$ to denote $G$ equipped with this structure of affine ind-group.
\end{noname}

\begin{noname}  \label {7BACADD4-796D-4491-8D0A-93D9F0132FFC}
If $G$ is a nested subgroup of $\Aut(X)$ then the following hold.
\begin{enumerate}[\rm(a)]

\item $\nes G$ is connected if and only if $G$ is the union of a sequence 
$G_0 \subseteq G_1 \subseteq G_2 \subseteq \cdots$ of connected algebraic subgroups of $\Aut(X)$.

\item If $\nes G$ is connected then $G$ is closed in $\Aut(X)$ and $\nes G = \clo G$.

\item If $\nes G$ is connected then $G$ is connected in $\Aut(X)$, but the converse is in general not true.

\end{enumerate}
In these statements, ``closed in $\Aut(X)$'' and ``connected in $\Aut(X)$'' refer to the ind-topology of $\Aut(X)$.
Part (a) follows from \cite[Proposition 2.2.1]{FurterKraft}.
Part (b) is a consequence of \cite[Proposition 3.13]{perepechko2025} (the fact that $\nes G = \clo G$ is established in the proof but not asserted in the statement);
alternatively, \cite[Proposition 3.13]{perepechko2025} states that $G$ is closed and \cite[Corollary 6.11]{solvableAutGroups} gives $\nes G = \clo G$.
In (c), the implication follows from (b) and the following example shows that the converse is false.
\end{noname}

\begin{example}
Choose an affine variety $X$ such that the ind-group $\Aut(X)$ is the algebraic group $\bbG_{\text{\rm m}}$.  Identify $\Aut(X)$ with $\bk^*$, let $G_n = \setspec{ t \in \bk^* }{ t^{ n! } = 1 }$ ($n \in \Nat$), and let $G$ be the union of $G_0 \subseteq G_1 \subseteq G_2 \subseteq \cdots$.
Then $G$ is a nested subgroup of $\Aut(X)$ and is connected in the ind-topology of $\Aut(X)$, but $\nes G$ has the discrete topology and hence is not connected.
Note that $G$ is not an element of the set $\NC(X)$ defined in \ref{44526848-3818-4BFF-94AD-FBDDA66B18EA}, even though it is nested and connected in $\Aut(X)$.
\end{example}

\begin{noname} \label {44526848-3818-4BFF-94AD-FBDDA66B18EA}
Let $\NC(X)$ denote the set of all subgroups $G$ of $\Aut(X)$ such that $G$ is nested and $\nes G$ is connected.
Each $G \in \NC(X)$ has a natural structure of affine ind-group, namely,
the structure $\nes G = \clo G$ (see \ref{7BACADD4-796D-4491-8D0A-93D9F0132FFC}(b));
we always view $G$ as being equipped with that structure and we will rarely use the notations $\nes G$ or $\clo G$.
If $G \in \NC(X)$ then the inclusion map $G \hookrightarrow \Aut(X)$ is a morphism of affine ind-groups;
if $G,H \in \NC(X)$ and $G\subseteq H$ then the inclusion map $G \hookrightarrow H$ is a morphism of affine ind-groups.
\end{noname}

\begin{noname} \label {2CABEFE5-DBF7-45CE-B5BB-A5569DBFACAF}
Every affine ind-group $G$ has a Lie algebra $\Lie(G)$, and every morphism $\phi : G \to H$ of affine ind-groups
induces a homomorphism of Lie algebras $\Lie(\phi) : \Lie(G) \to \Lie(H)$;
definitions can be found in \cite{FurterKraft} and \cite{kumar2012kac}.
In view of \ref{44526848-3818-4BFF-94AD-FBDDA66B18EA},
it follows that $\Lie(G)$ is defined for each $G \in \NC(X) \cup \big\{ \Aut(X) \big\}$.
By \cite[Theorem 0.3.2]{FurterKraft}, there is a canonical embedding of Lie algebras $\Lie(\Aut X) \hookrightarrow \Der_\bk(B)$. 
It follows that there is a natural embedding $\Lie(G) \hookrightarrow \Der_\bk(B)$ for each $G \in \NC(X) \cup \big\{ \Aut(X) \big\}$. 
We always identify $\Lie(G)$ with its image in $\Der_\bk(B)$.
Some properties of $\Lie(G)$ are summarized in the following Lemma. 
\end{noname}

\begin{lemma} \label {29B1EE01-E26F-43CA-ADB2-5A0E15668FCE}
For each $G \in \NC(X) \cup \big\{ \Aut(X) \big\}$, a Lie subalgebra $\Lie(G)$ of $\Der_\bk(B)$ is defined.
The assignment $G \mapsto \Lie(G)$ has the following properties.
\begin{enumerate}[\rm(a)]
\setlength{\itemsep}{1mm}

\item If $G,H \in \NC(X) \cup \big\{ \Aut(X) \big\}$ and $G \subseteq H$ then $\Lie(G) \subseteq \Lie(H)$.

\item \label {1E3B0B16-D6E9-4E74-96A9-A42E3459F98F}
If $G \in \NC(X)$ and $G_0 \subseteq G_1 \subseteq G_2 \subseteq \cdots$ are connected algebraic subgroups
of $\Aut(X)$ such that $G = \bigcup_{i \in \Nat} G_i$, then $\Lie(G) = \bigcup_{i \in \Nat} \Lie(G_i)$.

\item \label {4805DA99-0AEF-43B5-B27F-53CC2E949B97}
Let $U \in \NC(X)$, let $T$ be a connected algebraic subgroup of $\Aut(X)$ that normalizes $U$ 
(i.e., $t U t^{-1} = U$ for all $t \in T$), and let $G$ be the subgroup of $\Aut(X)$ generated by $T \cup U$.
Then $G \in \NC(X)$ and $\Lie(G) = \Lie(T) + \Lie(U)$.

\item \label {15EE7F9A-4838-434C-A2D0-704C44E7CF31}
Each locally finite $\bk$-derivation of $B$ is an element of $\Lie(\Aut X)$.

\end{enumerate}
\end{lemma}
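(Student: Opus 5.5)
This lemma collects standard facts about Lie algebras of ind-groups, so the plan is to reduce each part to results of \cite{FurterKraft} (and, where needed, \cite{perepechko2025}, \cite{solvableAutGroups}) via the embedding $\Lie(\Aut X)\hookrightarrow\Der_\bk(B)$ of \ref{2CABEFE5-DBF7-45CE-B5BB-A5569DBFACAF}.

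For (a), we use that $G\hookrightarrow H$ is a morphism of affine ind-groups (\ref{44526848-3818-4BFF-94AD-FBDDA66B18EA}, or trivially when $H=\Aut(X)$). It therefore induces $\Lie(G)\to\Lie(H)$. Composed with $\Lie(H)\hookrightarrow\Lie(\Aut X)\hookrightarrow\Der_\bk(B)$, this is the embedding of $\Lie(G)$, by functoriality applied to $G\hookrightarrow H\hookrightarrow\Aut(X)$. Hence the images are nested.

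For (b), since $\nes G$ is defined by the filtration $(G_i)$, the tangent space at the identity of an ind-variety filtered by $(G_i)$ is $\bigcup_i T_e G_i$. Thus $\Lie(G)=\bigcup\Lie(G_i)$ inside $\Lie(\Aut X)$, using (a) for each $G_i\subseteq G$.

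For (c), the key point is that $G=TU$, since $T$ normalizes $U$. I would write $U=\bigcup U_i$ with connected algebraic $U_i$. The main obstacle is that the $U_i$ need not be $T$-stable, so $TU_i$ need not be a group. I would try replacing $U_i$ by the subgroup $U_i'$ generated by $\bigcup_{t\in T}tU_it^{-1}$. It is the image of the morphism $T\times U_i\to U$ and generated by irreducible subsets containing $e$, and it lies in $U$, which is nested. So $U_i'$ is a connected $T$-stable algebraic subgroup, by the generation lemma for ind-groups \cite[Proposition 2.2.1]{FurterKraft}, because it sits inside some $U_j$ after closure. Then $G_i:=TU_i'$ is a connected algebraic subgroup, being the image of $T\ltimes U_i'$. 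The $G_i$ increase with union $G$, so $G\in\NC(X)$ by \ref{7BACADD4-796D-4491-8D0A-93D9F0132FFC}(a). For each $i$, $\Lie(TU_i')=\Lie(T)+\Lie(U_i')$, by surjectivity of $T\ltimes U_i'\to TU_i'$ in characteristic zero (the differential of a surjective homomorphism of algebraic groups is surjective). Taking unions via (b) then gives $\Lie(G)=\Lie(T)+\Lie(U)$.

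For (d), a locally finite $D$ has a Jordan decomposition $D=D_s+D_n$ with commuting semisimple and locally nilpotent parts. $D_n$ is tangent to the $\G_a$-action $\exp(tD_n)$. $D_s$ corresponds to a grading by a finitely generated group, hence lies in the Lie algebra of a diagonalizable algebraic subgroup (the closure of $\exp$ of $D_s$). Since $D_s$ and $D_n$ commute, both lie in the Lie algebra of one commutative algebraic subgroup. Alternatively, one can cite directly \cite{FurterKraft}, where locally finite derivations are shown to be exactly the derivations coming from algebraic subgroups.
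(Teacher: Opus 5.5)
Your plan matches the paper's. The paper treats (a) and (b) as folklore and gets (d) by citing \cite[Corollary 7.6.4]{FurterKraft} or \cite[Theorem 1]{CD03}. It proves (c) exactly as you propose: replace $U_i$ by a $T$-stable connected algebraic subgroup $U_i'$ containing $S_i=\{tut^{-1} : t\in T,\ u\in U_i\}$, set $G_i=TU_i'$, and combine $\Lie(TU_i')=\Lie(T)+\Lie(U_i')$ with (b).

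There is one gap, at the crux of (c): that $S_i$ lies in a single $U_\ell$. You assert this without a valid reason. Nestedness of $U$ does not give it, and neither does the connectedness statement \cite[Proposition 2.2.1]{FurterKraft}. What comes for free is only this: $S_i=\phi(T\times U_i)$, with $\phi(g,h)=ghg^{-1}$ an ind-morphism, lies in some member $V_j$ of an admissible filtration of $\Aut(X)$, and $S_i\subseteq U=\bigcup_n U_n$.

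To conclude $S_i\subseteq U_\ell$, you need the filtrations $(U_n)_n$ and $(U\cap V_n)_n$ to be equivalent, i.e.\ $\nes U=\clo U$ as in \ref{7BACADD4-796D-4491-8D0A-93D9F0132FFC}(b). This is a nontrivial input from \cite[Proposition 3.13]{perepechko2025} or \cite[Corollary 6.11]{solvableAutGroups}, and the paper uses exactly it to get $U\cap V_j\subseteq U_\ell$. A Baire-type argument (an irreducible variety is not a countable union of proper closed subsets) would only work for uncountable $\bk$. Put differently, your ``morphism $T\times U_i\to U$'' factors through some $U_\ell$ only once $U$ carries the structure $\clo U=\nes U$, and you should say so.

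Similarly, before calling $G_i=TU_i'$ closed because it is the image of $T\ltimes U_i'$, note that $G_i=\psi(T\times U_i')$, with $\psi(g,h)=gh$, lies in some $V_j$. So its closure $\overline{G_i}$ is an algebraic group, and the image of $T\ltimes U_i'\to\overline{G_i}$ is closed there.

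In (d), your Jordan-decomposition route is a legitimate alternative to the citation, with two fixes. First, replace ``the closure of $\exp$ of $D_s$'', which is meaningless over a general $\bk$, by the eigenspace decomposition of $D_s$. This is a grading $\ggoth$ of $B$ by a finitely generated torsion-free subgroup of $(\bk,+)$, so $D_s\in L(\ggoth)=\Lie(T_\ggoth)$ by \ref{92E834E4-3895-42FF-A577-61C5AF52410C}(a). Second, no common commutative subgroup is needed: $\Lie(\Aut X)$ is a vector space and therefore contains $D_s+D_n$.
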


% \begin{proof}
% Part \ref{15EE7F9A-4838-434C-A2D0-704C44E7CF31} follows from either one of \cite[Corollary 7.6.4]{FurterKraft} or \cite[Theorem 1]{CD03}.
% The other assertions are folklore.
% \end{proof}

\begin{proof}
Part \ref{15EE7F9A-4838-434C-A2D0-704C44E7CF31} follows from either one of \cite[Corollary 7.6.4]{FurterKraft} or \cite[Theorem 1]{CD03}.
The other assertions are folklore, but we include the proof of (c) for the reader's convenience.

(c) Let $U_0 \subseteq U_1 \subseteq U_2 \subseteq \cdots$ be a sequence of connected algebraic subgroups of $\Aut(X)$ such that $U = \bigcup_{i \in \Nat} U_i$
and let $(V_i)_{i \in \Nat}$ be an admissible filtration of $\Aut(X)$.
Let $i \in \Nat$ and consider the subset $S_i = \setspec{ tut^{-1} }{ t \in T \text{ and } u \in U_i }$ of $U$.
Since $\phi : \Aut(X) \times \Aut(X) \to \Aut(X)$, $\phi(g,h) = ghg^{-1}$ is an ind-morphism,
there exists $j$ such that $S_i \subseteq U \cap V_j$;
also note that $S_i = \phi(T \times U_i)$ is connected, because $T \times U_i$ is connected.
Since $\nes U = \clo U$, $(U_n)_{n \in \Nat}$ and  $(U \cap V_n)_{n \in \Nat}$ are equivalent filtrations of $U$,
so $U \cap V_j \subseteq U_\ell$  for some $\ell \in \Nat$.
So we have $U_i \subseteq S_i \subseteq U_\ell$.
If $U_i'$ denotes the closure of the subgroup of $U_\ell$ generated by $S_i$, then $U_i'$ is connected because $S_i$ is connected and contains the identity element of $U_\ell$.
So $U_i'$ is a connected algebraic subgroup of $\Aut(X)$, $U_i \subseteq U_i' \subseteq U_\ell$, and $T$ normalizes $U_i'$.
This shows that there exists a sequence $U_0' \subseteq U_1' \subseteq U_2' \subseteq \cdots$ of connected algebraic subgroups of $\Aut(X)$
such that $U = \bigcup_{i \in \Nat} U_i'$ and such that $T$ normalizes each $U_i'$.

For each $n \in \Nat$, let $G_n$ be the subgroup of $\Aut(X)$ generated by $T \cup U_n'$.
Let $i \in \Nat$.
Since $T$ normalizes $U_i'$, we have $G_i = \psi(T \times U_i')$ where $\psi : \Aut(X) \times \Aut(X) \to \Aut(X)$, $\psi(g,h) = gh$.
As $\psi$ is an ind-morphism, it follows that $G_i \subseteq V_j$ for some $j$.
So the closure $\overline G_i$ of $G_i$ in $\Aut(X)$ is an algebraic subgroup of $\Aut(X)$.
Since $T$ and $U_i'$ are closed subgroups of the algebraic group $\overline G_i$ such that $T$ normalizes $U_i'$,
the subgroup that they generate (namely, $G_i$) is closed in $\overline G_i$,
because it is the image of the morphism of algebraic groups $T \ltimes U_i' \to \overline G_i$, $(t,u) \mapsto tu$.
Hence, $G_i$ is closed in $\Aut(X)$.
Since $T \cup U_i'$ is connected and contains the identity element, $G_i$ is connected.
So $G_0 \subseteq G_1 \subseteq G_2 \subseteq \cdots$ are connected algebraic subgroups of $\Aut(X)$.
Since $G = \bigcup_{i \in \Nat} G_i$, we have $G \in \NC(X)$ by \ref{7BACADD4-796D-4491-8D0A-93D9F0132FFC}(a).
Also, $\Lie(G) = \bigcup_{i \in \Nat} \Lie(G_i)$ and $\Lie(U) = \bigcup_{i \in \Nat} \Lie(U_i')$ by part \ref{1E3B0B16-D6E9-4E74-96A9-A42E3459F98F}.
Since the result that we are proving is known to be true in the context of algebraic groups, we have $\Lie(G_i) = \Lie(T) + \Lie(U_i')$ for each $i \in \Nat$,
and consequently $\Lie(G) = \Lie(T) + \Lie(U)$.
This proves (c).
\end{proof}

\begin{noname} \label {groupLieLNcorrespondence}
We recall the terminology and fix our notation for unipotent automorphisms and the exponential and logarithm maps.
For each $D \in \lnd(B)$, the map $\exp(D) : B \to B$ given by $b \mapsto \sum_{n = 0}^\infty \frac{1}{n!}D^n(b)$
is a $\bk$-automorphism of $B$.
An automorphism $\theta \in \Aut_\bk(B)$ is {\it unipotent\/} if the following equivalent conditions are satisfied:
\begin{enumerate}[\rm(i)]
\item the linear map $\theta - \Id_B : B \to B$ is locally nilpotent;
\item there exists $D \in \lnd(B)$ such that $\exp(D) = \theta$.
\end{enumerate}
Let $\Aut_{\text{\rm u}}(B)$ be the set of unipotent elements of $\Aut_\bk(B)$.
If $\theta \in \Aut_{\text{\rm u}}(B)$ then the map $\log(\theta) : B \to B$ given by
$b \mapsto \sum_{n=1}^\infty \frac{(-1)^{n+1}}{n} (\theta - \Id_B)^n(b)$ is an element of $\lnd(B)$
such that $\exp( \log(\theta) ) = \theta$.
(For this fact and the equivalence of (i) and (ii), see \cite[Proposition 2.1.3]{van2000polynomial}.)
One can also show that $D \mapsto \exp(D)$ is an injective map,
so the maps 
$\xymatrix@1{ \lnd(B) \ar @<.4ex>[r]^-{ \exp }  & \Aut_{\text{\rm u}}(B) \ar @<.4ex>[l]^-{ \log } }$
are bijective and inverse of each other.

Recall that $\Aut_\bk(B) \to \Aut(X)$, $\theta \mapsto \Spec(\theta)$, is an anti-isomorphism of groups.
An automorphism of $X$ is called {\it unipotent\/} if it is equal to $\Spec(\theta)$ for some $\theta \in \Aut_{\text{\rm u}}(B)$,
and we write $\Aut_{\text{\rm u}}(X)$ for the set of unipotent elements of $\Aut(X)$.\footnote{This is equivalent to the definition given at the beginning of the Introduction, but the present definition is the only one we use.}
A subgroup of $\Aut(X)$ is said to be {\it unipotent\/} if all its elements are unipotent.
We define $\EXP : \lnd(B) \to \Aut_{\text{\rm u}}(X)$ and $\LOG : \Aut_{\text{\rm u}}(X) \to \lnd(B)$ to be the  compositions
$\lnd(B) \xrightarrow{ \exp } \Aut_{\text{\rm u}}(B) \xrightarrow{\Spec} \Aut_{\text{\rm u}}(X)$
and $\Aut_{\text{\rm u}}(X) \xrightarrow{\Spec^{-1}} \Aut_{\text{\rm u}}(B) \xrightarrow{ \log } \lnd(B)$, respectively.
To summarize, we have:
\begin{equation}  \label {62E66CBC-68A6-4EFF-8124-66672AAB79CF}
\xymatrix{ \lnd(B) \ar @<.5ex>[r]^-{ \exp } \ar@/_2em/@<.5ex>[dr]^-{\EXP} & \Aut_{\text{\rm u}}(B) \ar @<.5ex>[l]^-{ \log } \ar[d]^-{\Spec} \\
&  \Aut_{\text{\rm u}}(X) \ar@/^2em/@<.5ex>[ul]^-{\LOG} 
}
\end{equation}
where all maps are bijective,
$\exp$ and $\log$ are inverse of each other,
$\EXP$ and $\LOG$ are inverse of each other,
$\EXP = \Spec \circ \exp$ and $\log = \LOG \circ \Spec$.
Given subsets $\Lscr \subseteq \lnd(B)$ and $U \subseteq\Aut_{\text{\rm u}}(X)$, we define
$\EXP(\Lscr) = \setspec{ \EXP(D) }{ D \in \Lscr }$ 
and $\LOG(U) = \setspec{ \LOG(u) }{ u \in U }$.
\end{noname}

\begin{lemma} \label {unipotentLieLNDiscussion}
Let $\NU(X)$ denote the set of all nested unipotent subgroups of $\Aut(X)$.
\begin{enumerate}[\rm(a)]

\item $\NU(X) \subseteq \NC(X)$

\item If $U \in \NU(X)$ then $\Lie(U) = \LOG(U)$.

\item  \label {4BFE0092-4242-48D7-A3D3-0C484D4BC2B5}
The maps \ \ $\xymatrix@1{ \LieLN_\bk(B) \ar @<.4ex>[r]^-{ \EXP }  & \NU(X) \ar @<.4ex>[l]^-{ \Lie } }$ \ \ are bijective and inverse of each other.

\end{enumerate}
\end{lemma}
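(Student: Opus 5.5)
The plan is to prove the three parts in order, reducing every infinite-dimensional statement to unipotent algebraic groups via the nested filtration.

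For (a), let $U \in \NU(X)$ and write $U = \bigcup_i G_i$ with $G_0 \subseteq G_1 \subseteq \cdots$ algebraic subgroups of $\Aut(X)$. Each $G_i$ consists of unipotent elements, so it is a unipotent algebraic group. Over a field of characteristic zero a unipotent algebraic group is connected, since $\exp : \Lie(G_i) \to G_i$ is an isomorphism of varieties. Then \ref{7BACADD4-796D-4491-8D0A-93D9F0132FFC}(a) gives that $\nes U$ is connected, so $U \in \NC(X)$.

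For (b), by Lemma \ref{29B1EE01-E26F-43CA-ADB2-5A0E15668FCE}\ref{1E3B0B16-D6E9-4E74-96A9-A42E3459F98F} we have $\Lie(U) = \bigcup_i \Lie(G_i)$, and clearly $\LOG(U) = \bigcup_i \LOG(G_i)$. So it suffices to show $\Lie(G) = \LOG(G)$ for a unipotent algebraic subgroup $G$ of $\Aut(X)$.
\begin{itemize}
\item Inclusion $\LOG(G) \subseteq \Lie(G)$: if $1 \neq g \in G$, the Zariski closure of $\langle g \rangle$ is a copy of $\G_a$. It equals $\{\EXP(tD) : t \in \bk\}$ with $D = \LOG(g)$, and its Lie algebra in $\Der_\bk(B)$ is $\bk D$ (differentiate the action $t \mapsto \exp(tD)$ at $t=0$). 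Hence $\LOG(g) \in \Lie(G)$.
\item Inclusion $\Lie(G) \subseteq \LOG(G)$: since $\exp$ is a bijection $\Lie(G)\to G$ for unipotent $G$, every $\xi \in \Lie(G)$ is the tangent vector of the one-parameter subgroup $t \mapsto \exp(t\xi) \subseteq G$. By the same differentiation, $\xi$ corresponds to the $D \in \lnd(B)$ with $\exp(t\xi)=\EXP(tD)$, so $\xi = \LOG(\exp \xi) \in \LOG(G)$.
\end{itemize}

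For (c), first consider $\Lie$ on $\NU(X)$. For $U \in \NU(X)$, $\Lie(U)$ is a Lie subalgebra of $\Der_\bk(B)$ which by (b) equals $\LOG(U) \subseteq \lnd(B)$. By \eqref{D6C764BE-C72A-4279-B9F3-CFD5F5E639ED} it therefore lies in $\LieLN_\bk(B)$, and $\EXP(\Lie U) = \EXP(\LOG U) = U$.

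The substantive step, which I expect to be the main obstacle, is showing that $\EXP(\Lscr) \in \NU(X)$ for $\Lscr \in \LieLN_\bk(B)$.
\begin{enumerate}
\item Choose a countable increasing family of finite subsets $F_0 \subseteq F_1 \subseteq \cdots$ of $\Lscr$ whose spans exhaust $\Lscr$. This is possible because $\Der_\bk(B)$ has countable dimension.
\item Fix generators $b_1,\dots,b_r$ of $B$ and a finite set $F \subseteq \Lscr$. Since $F$ is a locally nilpotent subset, a K\"onig's lemma argument shows that only finitely many words in $F$ act nontrivially on the $b_j$. Hence $V = $ span of all $w(b_j)$, over words $w$ in $F$, is a finite-dimensional subspace of $B$, contains the $b_j$, and is stable under $F$.
\item The restriction map $\LieAlg{F} \to \Leul(V)$ is injective, because a derivation is determined by its values on generators. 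Its image lies in the associative algebra generated by $F|_V$, which is nilpotent. So $\LieAlg{F}$ is a finite-dimensional nilpotent Lie algebra of nilpotent operators on $V$.
\item By Baker--Campbell--Hausdorff, $G_F := \EXP(\LieAlg{F})$ is a group. It is the image of a unipotent algebraic group and acts on $V$, hence its elements have bounded degree, so it lies in a single filtration piece of $\Aut(X)$. I would show $G_F$ is a closed algebraic subgroup via the algebraicity of $\exp$ on the nilpotent matrix algebra. This closedness check is where the real technical work lies.
\end{enumerate}
Then $\EXP(\Lscr) = \bigcup_n G_{F_n}$ is a nested unipotent subgroup. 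By (b), $\Lie(\EXP \Lscr) = \LOG(\EXP\Lscr) = \Lscr$, so $\EXP$ and $\Lie$ are mutually inverse bijections.
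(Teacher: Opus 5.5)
Your parts (a) and (b) follow the paper's argument. The paper reduces to the algebraic subgroups $U_i$ of a nested filtration, uses that unipotent groups are connected in characteristic zero together with \ref{7BACADD4-796D-4491-8D0A-93D9F0132FFC}(a), and combines $\Lie(U)=\bigcup_i \Lie(U_i)$ with $\Lie(U_i)=\LOG(U_i)$. The paper simply quotes that last identity as well known, whereas you sketch it.

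Part (c) is where your route genuinely differs. The paper never proves that $\EXP(\Lscr)$ is nested. It cites Proposition~3.5 of Perepechko for the fact that $\EXP:\LieLN_\bk(B)\to\NU(X)$ is a well-defined bijection. It then only observes that the inverse is $\LOG$, since $\EXP$ and $\LOG$ are mutually inverse on $\lnd(B)$, and that $\LOG$ coincides with $\Lie$ by (b).

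You instead reprove Perepechko's result:
\begin{enumerate}
\item exhaust $\Lscr$ by the subalgebras $\LieAlg{F_n}$;
\item use K\"onig's lemma on the locally nilpotent set $F_n$ to obtain a finite-dimensional stable $V$ containing generators, so that $\LieAlg{F_n}$ is a finite-dimensional nilpotent Lie algebra of nilpotent operators;
\item integrate via Baker--Campbell--Hausdorff.
\end{enumerate}
Your version is self-contained and shows explicitly where the filtration of $\EXP(\Lscr)$ comes from. The paper's version is two lines but relies entirely on the literature.

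The step you leave open, closedness of $G_F$, goes through by a standard argument. For each $b\in B$, the associative algebra generated by $\LieAlg{F}$ is nilpotent on some finite-dimensional stable subspace containing $b$ (by the same K\"onig argument, or Remark \ref{03442952-E213-4085-B07A-FABC7408E85A}). Hence $\exp(\xi)(b)$ is a polynomial in the coordinates of $\xi$. This does two things:
\begin{enumerate}
\item it justifies the BCH identity on all of $B$, not just on $V$;
\item it shows that the unipotent group $(\LieAlg{F},\mathrm{BCH})$ acts algebraically on $X$, so $\xi\mapsto\EXP(\xi)$ is an ind-morphism.
\end{enumerate}
Its image is then a constructible subgroup lying in a single filtration piece, and its closure is an algebraic subgroup of $\Aut(X)$. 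A constructible dense subgroup of an algebraic group is the whole group, so $G_F$ is closed.
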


\begin{proof}
We prove (a) and (b) together.
Let $U \in \NU(X)$ and consider a sequence
$U_0 \subseteq U_1 \subseteq U_2 \subseteq \cdots$ of algebraic subgroups of $\Aut(X)$ such that $U = \bigcup_{i \in \Nat} U_i$.
Since unipotent algebraic groups are connected, each $U_i$ is connected, so $U \in \NC(X)$ by \ref{7BACADD4-796D-4491-8D0A-93D9F0132FFC}(a).
Since the fact that $\Lie(U) = \LOG(U)$ is well known in the context of algebraic groups, we have $\Lie(U_i) = \LOG(U_i)$ for all $i \in \Nat$.
We have $\Lie(U) = \bigcup_{i \in \Nat} \Lie(U_i)$ by Lemma \ref{29B1EE01-E26F-43CA-ADB2-5A0E15668FCE}\ref{1E3B0B16-D6E9-4E74-96A9-A42E3459F98F},
and it is obvious that $\LOG(U) = \bigcup_{i \in \Nat} \LOG(U_i)$,
so $\Lie(U) = \LOG(U)$. This proves (a) and (b).

(c) Proposition 3.5 of \cite{perepechko2025} shows that  $\EXP : \LieLN_\bk(B) \to \NU(X)$ is a well-defined bijection.
The inverse of this map is $\LOG: \NU(X) \to \LieLN_\bk(B)$, because $\EXP$ and $\LOG$ in \eqref{62E66CBC-68A6-4EFF-8124-66672AAB79CF} are bijective and inverse of each other.
In view of (b), this is the desired conclusion.
\end{proof}

\medskip

Paragraphs \ref{FD621230-87A8-49EC-B5C9-5E39E7C97DCE}--\ref{92E834E4-3895-42FF-A577-61C5AF52410C} recall how $\Integ^n$-gradings are related to torus actions
and how to compute the Lie algebra of a torus.  This being very well known, we omit the proofs.

\begin{noname} \label {FD621230-87A8-49EC-B5C9-5E39E7C97DCE}
Let $\ggoth = (B_i)_{i \in G}$ be a grading of $B$ by an abelian group $G$.
Consider the submonoid $\bbM = \bbM(\ggoth) = \setspec{ i \in G }{ B_i \neq 0 }$ of $G$ and its multiplicative character group $\Hom( \bbM , \bk^* )$.
For each character $\chi : \bbM \to \bk^*$, we define the $\bk$-automorphism $\Theta_\ggoth^\chi : B \to B$ by declaring that, for each $i \in \bbM$,
$\Theta_\ggoth^\chi |_{B_i} : B_i \to B_i$ is the multiplication by $\chi(i)$.
We define:
$$
\bbT_{\ggoth} = \setspec{ \Theta_\ggoth^\chi }{ \chi \in \Hom( \bbM , \bk^* ) } \subseteq \Aut_\bk(B)
\quad \text{and} \quad
T_\ggoth = \setspec{ \Spec(\Theta) }{ \Theta \in \bbT_\ggoth }  \subseteq \Aut(X) .
$$
Note that $\Theta_\ggoth^{\chi_1 \cdot \chi_2} = \Theta_\ggoth^{\chi_1} \circ \Theta_\ggoth^{\chi_2}$ for all $\chi_1,\chi_2 \in \Hom(\bbM,\bk^*)$,
so $\chi \mapsto \Theta_\ggoth^\chi$ is a group homomorphism $\Hom( \bbM , \bk^* ) \to \Aut_\bk(B)$ whose image is $\bbT_\ggoth$.
So $\bbT_\ggoth$ is a subgroup of $\Aut_\bk(B)$ and hence $T_\ggoth$ is a subgroup of $\Aut(X)$.
Since $\Hom( \bbM , \bk^* )$ is an abelian group, so are $\bbT_\ggoth$ and $T_\ggoth$.
\end{noname}

\begin{definition}
Recall that each algebraic subgroup of $\Aut(X)$ is naturally endowed with the structure of an algebraic group;
by an {\it algebraic torus of $\Aut(X)$} we mean an algebraic subgroup $T$ of $\Aut(X)$ which, endowed with this structure,
is isomorphic as an algebraic group to a torus $\bbG_{\text{\rm m}}^{\, n}$ for some $n \in \Nat$.
\end{definition}

\begin{noname}  \label {92E834E4-3895-42FF-A577-61C5AF52410C}
% Note to myself: I proved this in Proposition 14.5 of my personal notes ``ReadingFurKraft.pdf''.
\begin{enumerate}[\rm(a)]

\item Let $n \in \Nat$ and let $\ggoth$ be a $\Integ^n$-grading of $B$.
Then $T_\ggoth$ is an algebraic torus of $\Aut(X)$ isomorphic to $\bbG_{\text{\rm m}}^{\, r}$ where $r = \rank(\ggoth)$; that is, $\dim T_\ggoth = \rank(\ggoth)$.
Moreover, we have the equality $\Lie\big( T_\ggoth \big) = L(\ggoth)$ of Lie subalgebras of $\Der_\bk(B)$,
where $L(\ggoth)$ is defined in \ref{FA176F6C-52D4-43FA-A7AF-B41C94E23E8B}.

\item Let $T$ be an algebraic torus of $\Aut(X)$ and let $d = \dim(T)$.
Then the natural action $T \times X \to X$,  $(t,x) \mapsto t(x)$, is an algebraic action of $\bbG_{\text{\rm m}}^{\, d}$ on $X$.
This action determines a $\Integ^d$-grading $\ggoth$ of $B$, and we have the equality $T = T_\ggoth$ of subgroups of $\Aut(X)$.
By (a), it follows that
$$
\setspec{ T_\ggoth }{ \text{$\ggoth$ is a $\Integ^n$-grading of $B$ for some $n \in \Nat$} }
$$
is equal to the set of all algebraic tori of $\Aut(X)$.

\end{enumerate}
\end{noname}

\begin{lemma} \label {1E03B95D-4E12-4AA5-8D53-357AA4219E2B}
Let $d \in \Nat$, let $\ggoth$ be a $\Integ^d$-grading of $B$, and consider the algebraic torus $T_\ggoth$ of $\Aut(X)$.
Let $\Lscr \in \LieLN_\bk(B)$ and $U = \EXP(\Lscr) \in \NU(X)$.
Then $\Lscr$ is $\ggoth$-homogeneous if and only if $U$ is normalized by $T_\ggoth$ (i.e., $t U t^{-1} = U$ for all $t \in T_\ggoth$).
\end{lemma}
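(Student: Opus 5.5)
We translate both conditions into statements about the action of $T_\ggoth$ on $\Der_\bk(B)$ by conjugation, and then use that $\EXP$ intertwines conjugation on derivations with conjugation in $\Aut(X)$. For $t = \Spec(\Theta)$ with $\Theta \in \bbT_\ggoth$ and $D \in \lnd(B)$, the identity $\Theta \exp(D) \Theta^{-1} = \exp(\Theta D \Theta^{-1})$ gives $\EXP(\Theta D \Theta^{-1}) = t^{-1}\EXP(D)\, t$, up to the order reversal coming from the anti-isomorphism $\Spec$. Because $\EXP : \lnd(B) \to \Aut_{\text{\rm u}}(X)$ is a bijection (\ref{groupLieLNcorrespondence}) and $U = \EXP(\Lscr)$, the condition ``$tUt^{-1} = U$ for all $t \in T_\ggoth$'' is therefore equivalent to ``$\Theta \Lscr \Theta^{-1} = \Lscr$ for all $\Theta \in \bbT_\ggoth$''. (Since $T_\ggoth$ is a group, it suffices to check the inclusion $\Theta\Lscr\Theta^{-1} \subseteq \Lscr$ for all $\Theta$.)

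It then remains to show that $\Lscr$ is $\ggoth$-homogeneous if and only if $\Theta\Lscr\Theta^{-1} = \Lscr$ for every $\Theta = \Theta_\ggoth^\chi$. The basic computation is this: if $E \in \Der_\bk(B,\ggoth)_e$ and $x \in B_i$, then $\Theta^\chi E (\Theta^\chi)^{-1}(x) = \chi(i)^{-1}\chi(i+e) E(x)$. When $i$ and $i+e$ lie in $\bbM$, this says $\Theta^\chi E(\Theta^\chi)^{-1} = \chi(e) E$, where we extend $\chi$ to the group generated by $\bbM$ (this is harmless since $\bk^*$ is divisible; if $E \neq 0$ then $e$ lies in that group). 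For $D = \sum_{e \in \Supp_\ggoth(D)} D_e$ we get $\Theta^\chi D(\Theta^\chi)^{-1} = \sum_e \chi(e) D_e$.

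The direction ($\Rightarrow$) is now immediate: if $\Lscr$ is homogeneous, each $D_e$ lies in $\Lscr$, and $\Lscr$ is a $\bk$-vector space, so $\sum_e \chi(e)D_e \in \Lscr$.

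For ($\Leftarrow$), fix $D \in \Lscr$ with support $\{e_1,\dots,e_r\}$. These are distinct elements of a torsion-free group, so the characters $\chi \mapsto \chi(e_k)$ of the torus $\Hom(\bbM,\bk^*)$ are pairwise distinct. By Dedekind--Artin independence of characters, there are $\chi_1,\dots,\chi_r$ for which the matrix $(\chi_j(e_k))$ is invertible. Concretely, choose $\chi$ so that the values $\chi(e_k)$ are pairwise distinct, which is possible because $\bk$ is infinite and the $e_k$ are distinct, and take $\chi_j = \chi^j$ to obtain a Vandermonde matrix. Inverting this matrix expresses each $D_{e_k}$ as a $\bk$-linear combination of the elements $\Theta^{\chi_j}D(\Theta^{\chi_j})^{-1} \in \Lscr$, so $D_{e_k} \in \Lscr$.

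The main obstacle is the first step: handling the translation between $\Aut_\bk(B)$ and $\Aut(X)$ with the anti-isomorphism carefully. The key point is that a group-conjugation statement of this kind transfers exactly, because a set is stable under conjugation by all $t$ if and only if it is stable under conjugation by all $t^{-1}$.
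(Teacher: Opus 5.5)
Your proposal is correct and follows essentially the same route as the paper. You reduce normalization of $U$ by $T_\ggoth$ to stability of $\Lscr$ under conjugation by $\bbT_\ggoth$ via $\exp$, compute that conjugation multiplies each homogeneous component of $D$ by a scalar ($\chi(e)$ in your notation, $\bt^{\bi_j}$ in the paper's), and finish with an independence-of-characters argument; your Vandermonde construction supplies the proof of the linear-algebra fact that the paper leaves to the reader.
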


\begin{proof} 
This is folklore, but we include a proof for completeness.
Given $\bt = (t_1,\dots,t_d) \in (\bk^*)^d$ and $\bi = (i_1,\dots,i_d) \in \Integ^d$, we define $\bt^\bi = t_1^{i_1} \cdots t_d^{i_d} \in \bk^*$.
For each $\bt \in (\bk^*)^d$, let $\phi_\bt : B \to B$ be the $\bk$-automorphism defined by declaring that,
for each $\bi \in \Integ^d$, $\phi_\bt |_{B_\bi} : B_\bi \to B_\bi$ is the multiplication by $\bt^\bi$.
Then $\bbT_\ggoth = \setspec{ \phi_\bt }{ \bt \in (\bk^*)^d }$ and  $T_\ggoth = \setspec{ \Spec(\phi_\bt) }{ \bt \in (\bk^*)^d }$.
It is clear that $U$ is normalized by $T_\ggoth$ if and only if $\exp(\Lscr)$ is normalized by $\bbT_\ggoth$
(meaning that $\phi_\bt \circ \exp(D) \circ \phi_\bt^{-1} \in \exp(\Lscr)$ for every $D \in \Lscr$ and $\bt \in (\bk^*)^d$),
if and only if
\begin{equation} \label {8BCF4C2A-4A5E-4292-BCDB-788C60BC25D8}
\text{for every $D \in \Lscr$ and $\bt \in (\bk^*)^d$, \quad $\phi_\bt \circ D \circ \phi_\bt^{-1} \in \Lscr$.}
\end{equation}
So we have to show that \eqref{8BCF4C2A-4A5E-4292-BCDB-788C60BC25D8} is equivalent to $\Lscr$ being $\ggoth$-homogeneous.
Let $D \in \Lscr$ and let $D = \sum_{j=1}^n D_j$ be the homogeneous decompositon of $D$,
where each $D_j$ is nonzero and $\ggoth$-homogeneous of degree $\bi_j$, and where $\bi_1, \dots, \bi_n$ are distinct elements of $\Integ^d$.
To finish the proof, it suffices to show that 
\begin{equation}  \label {EC8396F0-EF4A-416A-845A-0C0F9FA03D27}
\text{$\phi_\bt \circ D \circ \phi_\bt^{-1} \in \Lscr$ for all $\bt \in (\bk^*)^d$ \ $\iff$ \ $D_1, \dots, D_n \in \Lscr$.}
\end{equation}
A straightforward calculation shows that $\phi_\bt \circ D \circ \phi_\bt^{-1} = \sum_{j=1}^n \bt^{\bi_j} D_j$ for all $\bt \in (\bk^*)^d$.
So $(\Leftarrow)$ is clearly true, and $(\Rightarrow)$ is a consequence of the following statement, whose proof we leave to the reader:
{\it Let $V$ be a vector space over $\bk$ and $L$ a subspace of $V$.
Let  $\bi_1, \dots, \bi_n$ be distinct elements of $\Integ^d$.
If $v_1,\dots,v_n \in V$ are such that $\sum_{j=1}^n \bt^{\bi_j} v_j \in L$ for all $\bt \in (\bk^*)^d$, then $v_1,\dots,v_n \in L$.}
\end{proof}

The following is a special case of \cite[Corollary B and Theorem D]{solvableAutGroups} and plays an essential role in our proof of the main results of this section:

\begin{theorem}  \label {A38C833F-DB8D-45CD-9664-1D5AE5BE00CF}
Let $H$ be a solvable subgroup of $\Aut(X)$ that is closed and connected in the ind-topology of $\Aut(X)$.
Then the set $\urad H = H \cap \Aut_{\text{\rm u}}(X)$ is a nested unipotent subgroup of $\Aut(X)$ and
there exists an algebraic torus $T$ of $\Aut(X)$ that normalizes $\urad H$ and is such that $H = T \ltimes \urad H$.

{\rm The normal subgroup $\urad H = H \cap \Aut_{\text{\rm u}}(X)$ of $H$ is called the {\it unipotent radical\/} of $H$.}
\end{theorem}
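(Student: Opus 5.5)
This is a specialization of results of \cite{solvableAutGroups}, so the plan is to check that our hypotheses match theirs and to translate their conclusions into the notation of this section, rather than to reprove anything. First, I would confirm that ``closed and connected in the ind-topology of $\Aut(X)$'' is exactly the hypothesis of \cite[Theorem D]{solvableAutGroups}. That paper works with closed subgroups $G$ equipped with the induced ind-structure $\clo G$, as in our notation. I would also check that $\bk$ algebraically closed of characteristic zero and $X$ irreducible affine match its standing assumptions.

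Next, I would apply \cite[Corollary B]{solvableAutGroups}. I expect it to say that a closed connected solvable subgroup $H$ of $\Aut(X)$ is nested, hence an increasing union of connected solvable algebraic subgroups $H_0 \subseteq H_1 \subseteq \cdots$, using \ref{7BACADD4-796D-4491-8D0A-93D9F0132FFC}(a),(b) to pass between $\clo H$ and $\nes H$. For each $i$, the classical structure theorem for connected solvable linear algebraic groups gives $H_i = T_i \ltimes (H_i)_{\text{\rm u}}$, with $(H_i)_{\text{\rm u}} = H_i \cap \Aut_{\text{\rm u}}(X)$ unipotent and closed. Then $\urad H = \bigcup_i (H_i)_{\text{\rm u}}$ is an increasing union of unipotent algebraic subgroups, so it is a nested unipotent subgroup. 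It is normal in $H$ because each $(H_i)_{\text{\rm u}}$ is normal in $H_i$ and the $H_i$ exhaust $H$.

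The main obstacle is getting one torus $T$ that works for all $i$ at once, since the maximal tori $T_i$ need not form a chain. This is what \cite[Theorem D]{solvableAutGroups} supplies. I expect it to rest on a bound $\dim T_i \le \dim X$, which forces the tori to stabilize in dimension, combined with conjugacy of maximal tori inside each $H_i$ by elements of $(H_i)_{\text{\rm u}}$. With these one can inductively conjugate the tori so that $T_i \subseteq T_{i+1}$, and then the chain must stabilize at some torus $T$.

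Finally, I would check the semidirect product structure. $T$ normalizes $\urad H$ because $\urad H$ is normal in $H$. We have $T \cap \urad H = \{\Id_X\}$ because a torus contains no nontrivial unipotent element. And $H = T\,\urad H$ holds at each finite level $H_i = T_i\,(H_i)_{\text{\rm u}}$ once $T_i = T$ for large $i$. Together these give $H = T \ltimes \urad H$.
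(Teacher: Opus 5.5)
Your plan to specialize \cite{solvableAutGroups} is what the paper does; it gives no proof beyond that citation. The problem is that the argument you build on it puts all the weight on a claim that the cited results, as the paper describes them, do not provide.

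According to the paper's remarks after the statement, the roles of the two results are the reverse of what you guessed. Corollary B of \cite{solvableAutGroups} directly gives $H = T \ltimes \urad{H}$ with $T$ an algebraic torus, under weaker hypotheses than ours. Theorem D is invoked only to conclude that $\urad{H}$ is nested. Neither is cited for the statement you need, namely that $H$ itself is an increasing union of connected algebraic subgroups. In this paper that fact, $H \in \NC(X)$, is derived \emph{from} the theorem, in \ref{C14524DE-554D-4B0D-A527-D763AFE4A977} via Lemma~\ref{29B1EE01-E26F-43CA-ADB2-5A0E15668FCE}(c). Since $H \in \NC(X)$ plus classical algebraic-group theory already gives the whole conclusion, you have reduced the theorem to an essentially equivalent statement. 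You then attributed that statement to a result not cited for it.

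The step you call the main obstacle is in fact elementary. If $H=\bigcup_i H_i$ with each $H_i$ connected algebraic, choose $T_{i+1}$ to be a maximal torus of $H_{i+1}$ containing $T_i$. Every algebraic torus of $\Aut(X)$ has dimension at most $\rtrdeg_\bk(B)$, by \ref{92E834E4-3895-42FF-A577-61C5AF52410C}(a) and \ref{FA176F6C-52D4-43FA-A7AF-B41C94E23E8B}(i). So the chain of tori stabilizes, and no Theorem D is needed. The real difficulty is the nestedness you assumed.

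Even granting that $H$ is nested, your passage to \emph{connected} pieces $H_i$ is circular. Paragraph \ref{7BACADD4-796D-4491-8D0A-93D9F0132FFC}(b) infers closedness and $\nes{H} = \clo{H}$ \emph{from} connectedness of $\nes{H}$, not conversely. Paragraph \ref{7BACADD4-796D-4491-8D0A-93D9F0132FFC}(c), with the example after it, shows that a nested subgroup connected in $\Aut(X)$ need not have $\nes{H}$ connected. You would need a separate identification $\nes{H} = \clo{H}$ for closed nested $H$.

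The repair is to cite the two results for what the paper says they give: Corollary B for the torus decomposition and Theorem D for nestedness of $\urad{H}$. Your remaining observations then complete the translation: $\urad{H} = H \cap \Aut_{\text{\rm u}}(X)$ is normal in $H$, so $T$ normalizes it, and $T \cap \urad{H} = \{\Id_X\}$. A minor point: ``closed and connected'' is stronger than the hypothesis in \cite{solvableAutGroups}, not identical to it, which is harmless.
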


Corollary B of \cite{solvableAutGroups} is stronger than Theorem \ref{A38C833F-DB8D-45CD-9664-1D5AE5BE00CF} in several respects.
Their result holds over a field of arbitrary characteristic, and their variety $X$ is quasi-affine and not necessarily irreducible
(compare with our global assumptions on $\bk$, $B$, and $X$ at the beginning of this section).
Our assumption that $H$ is closed and connected in the ind-topology of $\Aut(X)$ is stronger than the corresponding assumption in \cite{solvableAutGroups}.
Also, our conclusion that $\urad H$ is a nested unipotent subgroup of $\Aut(X)$ follows from their conclusion together with \cite[Theorem D]{solvableAutGroups}.

\begin{noname} \label {C14524DE-554D-4B0D-A527-D763AFE4A977}
By Lemma \ref{29B1EE01-E26F-43CA-ADB2-5A0E15668FCE}\ref{4805DA99-0AEF-43B5-B27F-53CC2E949B97},
the groups $H$, $\urad H$ and $T$ that appear in the statement of Theorem \ref{A38C833F-DB8D-45CD-9664-1D5AE5BE00CF}
have the following properties:
\begin{equation}  \label {614A02E0-C9C8-49D5-9285-451382A1986A}
H \in \NC(X) \quad \text{and} \quad \Lie(H) = \Lie(T) + \Lie(\urad H) .
\end{equation}
Note that Theorem \ref{A38C833F-DB8D-45CD-9664-1D5AE5BE00CF} and \eqref{614A02E0-C9C8-49D5-9285-451382A1986A} are valid in 
particular when $H$ is a Borel subgroup of $\Aut(X)$.
(Recall that a {\it Borel subgroup\/} of $\Aut(X)$ is a maximal element of the set of all solvable connected subgroups of $\Aut(X)$. 
Since the closure of a solvable connected subgroup of $\Aut(X)$ is a solvable connected subgroup of $\Aut(X)$, it follows that 
Borel subgroups are closed in the ind-topology of $\Aut(X)$ and hence satisfy the hypothesis of Theorem \ref{A38C833F-DB8D-45CD-9664-1D5AE5BE00CF}.)
\end{noname}

\begin{noname}  \label {4B7B6F7E-43D3-4719-91CA-E6301252556A}
In \cite{Arz_Zaid_2026}, a Lie subalgebra of $\Lie( \Aut X )$ is called {\it locally integrable\/} if and only if it is weakly locally finite,
and a {\it locally integrable Borel subalgebra of $\Lie( \Aut X )$} is defined to be a maximal element of the set
of Lie subalgebras of $\Lie( \Aut X )$ that are locally integrable and solvable.
On the other hand, Lemma \ref{29B1EE01-E26F-43CA-ADB2-5A0E15668FCE}\ref{15EE7F9A-4838-434C-A2D0-704C44E7CF31} implies that
each element of $\LieWLF_\bk(B)$  is a Lie subalgebra of $\Lie(\Aut X)$.
So the following equalities hold, and constitute a dictionary between our notation and the terminology of \cite{Arz_Zaid_2026}: 
\begin{align*}
\LieWLF_\bk(B) &= \text{ set of Lie subalgebras of $\Lie( \Aut X )$ that are locally integrable,}  \\
\LieWLF_\bk(B)_{\text{\rm sol}} &= \text{ set of Lie subalgebras of $\Lie( \Aut X )$ that are locally integrable and solvable,} \\
\max\LieWLF_\bk(B)_{\text{\rm sol}} &= \text{ set of \textbf{locally integrable Borel subalgebras} of $\Lie( \Aut X )$.}
\end{align*}
Translating the results of \cite{Arz_Zaid_2026} into the language of the present article, we find that
Lemma 3.3, Theorem 3.4 and Corollary 3.6 of \cite{Arz_Zaid_2026} have the following immediate consequence:
\end{noname}

\begin{theorem}[\cite{Arz_Zaid_2026}] \label {29BFE460-6348-4F3D-9CB7-B7FA0F1316A4}
Let $\Borel(X)$ denote the set of all Borel subgroups of $\Aut(X)$.
Then $\Beul \mapsto \Lie(\Beul)$ defines a bijection $\Borel(X) \to \max\LieWLF_\bk(B)_{\text{\rm sol}}$,
and two elements of $\Borel(X)$ are conjugate if and only if their images by this map are conjugate.
\end{theorem}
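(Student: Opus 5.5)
The statement is described as an immediate consequence of Lemma 3.3, Theorem 3.4 and Corollary 3.6 of \cite{Arz_Zaid_2026}. So the plan is a translation argument rather than a new construction.

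First, I will check that $\Beul \mapsto \Lie(\Beul)$ is well defined and lands in $\LieWLF_\bk(B)_{\text{\rm sol}}$. By \ref{C14524DE-554D-4B0D-A527-D763AFE4A977}, any Borel subgroup $\Beul$ is closed, belongs to $\NC(X)$, and satisfies
\[
\Lie(\Beul)=\Lie(T)+\Lie(\urad\Beul)
\]
with $T=T_\ggoth$ an algebraic torus, by \ref{92E834E4-3895-42FF-A577-61C5AF52410C}(b). Here $\Lie(T)=L(\ggoth)$ and $\Lie(\urad\Beul)\in\LieLN_\bk(B)$ by Lemma \ref{unipotentLieLNDiscussion}. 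Lemma \ref{1E03B95D-4E12-4AA5-8D53-357AA4219E2B} shows that $\ggoth\in\Grad(\Lie(\urad\Beul))$. Hence $\Lie(\Beul)\in\Lgoth(B)$, and Lemma \ref{36F36D71-D42B-4E92-8612-63FD8842DBA9}(a) gives $\Lie(\Beul)\in\LieWLF_\bk(B)_{\text{\rm sol}}$. Via the dictionary in \ref{4B7B6F7E-43D3-4719-91CA-E6301252556A}, this means $\Lie(\Beul)$ is a locally integrable solvable subalgebra of $\Lie(\Aut X)$.

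Second, I will invoke Theorem 3.4 of \cite{Arz_Zaid_2026}. It states that $\Lie$ gives a bijection between Borel subgroups and locally integrable Borel subalgebras; its proof relies on \cite{solvableAutGroups}, in the form of Theorem \ref{A38C833F-DB8D-45CD-9664-1D5AE5BE00CF}. Lemma 3.3 of that paper is what identifies locally integrable subalgebras with weakly locally finite ones inside $\Lie(\Aut X)$. Combined with Lemma \ref{29B1EE01-E26F-43CA-ADB2-5A0E15668FCE}\ref{15EE7F9A-4838-434C-A2D0-704C44E7CF31}, this yields the equality
\[
\max\LieWLF_\bk(B)_{\text{\rm sol}}=\{\text{locally integrable Borel subalgebras}\}.
\]
Together these give bijectivity of $\Borel(X)\to\max\LieWLF_\bk(B)_{\text{\rm sol}}$.

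Third, for the conjugacy statement, I will use the equivariance
\[
\Lie(g\Beul g^{-1})=\theta\,\Lie(\Beul)\,\theta^{-1},\qquad \theta=\Spec^{-1}(g),
\]
which is naturality of $\Lie$ under the inner ind-automorphism. The direct implication follows from this at once, and the converse follows from it together with injectivity. This is the content of Corollary 3.6 of \cite{Arz_Zaid_2026}.

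The main obstacle is the dictionary itself. I must verify that the ambient notions agree: \cite{Arz_Zaid_2026}'s Borel subalgebras are maximal within subalgebras of $\Lie(\Aut X)$, while ours are maximal within subalgebras of $\Der_\bk(B)$. These sets coincide because every weakly locally finite algebra consists of locally finite derivations, which lie in $\Lie(\Aut X)$. Once this is confirmed, maximality transfers verbatim and the rest is citation.
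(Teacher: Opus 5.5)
Your proposal is correct and takes essentially the paper's route. The paper also gives no independent proof; it combines the dictionary of paragraph \ref{4B7B6F7E-43D3-4719-91CA-E6301252556A} (every element of $\LieWLF_\bk(B)$ lies in $\Lie(\Aut X)$ by Lemma \ref{29B1EE01-E26F-43CA-ADB2-5A0E15668FCE}\ref{15EE7F9A-4838-434C-A2D0-704C44E7CF31}, so maximality in $\Der_\bk(B)$ and in $\Lie(\Aut X)$ coincide) with Lemma 3.3, Theorem 3.4 and Corollary 3.6 of \cite{Arz_Zaid_2026}. Your first step is redundant but correct and non-circular, since it repeats the opening of the proof of Proposition \ref{7B4377C8-6C88-47ED-98C3-C46D5A07A731}(a). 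Deriving the conjugacy claim from equivariance plus injectivity is a valid substitute for citing Corollary 3.6. One harmless slip: because $\Spec$ is an anti-isomorphism, the formula should read $\Lie(g\Beul g^{-1})=\theta^{-1}\Lie(\Beul)\theta$, which changes nothing in the argument.
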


\begin{proposition}  \label {7B4377C8-6C88-47ED-98C3-C46D5A07A731}
Let $\Beul \in \Borel(X)$ and let $\urad{\Beul}$ be the unipotent radical of $\Beul$.
Consider the set $\Lgoth(B)$ defined in \ref{B6DFEABF-3379-4ECF-9C30-26F2BCDB4FD6}.
\begin{enumerate}[\rm(a)]

\item $\Lie(\Beul)$ is a maximal element of $\Lgoth(B)$ and $\Lie(\Beul) \cap \lnd(B) = \Lie( \urad{\Beul} )$.

\item $\Lie( \urad{\Beul} )$ is a maximal element of $\LieLN_\bk(B)$.

\item $ \urad{\Beul} $ is a maximal element of $\NU(X)$.

\end{enumerate}
\end{proposition}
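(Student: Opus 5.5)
The plan is to combine Theorem \ref{A38C833F-DB8D-45CD-9664-1D5AE5BE00CF}, which gives the torus decomposition of a Borel subgroup, with the Lie-algebra results of Section \ref{Liealggradsec}. The argument is mostly bookkeeping. Part (a) is proved first, and (b) and (c) are then deduced from it.

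For (a), I start from \ref{C14524DE-554D-4B0D-A527-D763AFE4A977}. It gives an algebraic torus $T$ of $\Aut(X)$ that normalizes $\urad{\Beul}$, with $\Beul = T \ltimes \urad{\Beul}$, $\Beul \in \NC(X)$ and $\Lie(\Beul) = \Lie(T) + \Lie(\urad{\Beul})$.

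By \ref{92E834E4-3895-42FF-A577-61C5AF52410C}(b) I write $T = T_\ggoth$ for some $\Integ^d$-grading $\ggoth$ of $B$. Then \ref{92E834E4-3895-42FF-A577-61C5AF52410C}(a) gives $\Lie(T) = L(\ggoth)$.

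Next I set $\Lscr := \Lie(\urad{\Beul})$. Since $\urad{\Beul} \in \NU(X)$, Lemma \ref{unipotentLieLNDiscussion}(c) gives $\Lscr \in \LieLN_\bk(B)$ and $\EXP(\Lscr) = \urad{\Beul}$. Because $T_\ggoth$ normalizes $\urad{\Beul}$, Lemma \ref{1E03B95D-4E12-4AA5-8D53-357AA4219E2B} shows that $\Lscr$ is $\ggoth$-homogeneous, that is, $\ggoth \in \Grad(\Lscr)$.

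Hence $\Lie(\Beul) = L(\ggoth) + \Lscr$ belongs to $\Lgoth(B)$. Lemma \ref{36F36D71-D42B-4E92-8612-63FD8842DBA9}(b) then gives $\Lie(\Beul) \cap \lnd(B) = \Lscr = \Lie(\urad{\Beul})$.

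For maximality, Theorem \ref{29BFE460-6348-4F3D-9CB7-B7FA0F1316A4} says that $\Lie(\Beul)$ is a maximal element of $\LieWLF_\bk(B)_{\text{\rm sol}}$. Lemma \ref{36F36D71-D42B-4E92-8612-63FD8842DBA9}(a) gives $\Lgoth(B) \subseteq \LieWLF_\bk(B)_{\text{\rm sol}}$. An element of $\Lgoth(B)$ that is maximal in the larger set is maximal in $\Lgoth(B)$, which proves (a).

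Part (b) is then immediate. Since $\Lie(\Beul) = L(\ggoth) + \Lscr$ is maximal in $\Lgoth(B)$, Lemma \ref{36F36D71-D42B-4E92-8612-63FD8842DBA9}(c) shows that $\Lscr = \Lie(\urad{\Beul})$ is maximal in $\LieLN_\bk(B)$.

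For (c), let $U' \in \NU(X)$ with $\urad{\Beul} \subseteq U'$. Both groups lie in $\NC(X)$ by Lemma \ref{unipotentLieLNDiscussion}(a), so Lemma \ref{29B1EE01-E26F-43CA-ADB2-5A0E15668FCE}(a) gives $\Lie(\urad{\Beul}) \subseteq \Lie(U')$. The right-hand side is in $\LieLN_\bk(B)$ by Lemma \ref{unipotentLieLNDiscussion}(c), so maximality from (b) forces equality. The bijection $\EXP$ of Lemma \ref{unipotentLieLNDiscussion}(c) then gives $U' = \urad{\Beul}$.

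The step needing the most care is the identification $\Lie(\Beul) = L(\ggoth) + \Lie(\urad{\Beul})$ with $\ggoth \in \Grad(\Lie(\urad{\Beul}))$. It requires that $T$ really is a torus $T_\ggoth$ of the form in \ref{92E834E4-3895-42FF-A577-61C5AF52410C}, and that the normalization $tUt^{-1}=U$ from Theorem \ref{A38C833F-DB8D-45CD-9664-1D5AE5BE00CF} is exactly the hypothesis of Lemma \ref{1E03B95D-4E12-4AA5-8D53-357AA4219E2B}. Both are quoted results, so I expect them to go through directly. The rest follows formally from Lemma \ref{36F36D71-D42B-4E92-8612-63FD8842DBA9} and Theorem \ref{29BFE460-6348-4F3D-9CB7-B7FA0F1316A4}.
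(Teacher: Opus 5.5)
Your proposal is correct and follows the paper's proof essentially step for step. It uses the decomposition $\Beul = T_\ggoth \ltimes \urad{\Beul}$ from Theorem \ref{A38C833F-DB8D-45CD-9664-1D5AE5BE00CF}, gets $\ggoth$-homogeneity of $\Lie(\urad{\Beul})$ from Lemma \ref{1E03B95D-4E12-4AA5-8D53-357AA4219E2B}, and then applies Lemma \ref{36F36D71-D42B-4E92-8612-63FD8842DBA9}(b),(c) together with Theorem \ref{29BFE460-6348-4F3D-9CB7-B7FA0F1316A4}; your part (c) simply spells out that the bijection $\EXP$/$\Lie$ of Lemma \ref{unipotentLieLNDiscussion}(c) preserves inclusions, which the paper cites in one line.
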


\begin{proof}
(a)  Theorem \ref{A38C833F-DB8D-45CD-9664-1D5AE5BE00CF} implies that $\urad{\Beul} \in \NU(X)$
and that $\Beul = T \ltimes  \urad{\Beul} $ for some algebraic torus $T \subseteq \Aut(X)$ that normalizes $\urad{\Beul}$.
By \ref{92E834E4-3895-42FF-A577-61C5AF52410C}(b), there exist $d \in \Nat$ and a $\Integ^d$-grading $\ggoth$ of $B$ such that $T = T_\ggoth$,
and \ref{92E834E4-3895-42FF-A577-61C5AF52410C}(a) gives $\Lie(T) = L(\ggoth)$.
As  $\urad{\Beul} \in \NU(X)$,  Lemma \ref{unipotentLieLNDiscussion}\ref{4BFE0092-4242-48D7-A3D3-0C484D4BC2B5} implies that there exists $\Lscr \in \LieLN_\bk(B)$
such that $\EXP(\Lscr) = \urad{\Beul}$ and $\Lie(\urad{\Beul}) = \Lscr$.
Since $T$ normalizes $\urad{\Beul}$, Lemma \ref{1E03B95D-4E12-4AA5-8D53-357AA4219E2B} implies that $\Lscr$ is $\ggoth$-homogeneous;
so  $L(\ggoth) + \Lscr \in \Lgoth(B)$.
Assertion \eqref{614A02E0-C9C8-49D5-9285-451382A1986A} gives $\Lie(\Beul) = \Lie(T) + \Lie( \urad{\Beul} ) = L(\ggoth) + \Lscr \in \Lgoth(B)$,
so Lemma \ref{36F36D71-D42B-4E92-8612-63FD8842DBA9}(b) gives $\Lie(\Beul) \cap \lnd(B) = \Lscr = \Lie( \urad{\Beul} )$.
Theorem \ref{29BFE460-6348-4F3D-9CB7-B7FA0F1316A4} implies that $\Lie(\Beul)$ is a maximal element of $\LieWLF_\bk(B)_{\text{\rm sol}}$;
as $\Lie(\Beul) \in \Lgoth(B) \subseteq\LieWLF_\bk(B)_{\text{\rm sol}}$, it follows that $\Lie(\Beul)$ is a maximal element of $\Lgoth(B)$.
This proves (a).

Since $\Lie(\Beul) = L(\ggoth) + \Lscr$ is maximal in $\Lgoth(B)$,
Lemma \ref{36F36D71-D42B-4E92-8612-63FD8842DBA9}(c) implies that $\Lscr$ is maximal in $\LieLN_\bk(B)$, which proves (b).
By Lemma \ref{unipotentLieLNDiscussion}\ref{4BFE0092-4242-48D7-A3D3-0C484D4BC2B5}, it follows that $ \urad{\Beul} $ is a maximal element of $\NU(X)$, proving (c).
\end{proof}

\begin{corollary}  \label {CC2270E1-029B-4F26-89F8-81B03963C6CD}
Each maximal element of $\LieWLF_\bk(B)_{\text{\rm sol}}$ belongs to the set $\Lgoth(B)$.
In other words, the condition $(*)$ in Proposition \ref{CD178309-4EC6-4386-9C1E-34F625BEBC29} is satisfied.
\end{corollary}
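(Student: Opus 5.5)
Take an arbitrary $\Mscr \in \max\LieWLF_\bk(B)_{\text{\rm sol}}$. The aim is to identify $\Mscr$ with the Lie algebra of a Borel subgroup and then apply Proposition \ref{7B4377C8-6C88-47ED-98C3-C46D5A07A731}(a).

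First, by Theorem \ref{29BFE460-6348-4F3D-9CB7-B7FA0F1316A4}, the map $\Beul \mapsto \Lie(\Beul)$ from $\Borel(X)$ to $\max\LieWLF_\bk(B)_{\text{\rm sol}}$ is a bijection. In particular it is surjective, so there exists $\Beul \in \Borel(X)$ with $\Lie(\Beul) = \Mscr$.

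Next, by Proposition \ref{7B4377C8-6C88-47ED-98C3-C46D5A07A731}(a), $\Lie(\Beul)$ is a maximal element of $\Lgoth(B)$. It therefore belongs to $\Lgoth(B)$, and hence $\Mscr \in \Lgoth(B)$. This is exactly condition $(*)$ of Proposition \ref{CD178309-4EC6-4386-9C1E-34F625BEBC29}.

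The only point requiring care is that the surjectivity we need is genuinely part of the cited Theorem \ref{29BFE460-6348-4F3D-9CB7-B7FA0F1316A4}, which comes from \cite{Arz_Zaid_2026}. All the real work was done earlier: \ref{92E834E4-3895-42FF-A577-61C5AF52410C} writes the torus $T$ of $\Beul$ as $T_\ggoth$, and Lemma \ref{1E03B95D-4E12-4AA5-8D53-357AA4219E2B} shows that $\Lie(\urad{\Beul})$ is $\ggoth$-homogeneous. These are the steps inside the proof of Proposition \ref{7B4377C8-6C88-47ED-98C3-C46D5A07A731} that give $\Lie(\Beul) = L(\ggoth) + \Lie(\urad{\Beul}) \in \Lgoth(B)$. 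So the corollary itself involves no further obstacle.
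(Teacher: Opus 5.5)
Your proof is correct and is essentially the paper's own argument: use surjectivity in Theorem \ref{29BFE460-6348-4F3D-9CB7-B7FA0F1316A4} to write $\Mscr = \Lie(\Beul)$ for some $\Beul \in \Borel(X)$, then apply Proposition \ref{7B4377C8-6C88-47ED-98C3-C46D5A07A731}(a) to conclude that $\Lie(\Beul) \in \Lgoth(B)$. There is no circularity, because Proposition \ref{7B4377C8-6C88-47ED-98C3-C46D5A07A731} does not rely on Proposition \ref{CD178309-4EC6-4386-9C1E-34F625BEBC29} or on condition $(*)$.
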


\begin{proof}
By Theorem \ref{29BFE460-6348-4F3D-9CB7-B7FA0F1316A4}, each maximal element of $\LieWLF_\bk(B)_{\text{\rm sol}}$
is equal to $\Lie(\Beul)$ for some $\Beul \in \Borel(X)$, and  Proposition \ref{7B4377C8-6C88-47ED-98C3-C46D5A07A731}(a) gives $\Lie(\Beul) \in \Lgoth(B)$.
\end{proof}

The following is the main result of this section.
The assumptions on $\bk$, $B$ and $X$ are stated at the beginning of the section.

\begin{theorem}  \label {0CA62AB7-E680-4831-972F-E730B069503F}
Consider the diagram of set maps
\begin{equation}  \label {A3583570-EE9B-4EE9-883D-05372F0F5495}
\hspace{-12mm} \raisebox{5mm}{\scalebox{.95}{\xymatrix@R=7mm{
\Borel(X) \ar[d]_-{\Lie} \ar[r]^{ \rho_X }  &  \max \NU(X)  \ar @<-.4ex> [d]_-{\Lie}   \\
\max\LieWLF_\bk(B)_{\text{\rm sol}} \ar[r]_-{\rho_B}  &  \max\LieLN_\bk(B) \makebox[0mm][l]{$\, = \max\DJL(B)$} \ar @<-.4ex>[u]_-{ \EXP }
}}}
\end{equation}
defined by the following stipulations:
\begin{itemize}

\item The map $\rho_X$ sends each $\Beul \in \Borel(X)$ to its unipotent radical $ \urad{\Beul}  = \Beul \cap \Aut_{\text{\rm u}}(X)$.

\item The map $\rho_B$ is the bijection $\Mscr \mapsto \Mscr \cap \lnd(B)$ defined in Proposition \ref{CD178309-4EC6-4386-9C1E-34F625BEBC29}.

\item The left vertical arrow is the bijection $\Beul \mapsto \Lie(\Beul)$ of Theorem \ref{29BFE460-6348-4F3D-9CB7-B7FA0F1316A4}.

\item The two vertical arrows on the right are the restrictions of the two maps of Lemma \ref{unipotentLieLNDiscussion}\ref{4BFE0092-4242-48D7-A3D3-0C484D4BC2B5}.

\end{itemize}
Then the following hold:
\begin{enumerate}[\rm(a)]

\item If $f$ denotes any one of the five maps in \eqref{A3583570-EE9B-4EE9-883D-05372F0F5495} then:
\begin{enumerate}[\rm(i)]

\item $f$ is a well-defined bijection;
\item two elements of $\dom(f)$ are conjugate if and only if their images by $f$ are conjugate.

\end{enumerate}

\item The diagram is commutative, i.e., we have $\Lie \circ \rho_X = \rho_B \circ \Lie$ and the two vertical arrows on the right are inverse of each other.

\item Given  $\Beul \in \Borel(X)$ and $U \in\max\NU(X)$, we have $U \subseteq \Beul$ $\Leftrightarrow$ $\rho_X(\Beul) = U$.

\item Given $U \in \max\NU(X)$, exactly one Borel subgroup of $\Aut(X)$ contains $U$.

\end{enumerate}
\end{theorem}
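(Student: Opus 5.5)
We first verify that the four maps other than $\rho_X$ are well-defined bijections compatible with conjugation, and then obtain the statements about $\rho_X$ by commutativity. The left vertical arrow is handled by Theorem \ref{29BFE460-6348-4F3D-9CB7-B7FA0F1316A4}. For the right vertical arrows, Lemma \ref{unipotentLieLNDiscussion}\ref{4BFE0092-4242-48D7-A3D3-0C484D4BC2B5} gives mutually inverse bijections $\LieLN_\bk(B)\leftrightarrows\NU(X)$. Both maps are order-preserving: $\Lie$ by Lemma \ref{29B1EE01-E26F-43CA-ADB2-5A0E15668FCE}(a), and $\EXP$ because $\Lie=\LOG$ on $\NU(X)$. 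So they restrict to mutually inverse bijections $\max\NU(X)\leftrightarrows\max\LieLN_\bk(B)$. The equality $\max\LieLN_\bk(B)=\max\DJL(B)$ is Proposition \ref{6D6E22A4-236A-4B28-96D9-86157812D34A}(c). For conjugation we use that $\EXP(\theta D\theta^{-1})$ is the corresponding conjugate of $\EXP(D)$, so $\EXP$ commutes with conjugation by elements of $\Aut(X)\cong\Aut_\bk(B)^{\rm op}$. For $\rho_B$, Corollary \ref{CC2270E1-029B-4F26-89F8-81B03963C6CD} verifies $(*)$, so Proposition \ref{CD178309-4EC6-4386-9C1E-34F625BEBC29}(a),(d) gives bijectivity and compatibility with conjugation.

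Next we establish commutativity. Let $\Beul\in\Borel(X)$. Proposition \ref{7B4377C8-6C88-47ED-98C3-C46D5A07A731}(c) gives $\urad\Beul\in\max\NU(X)$, so $\rho_X$ is well defined. Part (a) of the same proposition gives $\Lie(\urad\Beul)=\Lie(\Beul)\cap\lnd(B)=\rho_B(\Lie\Beul)$, which is $\Lie\circ\rho_X=\rho_B\circ\Lie$. It follows that $\rho_X=\EXP\circ\rho_B\circ\Lie$ is a composite of three bijections, hence a bijection. Each of the three maps is compatible with conjugation, and therefore so is $\rho_X$. This proves (a) and (b).

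For (c), one direction is immediate: $\rho_X(\Beul)=U$ means $U=\Beul\cap\Aut_{\rm u}(X)\subseteq\Beul$. For the converse, suppose $U\subseteq\Beul$ with $U\in\max\NU(X)$. Then $U\subseteq\Beul\cap\Aut_{\rm u}(X)=\urad\Beul$. Since $\urad\Beul\in\NU(X)$ by Theorem \ref{A38C833F-DB8D-45CD-9664-1D5AE5BE00CF}, maximality of $U$ forces $U=\urad\Beul$.

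For (d), existence comes from surjectivity of $\rho_X$: there is $\Beul$ with $\urad\Beul=U$, so $U\subseteq\Beul$. Uniqueness: if $U\subseteq\Beul_1,\Beul_2$, then by (c) $\rho_X(\Beul_1)=U=\rho_X(\Beul_2)$, and injectivity of $\rho_X$ gives $\Beul_1=\Beul_2$.

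The main obstacle I anticipate is conjugation compatibility for the vertical maps. For the right-hand arrows this needs $\Lie(gUg^{-1})=g\Lie(U)g^{-1}$ under the anti-isomorphism $\Spec$; this is a direct check via $\LOG$, keeping track of the fact that $\Spec$ reverses the order of composition. The remaining steps are bookkeeping with the cited results.
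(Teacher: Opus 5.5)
Your proof is correct. It differs from the paper's only in how you obtain the conjugacy statement (a-ii); the rest follows the paper.

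For (a-i), (b), (c) and (d) you follow the paper essentially verbatim. The paper dismisses (c) and (d) as trivial, and your maximality argument for (c) is the natural one.

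The difference in (a-ii) is in which maps are checked by hand and which are deduced from commutativity:
\begin{itemize}
\item \textbf{The paper} proves (a-ii) directly for $\rho_X$. Conjugation preserves unipotency, so $\theta\Beul\theta^{-1}=\Beul'$ gives $\theta\,\urad{\Beul}\,\theta^{-1}=\urad{\Beul'}$. Conversely, if the unipotent radicals are conjugate, then $\theta\Beul\theta^{-1}$ and $\Beul'$ are two Borel subgroups containing $\urad{\Beul'}$, so they coincide by (d). It then obtains (a-ii) for the two right-hand arrows formally, from commutativity and the known compatibility of $\rho_B$ and of the left $\Lie$.
\item \textbf{You} go the other way. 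You verify compatibility of $\EXP$ directly, using $\exp(\theta D\theta^{-1})=\theta\exp(D)\theta^{-1}$, the fact that $\Spec$ is an anti-isomorphism, and bijectivity of $\EXP$ for the converse direction. You then get (a-ii) for $\rho_X=\EXP\circ\rho_B\circ\Lie$ by composition.
\end{itemize}

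What each route buys:
\begin{itemize}
\item Your route needs a small bookkeeping computation with $\exp$ and conjugation, but it does not use (d).
\item The paper's route avoids any computation with $\EXP$. It also yields an intrinsic group-theoretic explanation of why two Borel subgroups are conjugate exactly when their unipotent radicals are.
\end{itemize}
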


\begin{proof}
The equality in the lower-right corner of \eqref{A3583570-EE9B-4EE9-883D-05372F0F5495} is Proposition \ref{6D6E22A4-236A-4B28-96D9-86157812D34A}(c).
By Proposition {\rm\ref{7B4377C8-6C88-47ED-98C3-C46D5A07A731}(c)}, $\rho_X$ is a well-defined map. 
By Lemma \ref{unipotentLieLNDiscussion}\ref{4BFE0092-4242-48D7-A3D3-0C484D4BC2B5},
it is clear that the two vertical arrows on the right are well-defined bijective maps and are inverse of each other.
The left vertical arrow is a bijection by Theorem \ref{29BFE460-6348-4F3D-9CB7-B7FA0F1316A4}.
By Corollary \ref{CC2270E1-029B-4F26-89F8-81B03963C6CD}, the hypothesis of Proposition \ref{CD178309-4EC6-4386-9C1E-34F625BEBC29} is satisfied,
so the conclusions of that result are valid; in particular, $\rho_B$ is a well-defined bijection.

If $\Beul \in \Borel(X)$ then $\rho_B\big( \Lie(\Beul) \big) = \Lie(\Beul) \cap \lnd(B) = \Lie( \urad{\Beul} )$,
the last equality by Proposition \ref{7B4377C8-6C88-47ED-98C3-C46D5A07A731}(a).
This shows that $\rho_B \circ \Lie = \Lie \circ \rho_X$, so diagram \eqref{A3583570-EE9B-4EE9-883D-05372F0F5495} is commutative.
It follows that $\rho_X$ is bijective, because all the other maps in the diagram are bijective.
So far, we proved (a-i) and (b). Assertions (c) and (d) trivially follow.
% Next, we prove (c).
% Let  $\Beul \in \Borel(X)$ and $U \in\max\NU(X)$.
% If $U \subseteq \Beul$ then $U \subseteq \Beul \cap \Aut_{\text{\rm u}}(X) = \urad{\Beul}$
% and $ \urad{\Beul}  \in \NU(X)$ by Proposition \ref{7B4377C8-6C88-47ED-98C3-C46D5A07A731}(c), 
% so $ \urad{\Beul}  = U$ (by maximality of $U$) and hence $\rho_X(\Beul) = U$.
% The converse being clear, we have $U \subseteq \Beul$ $\Leftrightarrow$ $\rho_X(\Beul) = U$, proving (c).
% Since $\rho_X$ is bijective, (d) follows from (c).

It remains to prove assertion (a-ii).
We first show that $\Beul, \Beul' \in \Borel(X)$ are conjugate if and only if $\rho_X(\Beul) =  \urad{\Beul} $ is conjugate to $\rho_X(\Beul') =  \urad{\Beul} '$.
Suppose that $\theta \in\Aut(X)$ is such that $\theta \Beul \theta^{-1} = \Beul'$.
Since $ \urad{\Beul} $ is the set of unipotent elements of $\Beul$,
it follows that $\theta  \urad{\Beul}  \theta^{-1}$ is the set of unipotent elements of $\theta \Beul \theta^{-1} = \Beul'$, i.e., 
$\theta  \urad{\Beul}  \theta^{-1} =  \urad{\Beul} '$.
Conversely, if $\theta \in\Aut(X)$ is such that $\theta  \urad{\Beul}  \theta^{-1} =  \urad{\Beul} '$
then both $\Beul'$ and $\theta \Beul \theta^{-1}$ are Borel subgroups of $\Aut(X)$ that contain $ \urad{\Beul} '$,
so (d) implies that $\theta \Beul \theta^{-1} = \Beul'$.

This shows that assertion (a-ii) is true for $\rho_X$.
Since (a-ii) is true for $\rho_B$ (by Proposition \ref{CD178309-4EC6-4386-9C1E-34F625BEBC29}(d))
and for the left vertical arrow (by Theorem \ref{29BFE460-6348-4F3D-9CB7-B7FA0F1316A4}),
the commutativity of the diagram implies that (a-ii) is also true for the two vertical arrows on the right.
This proves the Theorem.
\end{proof}

The next result complements the above Theorem by making the inverse maps $\rho_X^{-1}$ and $\rho_B^{-1}$ of
diagram \eqref{A3583570-EE9B-4EE9-883D-05372F0F5495} completely explicit. Theorem
\ref{0CA62AB7-E680-4831-972F-E730B069503F} shows that a given $U \in \max\NU(X)$ lies in a unique Borel
subgroup; we now construct that Borel subgroup explicitly from $U$ (or, in Lie-algebraic terms, from
$\Lscr = \Lie(U) \in \max\LieLN_\bk(B)$) using the tori that normalize $U$ and the gradings in
$\Grad(\Lscr)$. The proposition also establishes some additional facts concerning conjugation.
See \ref{B77C75D9-E381-4069-A82A-870BECEA6849} for the definitions of $\grank(\Lscr)$ and $\Grad(\Lscr)$.

\begin{proposition}  \label{38182F16-1D38-42ED-9BA4-C7CB5326711E}
Let $U \in \NU(X)$ and $\Lscr \in \LieLN_\bk(B)$ be such that $\EXP(\Lscr)=U$ and $\Lie(U)=\Lscr$.\footnote{Recall from
Lemma \ref{unipotentLieLNDiscussion}\ref{4BFE0092-4242-48D7-A3D3-0C484D4BC2B5} that  
$\xymatrix@1{ \LieLN_\bk(B) \ar @<.4ex>[r]^-{ \EXP }  & \NU(X) \ar @<.4ex>[l]^-{ \Lie } }$
are mutually inverse bijections.}
Let $\Tscr(U)$ be the set of all algebraic tori in $\Aut(X)$ that normalize $U$.
\begin{enumerate}[\rm(a)]

\item The rule $\ggoth \mapsto T_\ggoth$ is a well-defined surjective map $\Grad(\Lscr) \to \Tscr(U)$,
and $\dim T_\ggoth = \rank(\ggoth)$ for every $\ggoth \in \Grad(\Lscr)$.
Consequently, $\max\setspec{ \dim T }{ T \in \Tscr(U) } = \grank(\Lscr)$.

\item The following conditions are equivalent:
\begin{enumerate}[\rm(i)]

\item There exists $T \in \Tscr(U)$ such that $\langle T , U \rangle_{\text{\rm gp}}$ is a Borel subgroup of $\Aut(X)$;

\item $U$ is maximal in $\NU(X)$;

\item $\Lscr$ is maximal in $\LieLN_\bk(B)$.

\end{enumerate}
\end{enumerate}

\noindent In {\rm(c--f)},
assume that the equivalent conditions {\rm(i--iii)} of {\rm(b)} are satisfied.

\begin{enumerate}[\rm(a)]
\addtocounter{enumi}{2}

\item For each $T \in \Tscr(U)$, the following conditions are equivalent:

\begin{enumerate}[\rm(i)]

\item $\langle T , U \rangle_{\text{\rm gp}}$ is a Borel subgroup of $\Aut(X)$;

\item $T$ is maximal in $\Tscr(U)$;

\item $\dim T = \grank(\Lscr)$.

\end{enumerate}

\item  \label {95CB7549-845E-436A-BD45-B14B7DC28F63}
For each $\ggoth \in \Grad(\Lscr)$, the following are equivalent:
\begin{enumerate}[\rm(i)]

\item $L(\ggoth) + \Lscr \in \max\LieWLF_\bk(B)_{\text{\rm sol}}$

\item $\rank(\ggoth) = \grank(\Lscr)$.

\end{enumerate}

\item \label {CFA2596B-55F6-4864-9F15-0E5004E73AA2}
  If $T,T' \in \max\Tscr(U)$ then there exists $u \in U$ such that $u T u^{-1} = T'$.

\item \label{8E3DCA2F-E04D-4436-8149-693BB7FDEAE1}
If $\ggoth,\hgoth \in \Grad(\Lscr)$ and $\rank(\ggoth) = \rank(\hgoth) = \grank(\Lscr)$ then there exists $D \in \Lscr$
such that the automorphism $\epsilon = \exp(D)$ of $B$ satisfies $\epsilon \, \bbT_\ggoth \, \epsilon^{-1} = \bbT_\hgoth$.

\end{enumerate}
\end{proposition}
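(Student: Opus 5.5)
For (a), I would use Lemma \ref{1E03B95D-4E12-4AA5-8D53-357AA4219E2B} together with \ref{92E834E4-3895-42FF-A577-61C5AF52410C}. If $\ggoth \in \Grad(\Lscr)$, then $T_\ggoth$ is an algebraic torus of dimension $\rank(\ggoth)$, and it normalizes $U$ by Lemma \ref{1E03B95D-4E12-4AA5-8D53-357AA4219E2B}. So the map $\ggoth\mapsto T_\ggoth$ is well defined into $\Tscr(U)$. Conversely, any $T\in\Tscr(U)$ equals $T_\ggoth$ for some $\Integ^d$-grading $\ggoth$ by \ref{92E834E4-3895-42FF-A577-61C5AF52410C}(b), and the same lemma gives $\ggoth\in\Grad(\Lscr)$, which proves surjectivity. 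The equality of maxima then follows from the dimension formula.

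For the remaining parts, the key fact is that for $T=T_\ggoth\in\Tscr(U)$, the group $G=\langle T,U\rangle_{\rm gp}$ lies in $\NC(X)$ and satisfies $\Lie(G)=L(\ggoth)+\Lscr$, by Lemma \ref{29B1EE01-E26F-43CA-ADB2-5A0E15668FCE}(c).

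For (b): (i)$\Rightarrow$(ii) holds because $U\subseteq\urad G$ and $\Lie(\urad G)=\Lie(G)\cap\lnd(B)=\Lscr$ by Proposition \ref{7B4377C8-6C88-47ED-98C3-C46D5A07A731}(a) and Lemma \ref{36F36D71-D42B-4E92-8612-63FD8842DBA9}(b). Hence $U=\urad G$, and this is maximal by Proposition \ref{7B4377C8-6C88-47ED-98C3-C46D5A07A731}(c). (ii)$\Leftrightarrow$(iii) is Lemma \ref{unipotentLieLNDiscussion}(c). For (iii)$\Rightarrow$(i), take $\ggoth$ with $\rank\ggoth=\grank\Lscr$. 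Then $L(\ggoth)+\Lscr$ is in $\max\LieWLF_\bk(B)_{\rm sol}$ by Proposition \ref{CD178309-4EC6-4386-9C1E-34F625BEBC29}(b), so it equals $\Lie(\Beul)$ for a Borel subgroup $\Beul$ (Theorem \ref{29BFE460-6348-4F3D-9CB7-B7FA0F1316A4}). It remains to show $G=\Beul$. I would argue that $G$ is connected and solvable, so it lies in some Borel subgroup $\Beul'$. Then $\Lie(G)\subseteq\Lie(\Beul')$, and maximality of $\Lie(G)$ forces $\Lie(G)=\Lie(\Beul')$. I expect this to be the main obstacle: I need ``equal Lie algebras implies equal groups'' for $G\subseteq\Beul'$ in $\NC(X)$. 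I would get it by writing $\Beul'=T'\ltimes U'$ with $U'=U$ (by uniqueness in Theorem \ref{0CA62AB7-E680-4831-972F-E730B069503F}(d)) and comparing torus dimensions, since $\dim T'=\dim(\Lie\Beul'/\Lscr)=\rank\ggoth$ and $T\subseteq \Beul'$ maps injectively onto $\Beul'/U$.

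Part (d) follows from Lemma \ref{36F36D71-D42B-4E92-8612-63FD8842DBA9}(d) together with Lemma \ref{F2922F53-50B0-4424-A59A-C3A8D4BE38A3}: a non-maximal rank gives a strictly smaller algebra contained in $\Lscr+\sum L(\hgoth)$, which is weakly locally finite by Proposition \ref{8E965CF0-961C-4AC9-859F-398B4CEA6E5E}. Part (c) then follows from (d) and the argument in (b), using $\dim T_\ggoth=\rank\ggoth$. For (ii)$\Leftrightarrow$(iii) in (c), I note that $T\subseteq T'$ in $\Tscr(U)$ with equal dimension forces $T=T'$ because tori are connected; conversely, a $T$ of non-maximal dimension gives a non-Borel $G$, which sits strictly inside the Borel subgroup $\Beul\supseteq U$, whose torus part provides a larger torus after conjugation.

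For (e), both $\langle T,U\rangle$ and $\langle T',U\rangle$ are Borel subgroups containing $U$, so they coincide by Theorem \ref{0CA62AB7-E680-4831-972F-E730B069503F}(d). Call this group $\Beul=T\ltimes U$. Then $T$ and $T'$ are maximal tori of the nested solvable group $\Beul$. Working inside a large connected algebraic subgroup $T\ltimes U_n$ containing both $T$ and $T'$, the classical conjugacy of maximal tori in connected solvable algebraic groups gives $u\in U_n$ with $uTu^{-1}=T'$. Finally, (f) is (e) translated via (a): write $T_\hgoth=uT_\ggoth u^{-1}$ with $u=\EXP(D)$ and $D\in\Lscr$, and pass to $B$ through the anti-isomorphism $\Spec$, adjusting $D$ to $-D$ if needed.
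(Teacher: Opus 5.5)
Your proposal is correct in outline, but it is organized differently from the paper's proof.

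\textbf{How the paper proceeds.} The paper proves (b)(ii)$\Rightarrow$(i) almost for free. Surjectivity of $\rho_X$ gives a Borel subgroup $\Beul$ with $\urad{\Beul}=U$, and Theorem \ref{A38C833F-DB8D-45CD-9664-1D5AE5BE00CF} writes $\Beul=T\ltimes U$ with $T\in\Tscr(U)$; no specific torus has to be identified. The paper then shows that every $\langle T,U\rangle_{\text{\rm gp}}$ lies in the unique Borel subgroup $\Beul\supseteq U$. Next it proves (e), by conjugacy of maximal tori inside a connected algebraic subgroup of $\Beul$ containing $T_0\cup T$. It derives (c) from (e): one gets $\langle T,U\rangle_{\text{\rm gp}}=u\Beul u^{-1}=\Beul$, and all maximal elements of $\Tscr(U)$ have the same dimension. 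Finally it proves (d) through injectivity of $\rho_B$ and the rank computation for gradings with $L(\ggoth)+\Lscr=\Lie(\Beul)$.

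\textbf{Where your route costs more.} You insist on showing already in (b) that $\langle T_\ggoth,U\rangle_{\text{\rm gp}}$ is Borel for a grading of maximal rank. That is really (c)(iii)$\Rightarrow$(i), and it is what forces your ``equal Lie algebras $\Rightarrow$ equal groups'' detour; (b) itself does not need it. Your dimension comparison is also only complete once $T\to\Beul'/U$ is a morphism of algebraic groups. To get this, put $T\cup T'$ inside a connected algebraic subgroup $H\subseteq\Beul'$ (a dense finitely generated subgroup of each torus lands in some member of a nested filtration of $\Beul'$). Then use $H=T'\ltimes(H\cap U)$ and compare $T$ with $H/(H\cap U)\cong T'$. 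This is exactly the reduction the paper uses in (e), and it is fillable. Your ordering also uses torus conjugacy twice, in (c)(ii)$\Rightarrow$(iii) and again in (e). The paper uses it once and reads (c) and (f) off from it.

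\textbf{Where your route is better.} Your proof of (d) is genuinely more economical than the paper's. Fix $\hgoth_0\in\Grad(\Lscr)$ with $\rank\hgoth_0=\grank\Lscr$. Both $L(\ggoth)+\Lscr$ and $L(\hgoth_0)+\Lscr$ lie in $\Lscr+\sum_{\hgoth\in\Grad(\Lscr)}L(\hgoth)$, which belongs to $\LieWLF_\bk(B)_{\text{\rm sol}}$ by Proposition \ref{8E965CF0-961C-4AC9-859F-398B4CEA6E5E}. So maximality of $L(\ggoth)+\Lscr$ forces $L(\hgoth_0)+\Lscr\subseteq L(\ggoth)+\Lscr$. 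Lemma \ref{F2922F53-50B0-4424-A59A-C3A8D4BE38A3} then gives $\rank\ggoth\ge\grank\Lscr$. This argument is purely Lie-algebraic and avoids tori and the auxiliary set $\Gscr$ entirely. Your Lie-algebraic proof of (b)(i)$\Rightarrow$(ii), via $\Lie(\urad G)=\Lie(G)\cap\lnd(B)=\Lscr$ and injectivity of $\Lie$ on $\NU(X)$, is also a valid alternative to the paper's semidirect-product argument.
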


\begin{proof}
By \ref{92E834E4-3895-42FF-A577-61C5AF52410C} and Lemma \ref{1E03B95D-4E12-4AA5-8D53-357AA4219E2B},
$\ggoth \mapsto T_\ggoth$ is a well-defined surjective map $\Grad(\Lscr) \to \Tscr(U)$.
Since \ref{92E834E4-3895-42FF-A577-61C5AF52410C}(a) gives  $\dim T_\ggoth = \rank(\ggoth)$ for all $\ggoth \in \Grad(\Lscr)$, we have
$\max\setspec{ \dim T }{ T \in \Tscr(U) } = \max\setspec{ \rank(\ggoth) }{ \ggoth \in \Grad(\Lscr) } = \grank(\Lscr)$, proving (a).

(b) Observe that if $T \in \Tscr(U)$ then $T \cap U = \{ \id_X \}$, so the subgroup $G=\langle T , U \rangle_{\text{\rm gp}}$ of $\Aut(X)$
is the semi-direct product $T \ltimes U$.
It is clear that $U \subseteq \urad G$; if  $U \subset \urad G$ is a strict inclusion then the basic properties of the semi-direct product show that
$G = T \ltimes U \subset T \ltimes \urad G = G$ is strict, which is absurd.  So $\urad G = U$.
By diagram \eqref{A3583570-EE9B-4EE9-883D-05372F0F5495},
if $G$ is a Borel subgroup of $\Aut(X)$ then $\urad G = U$ is maximal in $\NU(X)$,
which proves that (i)$\Rightarrow$(ii).
Conversely, if $U \in \max\NU(X)$ then surjectivity of $\rho_X$ in \eqref{A3583570-EE9B-4EE9-883D-05372F0F5495} implies that there exists $\Beul \in \Borel(X)$
such that $\urad{\Beul} = U$, and Theorem \ref{A38C833F-DB8D-45CD-9664-1D5AE5BE00CF} shows that there exists $T \in \Tscr(U)$ such
that $\Beul = \langle T , U \rangle_{\text{\rm gp}}$.  So (ii)$\Rightarrow$(i).
The equivalence  (ii)$\Leftrightarrow$(iii) follows from  \eqref{A3583570-EE9B-4EE9-883D-05372F0F5495}, so (b) is proved.

For the reminder of the proof, assume that the equivalent conditions {\rm(i--iii)} of {\rm(b)} are satisfied.
By Theorem \ref{0CA62AB7-E680-4831-972F-E730B069503F}, exactly one $\Beul \in \Borel(X)$ contains $U$, and this $\Beul$ satisfies $\urad{\Beul} = U$.
We claim:
\begin{equation}  \label {7BA0B7B2-A444-4876-A480-8B3553BE1DD3}
\begin{minipage}{.8\textwidth}
For each $T \in \Tscr(U)$ we have $\langle T , U \rangle_{\text{\rm gp}} \subseteq \Beul$,
and moreover $\langle T , U \rangle_{\text{\rm gp}} = \Beul$ implies $T \in \max\Tscr(U)$.
\end{minipage}
\end{equation}
Indeed, let $T \in \Tscr(U)$.
The subgroup $\langle T , U \rangle_{\text{\rm gp}} = T \ltimes U$ of $\Aut(X)$ is connected and solvable,
and hence is included  in some Borel subgroup of $\Aut(X)$ by \cite[Corollary 6.22]{solvableAutGroups};
since $\Beul$ is the only Borel that contains $U$, we have $\langle T , U \rangle_{\text{\rm gp}} \subseteq \Beul$.
We also note that if  $T \subset T'$ is a strict inclusion of elements of $\Tscr(U)$ then
$\langle T , U \rangle_{\text{\rm gp}} = T \ltimes U \subset T' \ltimes U = \langle T' , U \rangle_{\text{\rm gp}}$ is a strict inclusion,
by properties of the semi-direct product. This proves \eqref{7BA0B7B2-A444-4876-A480-8B3553BE1DD3}.
We shall now prove \ref{CFA2596B-55F6-4864-9F15-0E5004E73AA2}, (c), \ref{8E3DCA2F-E04D-4436-8149-693BB7FDEAE1} and \ref{95CB7549-845E-436A-BD45-B14B7DC28F63}, in this order.

\ref{CFA2596B-55F6-4864-9F15-0E5004E73AA2}
Theorem \ref{A38C833F-DB8D-45CD-9664-1D5AE5BE00CF} implies that $\Beul = \langle T_0 , U \rangle_{\text{\rm gp}}$ for some $T_0 \in \Tscr(U)$,
and we have $T_0 \in \max\Tscr(U)$ by \eqref{7BA0B7B2-A444-4876-A480-8B3553BE1DD3}.
Also consider an arbitrary $T \in \max\Tscr(U)$.
Since $\Beul \in \NC(X)$ by \eqref{614A02E0-C9C8-49D5-9285-451382A1986A}, we may consider a sequence
$B_0 \subseteq B_1 \subseteq B_2 \subseteq \cdots$ of connected algebraic subgroups of $\Beul$ such that $\Beul = \bigcup_{n \in \Nat}B_n$.
There is a finite subset $S$ of $T$ such that the only closed subgroup of $T$ that contains $S$ is $T$ itself;
this implies that $T \subseteq B_m$ for some  $m \in \Nat$.
Applying the same argument to $T_0$ shows that there exists  $n \in \Nat$ such that $T_0 \cup T \subseteq B_n$.
So we found a connected algebraic subgroup $G$ of $\Aut(X)$ such that $T_0 \cup T \subseteq G \subseteq \Beul$.
Since $\Beul = T_0 \ltimes U$, we have $U \lhd \Beul$; 
as $G \subseteq \Beul$, it follows that each torus of $G$ normalizes $U$ and hence belongs to $\Tscr(U)$.
This implies that $T_0$ and $T$ are maximal tori of $G$.
Since all maximal tori of a connected algebraic group are conjugate, there exists $g \in G$ such that $g \, T_0 \, g^{-1} = T$.
Since $g \in \Beul = T_0 \ltimes U$, we have $g = ut$ for some $u \in U$ and $t \in T_0$.
Then $T = g \, T_0 \, g^{-1} = ut \, T_0 \, t^{-1} u^{-1}  = u \, T_0 \, u^{-1}$.
This shows that for each $T \in \max\Tscr(U)$, there exists $u \in U$ such that $u \, T_0 \, u^{-1} = T$.
Assertion \ref{CFA2596B-55F6-4864-9F15-0E5004E73AA2} easily follows from this.

(c) The fact that $u \, T_0 \, u^{-1} = T$ also implies that
$$
u \, \Beul \, u^{-1} = u \, \langle T_0 , U \rangle_{\text{\rm gp}} \, u^{-1} = \langle (u \, T_0 \, u^{-1}) , (u U u^{-1}) \rangle_{\text{\rm gp}}
= \langle T , U \rangle_{\text{\rm gp}} ,
$$
so $\langle T , U \rangle_{\text{\rm gp}}$ is a Borel subgroup of $\Aut(X)$ and hence $\Beul = \langle T , U \rangle_{\text{\rm gp}}$.
This being true for an arbitrary $T \in \max\Tscr(U)$, we obtain that (c-ii) implies (c-i).
By \eqref{7BA0B7B2-A444-4876-A480-8B3553BE1DD3}, we get (c-i)$\Leftrightarrow$(c-ii).

Note that if  $T \subset T'$ is a strict inclusion of elements of $\Tscr(U)$ then $\dim T < \dim T'$.
By (a), it follows that each $T \in \Tscr(U)$ satisfying $\dim T = \grank(\Lscr)$ is maximal in $\Tscr(U)$.
Since \ref{CFA2596B-55F6-4864-9F15-0E5004E73AA2} implies that all maximal elements of $\Tscr(U)$ have the same dimension,
we obtain (c-ii)$\Leftrightarrow$(c-iii), which completes the proof of (c).

To prove \ref{8E3DCA2F-E04D-4436-8149-693BB7FDEAE1},
consider $\ggoth,\hgoth \in \Grad(\Lscr)$ such that $\rank(\ggoth) = \rank(\hgoth) = \grank(\Lscr)$.
Note that $\dim T_\ggoth = \rank(\ggoth) = \grank(\Lscr)$ and $\dim T_\hgoth = \rank(\hgoth) = \grank(\Lscr)$;
since (c-iii) implies (c-ii), we see that $T_\ggoth$ and $T_\hgoth$ are maximal in $\Tscr(U)$.
By \ref{CFA2596B-55F6-4864-9F15-0E5004E73AA2}, we have $u T_\hgoth u^{-1} = T_\ggoth$ for some $u \in U$.
There exists $D \in \Lscr$ such that $u = \EXP(D)$.
Let $\epsilon = \exp(D) \in \Aut_{\text{\rm u}}(B)$; then $u = \Spec(\epsilon)$, so $\Spec(\epsilon) T_\hgoth \Spec(\epsilon^{-1}) = T_\ggoth$.
Since $f \mapsto \Spec(f)$ is an anti-isomorphism of groups from $\Aut_\bk(B)$ to $\Aut(X)$, we obtain
$\epsilon^{-1} \bbT_\hgoth \epsilon = \bbT_\ggoth$, which proves \ref{8E3DCA2F-E04D-4436-8149-693BB7FDEAE1}.

Before proving \ref{95CB7549-845E-436A-BD45-B14B7DC28F63}, define the set $ \Gscr = \setspec{ \ggoth \in \Grad(\Lscr) }{ L(\ggoth) + \Lscr = \Lie(\Beul) } $.
We claim:
\begin{equation}  \label {CDD1A23B-4D16-4616-BC24-117A25B847D7}
\text{$\rank(\ggoth) = \grank(\Lscr)$ \ \ for all $\ggoth \in \Gscr$.}
\end{equation}
Indeed, if $\ggoth \in \Gscr$ then $L(\ggoth) \cap \Lscr = \{0\}$,
so we have $\Lie(B) = L(\ggoth) + \Lscr = L(\ggoth) + \Lie(\urad{\Beul})$ where the sum is direct;
it follows that $\Lie(\Beul) / \Lie(\urad{\Beul}) \isom L(\ggoth)$ and hence $\rank(\ggoth) = \dim_\bk L(\ggoth) = \dim_\bk \big( \Lie(\Beul) / \Lie(\urad{\Beul}) \big)$
(the first equality was noted in \ref{FA176F6C-52D4-43FA-A7AF-B41C94E23E8B}).
So all elements of $\Gscr$ have the same rank and, to prove \eqref{CDD1A23B-4D16-4616-BC24-117A25B847D7},
it suffices to find $\ggoth_0 \in \Gscr$ such that $\rank(\ggoth_0) = \grank(\Lscr)$.
As we argued in the proof of \ref{CFA2596B-55F6-4864-9F15-0E5004E73AA2},
there exists $T_0 \in \max\Tscr(U)$ such that $\Beul = \langle T_0 , U \rangle_{\text{\rm gp}}$,
and we have $\dim T_0 = \grank(\Lscr)$ by (c).
Recall that $\ggoth \mapsto T_\ggoth$ is a surjection $\Grad(\Lscr) \to \Tscr(U)$ and
choose $\ggoth_0 \in \Grad(\Lscr)$ such that $T_{\ggoth_0} = T_0$.
By \eqref{614A02E0-C9C8-49D5-9285-451382A1986A} and \ref{92E834E4-3895-42FF-A577-61C5AF52410C}(a), 
$\Lie(\Beul) = \Lie(T_0) + \Lie(\urad{\Beul}) = \Lie( T_{\ggoth_0} ) + \Lscr = L(\ggoth_0) + \Lscr$, so $\ggoth_0 \in \Gscr$.
Assertion \ref{92E834E4-3895-42FF-A577-61C5AF52410C}(a) also gives the first equality in
 $\rank(\ggoth_0) = \dim T_{\ggoth_0} = \dim T_0 = \grank(\Lscr)$,
so \eqref{CDD1A23B-4D16-4616-BC24-117A25B847D7} is proved.

We prove \ref{95CB7549-845E-436A-BD45-B14B7DC28F63}.
By Corollary \ref{CC2270E1-029B-4F26-89F8-81B03963C6CD}, the hypothesis $(*)$ of Proposition \ref{CD178309-4EC6-4386-9C1E-34F625BEBC29} is satisfied,
so part (b) of that Proposition shows that (f-ii) implies (f-i).
Conversely, suppose that (f-i) holds, i.e., $L(\ggoth) + \Lscr \in \max\LieWLF_\bk(B)_{\text{\rm sol}}$.
Commutativity of diagram \eqref{A3583570-EE9B-4EE9-883D-05372F0F5495} gives
$\rho_B \Lie (\Beul) = \Lie \rho_X(\Beul) = \Lie(U) = \Lscr = \rho_B\big( L(\ggoth) + \Lscr \big)$,
where the last equality is Lemma \ref{36F36D71-D42B-4E92-8612-63FD8842DBA9}(b);
so $\Lie(\Beul) = L(\ggoth) + \Lscr$ by injectivity of $\rho_B$.
It follows that $\ggoth \in \Gscr$, so \eqref{CDD1A23B-4D16-4616-BC24-117A25B847D7} gives $\rank(\ggoth) = \grank(\Lscr)$.
\end{proof}

\begin{noname}  \label {347219A2-47B7-44CA-B8EF-D539653C1734}
{\bf Classifying Borel subgroups of $\Aut(X)$. }
It is clear from Theorem \ref{0CA62AB7-E680-4831-972F-E730B069503F} that:
\begin{quote}
\it The problem of classifying Borel subgroups of $\Aut(X)$ up to conjugacy is reduced to that of classifying the maximal elements of $\DJL(B)$ up to conjugacy.
\end{quote}
We now make this reduction completely explicit.
Referring to diagram \eqref{A3583570-EE9B-4EE9-883D-05372F0F5495}, define the map
$\rho = \Lie \circ \rho_X = \rho_B \circ \Lie : \Borel(X) \to \max\LieLN_\bk(B)$.
Since $\max\LieLN_\bk(B) = \max\DJL(B)$ by Proposition \ref{6D6E22A4-236A-4B28-96D9-86157812D34A},
the map $\rho$ is described as follows:
$$
\rho : \Borel(X) \to \max\DJL(B), \qquad \Beul \mapsto \Lie(\urad{\Beul}) = \Lie(\Beul) \cap \lnd(B) .
$$
Theorem \ref{0CA62AB7-E680-4831-972F-E730B069503F} implies that $\rho$ is bijective,
and that two elements of $\Borel(X)$ are conjugate if and only if their images by $\rho$ are conjugate.
Proposition \ref{38182F16-1D38-42ED-9BA4-C7CB5326711E} allows the following description of $\rho^{-1}$: 
\begin{equation} \label {D26DB24A3725450D} 
\begin{minipage}{.88\textwidth}
Given $\Lscr \in \max\DJL(B)$, choose any $\ggoth \in \Grad(\Lscr)$ of maximal rank (i.e., $\rank(\ggoth) = \grank(\Lscr)$)
and define the subgroup $\Beul =  \langle T_\ggoth , \EXP(\Lscr) \rangle_{\text{\rm gp}}$ of $\Aut(X)$.
Then $\Beul \in \Borel(X)$ and $\rho(\Beul) = \Lscr$. Moreover, $\Beul =  T_\ggoth \ltimes \EXP(\Lscr)$ and $\Lie(\Beul) = L(\ggoth) + \Lscr$.
\end{minipage}
\end{equation}
\end{noname}

The procedure in \eqref{D26DB24A3725450D} is also useful outside the classification problem:
it lets us construct Borel subgroups explicitly, as we now illustrate with Nagata's automorphism.

\begin{noname} \label {6A24CF4A9C6743B6} 
{\bf Nagata's automorphism. }
If $f = xz + y^2 \in \bk[x,y,z] = \kk3$ and $\Delta = z \partial_y - 2y \partial_x \in \lnd(\bk[x,y,z])$
then $\nu := \EXP(f\Delta) \in \Aut(\aff^3)$ is Nagata's automorphism.  We claim:
\begin{equation} \label {511387D7208441BA} 
\text{\it There exists a unique Borel subgroup of $\Aut(\aff^3)$ that contains Nagata's automorphism.}
\end{equation}
Indeed, Lemma \ref{DJLNagata1} shows that $\Lscr := \Lscr(\bk[z], \ker(\Delta))$ is the unique maximal element of $\DJL(B)$ that contains $f\Delta$.
It follows from Lemma \ref{unipotentLieLNDiscussion}\ref{4BFE0092-4242-48D7-A3D3-0C484D4BC2B5}
 that $U := \EXP(\Lscr)$ is the unique maximal nested unipotent subgroup of $\Aut(\aff^3)$ that contains $\nu$.
By Theorem \ref{0CA62AB7-E680-4831-972F-E730B069503F}, if $\Beul \in \Borel(\aff^3)$ contains $\nu$
then $\urad{\Beul}$ is a  maximal nested unipotent subgroup of $\Aut(\aff^3)$ that contains $\nu$, so $\urad{\Beul} = U$.
By bijectivity of $\rho_X$ in diagram \eqref{A3583570-EE9B-4EE9-883D-05372F0F5495},
it follows that there is a unique  $\Beul \in \Borel(\aff^3)$ that contains $\nu$, which proves \eqref{511387D7208441BA}.

Let us describe $\Beul$ explicitly.
It is clear that $\rho(\Beul) = \Lscr$, where $\rho : \Borel(X) \to \max\DJL(B)$ and its inverse are explicitly described in 
paragraph \ref{347219A2-47B7-44CA-B8EF-D539653C1734}.
Let $\ggoth$ be the $\Integ^2$-grading of $\bk[x,y,z]$ such that $x$, $y$ and $z$ are $\ggoth$-homogeneous of degrees $(1,1)$, $(1,0)$ and $(1,-1)$ respectively.
Then $\ggoth \in \Grad(\Lscr)$ and $\rank(\ggoth) = 2 = \grank(\Lscr)$, so 
\eqref{D26DB24A3725450D} implies that $\Beul = \rho^{-1}(\Lscr) = T_\ggoth \ltimes \EXP(\Lscr)$.
We have $T_\ggoth = \setspec{ \Spec(\Theta_{s,t}) }{ (s,t) \in \bk^* \times \bk^* }$ where for each $(s,t) \in \bk^* \times \bk^*$ we define
$\Theta_{s,t}$ to be the $\bk$-automorphism of $\bk[x,y,z]$ that sends $x$, $y$ and $z$ to $st x$, $sy$ and $(s/t) z$ respectively.
Let us argue that
\begin{equation} \label {CACA338559EB4A23} 
\Lscr = L(\partial_x, \Delta) = \bk[z] \partial_x + \bk[z,f]\Delta .
\end{equation}
First note that the kernel sequence of the admissible sequence $(\partial_x, \Delta)$ is equal to $(\bk[z], \ker(\Delta)) = \kerseq(\Lscr)$,
so  $\Lscr = \Lscr(\partial_x, \Delta)$.
Arguing by contradiction, assume that $\Lscr(\partial_x, \Delta) \neq L(\partial_x, \Delta)$;
then there exist $\delta \in \Lscr(\partial_x, \Delta) \setminus L(\partial_x, \Delta)$
and a prime element $p$ of $\bk[z]$ such that $p\delta \in L(\partial_x, \Delta)$.
Then $p\delta = a_1 \partial_x + a_2 \Delta$ with $a_1 \in \bk[z]$ and $a_2 \in \ker\Delta$.
Evaluating both sides of $p\delta = a_1 \partial_x + a_2 \Delta$ at $y$ shows that $p \mid a_2 z$.
If $p$ divides $a_2$ then it also divides $a_1$, so $\delta \in L(\partial_x, \Delta)$, a contradiction; so $p \nmid a_2$ and hence $p \mid z$.
We might as well assume that $p=z$; then $z\delta = a_1 \partial_x + a_2 \Delta$.
Evaluation at $x$ gives $z \delta(x) = a_1 -2y a_2$, so if $\pi: \bk[x,y,z] \to \bk[x,y]$ is the map $P(x,y,z) \mapsto P(x,y,0)$,
then $\bk \ni \pi(a_1) = 2y\pi(a_2) \in y\pi( \ker\Delta ) = y\pi( \bk[z,f] ) = y\bk[y^2]$.
As $y\bk[y^2] \cap \bk = \{0\}$, we obtain $\pi(a_1) = 0 = \pi(a_2)$, so $z$ divides both $a_1$ and $a_2$, which contradicts $\delta \notin L(\partial_x, \Delta)$.
This proves \eqref{CACA338559EB4A23}.
It follows that\footnote{As in the introduction, we use the fact that 
if $L = L_1 \ltimes L_2$ are Lie algebras consisting of locally nilpotent derivations then $\EXP(L) = \EXP(L_1) \ltimes \EXP(L_2)$.
The proof uses the Baker-Campbell-Hausdorff Theorem and related results.}
$$
\EXP(\Lscr) = \EXP\big( \bk[z] \partial_x + \bk[z,f]\Delta \big) = \EXP\big( \bk[z] \partial_x \big) \ltimes \EXP\big( \bk[z,f]\Delta \big) .
$$
So the unique Borel subgroup $\Beul$ of $\Aut(\aff^3)$ that contains $\nu$ is:
$$
\Beul = \setspec{ \Spec(\Theta_{s,t}) }{ (s,t) \in \bk^* \times \bk^* } \ \ltimes \ \bigg( \EXP\big( \bk[z] \partial_x \big) \ltimes \EXP\big( \bk[z,f]\Delta \big) \bigg) .
$$
\end{noname}

\medskip

We conclude this section with a discussion of homogeneous automorphisms.
Let $\bk$, $B$ and $X$ be subject to the global assumptions stated at the beginning of the section.

\begin{definition} \label {4107DDFB-7A69-45EB-9AA3-D267BC7A0DE7}
Let $\ggoth$ be a grading of $B$ by some abelian group.
We say that $\sigma \in \Aut_\bk(B)$ is {\it $\ggoth$-homogeneous\/} if, for each $b \in B$,
$\sigma(b)$ is $\ggoth$-homogeneous if and only if $b$ is $\ggoth$-homogeneous.
Let $\Aut_\bk(B,\ggoth)$ denote the group of $\ggoth$-homogeneous elements of $\Aut_\bk(B)$.

Write $\ggoth = (B_i)_{i \in G}$ and consider the submonoid $\bbM = \setspec{ i \in G }{ B_i \neq 0 }$ of $G$.
If $\sigma \in \Aut_\bk(B,\ggoth)$ then there exists a unique automorphism $f_\sigma$ of the monoid $\bbM$
such that $\sigma(B_i) \subseteq B_{f_\sigma(i)}$ for all $i \in \bbM$.
If $f_\sigma$ is the identity map of $\bbM$, we say that $\sigma$ is {\it degree-preserving.}
The map $\Aut_\bk(B,\ggoth) \to \Aut(\bbM)$, $\sigma \mapsto f_\sigma$, is a group homomorphism:
$f_{\sigma_1 \circ \sigma_2} = f_{\sigma_1} \circ f_{\sigma_2}$ for all $\sigma_1,\sigma_2 \in \Aut_\bk(B,\ggoth)$.
Its kernel is the group of degree-preserving $\ggoth$-homogeneous automorphisms of $B$. 

Observe that if $\ggoth$ is an $\Nat$-grading then every $\ggoth$-homogeneous automorphism of $B$ is degree-preserving,
because every submonoid of $\Nat$ has a trivial automorphism group.
\end{definition}

\begin{lemma}  \label {B3A5B9B6-6F11-4CF9-80CB-3282C3B12873}
Let $\ggoth$ be a grading of $B$ by some abelian group.
An automorphism $\sigma \in \Aut_\bk(B)$ is $\ggoth$-homogeneous if and only if $\sigma \, \bbT_\ggoth \, \sigma^{-1} = \bbT_\ggoth$.
\end{lemma}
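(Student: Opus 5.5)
We prove the two implications separately, working throughout with the characters $\chi \in \Hom(\bbM,\bk^*)$ and the automorphisms $\Theta^\chi_\ggoth$ of \ref{FD621230-87A8-49EC-B5C9-5E39E7C97DCE}. Write $\ggoth = (B_i)_{i\in G}$ and let $\bbM$ be its support monoid.

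\emph{($\Rightarrow$)} Suppose $\sigma$ is $\ggoth$-homogeneous, and let $f_\sigma \in \Aut(\bbM)$ be as in Definition \ref{4107DDFB-7A69-45EB-9AA3-D267BC7A0DE7}. We first check that $\sigma(B_i) = B_{f_\sigma(i)}$ for each $i \in \bbM$. The inclusion $\subseteq$ holds by definition. For the reverse inclusion, take $y \in B_{f_\sigma(i)}$ nonzero. Then $\sigma^{-1}(y)$ is homogeneous of some degree $j$. We get $y \in B_{f_\sigma(j)}$, so $f_\sigma(j) = f_\sigma(i)$, and injectivity of $f_\sigma$ gives $j = i$.

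Now fix a character $\chi$ and take $b \in B_i$. Write $b = \sigma^{-1}(y)$ with $y \in B_{f_\sigma(i)}$. Then
$$
\sigma\,\Theta^\chi_\ggoth\,\sigma^{-1}(y) = \chi(i)\,y = \chi\bigl(f_\sigma^{-1}(f_\sigma(i))\bigr)\,y .
$$
Hence $\sigma\Theta^\chi_\ggoth\sigma^{-1} = \Theta^{\chi\circ f_\sigma^{-1}}_\ggoth$. Since $\chi \circ f_\sigma^{-1}$ is again a character of $\bbM$, this gives $\sigma\bbT_\ggoth\sigma^{-1} \subseteq \bbT_\ggoth$. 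The same computation for $\sigma^{-1}$ gives the reverse inclusion. We need $\sigma^{-1}$ to be homogeneous for this, and that is immediate from the symmetric definition.

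\emph{($\Leftarrow$)} This direction is the main obstacle. The key point is that $\bbT_\ggoth$ recovers the homogeneous elements. We aim to show that a nonzero $b$ is $\ggoth$-homogeneous if and only if $\bk b$ is stable under $\bbT_\ggoth$, i.e.\ $b$ is a common eigenvector of $\bbT_\ggoth$.

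One direction is clear. For the other, suppose $b = \sum_{i\in S} b_i$ has at least two distinct degrees $i \ne j$ in $S$. We must produce a character $\chi$ with $\chi(i) \ne \chi(j)$. Then $\Theta^\chi_\ggoth(b)$ is not proportional to $b$, because the $b_i$ are linearly independent.

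To build $\chi$, we use that the group $\bbM - \bbM$ generated by $\bbM$ embeds in $G$ and is a finitely generated abelian group. We also use that $\bk^*$ is divisible, since $\bk$ is algebraically closed. By divisibility, characters of the group $\bbM - \bbM$ into $\bk^*$ separate points; restricting such a character to $\bbM$ gives the required $\chi$. If the grading group has torsion, divisibility of $\bk^*$ is still enough, since $\bk^*$ contains all roots of unity in characteristic zero.

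Granting the eigenvector characterization, suppose $\sigma\bbT_\ggoth\sigma^{-1} = \bbT_\ggoth$ and let $b$ be a nonzero homogeneous element. For each $t \in \bbT_\ggoth$ we have
$$
t\,\sigma(b) = \sigma\,(\sigma^{-1}t\sigma)(b) \in \sigma(\bk^* b) = \bk^*\sigma(b),
$$
because $\sigma^{-1}t\sigma \in \bbT_\ggoth$. So $\sigma(b)$ is a common eigenvector of $\bbT_\ggoth$, hence homogeneous. Applying the same argument to $\sigma^{-1}$, which also normalizes $\bbT_\ggoth$, gives the converse. Therefore $\sigma$ is $\ggoth$-homogeneous.
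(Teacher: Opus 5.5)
Your proof is correct and follows essentially the same route as the paper. The forward direction is the same computation $\sigma\,\Theta^\chi_\ggoth\,\sigma^{-1} = \Theta^{\chi\circ f_\sigma^{-1}}_\ggoth$. For the converse, the paper argues by contradiction that $\chi(i_1)=\chi(i_2)$ for every character and then uses that characters of $\langle\bbM\rangle$ into $\bk^*$ separate points; this is exactly your common-eigenvector characterization of homogeneous elements, which the paper simply does not isolate as a separate step.
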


\begin{proof}
This is folklore, but we include a proof for the reader's convenience.
Let $\ggoth = (B_i)_{i \in G}$ and $\bbM = \setspec{ i \in G }{ B_i \neq 0 }$.
Suppose that $\sigma$ is $\ggoth$-homogeneous and let $f = f_\sigma \in \Aut(\bbM)$ (notation as in \ref{4107DDFB-7A69-45EB-9AA3-D267BC7A0DE7}).
Consider an element $\Theta_\ggoth^\chi$ of $\bbT_\ggoth$ (where $\chi \in \Hom( \bbM , \bk^* )$).
Since $f \in \Aut(\bbM)$ and $\chi \in \Hom( \bbM , \bk^* )$, we have $\chi \circ f^{-1} \in \Hom( \bbM , \bk^* )$
and consequently $\Theta_\ggoth^{\chi \circ f^{-1}} \in \bbT_\ggoth$.
If $i \in \bbM$ and $h \in B_i$ then $\sigma^{-1}(h) \in B_{f^{-1}(i)}$, so
$$
\big( \sigma \Theta_\ggoth^\chi \sigma^{-1} \big)(h)
= \sigma\big[ \chi( f^{-1}(i) ) \sigma^{-1}(h)  \big]
= \chi( f^{-1}(i) ) \sigma(\sigma^{-1}(h))
= \chi( f^{-1}(i) ) h
= \Theta_\ggoth^{\chi \circ f^{-1}} (h) .
$$
This shows that $\sigma \Theta_\ggoth^\chi \sigma^{-1} = \Theta_\ggoth^{\chi \circ f^{-1}} \in \bbT_\ggoth$,
and it follows that $\sigma \bbT_\ggoth \sigma^{-1} \subseteq \bbT_\ggoth$.
Since $\sigma^{-1}$ is $\ggoth$-homogeneous, we obtain $\sigma^{-1} \bbT_\ggoth \sigma \subseteq \bbT_\ggoth$
and hence $\sigma \bbT_\ggoth \sigma^{-1} = \bbT_\ggoth$.

To prove the converse, it suffices to show that
\begin{equation}  \label {614668F2-5EE0-4B7E-982A-B82329FD395D}
\text{if $\bbT_\ggoth \subseteq \sigma \bbT_\ggoth \sigma^{-1}$ then $\sigma$ sends $\ggoth$-homogeneous elements to $\ggoth$-homogeneous elements.}
\end{equation}
We prove \eqref{614668F2-5EE0-4B7E-982A-B82329FD395D} by contradiction:
suppose that $\bbT_\ggoth \subseteq \sigma \bbT_\ggoth \sigma^{-1}$ and that there exist $d \in \bbM$ and $h \in B_d \setminus \{0\}$
such that $\sigma(h)$ is not $\ggoth$-homogeneous.
Write $\sigma(h) = \sum_{i \in S} w_i$ where $S$ is a finite subset of $\bbM$ and $w_i \in B_i \setminus \{0\}$ for each $i \in S$,
and choose $i_1,i_2 \in S$ with $i_1 \neq i_2$.

Let $\chi \in \Hom(\bbM,\bk^*)$.
Then $\Theta_\ggoth^\chi \in \bbT_\ggoth \subseteq \sigma \bbT_\ggoth \sigma^{-1}$, so there exists $\chi' \in \Hom( \bbM , \bk^* )$
such that $\Theta_\ggoth^\chi =  \sigma \Theta_\ggoth^{\chi'} \sigma^{-1}$, i.e., $\Theta_\ggoth^\chi \sigma =  \sigma \Theta_\ggoth^{\chi'}$.
Since 
$( \Theta_\ggoth^\chi \sigma )(h) = \Theta_\ggoth^\chi ( \sum_{i \in S} w_i ) = \sum_{i \in S} \chi(i) w_i$
is equal to 
$( \sigma \Theta_\ggoth^{\chi'} )(h) = \sigma( \chi'(d) h ) = \chi'(d) \sigma( h ) = \chi'(d) \sum_{i \in S} w_i$,
it follows that $\chi(i) = \chi'(d)$ for all $i \in S$, so $\chi(i_1) = \chi(i_2)$.
We proved that  $\chi(i_1) = \chi(i_2)$ for all $\chi \in \Hom(\bbM,\bk^*)$.
If $\langle\bbM\rangle$ denotes the subgroup of $G$ generated by $\bbM$ then each $\phi \in \Hom(\langle\bbM\rangle,\bk^*)$
satisfies $\phi|_\bbM \in \Hom(\bbM,\bk^*)$ and hence $\phi(i_1)=\phi(i_2)$.
So we have $0 \neq i_1-i_2 \in \bigcap_{\phi \in \Hom(\langle\bbM\rangle,\bk^*)} \ker(\phi)$.
However, the fact that $\bk$ is algebraically closed and of characteristic zero implies that 
$\textstyle\bigcap_{ \phi \in \Hom(W,\bk^*) } \ker(\phi) \, = \, \{0\}$ for any abelian group $(W,+)$.
This contradiction proves \eqref{614668F2-5EE0-4B7E-982A-B82329FD395D}, and it follows that the
condition $\sigma \bbT_\ggoth \sigma^{-1} = \bbT_\ggoth$ implies that $\sigma$ is $\ggoth$-homogeneous.
\end{proof}

\begin{proposition}  \label{28D787E0-CB1E-413E-A9E5-CA9A302DDC55}
Suppose that $\Lscr, \Lscr'$ are maximal elements of $\LieLN_\bk(B)$ that are conjugate, and let  $r = \grank(\Lscr) = \grank(\Lscr')$.
If $\ggoth$ is a $\Integ^n$-grading of $B$ of rank $r$ (with $n \in \Nat$ arbitrary) such that both $\Lscr$ and $\Lscr'$ are $\ggoth$-homogeneous,
then there exists a $\ggoth$-homogeneous $\theta \in \Aut_\bk(B)$ such that $\theta \Lscr \theta^{-1} = \Lscr'$.
\end{proposition}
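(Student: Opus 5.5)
Since $\Lscr$ and $\Lscr'$ are conjugate, there is some $\sigma \in \Aut_\bk(B)$ with $\sigma \Lscr \sigma^{-1} = \Lscr'$. The idea is to correct $\sigma$ by an element of $\exp(\Lscr')$ so that it normalizes the torus $\bbT_\ggoth$. Lemma \ref{B3A5B9B6-6F11-4CF9-80CB-3282C3B12873} then says the corrected automorphism is $\ggoth$-homogeneous.

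\textbf{Step 1.} Transport the grading $\ggoth$ by $\sigma$ to get a grading $\hgoth := \sigma(\ggoth)$, whose pieces are $\sigma(B_i)$. One checks directly that $\bbT_\hgoth = \sigma \bbT_\ggoth \sigma^{-1}$ and $L(\hgoth) = \sigma L(\ggoth) \sigma^{-1}$, and that $\rank(\hgoth) = \rank(\ggoth) = r$. Since $\Lscr$ is $\ggoth$-homogeneous, Lemma \ref{98BA9308-0D1C-49E9-8D0D-7F648E5E2B67}(b) gives $L(\ggoth) \subseteq \Norm(\Lscr)$. Conjugating by $\sigma$ yields $L(\hgoth) \subseteq \Norm(\Lscr')$, so $\Lscr'$ is $\hgoth$-homogeneous, again by Lemma \ref{98BA9308-0D1C-49E9-8D0D-7F648E5E2B67}(b). (Alternatively, $\Lscr'$ is homogeneous with respect to the grading induced by the $\sigma$-conjugate torus.)

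\textbf{Step 2.} Now $\ggoth$ and $\hgoth$ both lie in $\Grad(\Lscr')$ and $\rank(\ggoth) = \rank(\hgoth) = r = \grank(\Lscr')$, and $\Lscr'$ is maximal in $\LieLN_\bk(B)$. So Proposition \ref{38182F16-1D38-42ED-9BA4-C7CB5326711E}\ref{8E3DCA2F-E04D-4436-8149-693BB7FDEAE1}, applied to $\Lscr'$ and $U' = \EXP(\Lscr')$, provides $D \in \Lscr'$ such that $\epsilon = \exp(D)$ satisfies $\epsilon \, \bbT_\hgoth \, \epsilon^{-1} = \bbT_\ggoth$.

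\textbf{Step 3.} Set $\theta = \epsilon \sigma$. Then
\[
\theta \bbT_\ggoth \theta^{-1} = \epsilon \bbT_\hgoth \epsilon^{-1} = \bbT_\ggoth ,
\]
so $\theta$ is $\ggoth$-homogeneous by Lemma \ref{B3A5B9B6-6F11-4CF9-80CB-3282C3B12873}. It remains to check that $\theta \Lscr \theta^{-1} = \epsilon \Lscr' \epsilon^{-1} = \Lscr'$, i.e.\ that $\exp(D)$ normalizes $\Lscr'$ when $D \in \Lscr'$. For this, note that $\epsilon E \epsilon^{-1} = \exp(\ad D)(E)$ for every $E$. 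Since $\Lscr'$ is a locally nilpotent subset (Lemma \ref{A4919C28-E3E4-4D7F-AE02-8F709B2AF0B7}), $\ad D$ is locally nilpotent on $\Lscr'$: for each $E \in \Lscr'$, the iterates $(\ad D)^k E$ are sums of words in $D$ and $E$, and these vanish for large $k$. Hence the series $\exp(\ad D)(E)$ is a finite sum of elements of $\Lscr'$, so $\epsilon \Lscr' \epsilon^{-1} \subseteq \Lscr'$; the same argument with $-D$ gives equality. Alternatively, $\epsilon \Lscr' \epsilon^{-1}$ is again maximal in $\LieLN_\bk(B)$ and lies in $\Lie$ of the group $U'$ conjugated by an element of $U'$, hence equals $\Lscr'$.

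\textbf{Main obstacle.} I expect the delicate point to be Step 3's finiteness claim. Local nilpotency of $\ad D$ on $\Lscr'$ does not follow immediately from the elementwise nilpotency $D^k(b)=0$, because a derivation is not determined by its values on finitely many elements in a way compatible with this series. I would handle it via the group side, using that $U' = \EXP(\Lscr')$ is a group and $\Lie(uU'u^{-1}) = \Lscr'$ for $u \in U'$, through Lemma \ref{unipotentLieLNDiscussion}. Step 1's compatibility of $\Grad$ with conjugation is routine.
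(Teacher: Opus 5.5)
Your proof is correct and is the paper's argument mirrored: the paper pulls $\ggoth$ back along the conjugating automorphism $\beta$, so that $\Lscr$ is homogeneous for both gradings, takes $D\in\Lscr$ from Proposition~\ref{38182F16-1D38-42ED-9BA4-C7CB5326711E}(f) and sets $\theta=\beta\circ\exp(D)$, whereas you push $\ggoth$ forward, take $D\in\Lscr'$ and set $\theta=\exp(D)\circ\sigma$. The point you flag as the main obstacle is harmless, and the paper simply asserts $\epsilon\Lscr\epsilon^{-1}=\Lscr$; contrary to your remark, a derivation of the affine algebra $B$ is determined by its values on finitely many generators, so $(\ad D)^kE=0$ for $k\gg0$ whenever $D\in\lnd(B)$, and then $\exp(D)E\exp(-D)=\exp(\ad D)(E)$ is a finite sum of elements of $\Lscr'$ (your group-theoretic alternative also works: $\exp(\Lscr')$ is a group containing $\epsilon$, and $\exp$ is injective on $\lnd(B)$).
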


\begin{proof}
Let $\beta \in \Aut_\bk(B)$ be such that $\beta \Lscr \beta^{-1} = \Lscr'$.
Write $\ggoth = ( B_i )_{ i \in \Integ^n }$ and define $\hgoth = ( \beta^{-1}(B_i) )_{i \in \Integ^n}$;
then $\hgoth$ is a $\Integ^n$-grading of $B$ of rank $r$ such that $\Lscr$ is $\hgoth$-homogeneous, and:
\begin{equation}  \label {F41A70CA-CBDC-4502-A4B3-BD1337FC1424}
\beta \, \bbT_\hgoth \, \beta^{-1} = \bbT_\ggoth .
\end{equation}
By Proposition \ref{38182F16-1D38-42ED-9BA4-C7CB5326711E}\ref{8E3DCA2F-E04D-4436-8149-693BB7FDEAE1} there exists $D \in \Lscr$
such that the automorphism $\epsilon = \exp(D) \in \Aut_{\text{\rm u}}(B)$ satisfies 
\begin{equation}  \label {3703F28D-919C-4207-BDE1-B7193EEE8833}
\epsilon \, \bbT_\ggoth \, \epsilon^{-1} = \bbT_\hgoth .
\end{equation}
Let $\theta = \beta \circ \epsilon \in \Aut_\bk(B)$; equalities \eqref{F41A70CA-CBDC-4502-A4B3-BD1337FC1424} and \eqref{3703F28D-919C-4207-BDE1-B7193EEE8833} give
$\theta \, \bbT_\ggoth \, \theta^{-1} = \bbT_\ggoth$, so $\theta$ is $\ggoth$-homogeneous by Lemma \ref{B3A5B9B6-6F11-4CF9-80CB-3282C3B12873}.
Since $\epsilon \, \Lscr \, \epsilon^{-1} = \Lscr$, we obtain
$\theta \, \Lscr \, \theta^{-1}
= (\beta \circ \epsilon) \, \Lscr \, (\beta \circ \epsilon)^{-1}
= \beta ( \epsilon \, \Lscr \, \epsilon^{-1} ) \beta^{-1}
= \beta \, \Lscr \, \beta^{-1}
= \Lscr'$, as claimed.
\end{proof}

\noindent{\bf Acknowledgements.} The authors used the AI assistant Claude Sonnet 5 (Anthropic) for occasional language editing in the Introduction of the paper. All mathematical content, including all definitions, results, and proofs, is the authors' own.

\bibliographystyle{amsalpha}

\providecommand{\bysame}{\leavevmode\hbox to3em{\hrulefill}\thinspace}
\providecommand{\MR}{\relax\ifhmode\unskip\space\fi MR }
% \MRhref is called by the amsart/book/proc definition of \MR.
\providecommand{\MRhref}[2]{%
  \href{http://www.ams.org/mathscinet-getitem?mr=#1}{#2}
}
\providecommand{\href}[2]{#2}

\end{document}